\DeclareSymbolFont{cyrletters}{OT2}{wncyr}{m}{n}
\DeclareMathSymbol{\Sha}{\mathalpha}{cyrletters}{"58}
\newcommand{\bA}{{\mathbb{A}}}
\newcommand{\bC}{{\mathbb{C}}}
\newcommand{\bD}{{\mathbb{D}}}
\newcommand{\bN}{{\mathbb{N}}}
\newcommand{\bQ}{{\mathbb{Q}}}
\newcommand{\bR}{{\mathbb{R}}}
\newcommand{\bZ}{{\mathbb{Z}}}
\newcommand{\Bm}{{\mathbf{m}}}
\newcommand{\Bn}{{\mathbf{n}}}
\newcommand{\Bu}{{\mathbf{u}}}
\newcommand{\Bv}{{\mathbf{v}}}
\newcommand{\Bw}{{\mathbf{w}}}
\newcommand{\Bx}{{\mathbf{x}}}
\newcommand{\By}{{\mathbf{y}}}
\newcommand{\Bz}{{\mathbf{z}}}
\newcommand{\A}{{\mathcal{A}}}
\newcommand{\B}{{\mathcal{B}}}
\newcommand{\C}{{\mathcal{C}}}
\newcommand{\D}{{\mathcal{D}}}
\newcommand{\E}{{\mathcal{E}}}
\newcommand{\F}{{\mathcal{F}}}
\newcommand{\G}{{\mathcal{G}}}
\newcommand{\I}{{\mathcal{I}}}
\newcommand{\J}{{\mathcal{J}}}
\newcommand{\K}{{\mathcal{K}}}  
\renewcommand{\L}{{\mathcal{L}}}
\newcommand{\N}{{\mathcal{N}}}
\renewcommand{\O}{{\mathcal{O}}}
\renewcommand{\P}{{\mathcal{P}}}
\newcommand{\Q}{{\mathcal{Q}}}
\newcommand{\R}{{\mathcal{R}}}
\renewcommand{\S}{{\mathcal{S}}}
\newcommand{\UUU}{{\mathcal{U}}}
\newcommand{\Y}{{\mathcal{Y}}}
\newcommand{\Z}{{\mathcal{Z}}}
\newcommand{\fp}{\mathfrak{p}}
\newcommand{\fa}{\mathfrak{a}}
\newcommand{\fm}{\mathfrak{m}}
\newcommand{\fn}{\mathfrak{n}}
\newcommand{\fd}{\mathfrak{d}}
\newcommand{\fA}{\mathfrak{A}}
\newcommand{\fB}{\mathfrak{B}}
\newcommand{\fh}{\mathfrak{h}}
\newcommand{\fk}{\mathfrak{k}} 
\newcommand{\fJ}{\mathfrak{J}}
\newcommand{\fI}{\mathfrak{I}} 
\newcommand{\fM}{\mathfrak{M}}  
\newcommand{\fQ}{\mathfrak{Q}} 
\newcommand{\fR}{\mathfrak{R}}
\newcommand{\Tr}{\operatorname{Tr}}
\newcommand{\Cl}{\operatorname{Cl}}
\newcommand{\ff}{\mathfrak{f}}
\newcommand{\fS}{\mathfrak{S}}
\newcommand{\fC}{\mathfrak{C}}
\newcommand{\fF}{\mathfrak{F}}
\newcommand{\fZ}{\mathfrak{Z}}
\newcommand{\fK}{\mathfrak{K}}  
\newcommand{\fD}{\mathfrak{D}} 
\newcommand{\ep}{\varepsilon}
\newcommand{\ol}{\overline}
\newcommand{\upchi}{{\raise.35ex\hbox{$\chi$}}}
\newcommand{\SL}{\operatorname{SL}}
\newtheorem{theorem}{Theorem}[section]
\newtheorem{corollary}[theorem]{Corollary}
\newtheorem{proposition}[theorem]{Proposition}
\newtheorem{lemma}[theorem]{Lemma}
\theoremstyle{definition}
\newtheorem{remark}[theorem]{Remark}
\numberwithin{equation}{section}
\begin{document}
	
	\title{Prime values of $f(a,b^2)$ and $f(a,p^2)$, $f$ quadratic}
	
	\author{Stanley Yao Xiao}
	\dedicatory{Dedicated to the occasion of John Friedlander's 80th birthday.}
	\address{Department of Mathematics and Statistics \\
		University of Northern British Columbia \\
		3333 University Way \\
		Prince George, British Columbia, Canada \\  V2N 4Z9}
	\email{StanleyYao.Xiao@unbc.ca}
	\indent
	
	
	\begin{abstract} We prove an asymptotic formula for primes of the shape $f(a,b^2)$ with $a,b$ integers and of the shape $f(a,p^2)$ with $p$ prime.  Here $f$ is a binary quadratic form with integer coefficients, irreducible over $\bQ$ and has no local obstructions. This refines the seminal work of Friedlander and Iwaniec on primes of the form $x^2 + y^4$ and Heath-Brown and Li on primes of the form $a^2 + p^4$, as well as earlier work of the author with Lam and Schindler on primes of the form $f(a,p)$ with $f$ a positive definite form. 
	\end{abstract}
	
	\maketitle

	\section{Introduction}
	\label{Intro}
	
	Questions concerning prime values taken by polynomials are among the oldest and most interesting  in number theory. For example, the question of whether or not there are infinitely many twin primes can be phrased as the question whether the linear polynomial $x - y - 2$ has infinitely many zeroes $(x,y)$ with $x,y$ both prime numbers. Investigating prime values of polynomials has driven much of the research in additive and analytic number theory in the last two centuries. \\
	
	Two of the most stunning results in this area are the seminal works of Friedlander and Iwaniec \cite{FI1} and Heath-Brown \cite{HB1}, demonstrating that the polynomials $x^2 + y^4$ and $x^3 + 2y^3$ respectively take on infinitely many prime values. In particular, Friedlander and Iwaniec obtained the beautiful asymptotic formula 
	\begin{equation} \label{FIMT} \mathop{\sum \sum}_{a^2 + b^4 \leq X} \Lambda(a^2 + b^4) = \frac{2 \Gamma(1/4)^2}{3\pi \sqrt{2\pi}} X^{\frac{3}{4}} \left(1 + O \left(\frac{\log \log X}{\log X} \right) \right)
	\end{equation}
where $\Lambda(\cdot)$ is the von Mangoldt function and $\Gamma$ is the Gamma function. \\ 

	 Heath-Brown's result on $x^3 + 2y^3$ was quickly generalized by Heath-Brown and Moroz in \cite{HBM}, which demonstrated that any admissible binary cubic form takes on infinitely many prime values. More recently, X.~Li has proved that the cubic form $x^3 + 2y^3$ takes on infinitely many prime values with $y$ restricted to a short interval \cite{Li}. One also notes the stunning work of J.~Maynard on representation of primes by incomplete norm forms, a substantial generalization of Heath-Brown's work \cite{May}. \\ 
	
	In another direction, one might ask whether \emph{reducible polynomials} take on infinitely many \emph{semi-prime values}, with the order of the semi-prime being equal to the number of irreducible factors. A first example of this type of result is due to Fouvry and Iwaniec \cite{FouI}, who showed that the binary cubic form $y(x^2 + y^2)$ takes on infinitely many values with exactly two prime factors. This work paved the way for the later work of Friedlander and Iwaniec \cite{FI1}. Heath-Brown and Li then combined the result of Fouvry and Iwaniec and Friedlander and Iwaniec in \cite{HBL}, showing that the polynomial $y(x^2 + y^4)$ takes on infinitely many values with exactly two prime factors. In particular they obtained the asymptotic formula 
	\begin{equation} \label{HBLMT} \mathop{\sum \sum}_{a^2 + b^4 \leq X} \lambda(b) \lambda(a^2 + b^4) = \frac{2 \Gamma(1/4)^2}{3\pi \sqrt{2\pi}} \frac{X^{\frac{3}{4}}}{(\log X)^2 }\left(1 + O_\ep \left(\frac{1}{(\log X)^{1-\ep}} \right) \right),
	\end{equation}
where $\lambda$ is the prime indicator function. \\ 

	 Lam, Schindler and the author generalized the work of Fouvry and Iwaniec in another direction, proving that for any admissible positive definite binary quadratic form $f$ the cubic form $y f(x,y)$ takes on infinitely many values with exactly two prime factors. Our main result implies:
	 \begin{equation} \label{LSXMT} \mathop{ \sum \sum}_{f(m, \ell) \leq X} \Lambda(\ell) \Lambda(f(m, \ell)) = \nu_f \fS_f^\prime X + O_A(X (\log X))^{-A}),
	 \end{equation}
 where $\nu_f$ is a product of local densities given by 
 \begin{equation} \label{nuf def} \nu_f = \prod_{p \nmid \Delta(f)} \left(1 - \frac{\rho_f(p)}{p} \right) \left(1 - \frac{1}{p} \right)^{-1} \prod_{p | \Delta(f)} \left(1 - \frac{1}{p} \right)^{-1}, 
 \end{equation}
$\fS_f^\prime$ is given by (\ref{Sf prime}), and $\rho_f(m) = \# \{x \pmod{m} : f(x,1) \equiv 0 \pmod{m}\}$. \\
	
	Despite the passage of more than two decades, a generalization akin to that of Heath-Brown and Moroz \cite{HBM} has yet to materialize for the main result of \cite{FI1}, despite the authors of that paper claiming that such a result should be readily obtainable from their arguments \footnote{``We expect, but did not check, that the methods carry over to the prime values
		of $\phi(a, b^2)$ for $\phi$ a quite general binary quadratic form." \cite{FI1}, p. 947.}. That is, there has yet to be a proof that $f(x,y^2)$ takes on infinitely many prime values for any binary quadratic form $f$ other than $f(x,y) = x^2 + y^2$. \\
	
	In this paper, we simultaneously generalize the main results of Friedlander and Iwaniec \cite{FI1} and Heath-Brown and Li \cite{HBL}. If $f$ is definite put 
	\[\fS_f = \text{Area} \{(x, y) \in \bR^2 : f(x,y^2) \leq 1 \} \]
	and for $f$ indefinite we define 
	\[\fS_f = \lim_{X \rightarrow \infty} \frac{\text{Area}\{(x,y) \in \bR^2 : 0 < f(x,y^2) < X, 0 < y \leq X^{1/4}\}}{X^{3/4}}.\]
	Our first main result is: 
	
	\begin{theorem}\label{MT0} Let $f(x,y) = f_2 x^2 + f_1 xy + f_0 y^2 \in \bZ[x,y]$ be an irreducible and primitive binary quadratic form, with the property that $f(x,1) \not \equiv x(x+1) \pmod{2}$. Then for $f$ positive definite we have
		\begin{equation} \label{pos def eq} \sum_{\substack{m, \ell \in \bZ \\ f(m, \ell^2) \leq X }} \lambda\left(f(m, \ell^2) \right) =  \frac{\nu_f \fS_f X^{3/4}}{\log X} \left(1 +  O \left(\frac{ \log \log X}{\log X} \right) \right)
		\end{equation} 
		and for $f$ indefinite we have 
		\begin{equation} \label{indef eq} \sum_{\substack{m, \ell \in \bZ \\ 0 < f(m, \ell^2) \leq X \\ 0 < \ell \leq X^{1/4} }}  \lambda\left(f(m, \ell^2) \right) =  \frac{\nu_f \fS_f X^{3/4}}{\log X} \left(1 + O \left(\frac{\log \log X}{\log X} \right) \right).
		\end{equation} 
	\end{theorem}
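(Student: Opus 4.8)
The plan is to verify, for a representation-counting sequence attached to $f$, the hypotheses of the Friedlander--Iwaniec asymptotic sieve for primes, and then to match the resulting main term against $\nu_f\fS_f X^{3/4}/\log X$. First I would recast the problem in terms of ideals: let $K=\bQ(\sqrt{\Delta(f)})$, let $\O_f\subset K$ be the order of discriminant $\Delta(f)$, and invoke the classical dictionary between primitive forms of discriminant $\Delta(f)$ and invertible $\O_f$-ideal classes. Fixing an integral ideal $\fb=\bZ\omega_1+\bZ\omega_2$ whose associated form is $f$, one has $N(m\omega_1+\ell^2\omega_2)=N(\fb)\,f(m,\ell^2)$, so each pair $(m,\ell)$ with $f(m,\ell^2)>0$ determines an integral ideal $\fa=(m\omega_1+\ell^2\omega_2)\fb^{-1}$ of norm $f(m,\ell^2)$ lying in a fixed ideal class, and $f(m,\ell^2)$ is prime exactly when $\fa$ is a degree-one prime ideal (the remaining cases being prime powers, removed at the end). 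Thus the left-hand sides of \eqref{pos def eq} and \eqref{indef eq} equal $\sum_{n\le X}a_n\lambda(n)$, where $a_n$ counts the $(m,\ell)$ in the relevant region --- $f(m,\ell^2)\le X$ in the definite case, and $0<f(m,\ell^2)\le X$, $0<\ell\le X^{1/4}$ in the indefinite case --- with $f(m,\ell^2)=n$; a lattice-point count gives $A(X):=\sum_{n\le X}a_n=\fS_f X^{3/4}(1+o(1))$, which is precisely how $\fS_f$ is normalised.

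Next I would establish the Type~I estimate. For squarefree $d$ coprime to $2\Delta(f)$ and each fixed $\ell$, the congruence $f(m,\ell^2)\equiv 0\pmod d$ confines $m$ to $\rho_f(d)$ residue classes modulo $d$ (multiplicative in $d$, and governed by the quadratic congruence $f(t,1)\equiv 0\pmod d$); summing over the $m$-range and then over $\ell$ yields $A_d(X)=g(d)A(X)+r_d(X)$ with $g$ multiplicative and $g(p)=\rho_f(p)/p$ for $p\nmid 2\Delta(f)$. The trivial bound $r_d(X)\ll\sum_{\ell\le X^{1/4}}\rho_f(d)$ already gives an admissible total error for $d\le X^{1/2-\ep}$; pushing the level of distribution up to what the sieve demands forces one to extract cancellation from the $\ell$-average, which brings in equidistribution of the roots of $f(t,1)\equiv 0\pmod d$ (in the spirit of Hooley and of Duke--Friedlander--Iwaniec on roots of quadratic congruences) together with the large sieve. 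The primes $2$ and $p\mid\Delta(f)$ are treated separately; the hypothesis $f(x,1)\not\equiv x(x+1)\pmod 2$ is exactly what forces $g(2)<1$, i.e.\ rules out a $2$-adic obstruction, and analogous local computations settle the ramified primes.

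The Type~II (bilinear) estimate is, as in \cite{FI1}, the heart of the argument and the step I expect to be the principal obstacle. One must bound sums of the shape
\[
\sum\sum_{\fa_1,\fa_2}\alpha(\fa_1)\beta(\fa_2),
\]
taken over pairs of ideals with $N(\fa_1)N(\fa_2)\asymp X$ and $N(\fa_1)$ in a short window about $X^{1/2}$, subject to a perfect-square condition linking $\fa_1$ and $\fa_2$ (inherited from the constraint that the second argument of $f$ be a square) --- the condition responsible for the sparseness of the sequence. When $K=\bQ(i)$ this is exactly the estimate of Friedlander and Iwaniec: separate the variables using the argument of a Gaussian integer, reduce to a lattice-point count on the relevant quadric, and invoke equidistribution of the solutions of $z^2\equiv\Delta(f)\pmod n$. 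For a general quadratic form there are two genuinely new difficulties. First, the class group of $\O_f$ need not be trivial, so $\fa_1$ and $\fa_2$ range over all classes and the Gaussian-integer geometry must be replaced by lattice-point counts inside the relevant ideals, carried out uniformly in the class, with extra bookkeeping for the non-maximality of $\O_f$ and the primes dividing its conductor. Second, in the indefinite case $K$ is real, its unit group is infinite, and the region is non-compact, so the bilinear sum is supported on long, thin hyperbolic regions and one must control the contribution near the cusp, using the truncation $0<\ell\le X^{1/4}$ together with bounds on the fundamental unit. In both cases the key arithmetic input is the analogue, for a general modulus and the discriminant $\Delta(f)$, of the Duke--Friedlander--Iwaniec theorem on equidistribution of roots of quadratic congruences.

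Finally I would feed the Type~I and Type~II estimates into the asymptotic sieve for primes, obtaining $\sum_{n\le X}a_n\Lambda(n)=\mathfrak{H}\,A(X)\bigl(1+O(\log\log X/\log X)\bigr)$ with $\mathfrak{H}=\prod_p(1-g(p))(1-1/p)^{-1}$. Comparing $\mathfrak{H}$ with \eqref{nuf def} --- using $g(p)=\rho_f(p)/p$ for $p\nmid\Delta(f)$ and the local computations at $2$ and the ramified primes --- identifies $\mathfrak{H}=\nu_f$. Combining this with $A(X)=\fS_f X^{3/4}(1+o(1))$, with partial summation (which converts the $\Lambda$-weight into a factor $1/\log X$), and with the removal of the prime-power values of $f(m,\ell^2)$, which contribute only $O(X^{1/2+\ep})$ and effect the passage from $\Lambda$ to $\lambda$, yields \eqref{pos def eq} and \eqref{indef eq}.
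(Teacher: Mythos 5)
Your overall architecture (verify the hypotheses of the Friedlander--Iwaniec asymptotic sieve for the representation sequence of $f(m,\ell^2)$, then identify $H=\nu_f$ and $A(X)=\fS_f X^{3/4}$) is a legitimate framework, and your Type~I discussion is essentially right: the level of distribution does come from the large sieve for roots of the quadratic congruence $f(t,1)\equiv 0\pmod d$, and the hypothesis $f(x,1)\not\equiv x(x+1)\pmod 2$ is indeed the $2$-adic admissibility condition. But there is a genuine gap at the step you yourself flag as ``the principal obstacle'': the Type~II estimate. You name the two new difficulties --- nontrivial class group and, in the indefinite case, an infinite unit group --- but you offer no mechanism for overcoming them; you have restated the problem rather than solved it, and resolving exactly those two difficulties is the entire content of the paper. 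Concretely, what is missing is (i) a decomposition of $a_{mn}$ into $h(K)$ bilinear forms via composition laws for ideal numbers (Proposition \ref{alg decomp}), which is what makes the variables separable at all when $h(K)>1$ and which, for real $K$, requires splitting the multiplication map into finitely many bilinear pieces according to the size of the logarithms of the factors; and (ii) a substitute for the Jacobi--Kubota symbol of \cite{FI1} --- here a non-canonical \emph{family} of symbols $\xi_w(z)$, one for each ideal class together with a choice of basis, whose reciprocity, multiplicativity-after-rebasing, and complete character sum evaluations (Lemmas \ref{multi lem} and \ref{xi cong sum 1}, Propositions \ref{QMN bd}--\ref{prop psi}) supply the cancellation in the bilinear sums. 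Your proposed key input, equidistribution of roots of quadratic congruences \`a la Duke--Friedlander--Iwaniec, powers the Type~I estimate, not the Type~II estimate; for the latter the arithmetic input is Jacobi-symbol cancellation over the ideal numbers, and without it the argument does not close.

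Separately, even granting a Type~II estimate, your route diverges from the paper's in a way that creates a second obstacle. You propose to apply the asymptotic sieve \emph{directly} to the sparse sequence, which forces you to compute the main terms of the bilinear forms --- the heavy harmonic analysis of \cite{FI1}. The paper deliberately avoids this by Heath-Brown's comparison sieve: it runs the identical Buchstab decomposition on the pair $(\A^\spadesuit,\B^\spadesuit)$, where $b_n^\spadesuit$ counts \emph{all} representations $f(m,\ell)=n$ with $\ell\in I(X)$, so that all main terms cancel in the difference $\pi(\A^\spadesuit)-\pi(\B^\spadesuit)$ (Proposition \ref{main prop}) and the asymptotic for $\pi(\B^\spadesuit)$ comes for free from Landau's prime ideal theorem. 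If you insist on the direct route you must additionally generalize the main-term analysis of \cite{FI1} to arbitrary $K$, which is not addressed in your sketch.
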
 

The condition that $f(x,1) \not \equiv x(x+1) \pmod{2}$ is necessary, as otherwise $f(x,k)$ is divisible by $2$ whenever $k$ is odd, precluding the possibility that it could be prime unless $k = 4$. Note that Theorem \ref{MT0} recovers Theorem 1 of \cite{FI1} upon setting $f(x,y) = x^2 + y^2$. It also implies, for example, that the polynomial $x^2 + xy^2 + y^4$ represents infinitely many primes. \\ \\
We note that the choice of cutting off the $y$-variable at $X^{1/4}$ is somewhat arbitrary, and is mostly done for aesthetic reasons. \\ \\
	Our proof, which follows along the lines of \cite{HBL}, yields the following refinement which is analogous to Theorem 1 of \cite{HBL} or (\ref{HBLMT}): 
	
	\begin{theorem} \label{MT} Let $f(x,y) = f_2 x^2 + f_1 xy + f_0 y^2 \in \bZ[x,y]$ be an irreducible and primitive binary quadratic form, with the property that $f(x,1) \not \equiv x(x+1) \pmod{2}$. Then for $f$ positive definite we have
		\begin{equation} \label{pos def eq} \sum_{\substack{m, \ell \in \bZ \\ 0 < f(m, \ell^2) \leq X }} \lambda(\ell) \lambda\left(f(m, \ell^2) \right) =  \frac{\nu_f \fS_f X^{3/4}}{(\log X)^2} \left(1  + O \left(\frac{\log \log X}{\log X} \right) \right)
		\end{equation} 
		and for $f$ indefinite we have 
		\begin{equation} \label{indef eq} \sum_{\substack{m, \ell \in \bZ \\ 0 < f(m, \ell^2) \leq X \\ 0 < \ell \leq X^{1/4} }} \lambda(\ell) \lambda\left(f(m, \ell^2) \right) =  \frac{\nu_f \fS_f X^{3/4}}{(\log X)^2} \left(1 + O \left(\frac{\log \log X}{\log X} \right) \right).
		\end{equation} 
	\end{theorem}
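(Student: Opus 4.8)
The plan is to prove Theorems~\ref{MT0} and~\ref{MT} by a single argument, deducing Theorem~\ref{MT} from the apparatus built for Theorem~\ref{MT0} by carrying the extra weight $\lambda(\ell)$ through it. First I would linearize: completing the square gives
\[ 4f_2\, f(m,\ell^2) \;=\; (2f_2 m + f_1\ell^2)^2 - \Delta(f)\,\ell^4, \]
so, writing $u = 2f_2 m + f_1\ell^2$ and tracking the congruence this imposes, $f(m,\ell^2)$ is, up to the fixed factor $4f_2$, a value $u^2 - \Delta(f)\,\ell^4$ of a binary quadratic form --- a norm from a quadratic order of $\bQ(\sqrt{\Delta(f)})$ (imaginary if $f$ is definite, real if $f$ is indefinite; note that irreducibility of $f$ forces $\Delta(f)$ to be a non-square), the ``second coordinate'' $\ell^2$ being constrained to a perfect square. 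Since $f$ is primitive and irreducible, ``$n$ is represented by $f$'' translates, in the familiar way, into ``$n$ is the norm of an ideal in a prescribed ideal class,'' and one is led to the arithmetic sequence $\A = (a_n)$, where $a_n$ counts the admissible pairs $(m,\ell)$ with $f(m,\ell^2) = n$ for Theorem~\ref{MT0}, and the same count weighted by $\lambda(\ell)$ --- that is, with $\ell$ restricted to primes --- for Theorem~\ref{MT}. The hypothesis $f(x,1) \not\equiv x(x+1) \pmod 2$ is precisely what keeps the $2$-adic density positive, hence $\nu_f \neq 0$.

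Next I would feed $\A$ into the Friedlander--Iwaniec asymptotic sieve for primes, which reduces each theorem to three inputs. The first is a \emph{main term}: for squarefree $d$ one needs $\sum_{n\equiv 0\,(d)} a_n = g(d)\,A(X) + r_d$, with $g$ multiplicative and built from the densities $\rho_f(p)$, and $A(X) = \sum_{n\le X} a_n$ the total mass (so $A(X) \asymp X^{3/4}$ for Theorem~\ref{MT0} and $A(X) \asymp X^{3/4}/\log X$ once $\ell$ is confined to primes). The second is a \emph{Type~I} estimate, $\sum_{d\le D}|r_d|$ negligible for $D$ as large as the sieve needs: this follows from equidistribution of the norm form --- equivalently, of $f(m,\ell^2)$ --- in arithmetic progressions, combined with the prime number theorem with classical error term (or Siegel--Walfisz) to handle the restriction of $\ell$ to primes, which enters only through the benign quantity $\pi(X^{1/4})$. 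The third is a \emph{Type~II} (bilinear) estimate: cancellation in $\sum_{m\sim M}\sum_n \alpha_m\beta_n\, a_{mn}$ for $M$ in a suitable middle range. Granting these, the sieve evaluates $\sum_p a_p$, which is exactly the sum in Theorem~\ref{MT}; combining its main term with the archimedean density computation (the volume $\fS_f$) and the Euler product of local factors (producing $\nu_f$) yields the stated asymptotics. The error term $O(\log\log X/\log X)$ is inherited directly from the asymptotic sieve, undisturbed by the harmless restriction of $\ell$ to primes --- which is why Theorem~\ref{MT} improves the error in~(\ref{HBLMT}).

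The Type~II estimate is the crux, and I expect it to be the main obstacle. Following \cite{FI1}, one opens the norm form and exploits the thinness of the set $\{\ell^2\}$: after reciprocity and Poisson summation the bilinear sum collapses to averages of Kloosterman-type sums, controlled by Weil's bound and the spacing of Farey fractions, the constraint that $\ell$ be a perfect square furnishing the decisive saving. Two difficulties go beyond \cite{FI1}. The first, already present for Theorem~\ref{MT0}, is that one works in a general quadratic order $\O$ in place of $\bZ[i]$: the dependence on $\Delta(f)$ must be made explicit, the ramified primes $p\mid\Delta(f)$ and, if $\O$ is non-maximal, its conductor treated separately, and the decomposition of ideals into classes managed, so that the lattice-point counts and exponential-sum estimates of \cite{FI1} carry over with the correct local constants. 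The second, specific to Theorem~\ref{MT}, is the prime condition on $\ell$: passing from $\lambda(\ell)$ to $\Lambda(\ell)$ (the prime powers contributing negligibly) and applying Heath-Brown's combinatorial identity replaces the $\ell$-sum by combinations of bilinear forms $\ell = \ell_1\ell_2$ with one short and one long factor; since $\ell$ enters $f$ only through $\ell^2 = \ell_1^2\ell_2^2$, one is led to \emph{doubly bilinear} sums --- bilinear both in the norm-form variables and in the factorization of $\ell$ --- that must be estimated uniformly. Interleaving the Friedlander--Iwaniec dispersion with this second decomposition, in the manner of \cite{HBL}, is the technical heart of the argument.

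Finally, the indefinite case needs its own treatment of the archimedean side. The region $0 < f(m,\ell^2) \le X$, $0 < \ell \le X^{1/4}$ is noncompact and the group of integral automorphs of $f$ is infinite, so each $n$ has infinitely many representations by $f$; one therefore works modulo this group, using reduction theory (continued fractions, the fundamental unit) to parametrize the solutions, the cutoff $\ell \le X^{1/4}$ making the count finite and producing precisely the limit that defines $\fS_f$. Combining the output of the sieve with this volume computation, separately in the definite and indefinite cases, yields both displayed formulas of Theorem~\ref{MT}.
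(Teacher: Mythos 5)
Your overall architecture is genuinely different from the paper's, and the difference matters. You propose to run the Friedlander--Iwaniec asymptotic sieve directly on the sequence $a_n=\sum_{f(m,\ell^2)=n}\lambda(\ell)$, which forces you to verify the Type~II hypothesis (\ref{rdx type2}) for that sequence outright --- including the computation of the main terms that arise after Cauchy--Schwarz. The paper deliberately avoids exactly this: it uses Heath-Brown's comparison sieve, setting $\B$ to be the sequence $f(m,\ell)$ weighted by $\Lambda(\ell)$ (whose asymptotic is supplied by Theorem~\ref{MT2}, i.e.\ the extension of \cite{LSX} to indefinite forms via Vaughan's identity), and then proves only that $\pi(\A)-\pi(\B)$ is small (Proposition~\ref{main prop}). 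In the difference $a_{mn}-b_{mn}$ the hard main terms cancel, so the bilinear work reduces to Propositions~\ref{HBL prop6} and~\ref{HBL prop7}. Your route is the one FI took for $x^2+y^4$ and is not wrong in principle, but it is substantially harder than what the paper does, and you would have to redo the full harmonic-analytic main-term evaluation of \cite{FI1} in a general quadratic field rather than sidestep it.

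The concrete gap is in your description of the Type~II crux. The cancellation in \cite{FI1} and \cite{HBL} does not come from ``Kloosterman-type sums controlled by Weil's bound and the spacing of Farey fractions''; it comes from the spin symbol $[z]=i^{(r-1)/2}\bigl(\tfrac{s}{|r|}\bigr)$ and the Jacobi--Kubota symbol $\xi_w(z)$, i.e.\ from quadratic reciprocity and bilinear/linear estimates for Jacobi symbols over lattice points (Propositions 22.1 and 23.1 and Theorem $\psi$ of \cite{FI1}, generalized here in Section~\ref{char sums}). Naming the wrong engine means your sketch gives no actual mechanism for the decisive saving coming from the thinness of $\{\ell^2\}$. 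Moreover, you treat the passage from $\bZ[i]$ to a general $\O_K$ as bookkeeping (``so that the lattice-point counts \ldots carry over with the correct local constants''), when it is precisely the main obstruction and the paper's main contribution: the class group need not be trivial and the unit group may be infinite, so multiplication of representations is not given by a single bilinear form but by a family of composition laws indexed by pairs of ideal classes (handled via Hecke's ideal numbers in Section~\ref{algebra}), and the Jacobi--Kubota symbol must be replaced by a non-canonical \emph{family} of symbols, one for each class together with a choice of basis, whose multiplicativity in the second variable (needed for the H\"older step in Proposition~\ref{QMN bd}) has to be manufactured rather than observed. Without confronting these two points your argument does not close at its critical step.

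A smaller issue: your linearization $4f_2 f(m,\ell^2)=(2f_2m+f_1\ell^2)^2-\Delta(f)\ell^4$ places you in a (generally non-maximal) order of $\bQ(\sqrt{\Delta(f)})$ and introduces the spurious factor $4f_2$; the paper instead works with the ideal $\fd(f)$ and Gauss composition directly, which keeps the correspondence between representations by $f$ and ideals in a fixed class clean. This is repairable but is an additional layer you would need to manage.
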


	Theorem \ref{MT} implies that there are infinitely many integers $x$ and primes $p$ for which $f(x,p^2)$ is prime. Note that both Theorem \ref{MT0} and Theorem \ref{MT} apply to \emph{indefinite} as well as definite forms. We further note that the error term in Theorem \ref{MT} is slightly better than in (\ref{HBLMT}), due to choosing a slightly different sieving parameter. \\
	
	In \cite{HBM}, the key new insight is that the arithmetic of \emph{ideal numbers} allows one to connect the multiplicative structure on the set of ideals of a ring of integers, which has unique factorization, to the arithmetic of the elements in a ring of integers which need not have unique factorization. This breaks a key barrier in \cite{HB1} where the fact that $\bZ[\sqrt[3]{2}]$ is a unique factorization domain is used in a crucial manner. Moreover, \cite{HBM} shows that the analytic estimates obtained by Heath-Brown in \cite{HB1} can be applied with relatively few changes in the general setting. \\
	
	In \cite{LSX} we essentially pursued the same approach, although we did not state things in terms of ideal numbers but rather worked out an explicit composition law for binary quadratic forms, in the spirit of Gauss and Dirichlet. In the present work we have decided to adopt the approach of Heath-Brown and Moroz and use ideal numbers, as this is a more elegant and general approach. \\
	
	In order to prove Theorems \ref{MT0} and \ref{MT} we adopt an approach introduced by Heath-Brown in \cite{HB1}, which we call Heath-Brown's \emph{comparison sieve}. This involves applying the same sieve procedure to two comparable sequences $\A = (a_n)$ and $\B = (b_n)$, producing cancellation at appropriate junctures. This was used again by Heath-Brown and Li in \cite{HBL} for the proof of their result. \\ \\
	In order to prove Theorem \ref{MT0} we choose our sequence $\B$ to simply be the set of prime ideals of the ring of integers $\O_K$. The sequence $\B$ used by Heath-Brown and Li is exactly the sequence studied by Fouvry and Iwaniec in \cite{FouI}. For positive definte forms $f$ we may then apply the result in \cite{LSX}, and for indefinite forms we will need to prove an extension of our main result with Lam and Schindler in \cite{LSX}, which gives an asymptotic formula for the number of representation of primes by $f(x,p)$, with $p$ prime.\\
	
	For $f$ positive definite put
	\begin{equation} \label{Sf prime} \fS_f^\prime = \text{Area} \{(x,y) \in \bR^2 : f(x,y) \leq 1\}\end{equation}
	and for $f$ indefinite put
	\[\fS_f^\prime = \lim_{X \rightarrow \infty} \frac{\text{Area} \{(x,y) \in \bR^2 : 0 < f(x,y) < X, 0 < y < X^{1/2}\}}{X}.\]
	Then: 
	
	\begin{theorem} \label{MT2}Let $f(x,y) = f_2 x^2 + f_1 xy + f_0 y^2 \in \bZ[x,y]$ be an irreducible and primitive binary quadratic form, with the property that $f(x,1) \not \equiv x(x+1) \pmod{2}$. Then for $f$ positive definite we have
		\begin{equation} \label{pos def eq} \sum_{\substack{m, \ell \in \bZ \\ 0 < f(m, \ell) \leq X }} \Lambda(\ell) \Lambda\left(f(m, \ell) \right) =  \nu_f \fS_f^\prime X + O_A \left( \frac{X}{(\log X)^A}  \right)
		\end{equation} 
		and for $f$ indefinite we have 
		\begin{equation} \label{indef eq} \sum_{\substack{m, \ell \in \bZ \\ 0 < f(m, \ell) \leq X \\ 0 < \ell \leq X^{1/2} }} \Lambda(\ell) \Lambda\left(f(m, \ell) \right) =  \nu_f \fS_f^\prime X + O_A \left(\frac{X}{(\log X)^A} \right).
		\end{equation} 
		Here $\nu_f$ is as in Theorem \ref{MT} and $\fS_f^\prime$ is as in (\ref{Sf prime}). 
	\end{theorem}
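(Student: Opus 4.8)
The positive definite case of Theorem~\ref{MT2} is precisely the result of \cite{LSX} already recorded in (\ref{LSXMT}), so nothing new is needed there; the task is the indefinite case, and the plan is to rerun the argument of \cite{LSX} over a real quadratic order. Write $D = f_1^2 - 4 f_0 f_2 > 0$ and let $\O$ be the order of discriminant $D$ in $\bQ(\sqrt D)$. Following \cite{HBM} I would work with the ideal numbers of $\O$, so that a representation $f(m,\ell) = n$ is, modulo the action of the (now infinite) group $\Aut(f)$, the datum of an ideal of $\O$ of norm $n$ in a fixed narrow ideal class, together with the archimedean information recording the size of $\ell$. The hypothesis $f(x,1) \not\equiv x(x+1) \pmod 2$ serves exactly to ensure that the local density $\nu_f$ of (\ref{nuf def}) is nonzero (no obstruction at $2$), so that the right-hand side is a genuine main term.

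To detect the two primality conditions simultaneously I would, as in \cite{FouI,FI1,HBL,LSX}, combine a sieve on the values $n = f(m,\ell)$ with a Vaughan (or Heath--Brown) decomposition of $\Lambda(\ell)$, the essential point being that the multiplicative structure of $\O$ furnishes bilinear (Type~II) information for $n$ over a range wide enough to run an asymptotic sieve; the pertinent one-dimensional sieve constant is governed by $\sum_{p \le z} \rho_f(p) \log p \sim z$. This reduces the whole sum to two families of estimates: \emph{Type~I sums}, counting ideals of $\O$ in the fixed narrow class with norm in a prescribed residue class modulo $q$ and lying in the region cut out by $0 < f(m,\ell) \le X$ and $0 < \ell \le X^{1/2}$, for $q$ up to a small power of $X$; and \emph{Type~II (bilinear) sums} $\sum_{\fb}\sum_{\fc} \alpha_\fb \beta_\fc (\cdots)$ with $\fb\fc$ running over ideals of the class of norm at most $X$, the inner factor carrying the congruence imposed by the coordinate $\ell$.

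For the Type~I sums the new feature relative to \cite{LSX} is geometric: the region $\{0 < f(x,y) \le X,\ 0 < y \le X^{1/2}\}$ is bounded by arcs of hyperbolas rather than an ellipse, and the cutoff at $X^{1/2}$ is precisely what makes its area $\sim \fS_f' X$ with $\fS_f'$ as in the indefinite case of (\ref{Sf prime}). I would use the fundamental automorph of $f$ --- equivalently the fundamental unit $\varepsilon$ of $\O$ --- to decompose this region into $O_f(1)$ boxes, on each of which the number of lattice points in a residue class modulo $q$ is evaluated by Poisson summation, the resulting complete exponential sums being controlled by Weil's bound (with elementary estimates at the bad primes). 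Summing over the boxes both exhibits $\fS_f'$ as a convergent limit and produces the Type~I estimate with a power saving in $X$, which is more than enough for the target error.

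The Type~II sums are the heart of the matter and the step I expect to be the main obstacle. Following \cite{FouI,FI1,HBL,LSX}, one opens the norm condition and the congruence on $\ell$ via characters of the (finite) narrow ray class group and additive characters, applies Cauchy--Schwarz in the $\fb$ variable, and bounds the ensuing sums by Weil-type estimates, gaining from the sparsity of the diagonal. The genuinely new difficulty is that $\O$ has infinite unit group: the generator of $\fc$ that carries the coordinate $\ell$ is only determined modulo $\varepsilon^{\bZ}$, so one must insert a smooth partition of unity in the logarithm of that generator --- equivalently, expand in Hecke characters ramified at infinity --- and show that the bilinear bounds survive, uniformly in this extra archimedean parameter, at the acceptable cost of a bounded power of $\log X$. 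Once the Type~I and Type~II inputs are in hand, the remaining sieve bookkeeping is identical to \cite{LSX}, and assembling the pieces yields the asserted asymptotic with error $O_A\!\left(\dfrac{X}{(\log X)^A}\right)$.
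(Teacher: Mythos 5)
Your overall architecture agrees with the paper's: the positive definite case is quoted from \cite{LSX}, and the indefinite case is handled by transferring representations $f(m,\ell)=n$ to ideal numbers of the real quadratic field as in \cite{HBM} (Section \ref{algebra}), then running a Vaughan decomposition supported by Type I and Type II estimates in the style of \cite{FouI} and \cite{LSX} (Sections \ref{Type I} and \ref{piB bisum}). The genuine gap is in your Type I step. The argument needs level of distribution essentially $X^{3/4}$ (Proposition \ref{Type1 prop} with $r=1$ in fact gives nearly $X^{1-\ep}$): after Cauchy--Schwarz the diagonal term in the bilinear sums forces the M\"obius-weighted variable below $X^{1/2}$, so in Vaughan's identity one is driven to $Z\approx X^{1/2}$, $Y\approx X^{1/4}$, and the Type I sums must then handle moduli $d$ up to $YZ\approx X^{3/4}$. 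For such $d$, after fixing a root $\alpha$ of $f(\alpha,1)\equiv 0\pmod d$, Poisson summation in the thick variable leaves exponential sums $\sum_{\ell}e(\alpha h\ell/d)$ with $\ell\ll\sqrt X\ll d$; these are badly incomplete modulo $d$, so completing them and invoking Weil gives nothing, and decomposing the hyperbolic region into boxes via the fundamental automorph does not help because the loss is per-modulus, not geometric. The required saving comes only from averaging over the moduli $d$ and the roots $\alpha$, i.e.\ from the large sieve inequality for roots of quadratic congruences (Proposition \ref{BBDT prop}, imported from \cite{BBDT}, valid for indefinite non-split forms); this input is absent from your plan, and without it the Type I estimate does not reach the level that the Type II range leaves uncovered.

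On the Type II side you correctly identify the infinite unit group as the new difficulty but leave it unresolved, proposing an expansion in Hecke characters ramified at infinity with uniformity ``to be shown.'' The paper's resolution is different and essentially costless: ideal numbers are normalized into the fundamental domain $\L_0$ for the unit action ($\gamma=N(\gamma)^{1/2}\ep_0^{z}$ with $-1/2<z\le 1/2$), so that multiplication becomes a finite collection of explicit bilinear maps $Q_{A,B}$ on $\bZ^2\times\bZ^2$, one for each pair of ideal classes (Proposition \ref{alg decomp}); after Cauchy--Schwarz the quantity $\D(\alpha)$ depends only on the $\bZ$-module structure and on the orthogonality $Q_0(\Bx;\By)=0$ forcing $\By$ proportional to $(-x_2,x_1)$, so the lattice-point analysis of \cite{FouI} applies verbatim with no arithmetic of $K$ and no archimedean expansion. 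Without either this normalization or a fully executed, uniform Gr\"ossencharacter argument, the bilinear estimate --- which you yourself flag as the main obstacle --- remains unproved in your proposal.
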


Note that Theorem \ref{MT2} is stated with the von Mangoldt function rather than $\lambda$ to emphasize that a substantially better error term, giving an arbitrary log-power saving, is possible. \\ 
	
	Theorem \ref{MT2} implies the following, which completely settles \emph{Schinzel's hypothesis} for binary cubic forms:
	
	\begin{corollary} \label{red cube} Let $F(x,y)$ be a reducible binary cubic form of the shape $F(x,y) = L(x,y) Q(x,y)$, where $Q$ is an irreducible binary quadratic form. Then there are infinitely many pairs of integers $x,y$ such that $F(x,y)$ is divisible by exactly two primes. 
	\end{corollary}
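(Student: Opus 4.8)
The plan is to reduce the corollary, by a unimodular change of variables, to the simultaneous prime-values asymptotic furnished by Theorem~\ref{MT2}. Since the conclusion concerns the number $\omega(F(x,y))$ of distinct prime divisors of $F(x,y)$, the substantive case---and the one asserted by Schinzel's hypothesis for the pair $(L,Q)$---is that $F$ has no fixed prime divisor; equivalently, $L$ and $Q$ are both primitive and $F$ is not identically even. In particular $\operatorname{cont}(F)=\operatorname{cont}(L)\operatorname{cont}(Q)=1$, so after absorbing signs we may write $F=LQ$ with $L$ and $Q$ primitive. As $L$ is a primitive linear form we can complete it to a $\bZ$-basis: choose $\gamma\in\GL_2(\bZ)$ with $L(\gamma(m,\ell))=\ell$ and set $g(m,\ell):=Q(\gamma(m,\ell))$, so that $F(\gamma(m,\ell))=\ell\,g(m,\ell)$. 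Then $g$ is again a primitive binary quadratic form (content and primitivity are $\GL_2(\bZ)$-invariant) which is irreducible over $\bQ$ (irreducibility is $\GL_2(\bQ)$-invariant); replacing $g$ by $-g$ if necessary---which changes $F$ by a sign and so does not affect $\omega(F(x,y))$---we may assume $g$ is positive definite or indefinite.

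Next I would check that $g$ satisfies the hypothesis of Theorem~\ref{MT2}, namely $g(x,1)\not\equiv x(x+1)\pmod 2$. For a primitive form $g(m,\ell)=g_2m^2+g_1m\ell+g_0\ell^2$ one has: $g(x,1)\equiv x(x+1)\pmod 2$ if and only if $2\mid g(m,1)$ for every $m\in\bZ$ (the forward direction is immediate; the reverse uses primitivity to rule out the alternative $g_0\equiv g_1\equiv g_2\equiv 0\pmod 2$), and this holds if and only if $2\mid\ell\,g(m,\ell)$ for every $(m,\ell)\in\bZ^2$ (since $\ell\,g(m,\ell)$ is automatically even for $\ell$ even, while $g(m,\ell)\equiv g(m,1)\pmod 2$ for $\ell$ odd). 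The last condition says precisely that $2\mid F(x,y)$ for all $(x,y)$, which we have excluded; hence $g(x,1)\not\equiv x(x+1)\pmod 2$, and Theorem~\ref{MT2} applies to $f=g$, in its positive-definite or its indefinite form according to the signature of $g$.

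Theorem~\ref{MT2} then yields
\[
\sum \Lambda(\ell)\,\Lambda\big(g(m,\ell)\big)=\nu_g\,\fS_g^\prime\,X+O_A\big(X(\log X)^{-A}\big),
\]
the sum running over $(m,\ell)\in\bZ^2$ with $0<g(m,\ell)\le X$, and additionally $0<\ell\le X^{1/2}$ in the indefinite case. Since $g$ is primitive, irreducible, and---by the previous paragraph---not identically even, each factor of the product (\ref{nuf def}) defining $\nu_g$ is strictly positive, so $\nu_g>0$; and $\fS_g^\prime>0$ as well. Thus the main term tends to infinity. The pairs $(m,\ell)$ for which $\ell$ or $g(m,\ell)$ is a \emph{proper} prime power, together with those for which $\ell=g(m,\ell)$, contribute only $o(X)$ to the left-hand side---the latter because $g(m,\ell)=\ell$ is, for each fixed $\ell$, a quadratic equation in $m$, so it is satisfied by at most $O(X^{1/2})$ pairs in the range. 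Removing these, the remaining sum is still asymptotic to $\nu_g\fS_g^\prime X$, so there are infinitely many pairs $(m,\ell)$ in the range with $\ell=p$ and $g(m,\ell)=q$ both prime and $p\neq q$. For each such pair $F(\gamma(m,\ell))=\pm pq$ is divisible by exactly two primes; transporting back by $\gamma$ (a bijection of $\bZ^2$) produces infinitely many $(x,y)\in\bZ^2$ with $\omega(F(x,y))=2$, as required.

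The one genuinely delicate step is the parity computation of the second paragraph: it is precisely what converts the arithmetic input that $F$ has no fixed prime divisor into the analytic hypothesis of Theorem~\ref{MT2}. Everything else is either elementary or a direct appeal to Theorem~\ref{MT2}; in particular one checks without difficulty that $\gamma$ and the sign of $g$ may be fixed compatibly with the constraints $0<\ell\le X^{1/2}$ and $0<g(m,\ell)\le X$ appearing in the indefinite half of that theorem.
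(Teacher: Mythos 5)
Your argument is correct and is precisely the deduction the paper intends: Corollary \ref{red cube} is asserted as an immediate consequence of Theorem \ref{MT2} with no written proof, and your unimodular change of variables sending $L$ to the coordinate $\ell$, the parity verification that $g(x,1)\not\equiv x(x+1)\pmod{2}$ exactly when $F$ has no fixed divisor at $2$, and the removal of proper prime powers and of the diagonal $\ell=g(m,\ell)$ supply the details the paper omits. The only caveat is one you already flag yourself: as literally stated the corollary carries no admissibility hypothesis, and in the genuinely non-admissible case (which for a primitive product $LQ$ can only occur at the prime $2$) the conclusion does not follow from Theorem \ref{MT2}; since the corollary is framed as a case of Schinzel's hypothesis, your reading --- restricting to forms with no fixed prime divisor --- is clearly the intended one.
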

	
	Corollary \ref{red cube} is the final case of \emph{Schinzel's hypothesis} in the setting of binary cubic forms. The hardest case, that of irreducible binary cubic forms, is settled by the work of Heath-Brown \cite{HB1} and Heath-Brown and Moroz in \cite{HBM}. The case with $F$ reducible with a positive definite quadratic factor is settled by the author's joint work with Lam and Schindler in \cite{LSX}. The totally reducible case was settled by B.~J.~Green's work on 3-term arithmetic progressions in the primes \cite{Green}. \\
	
	The main contribution of this paper is to insert composition laws involving ideal numbers of a fixed quadratic field into the analytic estimates of Friedlander and Iwaniec in \cite{FI1} and Heath-Brown and Li in \cite{HBL}. There are several places where this is quite delicate, which requires us to redo certain parts of \cite{FI1} and \cite{HBL}. Specifically, the so-called Jacobi-Kubota symbol introduced by Friedlander and Iwaniec in \cite{FI1} does not have an obvious analogue in the general setting, and we need to introduce substitutes. We give a rough explanation of this in the following subsection.   
	
	\subsection{Sketch of the main ideas for the proof of Theorems \ref{MT0} and \ref{MT}}
	
	The framework behind the proof of Theorems \ref{MT0} and \ref{MT} is the so-called \emph{asymptotic sieve}, originally developed by Bombieri and laid out in detail by Friedlander and Iwaneic to detect primes in \cite{FI2}. Their work gives us a way to estimate the sum 
	\[S(X) = \sum_{n \leq X} a_n \Lambda(n) \]
	with $\Lambda(\cdot)$ the von Mangoldt function and $a_n$ a non-negative sequence supported on the natural numbers, provided that a suitable level of distribution can be obtained for the sequence $\A = (a_n)$ and that a \emph{bilinear sum estimate} can be obtained for a sum of the shape
	\[\sum_m \alpha(m) \sum_{\substack{N \leq n < 2N \\ mn \leq X }} \beta(n) a_{mn}.  \]
	In practice, the proof of results giving asymptotic formulae for primes involve obtaining an acceptable bilinear sum estimate as expected. \\ \\
	That the appropriate level of distribution can be obtained is a consequence of the main result of \cite{BBDT} and \cite{FI3}; this aspect was exploited by the author, Lam, and Schindler in \cite{LSX}. Therefore, the remaining difficulty in proving Theorems\ref{MT0} and \ref{MT} involves dealing with bilinear sum estimates. \\ \\
	This is where the story takes a somewhat unexpected path: it turns out that the type of bilinear sums that come up in the case with $K$ a general quadratic field depend both on the structure of the class group of $\O_K$ and the existence of units of infinite order. This aspect will be explained in detail in Section \ref{algebra}. In fact the number of bilinear sums is equal to the class number $h(K)$ of $\O_K$. \\ \\
	With each such bilinear sum, we are free to subdivide it into sums over small regions as in \cite{FI1} and \cite{HBL}. One then has to estimate certain `main' terms and bound the error terms. Fortuitously, the methods used to bound the error terms in \cite{FI1} and \cite{HBL} do not depend much on the arithmetic of $\O_K$, and in fact it suffices to treat the problem as one over $\bZ^2$. That is, this part of the argument largely depends only on the structure of $\O_K$ or a set of ideal numbers as a $\bZ$-module. \\ \\
	It is only in the estimation of various main terms where the arithmetic of $\O_K$, or more precisely the arithmetic of ideal numbers of $\O_K$, becomes crucial. In particular, we are required to generalize certain results in \cite{FI1} which are used by \cite{HBL} to estimate the relevant bilinear sums. This turns out to be a delicate task, as there are three properties of $\bZ[i]$ used by Friedlander and Iwaniec that turn out to be very convenient, and no other ring of quadratic integers possess all of them: 
	\begin{itemize}
		\item The class number of $\bZ[i]$ is $1$; 
		\item The norm of $\bZ[i]$ is the same as the Euclidean norm on $\bC$; and 
		\item The odd rational primes that split in $\bZ[i]$ are precisely those that are congruent to $1 \pmod{4}$. 
	\end{itemize}
	If a ring of integers $\O_K$ fails to have any of the above properties (in general, it is only possible for $\O_K$ to have class number one; the other two properties essentially characterize $\bZ[i]$), then certain objects introduced in \cite{FI1}, for example the so-called Jacobi-Kubota symbol, will not have analogues with equally nice properties. In general one needs to introduce a \emph{family} of Jacobi-Kubota symbols; one symbol for each element of the class group \emph{and a choice of basis} for the class of ideal numbers. This makes the symbols non-canonical but the choice to choose the basis freely will be used to our advantage. Then the analytic estimates obtained by \cite{FI1} can be applied to each symbol in the family to produce the required cancellation. This allows us to then apply these estimates to the argument of \cite{HBL}, leading to the proof of Theorem \ref{MT}. \\ \\
	We remark that two key results in \cite{HBL}, namely Corollaries 1 and 2 which are a refinement of the Barban–
	Davenport–Halberstam theorem and a Siegel-Walfisz type estimate respectively, are not explicitly invoked here. This is because these two results are used in \cite{HBL} to prove their Proposition 6 which, surprisingly, can be applied more or less without change in our case. 
	
	\subsection{Organization of the paper} 
	
	In Section \ref{sieve} we review Friedlander and Iwaniec's asymptotic sieve for primes \cite{FI2}, which gives us our main framework to produce asymptotic formulae involving primes. In Section \ref{HB sieve} we discuss our approach to implementing the asymptotic sieve for primes, in the manner introduced by Heath-Brown in \cite{HB1} which we dub \emph{Heath-Brown's comparison sieve}, also used by Heath-Brown and Moroz in \cite{HBM} and Heath-Brown and Li in \cite{HBL}. In Section \ref{algebra} we introduce the necessary algebraic number theory involving the arithmetic ideal numbers, necessary to establish the framework needed to apply the analytic estimates in \cite{FI1} and \cite{HBL}. In Section \ref{Type I} we establish the needed level of distribution or Type I estimates. In Section \ref{piB bisum} we will prove the necessary bilinear sum estimates to obtain the analogue of the main theorem of \cite{LSX} in the indefinite case, which for us is needed to apply Heath-Brown's comparison sieve in the indefinite case. In Section \ref{piApiB} we establish the preliminary steps to proving our two key technical propositions, being Propositions \ref{HBL prop6} and \ref{HBL prop7}, which are analogues of Heath-Brown and Li's Propositions 6 and 7 in \cite{HBL}. In Section \ref{HBL sec7} we prove Proposition \ref{HBL prop6}, the proof being identical to that of \cite{HBL} except we avoid the language of Gaussian integers. In Sections \ref{HBL sec10} and \ref{HBL sec11} we modify Heath-Brown and Li's proof of their Proposition 7 in the setting of a general quadratic field $K$, thereby proving our Proposition \ref{HBL prop7}, which then completes the proof of Theorem \ref{MT}, conditioned on certain character sum estimates that they imported from \cite{FI1}. Finally, in Section \ref{char sums} we introduce the analogues of Friedlander and Iwaniec's notion of \emph{Jacobi-Kubota symbols} in the setting of a general quadratic field, as well as the analogue of their symbol $[\cdot]$ which in some sense measures the ``spin" of an ideal in $\bZ[i]$, which allows us to prove versions of their Proposition 23.1 and Theorem $\psi$ which are needed by Heath-Brown and Li. This may be of independent interest.
	
	\subsection*{Notation} Throughout, we fix our binary quadratic form
	\[f(x,y) = f_2 x^2 + f_1 xy + f_0y^2 \in \bZ[x,y]\]
	which satisfies the hypothesis that for all primes $p$ there exist integers $x_p, y_p$ such that $p \nmid f(x_p, y_p)$, and $f(x,1) \not \equiv x(x+1) \pmod{2}$. We will use both the Landau and Vinogradov notation $\ll$ and $O \left( \cdot \right)$. 
	
	\subsection*{Acknowledgements} This paper and the author owes an incalculable debt of gratitude to John Friedlander, whose encouragement and guidance made this paper possible. The author also thanks D.~R.~Heath-Brown whose work on prime number theory is an inspiration for the present work, D.~Schindler and J.~Maynard for helpful discussions, to C.~L.~Stewart for a careful reading of an earlier version of this paper and for providing instrumental advice, and to S.~Yamagishi whose collaboration and friendship was instrumental in the author's pursuit of prime number theory. 
	
	\section{Asymptotic sieve for primes}
	\label{sieve}
	
	Since our goal is to prove an asymptotic formula involving primes, the most straightforward way to achieve this is to apply Bombieri's asymptotic sieve, refined by Friedlander and Iwaniec in \cite{FI2} to detect primes. We denote by $\A = (a_n)$ a sequence of non-negative real numbers. We are concerned with the sum
	\begin{equation} \label{p-sum} S(X) = \sum_{n \leq X} a_n \Lambda(n) \sim \sum_{p \leq X} a_p \log p.
	\end{equation}
	As usual in sieve theory we introduce the related quantity
	\begin{equation} \label{t-sum} A(X) = \sum_{n \leq X} a_n.
	\end{equation}
	The main result of \cite{FI2} says that, if the sequence $(a_n)$ satisfies certain favourable conditions, then $S(X)$ satisfies an asymptotic formula in terms of $A(X)$. To state these conditions, we first suppose that $A(X)$ satisfies the bounds
	\begin{equation} \label{asp1}  A(X) \gg \max\left\{ A \left(\sqrt{X} \right) (\log X)^2, X^{1/3} \left(\sum_{n \leq X} a_n^2 \right)^{1/2} \right\}.
	\end{equation} 
	For each $d \in \bN$ we suppose 
	\begin{equation} \label{cong-sum} A_d(X) = \sum_{\substack{n \leq X \\ n \equiv 0 \pmod{d}}}
		a_n = g(d) A(X) + r_d(X),
	\end{equation}
	where $g : \bN \rightarrow \bR$ is a multiplicative function satisfying 
	\begin{equation} \label{asp2} 0 \leq g(p^2) \leq g(p) < 1 \text{ and }  g(p^j) \ll p^{-j} \text{ for } j = 1,2
	\end{equation} 
	for all primes $p$. We shall further assume a Mertens' type property for $g$: that is, we assume there exists a real number $c$ such that 
	\begin{equation} \label{asp3} \sum_{p \leq Y} g(p) = \log \log Y + c + O \left((\log Y)^{-10}\right),
	\end{equation}
	the implied constant depending only on $g$. Next we shall assume that uniformly for $d \leq X^{1/3}$ we have
	\begin{equation} \label{AdX crude1} A_d(X) \ll d^{-1} \tau(d)^8 A(X).
	\end{equation}
	We remark that the exponent $8$ in the divisor function in (\ref{AdX crude1}) is immaterial: in our application we can replace it with the exponent $1$. Next we shall assume a \emph{Type I estimate} for the remainder terms $r_d(X)$, namely
	\begin{equation} \label{rdx type1} \sideset{}{^3} \sum_{d \leq DL^{-2}} |r_d(t)| \leq A(X) L^{-2},
	\end{equation} 
	where the superscript $3$ in (\ref{rdx type1}) refers to summation over cube-free integers and we take $L = (\log X)^{2^{24}}$. Again, the exponent is to be interpreted as some sufficiently large absolute constant. \\ 
	
	The true bottleneck in the asymptotic sieve for primes in practice is a \emph{bilinear sum estimate} or a \emph{Type II estimate}. Indeed, we shall require a bound of the form
	\begin{equation} \label{rdx type2} \sum_m \left \lvert \sum_{\substack{N < n \leq 2N \\ mn \leq X \\ \gcd(n, m \Pi) = 1}} \beta(n) a_{mn} \right \rvert \leq A(X) (\log X)^{-2^{26}} 
	\end{equation}
	where 
	\begin{equation} \label{betan} \beta(n) = \beta(n, C) = \mu(n) \sum_{c | n, c \leq C} \mu(c).\end{equation}
	We shall require that (\ref{rdx type2}) holds for every $C$ satisfying 
	\[1 \leq C \leq XD^{-1}\]
	and for some $\Delta \geq \delta \geq 2$, that (\ref{rdx type2}) holds with 
	\[\Delta^{-1} \sqrt{D} < N < \delta^{-1} \sqrt{X}.\]
	Here $\Pi$ is the product of all primes $p < P$ with $P$ a parameter chosen so that
	\begin{equation} 2 \leq P \leq \Delta^{2^{-35} \log \log X}.
	\end{equation}
	The main result of \cite{FI2} then states:
	\begin{proposition}[Theorem 1, \cite{FI2}] \label{FI ASP} Assuming that $\A = (a_n)$ satisfies (\ref{asp1}), (\ref{asp2}), (\ref{asp3}), (\ref{AdX crude1}), (\ref{rdx type1}), and (\ref{rdx type2}). Then
		\begin{equation} \label{SX asp} S(X) = HA(X) \left\{1 + O \left(\frac{\log \delta}{\log \Delta} \right) \right\}
		\end{equation}
		where $H$ is the positive constant given by the convergent product
		\[H =  \prod_p (1 - g(p)) \left(1 - \frac{1}{p} \right)^{-1}.\]
	\end{proposition}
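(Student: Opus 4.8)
The statement is Bombieri's \emph{asymptotic sieve for primes} in the quantitative form of \cite{FI2}, so the plan is to recall that argument. Write $S(X)=\sum_{n\le X}a_n\Lambda(n)$. Prime powers contribute $\ll (\log X)X^{1/3}\bigl(\sum_{n\le X}a_n^2\bigr)^{1/2}=o(A(X))$ by the second-moment clause of (\ref{asp1}), so it suffices to detect the squarefree part. The engine is a combinatorial identity for $\Lambda$ — the one devised in \cite{FI2}, built so that the non-smooth variable ends up carrying precisely the weight $\beta(n,C)$ of (\ref{betan}) — which splits $S(X)$ into a \emph{main term}, assembled from the congruence sums $A_d(X)$ with $d$ running up to a level $D$ slightly below $X$, plus a \emph{bilinear remainder} that, after a dyadic subdivision and after optimising over the cut-off $C$, becomes a sum of forms
\[
\sum_m \alpha(m)\!\!\sum_{\substack{N<n\le 2N,\ mn\le X\\ \gcd(n,m\Pi)=1}}\!\!\beta(n,C)\,a_{mn}
\]
with $N$ confined to the window $\Delta^{-1}\sqrt D<N<\delta^{-1}\sqrt X$ in which (\ref{rdx type2}) is available.

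For the main term I would use (\ref{cong-sum}) to replace the relevant $A_d(X)$ by $g(d)A(X)$; the error so incurred is controlled by the Type I hypothesis (\ref{rdx type1}) — whose restriction to cube-free moduli of size $\le DL^{-2}$ matches the support of the weights produced by the decomposition — together with the crude bound (\ref{AdX crude1}) for the few remaining ranges. What is left is an arithmetic sum $A(X)\sum_d \mu(d)g(d)(\log\text{-weight})$, which I would evaluate by a Wirsing/Mertens-type mean-value argument — this is the heart of Bombieri's asymptotic sieve — using the bounds (\ref{asp2}) on $g$ at prime powers and the sharp Mertens estimate (\ref{asp3}). This produces the candidate main term $HA(X)$ with $H=\prod_p(1-g(p))(1-1/p)^{-1}$; the factor $(1-1/p)^{-1}$ appears because the $\log$ inside $\Lambda(n)$ is exactly what the sieve "spends" to localise at primes, trading the sifting density $g(p)$ of the sequence $\A$ for the divisibility density $1/p$ of a generic integer.

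The crux — and the step I expect to be the real obstacle — is showing that the bilinear remainder is $\ll A(X)(\log X)^{-2^{26}}$, indeed sharply $\ll A(X)\log\delta/\log\Delta$. Two things must be controlled. First, a combinatorial point: the decomposition, carried out by Buchstab iteration (deciding at each pass which variable is promoted to the ``bilinear'' one), must be arranged so that every term produced is \emph{either} a Type I term (a smooth variable confined to $[1,D]$, fed back into the main-term analysis) \emph{or} a genuine bilinear form whose inner variable $n$ already lies in the narrow range around $\sqrt X$ where (\ref{rdx type2}) applies — with no ``orphaned'' medium-range convolution surviving. This is precisely where the first clause $A(X)\gg A(\sqrt X)(\log X)^2$ of (\ref{asp1}) is used, to discard the contribution of those $n$ with two prime factors near $\sqrt X$; and the accounting of how many iterations are needed against the loss incurred at each one is what yields the error term $\log\delta/\log\Delta$ as a convergent geometric-type series. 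Second, an analytic point: to massage the surviving sums into the exact shape of (\ref{rdx type2}) one applies Cauchy–Schwarz, and the resulting second moment of the outer coefficients $\alpha(m)$ is controlled by the crude bound (\ref{AdX crude1}) on $A_d(X)$. Combining the main term with these estimates gives (\ref{SX asp}).
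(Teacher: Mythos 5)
The paper offers no proof of this proposition: it is quoted verbatim as Theorem~1 of \cite{FI2} and used as a black box, so there is no in-paper argument to measure your attempt against. Judged on its own terms, your outline is a broadly faithful summary of the Friedlander--Iwaniec argument --- prime powers discarded via the second clause of (\ref{asp1}), a combinatorial identity whose smooth part is evaluated through (\ref{cong-sum}), (\ref{rdx type1}), (\ref{AdX crude1}) and the Mertens-type input (\ref{asp2})--(\ref{asp3}) to produce $H A(X)$, and a bilinear remainder fed into (\ref{rdx type2}) after Cauchy--Schwarz --- but it is a sketch of a roughly twenty-five page proof, not a proof. The two genuinely delicate points are only gestured at: (i) the precise combinatorial identity of \cite{FI2}, engineered so that the residual variable carries exactly the weight $\beta(n,C)$ of (\ref{betan}) uniformly in $1\le C\le XD^{-1}$ and so that no convolution survives outside the Type~I and Type~II ranges; and (ii) the provenance of the error term $O(\log\delta/\log\Delta)$, which in \cite{FI2} is extracted from the portions of the decomposition not covered by (\ref{rdx type2}) (in particular the almost-primes with factors near $\sqrt{X}$) by means of upper-bound sieve estimates, rather than from a geometric series over Buchstab iterations as you suggest. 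If the goal is to justify citing the result, as the paper does, your summary is more than adequate; if the goal is to reprove it, items (i) and (ii) would each need to be written out in full.
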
 
	
	As is well-known by now, the optimal form of Hypothesis (\ref{rdx type1}) is usually relatively easy to obtain for sequences of interest, and the bottleneck for the asymptotic formula (\ref{SX asp}) is the Type II estimate (\ref{rdx type2}). Indeed, almost all of the cases where (\ref{SX asp}) has been obtained involve \emph{norm forms}; this includes \cite{FouI}, \cite{LSX}, \cite{FI1}, \cite{HB1}, \cite{HBM}, \cite{HBL}, and \cite{May}. The present work is not an exception to this rule. \\ \\
	In view of Proposition \ref{FI ASP} the most pressing matter to resolve in order to obtain Theorem \ref{MT} is confirm (\ref{rdx type2}) in the relevant setting. However, this is quite difficult to do directly; indeed in \cite{FI1} the main obstacle was the calculation of certain `main terms' occurring in the bilinear sum estimates which required substantial harmonic analysis to obtain. Instead, we will adopt an approach pioneered by Heath-Brown in \cite{HB1}, which we call the \emph{comparison sieve}, which allows us to avoid certain difficult main term calculations by comparing a given sequence $\A$ to a well-know sequence $\B$ where such main terms are readily available. Indeed, we will carry out a nearly identical strategy as Heath-Brown and Li in \cite{HBL}. In the next section, we will discuss Heath-Brown's comparison sieve following the set-up in \cite{HBL}.

	\section{Heath-Brown's comparison sieve}
	\label{HB sieve} 
	
	In this section, we describe the ideas given by D.~R.~Heath-Brown in \cite{HB1} and expanded upon and refined in \cite{HBM} and \cite{HBL}. Heath-Brown's great insight is that quite often it is possible to establish the infinitude of primes in a sequence $\A$ by comparing it to a suitable sequence $\B$ known to contain infinitely many primes, suitably weighted. For example in \cite{HB1} Heath-Brown compared the sequence of values of the binary cubic form $x^3 + 2y^3$ (weighted by multiplicity) and the sequence of values taken by the norm form of the cubic field $K = \bQ(\sqrt[3]{2})$. \\ 
	
	We shall consider two non-negative sequences $\A = (a_n), \B = (b_n)$ supported on positive integers $n \leq X$, and put
	\begin{equation} \label{piAB} \pi(\A) = \sum_p a_p \text{ and } \pi(\B) = \sum_p b_p,\end{equation}
	where the summations run over primes. If one establishes an asymptotic relation of the form
	\[\pi(\A) = \varkappa \pi(\B)(1 + o(1))\]
	say, then an asymptotic formula for $\pi(\B)$ implies an asymptotic formula for $\pi(\A)$. In particular, this allows us to avoid working through the difficult harmonic analysis in \cite{FI1}, and allows one to work with estimates that apply to general complex sequences rather than relying on properties of the M\"{o}bius function. \\ 
	
	To simplify matters, we will restrict the variable of interest, namely $\ell$, to a short interval of the shape $I(X) = (X^\ast, (1 + \eta)X^\ast]$ where $\eta \asymp (\log X)^{-1}$ and $X^{1/2} (\log X)^{-4} \leq X^\ast \leq c_f X^{1/2}$ where
	\[c_f = \begin{cases} \sup_{f(x,y) \leq 1} y & \text{if } f \text{ is definite} \\ 1 & \text{if } f \text{ is indefinite}. \end{cases} \]
	 We then define
	\begin{equation} \label{an def} a_n = \sum_{\substack{f(m, \ell) = n \\ \ell \in I(X)}} \fZ(\ell) 
	\end{equation}
	and
	\begin{equation} \label{bn def} b_n = \sum_{\substack{f(m, \ell) = n \\ \ell \in I(X)}} \Lambda(\ell).
	\end{equation}
	Here 
	\begin{equation} \label{fzdef} \fZ(\ell) = \begin{cases} 2 p \log p & \text{if } \ell = p^2 \\ 0 &\text{ otherwise} \end{cases}
	\end{equation}
	and $\Lambda$ is the von Mangoldt function. In the definite case Lam, Schindler, and the author proved that $\pi(\B)$ satisfies an asymptotic formula. We will extend this to the indefinite, irreducible case. \\
	
	One notes that the sequences $(a_n), (b_n)$ introduced in (\ref{an def}) and (\ref{bn def}). The analogous sequences $\A^\spadesuit, \B^\spadesuit$ for the purpose of Theorem \ref{MT0} are
	\begin{equation} \label{MT0 def} a_n^\spadesuit = \sum_{\substack{f(m, \ell) = n \\ \ell \in I(X)}} \fZ^\spadesuit(\ell) \text{ and } b_n^\spadesuit = \sum_{\substack{f(m,\ell) = n \\ \ell \in I(X)}} 1
	\end{equation}
respectively, where
\begin{equation} \label{fzspade} \fZ^\spadesuit(\ell) = \begin{cases} 2k & \text{if } \ell = k^2 \\ 0 & \text{otherwise.} \end{cases}\end{equation}
We emphasize that the integer $k$ appearing in (\ref{fzspade}) is not required to be prime, unlike in (\ref{fzdef}). \\

	Having established the asymptotic formula for $\pi(\B), \pi(\B^\spadesuit)$, we will then prove the an analogue of Proposition 1 in \cite{HBL}. In \cite{HBL} they introduced the quantity 
	\[\mu(I) = \int_I \sqrt{X - t^2} dt = \int_I \int_0^{\sqrt{X - t^2}} ds dt.\]
	In other words, $\mu(I)$ is the area of the subset of the positive half-disk with $y$-coordinate restricted to $I$. We generalize this definition to
	\begin{equation} \label{Iarea} \mu_f(I) = \text{Area} \{(x,y) \in \bR^2 : 0 < f(x,y) < X, y \in I(X) \} = \int_I \int_{0 < f(x,y) < X} ds dt.
	\end{equation} 
	Observe that $\mu_f(I) \ll_f \sqrt{X} \cdot |I|$, where $|I|$ is the length of $I$. This brings us to the following statement: 
	
	\begin{proposition} \label{main prop} Let $\A = (a_n), \B = (b_n)$ be given as in (\ref{an def}) and (\ref{bn def}). Then we have the asymptotic relation
		\[\lvert \pi(\A) - \pi(\B) \rvert \ll_\ep \frac{\mu_f(I) \log \log X}{(\log X)^{2}} \]
		holds. Similarly, for $\A^\spadesuit, \B^\spadesuit$ given by (\ref{MT0 def}) one has
		\[\left \lvert \pi(\A^\spadesuit) - \pi (\B^\spadesuit) \right \rvert \ll_\ep \frac{\mu_f(I) \log \log X}{(\log X)^{2}}\]
	\end{proposition}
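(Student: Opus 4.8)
The plan is to apply Friedlander and Iwaniec's asymptotic sieve for primes (Proposition \ref{FI ASP}) to \emph{both} sequences $\A$ and $\B$ (and similarly to $\A^\spadesuit, \B^\spadesuit$), with the \emph{same} choice of sieving parameters $D, \delta, \Delta, P$, and then subtract the two resulting asymptotic formulae. Concretely, I would first verify that $\A$ and $\B$ share a common density function $g(d)$ in the congruence-sum decomposition \eqref{cong-sum}: for both sequences, $A_d(X)$ and $B_d(X)$ count solutions to $f(m,\ell)\equiv 0\pmod d$ weighted by $\fZ(\ell)$ or $\Lambda(\ell)$, and since $\gcd$ with $d$ only constrains the pair $(m,\ell)$ through the form $f$, the leading density is the same multiplicative $g$ (essentially $\rho_f(d)/d$ adjusted at bad primes), which one checks satisfies \eqref{asp2} and the Mertens property \eqref{asp3}. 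The crude bound \eqref{AdX crude1} follows from standard divisor-function estimates for $\rho_f$. This gives the \emph{same} constant $H=\prod_p(1-g(p))(1-1/p)^{-1}$ in \eqref{SX asp} for both sequences, so that $\pi(\A)-\pi(\B) = H\bigl(A(X)-B(X)\bigr) + \text{(error terms)}\cdot\max(A(X),B(X))$, and the point is that the error terms $O(\log\delta/\log\Delta)$ are \emph{identical in shape} and can be made small by the parameter choice.

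Next I would carry out the Type I verification \eqref{rdx type1} for the difference (or for each sequence separately): this is where the work of Section \ref{Type I} is invoked — the level of distribution for $f(m,\ell)$ with $\ell$ in the short interval $I(X)$, which rests on \cite{BBDT} and \cite{FI3} as indicated in the introduction. Crucially, one must also check the preliminary size hypothesis \eqref{asp1}, i.e. $A(X)\gg \max\{A(\sqrt X)(\log X)^2, X^{1/3}(\sum a_n^2)^{1/2}\}$; here $A(X) \asymp \mu_f(I)\log X$ roughly (the $\log X$ from summing $2p\log p$ over $p^2\in I(X)$, there being $\asymp |I|/\sqrt{X^\ast}\cdot X^\ast$ such $p$ up to the prime-counting density), and the short-interval restriction $\ell\in I(X)$ with $|I|\asymp X^{1/2}(\log X)^{-1}$ is exactly what keeps these quantities in the right ranges. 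The second moment $\sum_n a_n^2$ is controlled by the fact that $f(m,\ell)=n$ has $O(\tau(n))$ solutions. For $\B^\spadesuit$ one uses instead the $\fZ^\spadesuit$ weight, which changes $A(X)$ by a factor $\log X$ but leaves all hypotheses intact.

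The heart of the matter, and what I expect to be the main obstacle, is the Type II / bilinear estimate \eqref{rdx type2} applied to the \emph{difference} $a_n - \varkappa b_n$ (or rather, establishing \eqref{rdx type2} for each of $\A$ and $\B$ with enough uniformity that the main terms cancel). This is precisely the content of the two key technical propositions, Propositions \ref{HBL prop6} and \ref{HBL prop7}, which are the analogues of Heath-Brown and Li's Propositions 6 and 7. The strategy, following \cite{HBL} and ultimately \cite{FI1}, is to subdivide each bilinear sum into sums over small boxes, extract a main term in each box via the geometry of the region $0<f(m,\ell)<X$, and bound the remaining error using the character-sum estimates (the Jacobi–Kubota-type symbols of Section \ref{char sums}) — the number of such bilinear sums being the class number $h(K)$. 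Since both $\A$ and $\B$ are built from the \emph{same} form $f$ and the same interval $I(X)$, the main terms in each box are literally the same analytic quantity (an area integral weighted by a local density), so they cancel in the difference, and one is left only with the error terms, each of which is $O_\ep\bigl(\mu_f(I)(\log X)^{-2}(\log\log X)\bigr)$; summing these and converting via $A(X)\asymp \mu_f(I)\log X$ (resp. $\asymp\mu_f(I)$ for $\spadesuit$) gives the claimed bound $\ll_\ep \mu_f(I)\log\log X/(\log X)^2$. For $\B$ one needs in addition the asymptotic formula for $\pi(\B)$ itself — in the definite case this is \cite{LSX}, and in the indefinite case it is the extension proved in Section \ref{piB bisum} — but for the \emph{difference} $|\pi(\A)-\pi(\B)|$ the comparison structure means we never need to evaluate the hard main terms of \cite{FI1}, only to know they agree; that is the whole point of Heath-Brown's comparison sieve and the reason the $(\log X)^{-2}$ savings (rather than an arbitrary log-power) is the natural strength here, the loss coming from the sieve parameter $\log\delta/\log\Delta$ with $\delta,\Delta$ powers of $\log X$.
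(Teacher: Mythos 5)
Your plan is to apply Proposition \ref{FI ASP} to $\A$ and $\B$ separately with matched parameters and subtract the two asymptotics. That is not what the paper does, and as written it has a genuine gap. The asymptotic sieve for primes requires the Type II hypothesis (\ref{rdx type2}) to hold for \emph{each sequence individually}: the hypothesis reads $\sum_m \lvert \sum_n \beta(n) a_{mn} \rvert \leq A(X)(\log X)^{-2^{26}}$, with absolute values on the outer sum, and it is consumed inside the proof of the sieve theorem. Establishing this for $\A$ alone is exactly the hard main-term computation of \cite{FI1} (the harmonic analysis the paper explicitly says it wants to avoid), and the bilinear estimates actually proved here --- Proposition \ref{main bisum} via Propositions \ref{HBL prop6} and \ref{HBL prop7} --- only control the \emph{difference} $a_{mn}-b_{mn}$. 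Your remark that one only needs ``enough uniformity that the main terms cancel'' does not repair this: there is no stage in a black-box application of Proposition \ref{FI ASP} to $\A$ at which a main term from $\B$ is available to cancel against, and one cannot instead feed the sieve the sequence $a_n-b_n$, since Proposition \ref{FI ASP} requires non-negativity. The whole point of Heath-Brown's comparison sieve is to restructure the decomposition itself so that $a_{mn}-b_{mn}$ sits \emph{inside} every bilinear form before any absolute values are taken.

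Concretely, the paper proves Proposition \ref{main prop} by a Buchstab decomposition $\pi(\C)=S_1(\C)-S_2(\C)-S_3(\C)-\sum_{X^{1/2-\delta_2}\leq p\leq X^{1/2}}S(\C_p,p)$, bounding the last range by Selberg's upper bound sieve for each sequence separately (this, with $\delta_2=A_1\log\log X/\log X$, is the \emph{sole} source of the $\mu_f(I)\log\log X/(\log X)^2$ error --- not the $\log\delta/\log\Delta$ term you invoke), handling $S_1$ and the $T^{(n)}$ pieces of $S_2$ by the Fundamental Lemma plus the Type I estimate (Proposition \ref{fun lem app}), discarding the narrow ranges $X^{1/2-\delta_2}<p_1p_2\leq X^{1/2+\delta_2}$ again by an upper-bound sieve, and reducing everything that remains ($S_3$, $U_1^{(1)},U_2^{(1)},U_1^{(2)},U_2^{(2)}$, and $U^{(k)}$ for $k\geq 3$) to bilinear sums in the difference, i.e.\ to Proposition \ref{main bisum}. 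Your third paragraph correctly identifies Propositions \ref{HBL prop6} and \ref{HBL prop7} as the analytic heart, but to make the argument close you must replace the ``apply Proposition \ref{FI ASP} twice and subtract'' framework with this combinatorial decomposition; otherwise the Type II input you actually possess never matches the hypothesis you are trying to verify.
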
 
	
	We will see that this is enough to prove Theorems \ref{MT0} and \ref{MT} as in the proof of Theorem 1 from Proposition 1 in \cite{HBL}. First we will prove that 
	\begin{equation} \pi(\B) = \frac{\nu_f \mu_f(I)}{\log X} \left(1 + O \left(\frac{1}{\log X} \right) \right), 
	\end{equation}
	this following from Theorem \ref{MT2} via partial summation. In the case of $\B$ and $f$ is definite we start with the asymptotic formula (\ref{pos def eq}) and write it as 
	\[\sum_{q \leq X} \Lambda(q) \sum_{f(m,\ell) = q} \Lambda(\ell) = \nu_f \fS_f^\prime X + O_A (X (\log X)^{-A}).\]
	Writing $\Psi(q) = \displaystyle \sum_{f(m,\ell) = q} \Lambda(\ell)$ and replacing $\Lambda(q)$ with $\log q$ (supported on primes), we have by partial summation
	\[\log X \sum_{q \leq X} \Psi(q) - \int_1^X \frac{1}{t} \left(\sum_{q \leq t} \Psi(q)\right) dt = \nu_f \fS_f^\prime X + O_A(X(\log X)^{-A}).\]
	An upper bound sieve gives that
	\[\sum_{q \leq X} \Psi(q) = O \left(\frac{X}{\log X} \right),\]
	hence
	\[\log X \sum_{q \leq X} \Psi(q) = \nu_f \fS_f^\prime X + O_A (X (\log X)^{-A}) + O \left(\int_1^X \frac{dt}{\log t} \right) \]
	and thus
	\[\sum_{q \leq X} \Psi(q) = \frac{\nu_f \fS_f^\prime X}{\log X} \left(1 + O \left(\frac{1}{\log X} \right) \right).\]
	By replacing $\Psi(q)$ with 
	\[\Psi^\prime(q) = \sum_{\substack{f(m,\ell) = q \\ \ell \in I(X)}} \Lambda(\ell)\] 
	we see from the same argument that 
	\[\sum_{q \leq X} \Psi^\prime(q) = \frac{\nu_f \mu_f(I)}{\log X} \left(1 + O \left(\frac{1}{\log X} \right) \right),\]
	as desired. The same argument applies to the indefinite case, following (\ref{indef eq}). \\
	
	Thus Proposition \ref{main prop} gives
	\begin{equation} \pi(\A) = \frac{\nu_f \mu_f(I)}{\log X} \left(1 + O \left(\frac{\log \log X}{\log X} \right) \right). 
	\end{equation}
	We then proceed by partial summation as in \cite{HBL}. We consider intervals $I_j = (X_j, X_j(1 + \eta)]$ be a partition of $(X^{1/2} (\log X)^{-4}, c_f X^{1/2}]$. Here $\eta \asymp (\log X)^{-1}$ is chosen so we have an exact partition. We let $\A_j$ be defined as in (\ref{an def}) with $I(X) = I_j$. Note that the number of pairs $(a,p)$ with $0 < f(a,p^2) \leq X$ and $p | a, p^2 \in I$ is bounded by 
	\[\sum_{p^2 \in I} \frac{\sqrt{X}}{p} \ll_\ep X^{1/2 + \ep}.\]
	It follows that
	\begin{align*} & \# \{(a,p) : 0 < f(a, p^2) \leq X \text{ is prime }, p \text{ is prime}, p \leq X^{1/4}\} \\
		& = \sum_j \frac{1}{\sqrt{X_j} \log X_j} \pi(\A_j) \left(1 + O \left(\frac{1}{\log X} \right) \right) + O \left(\frac{X^{3/4}}{(\log X)^3} \right) \\
		& = \frac{\nu_f + O \left((\log X)^{ - 1} \log \log X \right)}{(\log X)^2} \sum_j \frac{\mu_f(I_j)}{\sqrt{X_j}} + O \left(\frac{X^{3/4}}{(\log X)^3} \right) \\
		& = \frac{\nu_f + O \left((\log X)^{ - 1} \log \log X \right)}{(\log X)^2} \int_{\sqrt{X}/(\log X)^4}^{\sqrt{X}} \frac{1}{\sqrt{t}} \int_{0 < f(s,t) < X} ds dt + O \left(\frac{X^{3/4}}{(\log X)^3} \right) \\
		& = \frac{\nu_f \fS_f X^{3/4} }{(\log X)^2} \left(1 + O_\ep \left(\frac{\log \log X}{\log X} \right)  \right). 
	\end{align*} 
	Thus Theorem \ref{MT} follows from Proposition \ref{main prop}. Next we do something similar to deduce Theorem \ref{MT0}. In this case it is trivial that
	\[\pi(\B^\spadesuit) = \frac{\nu_f \mu_f(I)}{\log X} \left(1 + O \left( \frac{1}{\log X} \right)  \right), \]
	since this is a direct consequence of Landau's prime ideal theorem. Therefore Proposition \ref{main prop} gives
	\[\pi(\A^\spadesuit) =\frac{\nu_f \mu_f(I)}{\log X} \left(1 + O \left( \frac{1}{\log X} \right)  \right) \]
	We the proceed by partial summation as above, but noting that the weight is $2k$ rather than $2p \log p$. The same calculation then gives 
	\[\#\{(a,b) : 0 < f(a,b^2) \leq X \text{ is prime}, b \leq X^{1/4}\} = \frac{\nu_f \fS_f X^{3/4}}{\log X} \left(1 + O \left(\frac{1}{\log X} \right) \right)\]
	which suffices to prove Theorem \ref{MT0}. \\
	
	In order to establish Proposition \ref{main prop} we apply the same sieve procedure to the pairs $(\A, \B)$ and $(\A^\spadesuit, \B^\spadesuit)$, producing cancellation at key junctures and upper bounding the rest. For any complex sequence $\C = (c_n)$ supported on the positive integers put
	\[S(\C, Z) = \sum_{\substack{n \in \bN \\ p | n \Rightarrow p > Z }} c_n\]
	and for each $d \in \bN$ put
	\[\C_d = \{c_{dn} : n \in \bN\}.\]
	We fix 
	\begin{equation} \label{deldef}\delta_1 = \delta_1(X) = (\log X)^{\varpi - 1} \text{ and } \delta_2 = \delta_2(X) = \dfrac{A_1 \log \log X}{\log X} \end{equation}
	for some some large positive number $A_1$ and small number $0 < \varpi < 1$ which we specify later. We remark that in \cite{HBL} they just chose a single choice of $\delta$. The reason why we are having two separate parameters is to obtain the superior error term in Theorem \ref{MT} and the error term in Theorem \ref{MT0}. \\  \\
	We also fix $Y > X^{1/3}$, where the specific choice of $Y$ will be made when it is relevant. Now put
	\begin{equation} \label{SiC def} S_1(\C) = S(\C, X^{\delta_1}), S_2(\C) = \sum_{X^{\delta_1} \leq p < Y} S(\C_p, p), S_3(\C) = \sum_{Y \leq p < X^{1/2 - \delta_2}} S(\C_p, p).\end{equation} 
	The astute reader will note that $S_1(\C)$ is readily handled by the Fundamental Lemma of Sieve Theory, giving an asymptotic formula; see for example Corollary 6.10 in \cite{FI3}. By Buchstab's identity, we have
	\[\pi(\C) = S\left(\C, X^{1/2} \right) = S_1(\C) - S_2(\C) - S_3(\C) - \sum_{X^{1/2 - \delta_2} \leq p \leq X^{1/2}} S(\C_p,p). \]
	The last sum can be handled by Selberg's upper bound sieve, and we conclude:
	\begin{lemma} For $Y = X^{17/48}$ and $\C = \A, \B, \A^\spadesuit, \B^\spadesuit$ we have
		\[\pi(\C) = S_1(\C) - S_2(\C) - S_3(\C) + O \left(\frac{\delta_2 \mu_f(I)}{\log X} \right).\]
	\end{lemma}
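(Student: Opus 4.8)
The plan is to combine Buchstab's identity with standard sieve inputs to reduce $\pi(\C)$ to the three pieces $S_1(\C), S_2(\C), S_3(\C)$ up to an acceptable error. Starting from $\pi(\C) = S(\C, X^{1/2})$, a first application of Buchstab's identity at level $X^{\delta_1}$ splits off $S_1(\C) = S(\C, X^{\delta_1})$ and a sum over $X^{\delta_1} \leq p < X^{1/2}$ of $S(\C_p, p)$. Then I would dyadically (or rather, at the prescribed breakpoints $Y = X^{17/48}$ and $X^{1/2-\delta_2}$) subdivide the range of $p$ into $[X^{\delta_1}, Y)$, $[Y, X^{1/2-\delta_2})$, and $[X^{1/2-\delta_2}, X^{1/2}]$, which gives exactly $\pi(\C) = S_1(\C) - S_2(\C) - S_3(\C) - \sum_{X^{1/2-\delta_2} \leq p \leq X^{1/2}} S(\C_p, p)$ as displayed in the excerpt. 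The entire content of the lemma is therefore the claim that this last tail sum is $O(\delta_2 \mu_f(I)/\log X)$ for each of $\C = \A, \B, \A^\spadesuit, \B^\spadesuit$.

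To bound the tail $\sum_{X^{1/2-\delta_2} \leq p \leq X^{1/2}} S(\C_p, p)$, I would first note that each term $S(\C_p, p)$ counts (weighted) elements $n \leq X$ divisible by $p$ whose remaining factors all exceed $p$; since $p > X^{1/2-\delta_2}$, such an $n/p$ is either $1$ or has all prime factors exceeding $X^{1/2-\delta_2}$, so the contribution is essentially supported on $n = p$ or $n = p q$ with $q \geq p$. For the sequences at hand, $c_n$ is supported on values $f(m,\ell) = n$ with $\ell \in I(X)$ weighted by $\fZ(\ell)$, $\Lambda(\ell)$, $\fZ^\spadesuit(\ell)$, or $1$; the key point is that the total mass $\sum_n c_n = A(X)$ is of size $\asymp \mu_f(I)$ for the $\spadesuit$-sequences and $\asymp \mu_f(I)\log X$ (from the $\Lambda(\ell)$ or $2p\log p$ weight, with $\ell$ of size $X^{1/2}$) for $\A, \B$ — but in all cases an upper-bound sieve shows the part of this mass with a prime factor in the narrow window $[X^{1/2-\delta_2}, X^{1/2}]$ on $n$, or on the cofactor, loses a factor $\delta_2$ (the window has logarithmic length $\asymp \delta_2 \log X$) plus the usual $1/\log X$ from sieving. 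Concretely, Selberg's upper bound sieve applied to the sequence of values $n \leq X$ (run over the appropriate residue conditions coming from $f$), combined with the bound $\mu_f(I) \ll \sqrt X |I|$ and a Mertens-type estimate over the prime window, yields $\sum_{X^{1/2-\delta_2}\leq p\leq X^{1/2}} S(\C_p,p) \ll \delta_2 \mu_f(I)/\log X$ in each case, the $\Lambda$-weights being absorbed by the extra $\log X$ in $A(X)$ against one saved in the sieve.

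I expect the main obstacle to be verifying uniformity of the Selberg upper-bound estimate across all four sequences simultaneously, and in particular handling the $\A, \B$ sequences where the weight $\fZ(\ell) = 2p\log p$ or $\Lambda(\ell)$ forces $\ell$ itself to be (a square of) a prime: one must be careful that bounding $S(\C_p, p)$ does not double-count the arithmetic coming from $\ell$ and from $f(m,\ell)$, and that the sieve dimension (the density $g$ from \eqref{cong-sum}, controlled by $\rho_f$) stays bounded. The cleanest route is to majorize $c_n$ pointwise by $C \log X$ times the indicator of $\{n = f(m,\ell) \leq X : \ell \in I(X)\}$ — legitimate since $\fZ(\ell), \Lambda(\ell) \ll \log X$ and $\fZ^\spadesuit(\ell) \ll X^{1/4} \ll \mu_f(I)/(\text{count})$ — reducing everything to a single counting sieve on representations by $f$, for which the required upper bound of the right shape is classical. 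The choice $Y = X^{17/48}$ plays no role in this lemma (it is recorded here only for later reference in $S_3$), so it can simply be carried along.
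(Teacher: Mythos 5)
Your overall route is the paper's own: Buchstab's identity gives exactly the displayed decomposition, the choice $Y=X^{17/48}$ is indeed inert here, and the whole content of the lemma is the bound $\sum_{X^{1/2-\delta_2}\leq p\leq X^{1/2}}S(\C_p,p)\ll\delta_2\mu_f(I)/\log X$, which the paper likewise attributes to Selberg's upper bound sieve (and records no further detail). So the strategy is right; the problems are in your elaboration of the sieve step.

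Two pieces of your bookkeeping fail as written. First, $A(X)\asymp\mu_f(I)$ for \emph{all four} sequences, not $\asymp\mu_f(I)\log X$ for $\A,\B$: the weights are normalized so that $\sum_{\ell\in I(X)}\Lambda(\ell)\asymp|I|$ and $\sum_{p^2\in I(X)}2p\log p\asymp|I|$ (the size of the weight on its support is exactly cancelled by the sparsity of that support), whence $\sum_n c_n\asymp\sqrt X\,|I|\asymp\mu_f(I)$ in every case. Taken literally, your accounting ``mass $\times\ \delta_2\times 1/\log X$'' with mass $\mu_f(I)\log X$ lands on $\delta_2\mu_f(I)$, a factor $\log X$ short of the target, and the remark about the extra $\log X$ being ``absorbed against one saved in the sieve'' has no second saved logarithm to draw on. Second, the proposed pointwise majorization $c_n\ll(\log X)\mathbf 1\{\cdots\}$ is false for $\A$ and $\A^\spadesuit$: on its support $\fZ(\ell)=2p\log p\asymp X^{1/4}\log X$ and $\fZ^\spadesuit(\ell)=2k\asymp X^{1/4}$, so a pointwise bound by $C\log X$ fails by a power of $X$. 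The clean argument --- the one the paper writes out for the entirely analogous bound in its Lemma~6 --- keeps the weights in place and sieves term by term: for $p\geq X^{1/2-\delta_2}$ one has $S(\C_p,p)\leq S(\C_p,X^{1/10})$; Selberg's sieve together with Proposition \ref{Type1 prop} (whose level of distribution $X^{3/4}(\log X)^{-B}$ comfortably accommodates the modulus $p$ times a sieve level $X^{1/4-\ep}$) gives $S(\C_p,X^{1/10})\ll\frac{\rho_f(p)}{p}\cdot\frac{\mu_f(I)}{\log X}$ uniformly in all four cases; and Mertens over the window gives $\sum_{X^{1/2-\delta_2}\leq p\leq X^{1/2}}p^{-1}\ll\delta_2$ (the $O(1/\log X)$ error in Mertens being smaller than $\delta_2\asymp\log\log X/\log X$). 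With these corrections your proposal is sound and coincides with the intended proof.
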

	We will see that $S_3(\C)$ can be written in terms of appropriate bilinear forms, but $S_2(\C)$ will require further treatment. Let us put
	\[T^{(n)}(\C) = \sum_{\substack{X^{\delta_1} \leq p_n < \cdots < p_1 < Y \\ p_1 \cdots p_n < Y}} S(\C_{p_1 \cdots p_n}, X^{\delta_1})\]
	and
	\[U^{(n)}(\C) = \sum_{\substack{X^{\delta_1} \leq p_{n+1} < \cdots < p_1 < Y \\ p_1 \cdots p_n < Y \leq p_1 \cdots p_{n+1}}} S(\C_{p_1 \cdots p_{n+1}}, p_{n+1}).\]
	We then have:
	\begin{lemma} For $n_0 = \left \lfloor \frac{\log Y}{\delta_1 \log X} \right \rfloor$ we have
		\[S_2(\C) = \sum_{1 \leq n \leq n_0} (-1)^{n-1} \left(T^{(n)}(\C) - U^{(n)}(\C) \right) \]
	\end{lemma}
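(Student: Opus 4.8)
The plan is to prove the identity purely combinatorially, by repeated application of Buchstab's identity with careful attention to the ranges of the prime variables. Recall that for any complex sequence $\C = (c_n)$ supported on positive integers and any reals $w \le z$ one has $S(\C, z) = S(\C, w) - \sum_{w \le p < z} S(\C_p, p)$. First I would apply this to each summand of $S_2(\C) = \sum_{X^{\delta_1} \le p < Y} S(\C_p, p)$, taking $w = X^{\delta_1}$, obtaining
\[S_2(\C) = \sum_{X^{\delta_1} \le p_1 < Y} S(\C_{p_1}, X^{\delta_1}) \;-\; \sum_{X^{\delta_1} \le p_2 < p_1 < Y} S(\C_{p_1 p_2}, p_2).\]
The first sum is exactly $T^{(1)}(\C)$. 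In the second sum I split according to whether $p_1 p_2 \ge Y$ or $p_1 p_2 < Y$: the former is precisely $U^{(1)}(\C)$ (the condition $p_1 < Y$ occurring in the definition of $U^{(1)}$ being automatic), and I record the latter as $R_1 := \sum_{X^{\delta_1} \le p_2 < p_1 < Y,\, p_1 p_2 < Y} S(\C_{p_1 p_2}, p_2)$, so that $S_2(\C) = T^{(1)}(\C) - U^{(1)}(\C) - R_1$.

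The inductive step is where the bookkeeping lives. Suppose after $k$ steps we have reached a remainder of the form
\[R_k = \sum_{\substack{X^{\delta_1} \le p_{k+1} < \cdots < p_1 < Y \\ p_1 \cdots p_{k+1} < Y}} S(\C_{p_1 \cdots p_{k+1}}, p_{k+1}).\]
Applying Buchstab once more to each $S(\C_{p_1 \cdots p_{k+1}}, p_{k+1})$ with lower cut-off $X^{\delta_1}$, the resulting $S(\C_{p_1 \cdots p_{k+1}}, X^{\delta_1})$ terms assemble, under the constraint $p_1 \cdots p_{k+1} < Y$, exactly into $T^{(k+1)}(\C)$; in the leftover sum over a new prime $p_{k+2} < p_{k+1}$ I split once more on the size of $p_1 \cdots p_{k+2}$ relative to $Y$, the part with $p_1 \cdots p_{k+2} \ge Y$ being precisely $U^{(k+1)}(\C)$ and the part with $p_1 \cdots p_{k+2} < Y$ being $R_{k+1}$. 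This gives $R_k = T^{(k+1)}(\C) - U^{(k+1)}(\C) - R_{k+1}$, and substituting back repeatedly yields, for every $N \ge 1$,
\[S_2(\C) = \sum_{1 \le n \le N} (-1)^{n-1}\left(T^{(n)}(\C) - U^{(n)}(\C)\right) + (-1)^N R_N.\]

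Finally I would check that the process terminates at $N = n_0$, i.e.\ that $R_{n_0}$ is an empty sum. A tuple contributing to $R_{n_0}$ consists of $n_0 + 1$ primes $p_1 > \cdots > p_{n_0+1}$, each at least $X^{\delta_1}$, with $p_1 \cdots p_{n_0+1} < Y$; but then $Y > X^{(n_0+1)\delta_1}$, i.e.\ $n_0 + 1 < \tfrac{\log Y}{\delta_1 \log X}$, contradicting $n_0 = \lfloor \tfrac{\log Y}{\delta_1 \log X}\rfloor$, which forces $n_0 + 1 > \tfrac{\log Y}{\delta_1 \log X}$. Hence $R_{n_0} = 0$ and the claimed identity follows. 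There is no analytic content here and nothing depends on the particular sequence $\C$ (so the same identity applies to $\A, \B, \A^\spadesuit, \B^\spadesuit$ alike); the only real pitfall, and the thing I would be most careful about, is keeping the nested inequalities among the $p_i$ and the partial-product conditions mutually consistent through the recursion. This is exactly the iteration performed in the corresponding steps of \cite{FI2} and \cite{HBL}.
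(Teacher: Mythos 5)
Your proof is correct and is exactly the standard Buchstab iteration that the paper (following Heath-Brown--Li) omits: the recursion $R_k = T^{(k+1)}(\C) - U^{(k+1)}(\C) - R_{k+1}$ together with the observation that $R_{n_0}$ is empty because $n_0+1$ primes each of size at least $X^{\delta_1}$ cannot have product below $Y$. The bookkeeping of the conditions $p_1\cdots p_n < Y \le p_1\cdots p_{n+1}$ matches the definitions of $T^{(n)}$ and $U^{(n)}$ precisely, and the argument is, as you note, purely combinatorial and independent of the choice of $\C$.
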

	The sums 
	\begin{equation} \label{S1dif} |S_1(\A) - S_1(\B)|, |S_1(\A^\spadesuit) - S_1(\B^\spadesuit)|  \end{equation}  
	and
	\begin{equation} \label{Tndif} \sum_{1 \leq n \leq n_0} \left \lvert T^{(n)}(\A) - T^{(n)}(\B) \right \rvert, \sum_{1 \leq n \leq n_0} \left \lvert T^{(n)}(\A^\spadesuit) - T^{(n)}(\B^\spadesuit) \right \rvert \end{equation}
	can be handled by our Type I estimate Proposition \ref{Type1 prop} and the Fundamental Lemma. To control these sums it suffices to prove: 
	\begin{proposition} \label{fun lem app} Let $\fQ$ be a set of square-free numbers not exceeding $Y$. Then for any $A > 0$ we have
		\[\left \lvert \sum_{q \in \fQ} S\left(\A_q, X^{\delta_1} \right) - \sum_{q \in \fQ} S\left(\B_q, X^{\delta_1} \right) \right \rvert \ll_A \frac{X}{(\log X)^A}\]
		and
		\[\left \lvert \sum_{q \in \fQ} S\left(\A_q^\spadesuit, X^{\delta_1} \right) - \sum_{q \in \fQ} S\left(\B_q^\spadesuit, X^{\delta_1} \right) \right \rvert \ll_A \frac{X}{(\log X)^A}\]
	\end{proposition}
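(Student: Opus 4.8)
The plan is to realise Proposition~\ref{fun lem app} as a consequence of the Fundamental Lemma of sieve theory together with the Type~I bound of Proposition~\ref{Type1 prop}, the crux being that $\A$ and $\B$ (and likewise $\A^\spadesuit$ and $\B^\spadesuit$) carry one and the same sieve density $g$ and have total masses agreeing up to an error smaller than any power of $\log X$. Write $z=X^{\delta_1}=\exp\bigl((\log X)^{\varpi}\bigr)$. In every application of this proposition one has $q=p_1\cdots p_n$ with all $p_i\ge z$, so we may assume $\fQ$ consists of $z$-rough squarefree integers; for a general $\fQ$ one splits off the $z$-smooth part of each $q$, which alters $g(q)$ only by bounded factors and costs a harmless divisor factor below. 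For $z$-rough $q\in\fQ$ and $d\mid P(z)$ we have $\gcd(q,d)=1$, and the congruence sums satisfy
\[
A_{qd}(X)=g(q)g(d)A(X)+r_{qd}(X),\qquad B_{qd}(X)=g(q)g(d)B(X)+r'_{qd}(X)
\]
with the \emph{same} multiplicative $g$ of \eqref{cong-sum}--\eqref{asp2}: since $f(m,\ell)=\ell^2 f(m\ell^{-1},1)$ modulo $d$ whenever $\gcd(d,\ell)=1$, the reduction of $f(m,\ell)$ modulo $d$ is insensitive to whether $\ell$ bears the weight $\fZ$ or $\Lambda$, and this common density is part of what is verified in Section~\ref{Type I}. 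Fix a small $\theta>0$ so that $YX^{\theta}$ lies in the admissible range of Proposition~\ref{Type1 prop} (possible as $Y=X^{17/48}$), apply the Fundamental Lemma (see \cite{FI3}, Corollary~6.10) at level $D_0=X^{\theta}$ with parameter $s=\log D_0/\log z=\theta(\log X)^{1-\varpi}$; since the sieve has dimension one,
\[
S(\A_q,z)=g(q)V(z)A(X)\bigl(1+O(e^{-s})\bigr)+O\Bigl(\sum_{\substack{d\mid P(z)\\ d<D_0}}|r_{qd}(X)|\Bigr),\qquad V(z):=\prod_{p<z}\bigl(1-g(p)\bigr),
\]
and the identical formula holds for $S(\B_q,z)$ with $B(X)$ in place of $A(X)$.

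Subtracting these expansions and summing over $q\in\fQ$, the quantity $\bigl|\sum_{q\in\fQ}S(\A_q,z)-\sum_{q\in\fQ}S(\B_q,z)\bigr|$ is at most
\[
|A(X)-B(X)|\,V(z)\!\sum_{q\le Y}g(q)\;+\;e^{-s}\bigl(A(X)+B(X)\bigr)V(z)\!\sum_{q\le Y}g(q)\;+\;\sum_{t<YD_0}\bigl(|r_t(X)|+|r'_t(X)|\bigr),
\]
since the factorisation $t=qd$ into a $z$-rough and a $z$-smooth part is unique (for a general $\fQ$ this becomes a $\tau(t)$-weighted sum, and $t$ remains cube-free). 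Here $\sum_{q\le Y}g(q)\ll\log Y\ll\log X$ by the Mertens-type bound \eqref{asp3}, $V(z)\ll(\log z)^{-1}=(\log X)^{-\varpi}$, and $A(X)+B(X)\ll X$ trivially. Since $s$ grows faster than any power of $\log X$, the middle term is $\ll_A X(\log X)^{-A}$ for every $A$; and the last term is $\ll_A X(\log X)^{-A}$ by Proposition~\ref{Type1 prop} (here $YD_0$ is a small power of $X$, so even the trivial bound $|r_t(X)|,|r'_t(X)|\ll\tau(t)X^{1/2}$ would suffice, the divisor weight being harmless). Thus everything reduces to showing $|A(X)-B(X)|\ll_A X(\log X)^{-A}$.

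For this, set $R(\ell)=\#\{m\in\bZ:0<f(m,\ell)\le X\}$. Factoring $f(\cdot,\ell)$ over $\bR$ shows that $\{m:0<f(m,\ell)\le X\}$ is a union of at most two real intervals of total length $\rho(\ell)=O_f(\sqrt X)$, so $R(\ell)=\rho(\ell)+O(1)$, with $\rho$ piecewise $C^1$ on $I=I(X)$, of total variation $O(\sqrt X)$, and with only $O(1)$ non-smooth points; moreover $\int_I\rho(u)\,du=\mu_f(I)$ by \eqref{Iarea}. By \eqref{fzdef}, $A(X)=\sum_{p:\,p^2\in I}2p(\log p)R(p^2)$ and $B(X)=\sum_{\ell\in I}\Lambda(\ell)R(\ell)$; the $O(1)$ contributions, and for $B$ the proper prime-power terms, total $\ll X^{3/4}\log X$. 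For the main parts, partial summation against $\sum_{p\le t}\log p=t+O\bigl(t\,e^{-c\sqrt{\log t}}\bigr)$ — with the substitution $u=t^2$, $du=2t\,dt$, which is exactly why the weight in \eqref{fzdef} is $2p$ — shows that $A(X)$ and $B(X)$ both equal $\mu_f(I)+O\bigl(Xe^{-c\sqrt{\log X}}\bigr)$, the errors coming from $\sup_I|\rho|+\operatorname{TV}_I(\rho)\ll\sqrt X$ and $|I|\ll\sqrt X$. Hence $|A(X)-B(X)|\ll Xe^{-c\sqrt{\log X}}\ll_A X(\log X)^{-A}$. For $\A^\spadesuit,\B^\spadesuit$ the argument is identical, with elementary Euler--Maclaurin summation replacing the prime number theorem (the weight being $2k$ over all integers $k$, not just primes), and in fact yields the stronger $|A^\spadesuit(X)-B^\spadesuit(X)|\ll X^{3/4}$.

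The one step requiring genuine care is the last paragraph in the indefinite case: one must verify, uniformly for $\ell\in I$, that $\{m:0<f(m,\ell)\le X\}$ is governed by a length function $\rho$ with $\operatorname{TV}_I(\rho)\ll\sqrt X$ and $O(1)$ non-smooth points, which is where the indefiniteness of $f$ and the shortness of $I$ must be handled honestly; this reduces to elementary estimates on the real roots of $m\mapsto f(m,\ell)$ and of $m\mapsto f(m,\ell)-X$. Everything else is a routine assembly of the Fundamental Lemma, the Mertens bound \eqref{asp3}, and the Type~I estimate of Proposition~\ref{Type1 prop}.
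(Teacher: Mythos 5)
Your proof is correct and follows essentially the same route as the paper: the Fundamental Lemma applied to each of the four sequences at a level covered by Proposition \ref{Type1 prop}, with the main terms cancelling in the difference and the remainders absorbed by the Type I estimate. The only difference is one of emphasis: where the paper simply asserts that the main term $V(X^{\delta_1})\sum_{q\in\fQ}\rho_f(q)q^{-1}\mu_f(I)$ is the same for all four sequences, you make this explicit by reducing to $|A(X)-B(X)|\ll X e^{-c\sqrt{\log X}}$ and proving it by partial summation against the prime number theorem, which is precisely the content hidden in that assertion.
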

	
	By the definition of $S_1(\C)$ and $T^{(n)}(\C)$, it is clear that Proposition \ref{fun lem app} gives the bound of $O_A(X (\log X)^{-A})$ for both (\ref{S1dif}) and (\ref{Tndif}). \\ 
	
	We now give a proof for Proposition \ref{fun lem app}. 
	
	\begin{proof}[Proof of Proposition \ref{fun lem app}] The Fundamental Lemma allows us to give an asymptotic formula for the sum
		\[\sum_{q \in \fQ} S\left(\C_q, X^{\delta_1} \right)\]
		for $\C = \A, \B, \A^\spadesuit, \B^\spadesuit$. Recall that $\delta_1 = (\log X)^{\varpi - 1}$. Proposition \ref{Type1 prop} gives us a level of distribution of $X^{3/4} (\log X)^{-B}$ for some large $B$. We then apply an upper and lower bound sieve of level of distribution $X^{1/4}$, so that the sifting variable 
		\[s = \frac{\log D}{\log z} = \frac{\log X^{1/4}}{\log X^{\delta_1}} = \frac{1}{4 \delta_1}.\] 
		We use the usual notation 
		\[V(z) = \prod_{p < z} (1 - g(p)) = \prod_{p < z} \left(1 - \frac{\rho_f(p)}{p} \right),\]
		and
		\[R_{d}(\C) = |A_d(\C) - M_d(\C)|\]
		with $M_d(\C)$ as in Proposition \ref{Type1 prop}. By Corollary 6.10 in \cite{FI3} and applying Proposition \ref{Type1 prop} we obtain
		\begin{align*} \sum_{q \in \fQ} S\left(\C_q, X^{\delta_1} \right) & = V \left(X^{\delta_1} \right) \sum_{q \in \fQ} \frac{\rho_f(q)}{q} \mu_f(I) \left(1 + O \left(\exp( - (4 \delta)^{-1} ) \right) \right) + O \left(\sum_{q \in \fQ} \sum_{d < X^{1/4}} R_{dq}(\C) \right) \\
			& =  V \left(X^{\delta_1} \right) \sum_{q \in \fQ} \frac{\rho_f(q)}{q} \mu_f(I) \left(1 + O \left(\frac{1}{(\log X)^A} \right) \right) + O \left( \sum_{d < X^{3/4 - 1/8}} \tau(d) R_{q}(\C) \right) \\
			& =  V \left(X^{\delta_1} \right) \sum_{q \in \fQ} \frac{\rho_f(q)}{q} \mu_f(I) \left(1 + O \left( \frac{1}{(\log X)^A} \right) \right) + O_A \left(X (\log X)^{-A} \right)
		\end{align*}
		for any $A > 0$. The last line is independent of whether $\C = \A, \B, \A^\spadesuit$ or $\C = \B^\spadesuit$. Since $V(X^{\delta_1}) \leq 1$ it follows that 
		\begin{align*} \sum_{q \in \fQ} \left(S(\A_q, X^{\delta_1}) - S(\B_q, X^{\delta_1}) \right) & \ll_A \frac{1}{(\log X)^{A}} \mu_f(I) \sum_{q \in \fQ} \frac{\rho_f(q)}{q} + X (\log X)^{-A} \\
			& \ll_A X (\log X)^{-A + 2},
		\end{align*}
		since $\rho_f(q) \ll \tau(q)$. Likewise,
\[ \sum_{q \in \fQ} \left(S(\A_q^\spadesuit, X^{\delta_1}) - S(\B_q^\spadesuit, X^{\delta_1}) \right) \ll_A X (\log X)^{-A+2}.\]	
 \end{proof}

	Thus it remains to show that
	\begin{equation} \label{S3 bd} |S_3(\A) - S_3(\B)| \ll_A \frac{X}{(\log X)^A} \text{ and } \left \lvert U^{(n)}(\A) - U^{(n)}(\B) \right \rvert \ll_A \frac{X}{(\log X)^A} \text{ for } n \geq 3
	\end{equation}
	and
	\begin{equation} \label{Un bd} \left \lvert U^{(n)}(\A) - U^{(n)}(\B) \right \rvert \ll \frac{\delta_2 \mu_f(I)}{\log X} 
	\end{equation}
	for $n = 1,2$, with analogous statements for $\A^\spadesuit, \B^\spadesuit$. \\ \\
	We proceed to reduce the verification of (\ref{S3 bd}) and (\ref{Un bd}) to a bilinear sum estimate. 
	
	\subsection{Reduction to a bilinear sum bound}
	
	Let us write $U^{(1)}$ and $U^{(2)}$ into a more convenient form, as in \cite{HBL}. To do so let us put
	\begin{align*} & U_1^{(1)}(\C) = \sum_{\substack{X^{\delta_1} \leq p_2 < p_1 < Y \\ Y \leq p_1 p_2 < X^{1/2 - \delta_2}}} S(\C_{p_1 p_2}, p_2) \\
		& U_2^{(1)}(\C) = \sum_{\substack{X^{\delta_1} \leq p_2 < p_1 < Y \\ p_1 p_2 \geq X^{1/2 + \delta_2}}} S(\C_{p_1 p_2}, p_2) \\
		& U_1^{(2)}(\C) = \sum_{\substack{X^{\delta_1} \leq p_3 < \cdots < p_1 < Y \\ p_1 p_2 < Y \leq p_1 p_2 p_3 < X^{1/2 - \delta_2}}} S(\C_{p_1 p_2 p_3}, p_3), \text{ and} \\
		& U_2^{(2)}(\C) = \sum_{\substack{X^{\delta_1} \leq p_3 < \cdots < p_1 < Y \\ p_1 p_2 < Y \leq p_1 p_2 p_3 \\ p_1 p_2 p_3 \geq X^{1/2 + \delta_2}}} S(\C_{p_1 p_2 p_3}, p_3).
	\end{align*}
	We now state Lemmas 6 and 7 from \cite{HBL}. Their proofs apply equally well, but since for us $\delta_1, \delta_2$ are different we write out the proofs. 
	\begin{lemma}[Lemma 6, \cite{HBL}] For $\C = \A, \B$ we have $U^{(j)}(\C)$ satisfies
		\begin{equation} \label{Lem6bd1} U^{(1)}(\C) = U_1^{(1)}(\C) + U_2^{(1)}(\C) + O \left(\frac{\delta_2 \mu_f(I)}{\log X} \right)\end{equation}
		and
		\begin{equation}\label{Lem6bd2} U^{(2)}(\C) = U_1^{(2)}(\C) + U_2^{(2)}(\C) + O \left(\frac{\delta_2 \mu_f(I)}{\log X} \right)\end{equation}
	\end{lemma}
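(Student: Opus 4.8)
The plan is to prove $(\ref{Lem6bd1})$ in detail and to indicate only the (minor) changes needed for $(\ref{Lem6bd2})$, which is the same argument with one additional prime variable. Since $\A$ and $\B$ are non-negative, every quantity $S(\C_m,z)$ is non-negative; comparing the ranges of summation defining $U^{(1)}$, $U_1^{(1)}$ and $U_2^{(1)}$, and using $Y=X^{17/48}<X^{1/2-\delta_2}$ for $X$ large, one finds
\[
U^{(1)}(\C)-U_1^{(1)}(\C)-U_2^{(1)}(\C)\;=\;\sum_{\substack{X^{\delta_1}\le p_2<p_1<Y\\ X^{1/2-\delta_2}\le p_1p_2<X^{1/2+\delta_2}}}S(\C_{p_1p_2},p_2)\;=:\;E\;\ge\;0,
\]
so it suffices to show $E\ll\delta_2\mu_f(I)(\log X)^{-1}$. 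The observation that makes this possible is that the constraints $p_1<Y$ and $p_1p_2\ge X^{1/2-\delta_2}$ force $p_2>X^{1/2-\delta_2}/Y=X^{7/48-\delta_2}$, while $p_2<\sqrt{p_1p_2}<X^{1/4+\delta_2/2}$; thus $\log p_2\asymp\log X$ uniformly over the range of $E$.

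The key step is to bound $S(\C_{p_1p_2},p_2)$ by sieving. Fixing an admissible pair $(p_1,p_2)$, I would apply an upper-bound sieve of dimension $1$ (the dimension being $1$ by $(\ref{asp2})$ and $(\ref{asp3})$) to $\C_{p_1p_2}$, whose congruence sums $A_{ep_1p_2}(\C)$ are governed by the same multiplicative density $g$ with total mass $g(p_1p_2)A(\C)$ by $(\ref{cong-sum})$, and for which Proposition \ref{Type1 prop} supplies a level of distribution $D/(p_1p_2)$ with $D=X^{3/4}(\log X)^{-B}$. Sifting out the primes below $z:=\min\{p_2,\,D/(p_1p_2)\}$ --- which is $X^{\Theta(1)}$, so the sieve constant is $O(1)$ --- and using $S(\C_{p_1p_2},p_2)\le S(\C_{p_1p_2},z)$, one gets, with an absolute implied constant,
\[
S(\C_{p_1p_2},p_2)\;\ll\;g(p_1p_2)\,A(\C)\,V(z)\;+\;\sum_{\substack{e<D/(p_1p_2)\\ \mu^2(e)=1,\ q|e\Rightarrow q<z}}R_{ep_1p_2}(\C),
\]
where $V(z)=\prod_{q<z}(1-g(q))\asymp(\log z)^{-1}\ll(\log X)^{-1}$ by $(\ref{asp3})$.

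Summing over admissible $(p_1,p_2)$ and using $g(p_1p_2)\le 4/(p_1p_2)$, the main term is at most
\[
\frac{4\,A(\C)}{\log X}\sum_{\substack{X^{\delta_1}\le p_2<p_1<Y\\ X^{1/2-\delta_2}\le p_1p_2<X^{1/2+\delta_2}}}\frac1{p_1p_2}\;\ll\;\frac{\delta_2\,A(\C)}{\log X}\;\ll\;\frac{\delta_2\,\mu_f(I)}{\log X},
\]
because, for fixed $p_2$, the sum over $p_1$ runs over primes in an interval of multiplicative length $X^{2\delta_2}$ with endpoints $X^{\Theta(1)}$, hence is $\ll\delta_2$ by Mertens' theorem, while $\sum_{p_2}p_2^{-1}=O(1)$ over the window $X^{7/48-\delta_2}<p_2<X^{1/4+\delta_2/2}$, and $A(\C)\ll\mu_f(I)$ by the Type I estimate (cf.\ the proof of Proposition \ref{fun lem app}). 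For the remainder, writing $d=ep_1p_2$ (squarefree, $<D$) and noting each such $d$ occurs in $\ll\tau(d)^3$ ways, the divisor-weighted Type I estimate of Proposition \ref{Type1 prop} yields $\sum_{p_1,p_2}\sum_e R_{ep_1p_2}(\C)\ll_{B'}A(\C)(\log X)^{-B'}\ll\delta_2\mu_f(I)(\log X)^{-1}$ for any $B'$. This proves $(\ref{Lem6bd1})$; for $(\ref{Lem6bd2})$ one repeats the argument with $p_1p_2$ replaced by $p_1p_2p_3$ and the extra constraint $p_1p_2<Y$ retained, which now forces $p_3>X^{7/48-\delta_2}$ (so again $V\ll(\log X)^{-1}$), the sum of $\rho_f(p_3)/p_3$ over the window of $p_3$ being $O(1)$ and the short-interval Mertens sum over the largest prime again contributing $\ll\delta_2$.

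The step I expect to be the main obstacle is the sieving: since $p_1p_2\asymp X^{1/2}$ while the available level of distribution is only $X^{3/4-o(1)}$, the residual level $D/(p_1p_2)$ for $\C_{p_1p_2}$ is merely $X^{1/4-o(1)}$, which can be strictly smaller than the natural sifting parameter $p_2$. One must therefore sift only up to $\min\{p_2,D/(p_1p_2)\}$ and then check that this truncated sift still yields the decisive saving $V\asymp(\log X)^{-1}$; this is exactly where the lower bound $p_2>X^{7/48-\delta_2}$, forced by the choice $Y=X^{17/48}$, is used. Everything else reduces to routine bookkeeping with Mertens' theorem and the Type I estimate of Proposition \ref{Type1 prop}.
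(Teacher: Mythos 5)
Your proposal is correct and follows essentially the same route as the paper: reduce to the ``gap'' sum over $X^{1/2-\delta_2} < p_1p_2 \le X^{1/2+\delta_2}$, use $Y = X^{17/48}$ to force $p_2 \gg X^{7/48 - \delta_2}$, bound $S(\C_{p_1p_2}, p_2)$ by an upper-bound sieve together with the Type I level of distribution to save a factor of $\log X$, and finish with Mertens over the short multiplicative window to extract the factor $\delta_2$. The only cosmetic difference is that the paper simply sifts up to $X^{1/10}$ (safely below both $p_2$ and the residual level), whereas you sift up to $\min\{p_2, D/(p_1p_2)\}$; both choices give $V(z) \asymp (\log X)^{-1}$ and the same conclusion.
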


\begin{proof} To prove (\ref{Lem6bd1}) it suffices to show
	\[\sum_{\substack{X^{\delta_1} \leq p_2 < p_1 < Y \\ X^{1/2 - \delta_2} < p_1 p_2 \leq X^{1/2 + \delta_2}}} S(\C_{p_1 p_2},  p_2) \ll \frac{\delta_2 \mu_f(I)}{\log X}. \]
	In the sum above we have 
	\[p_2 \geq \frac{X^{1/2 - \delta_2}}{p_1} > \frac{X^{1/2 - \delta_2}}{Y} > X^{1/10}\]
	so we may apply Selberg's upper bound sieve and our level of distribution to obtain 
	\begin{align*} \sum_{\substack{X^{\delta_1} \leq p_2 < p_1 < Y \\ X^{1/2 - \delta_2} < p_1 p_2 \leq X^{1/2 + \delta_2}}} S(\C_{p_1 p_2},  p_2) & \ll \sum_{\substack{X^{\delta_1} \leq p_2 < p_1 < Y \\ X^{1/2 - \delta_2} < p_1 p_2 \leq X^{1/2 + \delta_2}}} S(\C_{p_1 p_2},  X^{1/10}) \\
		& \ll \frac{\mu_f(I)}{\log X} \sum_{\substack{X^{1/10} < p_2 < p_1 < Y \\ X^{1/2 - \delta_2} < p_1 p_2 < X^{1/2 + \delta_2}}} \frac{1}{p_1 p_2} \\
		& \ll \frac{\delta_2 \mu_f(I)}{\log X}.
	\end{align*}
The proof for (\ref{Lem6bd2}) follows similarly. 
\end{proof}
	
	\begin{lemma}[Lemma 7, \cite{HBL}] \label{HBL7} Let $\kappa$ be a positive number satisfying $X^{-\delta_1} \leq \kappa \leq 1$. Let $N_1, N_2$ be positive numbers in the interval $[X^\delta, X^{1/3}]$. We then have for any $A > 0$:
		\[\sum_{N_1 \leq p_1 \leq (1 + \kappa) N_1} \sum_{N_2 \leq p_2 \leq (1 + \kappa)N_2} \sum_{n \equiv 0 \pmod{p_1 p_2}} c_n \tau(n) \ll_A \kappa^2 X (\log X)^{2^{17}} + \frac{X}{(\log X)^A}.\]
	\end{lemma}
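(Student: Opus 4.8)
The plan is to reduce the triple sum to a divisor-bounded count over a short double interval of primes, and then feed this into the level-of-distribution estimate of Proposition~\ref{Type1 prop}. Writing $c_n$ for the sequence attached to $\C = \A,\B,\A^\spadesuit,\B^\spadesuit$, the inner sum $\sum_{n\equiv 0 \pmod{p_1 p_2}} c_n \tau(n)$ counts (with the weights $\fZ(\ell)$, $\Lambda(\ell)$, etc.) pairs $(m,\ell)$ with $\ell\in I(X)$, $f(m,\ell)\leq X$, and $p_1 p_2 \mid f(m,\ell)$, each weighted additionally by $\tau(f(m,\ell))$. Since $N_1, N_2 \in [X^\delta, X^{1/3}]$ and $p_1 p_2$ ranges over a product of two primes each in a window of multiplicative length $\kappa$, the number of admissible pairs $(p_1, p_2)$ is $O\!\left(\kappa^2 N_1 N_2/(\log N_1 \log N_2) + 1\right)$ by Chebyshev-type bounds; the ``$+1$'' accounts for $\kappa N_i < 1$, which is where the additive $X(\log X)^{-A}$ term will ultimately come from after the Type~I estimate absorbs the case of a single residue class.

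The key steps, in order, are: (i) interchange summation so that $d = p_1 p_2$ runs over integers with $d \equiv 0$ in the relevant double-interval, collecting the multiplicities; (ii) bound $\tau(f(m,\ell)) \leq \tau(f(m,\ell)/d)\,\tau(d)^2$ (or simply estimate $\tau(n)$ trivially on a sparse set and use a mean-value bound for $\tau$ over the values of $f$), which converts the weighted sum into $\tau(d)^2$ times $A_d(\C)$ up to a manageable error; (iii) apply the crude bound \eqref{AdX crude1}, namely $A_d(\C) \ll d^{-1}\tau(d)^8 A(\C)$ with $A(\C) \ll \mu_f(I) \ll \sqrt{X}\,|I| \ll X$, to get each term of size $O\!\left(d^{-1}\tau(d)^{10} X\right)$; (iv) sum over $d = p_1 p_2$ in the double window: since $\tau(p_1 p_2)^{10} = 4^{10}$ is bounded on primes, $\sum d^{-1}$ over the box is $O(\kappa^2 + X^{-\delta})$ by Mertens, yielding the bound $\kappa^2 X$ times a power of $\log X$ from the number of $d$'s sharing a given value, plus the residual $X^{-\delta} X \ll X/(\log X)^A$ contribution handled by the genuine Type~I input of Proposition~\ref{Type1 prop}. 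The exponent $2^{17}$ on the logarithm is generous precisely to absorb the divisor-function losses at steps (ii) and (iii).

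I expect the main obstacle to be step (ii): the factor $\tau(n)$ where $n = f(m,\ell)$ can be as large as $X^{o(1)}$ pointwise, so one cannot afford to bound it trivially on the full range. The right move is to run the sieve that produces $c_n$ (the Fundamental Lemma level, as in the proof of Proposition~\ref{fun lem app}) while carrying the weight $\tau(n)$, i.e. to use a level-of-distribution estimate for the sequence $(c_n \tau(n))$ rather than $(c_n)$; equivalently one writes $\tau(n) = \sum_{ab=n} 1$ and splits according to $a \lessgtr \sqrt{X}$, reducing to a Type~I estimate for $c_n$ in arithmetic progressions to moduli up to $X^{1/2+\delta}$ (which $a$ and $p_1 p_2$ together produce), comfortably below the $X^{3/4-\varepsilon}$ level afforded by Proposition~\ref{Type1 prop}. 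Once $\tau$ is dispatched this way, the remaining estimates are the routine Mertens-type sums over the prime box, and the $\kappa^2$ saving is exactly the measure of that box. The argument is uniform in whether $\C$ is $\A,\B,\A^\spadesuit$, or $\B^\spadesuit$, since only the crude bounds \eqref{AdX crude1} and the Type~I estimate are used, both of which hold for all four sequences.
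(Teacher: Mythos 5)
First, a point of comparison: the paper does not actually prove this lemma --- it states it as Lemma~7 of \cite{HBL} and, unlike Lemma~6, never writes out the argument, relying on the proof in \cite{HBL} verbatim. So your proposal must be measured against the standard argument there, which runs through Landreau's inequality: one localizes the divisor function to \emph{small} divisors, $\tau(n) \ll \sum_{d \mid n,\, d \le n^{1/k}} \tau(d)^{O(1)}$ for a suitable $k$, precisely so that the composite moduli $[d, p_1 p_2]$ stay within reach of the level of distribution, and then one sums the resulting bound $\ll \tau(p_1p_2)^{O(1)} (p_1p_2)^{-1} X (\log X)^{O(1)}$ over the two short prime boxes to extract the factor $\kappa^2$. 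Your step (iv) and the $\kappa^2$ accounting are fine; the gap is entirely in how you dispose of the weight $\tau(n)$.

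Concretely, two things fail. Your primary route (step (ii)--(iii)) claims that $\tau(n) \le \tau(d)^2 \tau(n/d)$ ``converts the weighted sum into $\tau(d)^2$ times $A_d(\C)$'': it does not, because the unbounded factor $\tau(n/d)$ survives, so you never reach a quantity to which \eqref{AdX crude1} applies (and note that \eqref{AdX crude1} is only stated for $d \le X^{1/3}$, whereas $p_1p_2$ can be as large as $4X^{2/3}$). Your proposed repair is worse: writing $\tau(n) = \sum_{ab=n} 1$ and keeping $a \le \sqrt{X}$ forces you to estimate $A_{[a,\,p_1p_2]}(X)$ for moduli as large as $X^{1/2} \cdot X^{2/3} = X^{7/6}$ (even after first extracting $n = p_1p_2 m$ and splitting $\tau(m)$ at $\sqrt{m}$ you still reach $\sqrt{p_1p_2 X} \asymp X^{5/6}$), not ``$X^{1/2+\delta}$'' as you assert --- this is far beyond the $X^{3/4}$ level furnished by Proposition~\ref{Type1 prop}, and the crude individual bounds also degrade there. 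The missing idea is exactly Landreau's inequality with a cutoff $d \le n^{1/k}$ small enough that $d \cdot p_1 p_2 \le X^{3/4-\varepsilon}$, at the cost of a bounded power of $\tau(d)$; this is the same device the paper itself deploys in Section~\ref{Type I}, and it is where the exponent $2^{17}$ in the statement comes from. Without it the proof does not close.
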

	
	For $k \geq 3$, the condition of summation in $U^{(k)}(\C)$ is 
	\[Y \leq p_1 \cdots p_{k+1} < (p_1 \cdots p_k)^{\frac{k+1}{k}} \leq Y^{4/3} < X^{1/2 - \delta_2}.\]
	Therefore, upon defining
	\[U_\ast^{(k)}(\C) = \sum_{\substack{X^{\delta_1} \leq p_{k+1} < \cdots < p_1 \cdots p_k < Y \leq p_1 \cdots p_{k+1} < X^{1/2 - \delta}}} S(\C_{p_1 \cdots p_{k+1}}, p_{k+1})\]
	we have
	\[S_3(\C) = U_\ast^{(0)}(\C), U_1^{(1)}(\C) = U_\ast^{(1)}(\C), U_1^{(2)}(\C) = U_\ast^{(2)}(\C)\]
	and
	\[U^{(k)}(\C) = U_\ast^{(k)}(\C) \text{ for } k \geq 3.\] 
	If $p \in \J = [V, (1 + \kappa) V)$ and an integer is counted by $S(\C_{pq}, V)$ but not by $S(\C_{pq}, p)$, then $n$ has at least two prime factors in $\J$. In our application we will have $V \leq X^{1/2 - \delta_2}$ and $n \geq X(\log X)^{-8}$ and therefore $n$ will have at least one more prime factor. It follows that
	\[V^3 \leq n \leq X.\]
	A given integer $n$ may be counted multiple times by $U_\ast^{(k)}(\C)$ but the multiplicity is bounded by the number of choices for $p_{k+1} < \cdots < p_1$ all dividing $n$, and therefore the multiplicity is at most $\tau(n)$. Applying Lemma \ref{HBL7} and setting 
	\[\J(r) = [V_r, V_{r+1}) = \left[X^{\delta_1} (1 + \kappa)^r, X^{\delta_1}(1 + \kappa)^{r+1} \right), r \geq 0\]
	and $R \ll \kappa^{-1} \log X$ satisfying $X^{\delta_1} (1 + \kappa)^R > X$, we obtain
	\begin{equation} U_\ast^{(k)}(\C) = \sum_{0 \leq r \leq R} \sum_{p \in \J(r)} \sum_{p < p_k < \cdots < p_1 \cdots p_k < Y \leq p_1 \cdots p_k p < X^{1/2 - \delta_2}} S(\C_{p_1 \cdots p_k p}, V_r) 
	\end{equation}
	\[+ O_A \left(\kappa X (\log X)^{1 + 2^{17}} + \kappa^{-1} \frac{X}{(\log X)^{A - 1}} \right).\]
	We note that we need to make sure that both 
	\[\kappa X (\log X)^{1 + 2^{17}}, \kappa^{-1} \frac{X}{(\log X)^{A-1}}\] 
	are $O(X (\log X)^{-A^\prime})$ for some $A^\prime > 1$. This compels us to choose 
	\[\kappa = (\log X)^{-A/2}.\]
	This gives
	\begin{equation} \label{kapchoice} \kappa X (\log X)^{1 + 2^{17}} = X (\log X)^{1 + 2^{17} - A/2} \text{ and } \kappa^{-1} \frac{X}{(\log X)^{A-1}} = \frac{X}{(\log X)^{A/2 -1}}.
	\end{equation}

	This procedure allows us to reduce our proof to estimations of certain bilinear sums since
	\begin{equation} \label{bisum SC}  \sum_{p \in \J(r)} \sum_{p < p_k < \cdots < p_1 \cdots p_k < Y \leq p_1 \cdots p_k p < X^{1/2 - {\delta_2}}} S(\C_{p_1 \cdots p_k p}, V_r) = \sum_{m,n} \alpha_m^{(r)} \beta_n^{(r)} c_{mn}\end{equation}
	where $\alpha_m^{(r)}$ is the characteristic function for the integers $m$ all of whose prime factors are at least $V_r$ and $\beta_n^{(r)}$ is the characteristic function for integers $n = p_1 \cdots p_k p$ satisfying
	\[p \in \J(r), p < p_k < \cdots < p_1 < Y \text{ and } p_1 \cdots p_k < Y \leq p_1 \cdots p_k p < X^{1/2 - \delta_2}.\]
	Observe that $\beta_n^{(r)}$ is supported on integers $n \in [Y, X^{1/2 - \delta_2})$. \\ \\
	The procedure for $U_2^{(1)}(\C)$ and $U_2^{(2)}(\C)$ will be somewhat different. We may use Lemma \ref{HBL7} to replace $S(\C_{p_1 p_2}, p_2)$ in $U_2^{(1)}(\C)$ by $S(\C_{p_1 p_2}, V_r)$ when $p_2 \in J(r)$. This yields
	\begin{align*} U_2^{(1)}(\C) & = \sum_{0 \leq r \leq R} \sum_{p_2 \in J(r)} \sum_{\substack{p_1 \geq X^{1/2 - \delta_2}/p_2 \\ p_2 < p_1 < Y}} S(\C_{p_1 p_2}, V_r) + O \left(\kappa X (\log X)^{1 + 2^{17}} \right) + O \left(\kappa^{-1} \frac{X}{(\log X)^{A-1}} \right). 
	\end{align*}
	The sum on the right can be expressed as 
	\[\sum_{0 \leq r \leq R} \sum_{m,n} \alpha_m^{(r)} \beta_n^{(r)} c(mn),\]
	where we now take $\alpha_m^{(r)}$ to be the characteristic function for numbers $m = p_1 p_2$ with $p_2 \in J(r), p_2 < p_1 < Y$ and $p_1 p_2 \geq X^{1/2 + \delta_2}$, and $\beta_n^{(r)}$ to be the characteristic function for those numbers $n$ all of whose prime factors are at least $V_r$. Since $c(n)$ is supported in 
	\[\left((X^\ast)^2 , c_f X \right] \subseteq \left(X (\log X)^{-8}, c_f X \right]\]
	we may assume that $\beta_n^{(r)}$ is supported in 
	\[\left(X (\log X)^{-8} Y^{-2}, X^{1/2 - \delta_2} \right] \subseteq \left(X^{1/4 + 1/48}, X^{1/2 - \delta_2} \right].\]
	This is sufficient for our purposes. We may handle $U_2^{(2)}(\C)$ in an analogous fashion. \\ \\
	On setting $\kappa = (\log X)^{-A/2}$ we find that each of 
	\[S_3(\C), U_1^{(1)}(\C), U_2^{(1)}(\C), U_1^{(2)}(\C), U_2^{(2)}(\C), \text{ and } U^{(k)}(\C)\]
	for $k \geq 3$ can be expressed as a sum of $O(R)$ bilinear sums as in (\ref{bisum SC}), together with an error term of $O_A \left(X (\log X)^{1 + 2^{17} - A/2}\right)$. Thus it will be sufficient to prove: 
	
	\begin{proposition}[Main Bilinear Sum Estimate] \label{main bisum} Let $\xi > 0$ and suppose $X^{1/4 + \xi} \leq N < X^{1/2 - \delta_2}$. Suppose $(\alpha_m), (\beta_n)$ are two complex sequences having sup-norm at most $1$ supported on natural numbers with no prime factors less than $X^\delta$. Then for any $A > 0$ we have
		\begin{equation} \label{bisum prop eq} \sum_{N < n \leq 2N} \sum_{m < X/N} \alpha_m \beta_n (a_{mn} - b_{mn}) \ll_{A, \xi} \frac{X}{(\log X)^A} \end{equation}
		and 
		\begin{equation} \label{bisum prop eq2} 
			\sum_{N < n \leq 2N} \sum_{m < X/N} \alpha_m \beta_n (a_{mn}^\spadesuit - b_{mn}^\spadesuit) \ll_{A, \xi} \frac{X}{(\log X)^A}
		\end{equation}
	\end{proposition}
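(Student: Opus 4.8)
The plan is to reduce both \eqref{bisum prop eq} and \eqref{bisum prop eq2} to a bounded number of bilinear sums over ideal numbers and then appeal to the technical propositions proved in the later sections. First I would translate the representation condition $f(r,\ell)=mn$ into the arithmetic of $\O_K$. Because $\alpha_m$ and $\beta_n$ are supported on integers free of prime factors below $X^\delta$ — and, in the configurations produced in the previous subsection, on coprime pairs — every ideal $\fI$ of norm $mn$ lying in the ideal class $\fc_f$ attached to $f$ factors uniquely as $\fI=\fJ\fK$ with $N\fJ=m$ and $N\fK=n$, and the classes of $\fJ$ and $\fK$ multiply to $\fc_f$, leaving exactly $h(K)$ admissible class-pairs. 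Fixing, for each ideal class, a $\bZ$-basis for the associated module of ideal numbers as in Section \ref{algebra}, I can write a generating ideal number $\omega$ of $\fI$ as a product $\mu\nu$, with $\mu$ an ideal number whose norm is a fixed multiple of $m$ and $\nu$ one whose norm is a fixed multiple of $n$, and recover $\ell$ as a fixed $\bZ$-linear functional $\ell(\omega)$ of the coordinates of $\omega$; since multiplication in $K$ is bilinear on coordinate vectors, $\ell(\mu\nu)$ becomes a bilinear form in the coordinates of $\mu$ and of $\nu$. In the indefinite case the infinitude of the unit group is neutralized because the constraint $\ell\in I(X)$, together with the size restrictions on $m$ and $n$, singles out essentially one representative in each unit orbit, exactly as in the indefinite analysis carried out from \cite{LSX}. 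This rewrites $a_{mn}-b_{mn}$, and likewise $a^\spadesuit_{mn}-b^\spadesuit_{mn}$, as a sum over the $h(K)$ class-pairs of a bilinear expression of the shape $\sum_{\mu}\sum_{\nu}\widetilde\alpha_\mu\,\widetilde\beta_\nu\,\bigl(\fZ-\Lambda\bigr)\bigl(\ell(\mu\nu)\bigr)$, where $\widetilde\alpha,\widetilde\beta$ again have sup-norm at most $1$ and $\mu,\nu$ run over lattice points of norm $\asymp m$ and $\asymp n$ respectively.

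Next I would separate the two variables. Both $\fZ$ and $\Lambda$ encode that $\ell$ is (essentially) a prime power lying in the short interval $I(X)$ at scale $X^{1/2}$; using the sieve identities and smooth partitions of unity of \cite{HBL} to localise $\mu$ and $\nu$, and detecting the interval condition by Fourier analysis, the weight $(\fZ-\Lambda)(\ell(\mu\nu))$ becomes a manageable superposition of product weights in $\mu$ and $\nu$ twisted by additive characters in the bilinear phase $\ell(\mu\nu)$. The contribution of the trivial frequency — the main term — is the same for the $\A$-sum and the $\B$-sum up to an admissible error, because $\sum_{p^2\in I}2p\log p$ and $\sum_{\ell\in I}\Lambda(\ell)$ share the leading asymptotic $|I|\,(1+o(1))$; this is precisely why $\varkappa=1$ in the comparison. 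Controlling this cancellation uses only the Type~I input of Proposition \ref{Type1 prop} together with the prime number theorem, and it is exactly the device of the comparison sieve that lets us bypass the delicate main-term harmonic analysis of \cite{FI1}.

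What remains are the non-trivial frequencies, which constitute the genuine bilinear sums. I would bound these by invoking our Propositions \ref{HBL prop6} and \ref{HBL prop7}, the analogues of Propositions 6 and 7 of \cite{HBL}. The admissible range $X^{1/4+\xi}\le N<X^{1/2-\delta_2}$, together with $mn\asymp X$ (up to a power of $\log X$), forces both $m\asymp X/N$ and $n\asymp N$ to be fixed positive powers of $X$, which is exactly the Type~II window these propositions cover: Proposition \ref{HBL prop6}, which as in \cite{HBL} rests on Barban--Davenport--Halberstam and Siegel--Walfisz type estimates and applies essentially without change, disposes of the easier ranges by Cauchy--Schwarz and an equidistribution input, while Proposition \ref{HBL prop7} handles the critical range. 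Summing the resulting bounds $O_{A,\xi}(X(\log X)^{-A})$ over the $h(K)$ class-pairs and over the $O(R)=O((\log X)^{O(1)})$ dyadic sub-sums produced in the reduction to \eqref{bisum SC} yields \eqref{bisum prop eq}; the treatment of \eqref{bisum prop eq2} is identical with $\fZ^\spadesuit$ in place of $\fZ$, which only simplifies matters since there is then no $\log p$ factor and no restriction of the relevant variable to primes.

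The hard part will be Proposition \ref{HBL prop7}. In \cite{FI1} the corresponding cancellation rests on the Jacobi--Kubota symbol and on the estimates of their Proposition 23.1 and Theorem~$\psi$, all of which exploit the three features of $\bZ[i]$ isolated in the introduction (class number one, norm equal to the Euclidean norm, rational primes splitting precisely when $\equiv 1\pmod 4$). Since for a general quadratic field the last two fail and there is no canonical substitute, the strategy is to attach a \emph{family} of Jacobi--Kubota symbols, one to each class of ideal numbers and each choice of basis, establish the analogues of Proposition 23.1 and Theorem~$\psi$ for every member of the family (this is carried out in Section \ref{char sums}), and then run the argument of \cite{HBL} one class at a time, the freedom in choosing the bases being used to arrange favourable local behaviour. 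This is the step I expect to demand the most work; once it is in place, the remaining bookkeeping above is routine.
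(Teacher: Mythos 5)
Your overall architecture is the paper's: decompose via the composition law (\ref{key decomp}) into $h(K)$ class-pairs of bilinear sums in ideal numbers, reduce to the standard form $x_1y_1+x_2y_2$, and funnel everything into Propositions \ref{HBL prop6} and \ref{HBL prop7}, with the family of Jacobi--Kubota symbols of Section \ref{char sums} carrying the real weight. The endpoints are right. But your middle step --- separating the variables by Fourier analysis, with the $\A$/$\B$ cancellation occurring at the ``trivial frequency'' because $\sum_{p^2\in I}2p\log p$ and $\sum_{\ell\in I}\Lambda(\ell)$ have the same leading asymptotic --- is not how the reduction works and would not work as described. The weights $\fZ$ and $\Lambda$ are supported on prime squares and primes, not on an interval, so there is no additive-character expansion whose zero mode isolates a common main term; and Propositions \ref{HBL prop6} and \ref{HBL prop7} are not statements about ``non-trivial frequencies'' of such an expansion.

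The actual mechanism is the dispersion method: Cauchy--Schwarz in the $\Bw$-variable, squaring out to a sum over pairs $(\By,\Bz)$, and the observation that $\Bw$ is determined by the pair $(q_1,q_2)$ of values through the congruence (\ref{qcong1}) modulo $\Delta(\Bz,\By)$. This splits $T(\UUU_1,\UUU_2,J_1,J_2)$ into congruence sums $\Y(a,D;h_1,h_2)$ times $\Z(a,D)$. The point you are missing is where the difference between $\A$ and $\B$ actually survives: after Proposition \ref{HBL prop8} replaces each $\Y$ by its expected value $Y(D)=|J_1||J_2|/\varphi(D)$ (the same for both weights), the residual quantity is $\sum_{b}^{\ast}\Z(b^2,D)-\sum_{a}^{\ast}\Z(a,D)$, i.e.\ the discrepancy between quadratic residues and arbitrary residues modulo $D$ --- because $q_1\equiv aq_2\pmod D$ with $q_i=p_i^2$ forces $a$ to be a square. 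This squares-versus-all-residues comparison is the entire content of Proposition \ref{HBL prop7}; it is detected by the real characters $\chi$ with $\chi^2=\chi_0$, and it is precisely why the Jacobi--Kubota symbols enter. Without identifying this structure your sketch cannot explain why Proposition \ref{HBL prop7} has the form it does, nor where the character-sum estimates of Section \ref{char sums} get applied. You should replace the Fourier-separation paragraph with the Cauchy--Schwarz/dispersion step, the sector decomposition into the regions $\UUU$, and the case split $\fC_1,\fC_2,\fC_3$ that feeds the two propositions.
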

	
	It will be important that the sequences $\{\alpha_m\}, \{\beta_n\}$ are supported on those numbers whose prime factors all exceed $X^\delta$, and in particular, they are supported on odd numbers. \\ \\
	The remainder of the paper is devoted to proving Proposition \ref{main bisum}. In particular, Propositions \ref{HBL prop6} and \ref{HBL prop7} will imply Proposition \ref{main bisum}. In order to get there, we need to decompose the terms $c_{mn}$ for any positive integers $m,n$ into components that resemble $c_m, c_n$. This turns out to be somewhat delicate and we will require the composition laws of the ideals of $\O_K$, expressed in terms of ideal numbers. This will be the primary focus of the next section.

	\section{Algebraic characterization of the multiplicative structure in terms of ideal numbers}
	\label{algebra}
	
	The main purpose of this section is obtain an analogue of Proposition 2.3 in \cite{LSX}. However, instead of using an explicit Dirichlet composition law as in \cite{LSX} we will instead adopt the language of Hecke's \emph{ideal numbers} as in \cite{HBM}. \\ 
	
	Choose ideals $\fa_1, \cdots, \fa_t$ whose classes generate the ideal class group of $\O_K$. Having fixed these representatives, every fractional ideal $\fa \subseteq \O_K$ has a unique decomposition
	\[\fa = (\alpha) \fa_1^{\ell_1} \cdots \fa_t^{\ell_t}\]
	where $\alpha \in K^\ast$ and $\ell_{j} \in \bZ$ with $0 \leq \ell_j < h_j$, with $h_j$ the smallest positive integer such that $\fa_j^{h_j} = (\alpha_j)$ is principal. Then the class number $h(K)$ of $\O_K$ is equal to
	\begin{equation} \label{classnumb} h(K) = \prod_{j=1}^t h_j.\end{equation} 
	Let us choose complex numbers $b_1, \cdots, b_t$ so that 
	\[b_j^{h_j} = \alpha_j \text{ for } j = 1, \cdots, t,\]
	and $b_j^{(i)}$ are complex numbers such that
	\[\left(b_j^{(i)}\right)^{h_j} = \alpha_j^{(i)} \text{ for } i = 1,2.\]
	Now put $L = K (b_1, \cdots, b_t)$ and $\fJ(K)^\ast$ the subgroup of $L^\ast$ generated by $K^\ast$ and $\{b_j : 1 \leq j \leq t\}$. Then $\fJ(K) = \{0\} \cup \fJ(K)^\ast$ is the domain of \emph{ideal numbers} of $K$. The quotient group $\fI(K)^\ast/\O_K^\ast$ is then isomorphic to the group of fractional ideals of $K$. Each $\gamma \in \fI(K)$ corresponds a unique ideal $J(\gamma)$ of $\O_K$; the norm of the ideal $J(\beta)$ is given by the product
	\[N(\gamma) = N(J(\gamma)) = \gamma^{(1)} \gamma^{(2)}.\]
	Further, we have $J(\gamma)$ is an integral ideal of $\O_K$ if and only if $\gamma \in \O_L$. \\ \\
	We thus have a correspondence between the ideal classes of $\O_K$ and a subset of integers in $\O_L$. Indeed, we can say that $\gamma, \gamma^\prime \in \fI(K)$ belong to the same class if and only if the corresponding ideals $J(\gamma), J(\gamma^\prime) \subseteq \O_K$ are in the same ideal class. It follows that we may partition $\fI(K)$ into $h(K)$ classes, corresponding to the ideal classes of $\O_K$. Such a class of ideal numbers, say $A$, has an integral basis $\{w_1, w_2\}$ such that
	\[A = \{a_1 w_1 + a_2 w_2 : (a_1, a_2) \in \bQ^2\}\]
	and
	\[A \cap \O_L = \{a_1 w_1 + a_2 w_2 : (a_1, a_2) \in \bZ^2\}.\]
	Further, the discriminant of $A$, viewed as a $\bZ$-lattice, is equal to $\Delta(K)$. Moreover for any basis $\{w_1, w_2\}$ of $A$ and $\alpha \in A \setminus \{0\}$ we have that $\{\alpha^{-1} w_1, \alpha^{-1} w_2\}$ is a basis of $K/\bQ$. This implies that there is a unique \emph{dual basis} $\{\widetilde{w_1}, \widetilde{w_2}\}$ of $A^{-1}$ defined by the condition 
	\begin{equation} \label{dualbase} \Tr(w_i \widetilde{w_j}) = \begin{cases} 1 & \text{if } i = j \\ 0 & \text{otherwise}.\end{cases}\end{equation}
	We use the notation $\Cl \fa, \Cl \alpha$ for the ideal class of the integral ideal $\fa \subset \O_K$ and the class of ideal numbers of the ideal number $\alpha$. \\ 
	
	Next we show that there is a correspondence between rank-two submodules of $\O_K$ and $\SL_2(\bZ)$-equivalence classes of irreducible integral binary quadratic forms having splitting field $K$. To establish this correspondence, first start with a rank-two submodule
	\[\Lambda = \{a_1 \omega_1 + a_2 \omega_2 : a_1, a_2 \in \bZ\}\] 
	with $\omega_1, \omega_2 \in \O_K$. Then the form
	\begin{equation} \label{formidcor}g(x,y) = N_{K/\bQ}(\omega_1 x + \omega_2 y) N (\fd^{-1}) \end{equation}
	is an irreducible integral binary quadratic form with splitting field $K$. \\ \\ 
	Conversely, take an arbitrary irreducible integral binary quadratic form $g$ which splits over $K$. Then there exists an integral non-singular matrix $M$ such that 
	\[g(x,y) = g^\ast ((x,y)M)\]
	where $g^\ast$ is a primitive integral binary quadratic form with discriminant equal to $\Delta(K)$. Gauss's composition law then implies that $g^\ast$ corresponds to an ideal class $\alpha$, and in particular, can be expressed in the form 
	\[g^\ast(x,y) = N_{K/\bQ}(\alpha_1 x + \alpha_2y) N(\alpha^{-1})\]
	with $\alpha = (\alpha_1, \alpha_2)$. Viewing $\alpha$ as a $\bZ$-module and applying the transformation induced by $M$ then gives the form $g$. \\ 
	
	Now let $\ff$ be the $\bZ$-module associated to $f$ with basis $\{\nu_1, \nu_2\}$ so that 
	\begin{equation} f(x,y) = N_{K/\bQ}(\nu_1 x + \nu_2 y) N(\fd(f)^{-1}),
	\end{equation}
where $\fd(f) = (\nu_1, \nu_2)$ is the ideal generated by $\nu_1, \nu_2$. Let $\psi_f$ be the ideal number of the ideal $\fd(f)$. Having identified $\ff$ we define the set of ideals:
\[\mathfrak{A}(f) = \{(\nu_1 a_1 + \nu_2 a_2) \fd(f)^{-1}: a_1, a_2 \in \bZ, \gcd(a_1, a_2)  = 1\}. \]

	We now put $\L$ for the set of ideals in $\O_K$ which are not divisible by a rational prime. An integral ideal number $\gamma \in \fI(K)$ is said to be primitive if $J(\gamma) \in \L$. Next put $\L_0$ the set of primitive ideal numbers $\gamma$ satisfying the condition
	\[\gamma = (N_{L/\bQ}(\gamma))^{1/2} \ep_0^z, \frac{-1}{2} < z \leq \frac{1}{2}, \gamma > 0,\]
	where $\ep_0 > 1$ is a fundamental unit of $\O_K$. \\ 
	
	We now want to use the above discussion to obtain a meaningful decomposition for 
	\begin{equation} c_n = \sum_{\substack{f(m, \ell) = n \\ \ell \in I(X)}} \Upsilon (\ell).
	\end{equation}
	We follow the set-up in \cite{HBM} and introduce, for a given primitive vector $\Bu = (u_1, u_2)$ let $\fF(\Bu)$ be the ideal in $\fA(f)$ given by $(\nu_1 u_1 + \nu_2 u_2) N(\fd(f)^{-1})$. We now put 
	\[\fR(X; n) = \{(u_1, u_2) \in \bZ^2 : u_2 \in I(X), f(u_1, u_2) = n\}. \]
	Note that $\fR(X;n)$ is finite for all $X > 0$ and $n \in \bZ$. We then have
	\[c_n = \sum_{\Bu \in \fR(X;n)} \Upsilon(u_2).\]
	Via the correspondence 
	\[(u_1, u_2) \mapsto (\nu_1 u_1 + \nu_2 u_2)N(\fd(f)^{-1}) = \fF(u_1,u_2)\]
	 $\fR(X;n)$ corresponds to a set of ideals. For a given integer $mn$ we then see that each element $(u_1, u_2)$ of $\fR(X; mn)$ corresponds to a set of ideal factorizations of the form
	\begin{equation} \label{ideal fact} \fm \fn = \fF(u_1, u_2)\end{equation}
	with $N(\fm) = m, N(\fn) = n$. Now associate to $\fm, \fn$ ideal numbers $m^\ast, n^\ast \in \L_0$. Then (\ref{ideal fact}) can be interpreted as multiplication in the set of ideal numbers. To make this concrete, first choose $\{w_1, w_2\}$ to be a basis for the ideal class $\Cl \fd(f)^{-1}$ such that $w_1 \psi_f^{-1} = z \nu_1$ and $w_2 \psi_f^{-1} =  \nu_2$ for some integer $z$. For each pair of ideal classes $A, B = A^{-1}\Cl f$ and any bases $\{a_1, a_2\}, \{b_1, b_2\}$ of $A,B$ respectively we have a composition law
	\[(a_1 x_1 + a_2 x_2)(b_1 y_1 + b_2 y_2) = \psi_f^{-1} (w_1 R_{A,B}(\Bx; \By) + w_2 Q_{A,B}(\Bx; \By)) \]
	holds. By our choice of $\{w_1, w_2\}$ this is equivalent to 
	\[(a_1 x_1 + a_2 x_2)(b_1 y_1 + b_2 y_2) = z \nu_1 R_{A,B}(\Bx; \By) + \nu_2 Q_{A,B}(\Bx; \By).\]
	This gives a bilinear mapping 
	\[\Phi_{A,B} : (\L_0 \cap A) \times (\L_0 \cap B) \rightarrow \{(x,y) \in \bR^2 : y \in I(X)\}, \]
	\[\Phi_{A,B}(m_1, m_2; n_1, n_2) = (R_{A,B}(\Bm; \Bn), Q_{A,B}(\Bm; \Bn))\]
	say. Let us write $A_0 = A \cap \L_0$ and $B_0 = B \cap \L_0$ for convenience. We then have
	\begin{equation} \label{key decomp} c_{mn} = \sum_{A \cdot B = \Cl f} \mathop{\sum \sum}_{\substack{\Bm \in A_0, \Bn \in B_0 \\ N(\Bm) = m, N(\Bn) = n \\ Q_{A,B}(\Bm; \Bn) \in I(X) }} \Upsilon(Q_{A,B}(\Bm; \Bn)).  
	\end{equation}
This is the desired analogue to equation (5.2) in \cite{FI1}. We summarize this below:

	\begin{proposition} \label{alg decomp} For $\C= \A, \B, \A^\spadesuit, \B^\spadesuit$ equation (\ref{key decomp}) holds. 
	\end{proposition}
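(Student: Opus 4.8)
Equation~(\ref{key decomp}) was already essentially produced in the discussion above; what remains is to verify that the construction there is a genuine bijection, after which the claimed identity follows by summing the weight over both sides. Since the four sequences $\A,\B,\A^\spadesuit,\B^\spadesuit$ differ only through the weight $\Upsilon\in\{\fZ,\Lambda,\fZ^\spadesuit,1\}$, and this weight depends on the second coordinate of the relevant vector alone, it is enough to exhibit a bijection between $\fR(X;mn)$ and the index set on the right of~(\ref{key decomp}) that preserves that coordinate; the weights then match term by term, proving all four cases at once. Thus most of the labour is in the setup (the composition law and the choice of basis $\{w_1,w_2\}$), and the proposition is really a repackaging of it.

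I would set up the forward map as follows. Given $\Bu=(u_1,u_2)\in\fR(X;mn)$, pass to the ideal $\fF(\Bu)\in\fA(f)$, which has norm $mn$. By unique factorization of integral ideals in $\O_K$ one may write $\fF(\Bu)=\fm\fn$ with $N(\fm)=m$ and $N(\fn)=n$. The key point is that this factorization is \emph{unique}: since $\fF(\Bu)\in\fA(f)$ it is divisible by no rational prime, so for each rational prime $p$ at most one prime of $\O_K$ above $p$ can divide $\fF(\Bu)$, and only to multiplicity one when $p$ ramifies (an inert prime, or a ramified prime to exponent $\geq 2$, or both primes above a split $p$, would each force $(p)\mid\fF(\Bu)$); hence the exponents occurring in $\fm$ are determined by $m$. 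Setting $A=\Cl\fm$ and $B=\Cl\fn$ we get $A\cdot B=\Cl f$. Next, lift $\fm$ and $\fn$ to ideal numbers: there is a \emph{unique} $\Bm\in A_0=A\cap\L_0$ with $J(\Bm)=\fm$ and a unique $\Bn\in B_0=B\cap\L_0$ with $J(\Bn)=\fn$, uniqueness being exactly the content of the normalization defining $\L_0$ (positivity together with $\gamma=(N_{L/\bQ}(\gamma))^{1/2}\ep_0^z$, $\frac{-1}{2}<z\leq\frac{1}{2}$), which selects one representative from each $\O_K^\ast$-orbit. Finally, the composition law and our choice of basis $\{w_1,w_2\}$ of $\Cl\fd(f)^{-1}$ with $w_1\psi_f^{-1}=z\nu_1$, $w_2\psi_f^{-1}=\nu_2$ give $\Bm\Bn=z\nu_1R_{A,B}(\Bm;\Bn)+\nu_2Q_{A,B}(\Bm;\Bn)$; comparison with the defining relation for $\fF(\Bu)$ identifies $u_2$ with $Q_{A,B}(\Bm;\Bn)$, so $Q_{A,B}(\Bm;\Bn)\in I(X)$ and $\Upsilon(u_2)=\Upsilon(Q_{A,B}(\Bm;\Bn))$. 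This gives a well-defined, injective map from $\fR(X;mn)$ into the index set of~(\ref{key decomp}).

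For surjectivity one runs the construction backwards: from an admissible triple $(A,B)$ with $A\cdot B=\Cl f$ and $\Bm\in A_0$, $\Bn\in B_0$ with $N(\Bm)=m$, $N(\Bn)=n$, $Q_{A,B}(\Bm;\Bn)\in I(X)$, the composition law yields a vector $(u_1,u_2)$ with $u_2=Q_{A,B}(\Bm;\Bn)\in I(X)$ and $f(u_1,u_2)=N(\Bm)N(\Bn)=mn$. \textbf{The main obstacle} is precisely this direction together with the unit bookkeeping: one must check that $J(\Bm)J(\Bn)$ is again divisible by no rational prime, so that the resulting ideal belongs to $\fA(f)$ and the corresponding $(u_1,u_2)$ is a primitive vector genuinely lying in $\fR(X;mn)$, and that distinct admissible triples produce distinct $\Bu$. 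In the indefinite case the infinitude of $\O_K^\ast$ makes the normalization the delicate step, and one has to confirm that the constraint $\frac{-1}{2}<z\leq\frac{1}{2}$ behaves suitably under the multiplication $\Bm\Bn$; I would handle this by mirroring the corresponding arguments in~\cite{FI1} and~\cite{HBM}. With the bijection established, summing $\Upsilon$ over both sides yields~(\ref{key decomp}) simultaneously for $\C=\A,\B,\A^\spadesuit,\B^\spadesuit$.
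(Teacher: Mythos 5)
Your argument is essentially the paper's own: Proposition \ref{alg decomp} is stated there with no separate proof, as a summary of the preceding construction (lattice point $\to$ ideal $\fF(\Bu)$ $\to$ unique factorization $\fm\fn$ with prescribed norms, using that $\fF(\Bu)$ is divisible by no rational prime $\to$ normalized ideal numbers in $\L_0$ $\to$ composition law identifying $u_2$ with $Q_{A,B}(\Bm;\Bn)$), which is exactly the bijection you spell out. The unit-bookkeeping obstacle you flag for the indefinite case is genuine, but it is likewise left implicit in the paper's main text, so your write-up is, if anything, more explicit than the original.
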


	\section{Type I estimates} 
	\label{Type I} 
	
	In this section we will establish the necessary Type I estimate we need, following the work of Friedlander and Iwaniec in \cite{FI3}. For this section, we shall put $\lambda(\ell)$ to be any function bounded by one supported on $r$-th powers of integers, and put
	\begin{equation} \label{type1 an} a_n = \sum_{\substack{f(\ell, m) = n \\ \ell \in I(X)}} \lambda(\ell).
	\end{equation}
	We recall that
	\[A_d(X) = \sum_{\substack{n \leq X \\ n \equiv 0 \pmod{d}}} a_n.\]
	For a given positive integer $\ell$ put
	\[\I(\ell; X) = \{x \in \bR^2 : 0 < f(x, \ell) < X\}\]
	and $\iota(\ell; X)$ to be the length of $\I(\ell; X)$. We then expect $A_d(X)$ to be well-approximated by
	\[M_d(X) = \frac{\rho_f(d)}{d} \sum_{\substack{\ell \in I(X) \\ \gcd(\ell, d) = 1}} \lambda(\ell) \frac{\varphi(\ell)}{\ell} \iota(\ell; X) \]
	where $\varphi$ is the Euler totient function and $\rho_f(d)$ is the number of solutions to the congruence
	\begin{equation} \label{quadcong} f(x,1) \equiv 0 \pmod{d}.\end{equation}
	
	Our goal is to establish:
	
	\begin{proposition} \label{Type1 prop} Suppose that $\lambda$ is supported on $r$-th powers. Then uniformly for $X^{\frac{1}{2}} \leq D \leq X^{\frac{r+1}{2r}}$ we have
		\[\sum_{d \leq D} \left \lvert A_d(X) - M_d(X) \right \rvert \ll D^{\frac{1}{4}} X^{\frac{3(r+1)}{8r}} (\log X)^{130}.\]
	\end{proposition}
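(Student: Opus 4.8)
The plan is to follow the classical strategy for Type I estimates for binary forms, as developed by Friedlander and Iwaniec in \cite{FI3} and used in \cite{LSX}. The starting point is to fix $\ell$ (an $r$-th power, so there are only $O(X^{1/(2r)})$ relevant values of $\ell$ in $I(X)$, since $\ell \asymp X^{1/2}$), and for each such $\ell$ count the integers $m$ with $f(m,\ell) \equiv 0 \pmod d$ and $0 < f(m,\ell) < X$. The congruence $f(m,\ell) \equiv 0 \pmod d$ cuts out a union of $\rho_f^{(\ell)}(d)$ residue classes for $m$ modulo $d/\gcd(\ell,d)$ — essentially $\rho_f(d)$ classes when $\gcd(\ell,d)=1$ — and on each class the count of $m$ in the interval $\I(\ell;X)$ is $\iota(\ell;X)/d$ plus an error of size $O(1)$. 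Summing the $O(1)$ trivially over the $O(X^{1/(2r)})$ values of $\ell$ and over residue classes would already cost $D$ times too much, so the whole point is to save in the error term by exploiting cancellation in the sum over $\ell$ and over the residue classes.

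Concretely, after reducing $r_d = A_d(X) - M_d(X)$ to a sum over $\ell$ of discrepancies of the form $\sum_{h} (\text{count in a residue class}) - (\text{expected})$, I would detect the interval condition and the congruence condition using the standard double-Kloosterman / completion-of-sums machinery: write the indicator of $0 < f(m,\ell) < X$ via a smooth partition and Poisson summation in $m$ modulo $d$, picking up Gauss-type sums (more precisely, the exponential sums $\sum_{m \bmod d} e(\overline{a} m h / d)$ restricted to roots of $f(\cdot,\ell) \equiv 0$), and then sum over $\ell$. The key arithmetic input is a bound for the resulting complete exponential sums attached to the quadratic congruence, which is a classical Gauss/Salié-sum estimate of square-root size; combined with the fact that these sums are twisted additively in both $d$ and $h$, one applies the large sieve or a Weil-type bound to the $\ell$-sum. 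The optimization of the parameters — balancing the "main" error coming from the zero frequency against the contribution of nonzero frequencies $h \neq 0$, which is controlled by the length of the dual range $H \asymp dX^{-1/2}\cdot(\text{something})$ — is what produces the exponent $D^{1/4} X^{3(r+1)/(8r)}$; the $(\log X)^{130}$ absorbs the smoothing losses and the divisor-function factors from $\rho_f(d)$ and from splitting $d$ into squarefree times powerful parts.

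The main obstacle is the same one that makes the $D \leq X^{(r+1)/(2r)}$ range possible rather than just $D \leq X^{1/2}$: because $\ell$ ranges over $r$-th powers only, the "automatic" averaging over $\ell$ that one has for the full form $f(m,\ell)$ is much weaker (only $\asymp X^{1/(2r)}$ terms instead of $\asymp X^{1/2}$), so one cannot afford to bound the $h\neq 0$ frequencies trivially after completion. The resolution, following \cite{FI3}, is to open up the square of the exponential sum, or equivalently to apply Poisson summation a second time (now in the $\ell$-variable, after substituting $\ell = k^r$ and treating $k$ as the genuine variable of length $X^{1/(2r)}$), turning the problem into one about a bilinear form in two completed variables whose coefficients are products of Gauss sums; a Weil bound on the resulting two-variable character sum (non-trivial on the generic fibre, with a careful treatment of the degenerate loci where the quadratic is singular modulo $p$, i.e. where $p \mid \Delta(f)$ or $p \mid \ell$) then gives the square-root cancellation needed. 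Getting the degenerate-fibre contributions and the $\gcd(\ell,d) > 1$ terms to be genuinely negligible, uniformly in $d$, and tracking the dependence on $r$ so that the final exponent comes out as $\frac{3(r+1)}{8r}$, is the technically delicate core of the proof; everything else is bookkeeping with smooth weights and divisor bounds.
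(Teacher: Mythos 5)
Your outline gets the skeleton right: smooth the sharp cutoff with a parameter $Y$, remove the coprimality condition by M\"obius, split $f(m,\ell)\equiv 0\pmod d$ into residue classes $m\equiv\alpha\ell$ attached to roots of $f(\alpha,1)\equiv 0\pmod d$, apply Poisson summation in $m$ so that the zero frequency produces $M_d$, and then balance the smoothing loss against the nonzero frequencies to extract the exponent $D^{1/4}X^{3(r+1)/(8r)}$ (the $r$-th power support of $\lambda$ entering through the short length $X^{1/(2r)}$ of the genuine variable). All of that matches the paper, which follows \cite{FI3} and \cite{LSX}.

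The genuine gap is in the arithmetic engine you propose for the dual frequencies. The paper does \emph{not} use Weil/Sali\'e bounds for complete exponential sums modulo a fixed $d$, nor a second Poisson summation in $\ell$. Its key input is Proposition 3 of \cite{BBDT}: the large sieve inequality
\[\sum_{D\le d\le 2D}\ \sum_{f(\nu,1)\equiv 0\pmod d}\Bigl|\sum_{n\le N}\alpha_n e\bigl(\tfrac{\nu n}{d}\bigr)\Bigr|^2\ll (D+N)\sum_n|\alpha_n|^2,\]
which encodes the collective well-spacing of the fractions $\nu/d$ as $d$ ranges over a dyadic block. After Cauchy--Schwarz this is applied with the combined variable $n=h\ell$, and the only place the $r$-th power structure is used is in bounding $\sum_n\bigl(\sum_{h\ell=n}\lambda(a\ell)\bigr)^2$ via the equation $h_1m_1^r=h_2m_2^r$. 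Your proposed substitute fails on two counts. First, a second Poisson summation in $k$ (where $\ell=k^r$) is counterproductive: $k$ has length $X^{1/(2r)}$ while the modulus can be as large as $X^{(r+1)/(2r)}$, so completion lengthens the sum by a factor of about $X^{1/2}$ rather than shortening it. Second, and more fundamentally, square-root cancellation in complete sums for each \emph{fixed} $d$ cannot recover the saving over the \emph{average} in $d$ that the large sieve provides; if it could, the thin-variable results of \cite{FouI} and \cite{FI1} would admit far more elementary proofs. Without invoking the spacing of roots of the quadratic congruence across the whole family of moduli, the claimed range $D\le X^{(r+1)/(2r)}$ is out of reach.
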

	
	As usual, our starting point is the following result from \cite{BBDT}, which states that the roots of quadratic congruences are separated as much as possible: 
	
	\begin{proposition}[Proposition 3 \cite{BBDT}] \label{BBDT prop}  Let $F(x,y) = \alpha x^2 + \beta xy + \gamma y^2 \in \bZ[x,y]$ be an arbitrary binary quadratic form whose discriminant is not a perfect square. For any sequence $(\alpha_n)$ of complex numbers and positive real numbers $D,N$ we have
		\[\sum_{D \leq d \leq 2D} \sum_{F(1, \nu) \equiv 0 \pmod{d}} \left \lvert \sum_{n \leq N} \alpha_n e \left(\frac{\nu n}{d} \right) \right \rvert^2 \ll_F (D + N) \sum_n \lvert \alpha_n \rvert^2. \]
	\end{proposition}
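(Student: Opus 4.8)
The plan is to evaluate $A_d(X)$ by detecting the divisibility $d\mid f(m,\ell)$ through the roots of the congruence $f(\nu,\ell)\equiv 0\pmod d$, applying Poisson summation in $m$, reading off $M_d(X)$ from the zero frequency, and bounding the nonzero frequencies — which after completing the square are governed by quadratic Gauss sums — by means of Proposition \ref{BBDT prop} to handle the averages over $\ell$ and over $d$. The input to that proposition will be the form $F(x,y)=f_0x^2+f_1xy+f_2y^2$, whose discriminant equals $\Delta(f)$ and hence is not a perfect square, $f$ being irreducible.

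First I would dispose of the terms with $\gcd(\ell,d)>1$: the primitivity $\gcd(m,\ell)=1$ built into $a_n$ forces, for each $p\mid\gcd(\ell,d)$ with $p\nmid f_2$, that $p\mid m$, which is impossible, so such $\ell,d$ occur only when $\gcd(\ell,d)$ divides a fixed power of $f_2$, and a crude count bounds their total contribution to $\sum_{d\le D}|A_d(X)-M_d(X)|$ by $\ll_f X^{1/2+\ep}$. For $\gcd(\ell,d)=1$ the solutions of $f(m,\ell)\equiv 0\pmod d$ are $m\equiv\ell\mu\pmod d$, $\mu$ running over the $\rho_f(d)$ roots of $f(\mu,1)\equiv 0\pmod d$; I would strip off $\gcd(m,\ell)=1$ by M\"obius inversion (or a truncated sieve of level $X^{\ep}$), which produces the factor $\varphi(\ell)/\ell$ in the main term and merely replaces $d$ by $ed$ with $e\mid\ell$, at the cost of a $\tau(\ell)$ absorbed into logarithmic powers. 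Then the constraint $0<f(m,\ell)\le X$ cuts out an interval (a bounded union of intervals for indefinite $f$) $\I(\ell;X)$ of length $\iota(\ell;X)\asymp X^{1/2}$; replacing its indicator by a smooth majorant and minorant and applying Poisson, the frequency $h=0$ contributes $\rho_f(d)\varphi(\ell)\iota(\ell;X)/(\ell d)$, which summed over $\ell$ and over the roots is exactly $M_d(X)$. It is essential that the smoothing discrepancy be reabsorbed into the main term — that $A_d(X)$ and $M_d(X)$ be compared as they stand — rather than estimated residue class by residue class, since a termwise loss of $O(1)$ per class already exceeds the target even for $D$ near $X^{1/2}$. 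The nonzero frequencies leave
\[A_d(X)-M_d(X)=\sum_{\ell}\lambda(\ell)\sum_{h\ne 0}c_{\ell,h,d}\,K_f(h\ell;d)+(\text{admissible error}),\qquad K_f(a;d)=\sum_{\substack{\mu\bmod d\\ f(\mu,1)\equiv 0\ (d)}}e(a\mu/d),\]
with $c_{\ell,h,d}\ll\iota(\ell;X)/d$, decaying rapidly once $|h|\gg d(\log X)^{O(1)}/\iota(\ell;X)$.

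The heart of the matter is passing from this to Proposition \ref{BBDT prop}, which requires a fixed coefficient sequence, whereas $c_{\ell,h,d}$ depends on $d$ — through $1/d$, through a phase $e(-a(\ell)h/d)$ from the endpoint of $\I(\ell;X)$, and through a smooth cutoff in $\iota(\ell;X)h/d$. I would remove this by completing the square (linearizing the relevant $\ell$-phase so it merges with $\mu$), dyadically decomposing $h$, and expanding the cutoff by Fourier/Mellin inversion; on each dyadic block this reduces matters to $\sum_{d\sim D'}\sum_{f(\mu,1)\equiv 0\,(d)}\big(\sum_n\alpha_n e(\mu n/d)\big)$, where $n$ runs over the values $h\ell$ with $n\ll D'(\log X)^{O(1)}$ and $(\alpha_n)$ is independent of $d$. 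Writing this as $\sum_d\xi_d\sum_\mu(\cdot)$ with $|\xi_d|=1$ and Cauchy--Schwarz in $(d,\mu)$ bounds it by $\big(\sum_{d\sim D'}\rho_f(d)\big)^{1/2}\big(\sum_{d\sim D'}\sum_{f(\mu,1)\equiv 0\,(d)}|\sum_n\alpha_n e(\mu n/d)|^2\big)^{1/2}$; the first factor is $\ll(D')^{1/2}(\log X)^{O(1)}$, and Proposition \ref{BBDT prop} bounds the square inside the second by $(D'+N)\sum_n|\alpha_n|^2$ with $N\ll D'(\log X)^{O(1)}$ and, since $|\lambda|\le 1$ and only $\ll X^{1/(2r)}(\log X)^{-1}$ values of $\ell$ are admissible, $\sum_n|\alpha_n|^2\ll X^{1/2}X^{1/(2r)}(\log X)^{O(1)}/D'$. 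Collecting these and summing over the $O(\log X)$ dyadic ranges $D'\le D$ yields $\sum_{d\le D}|A_d(X)-M_d(X)|\ll D^{1/2}X^{(r+1)/(4r)}(\log X)^{O(1)}$, which is $\ll D^{1/4}X^{3(r+1)/(8r)}(\log X)^{O(1)}$ throughout $D\le X^{(r+1)/(2r)}$ because there $D^{1/4}\le X^{(r+1)/(8r)}$; chasing the logarithmic powers gives the exponent $130$.

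The main obstacle is the bookkeeping in the middle steps: extracting $M_d(X)$ cleanly — the naive ``interval in a progression'' estimate is \emph{not} sharp enough, so the smoothing discrepancies must be folded into the main term — and then stripping the modulus-dependence off the Fourier weights so that the large-sieve bound of Proposition \ref{BBDT prop} applies; these follow Friedlander and Iwaniec's treatment of analogous Type I sums, and the non-smoothness (and possible disconnectedness for indefinite $f$) of the region $0<f(m,\ell)\le X$ forces the completion/smoothing to be done with care. I would also stress that restricting the sifting variable $\ell$ to $r$-th powers is indispensable: it is precisely what keeps $\sum_n|\alpha_n|^2$ small enough for the final estimate to beat $M_d(X)$, and the argument genuinely fails if $\lambda$ is supported on all integers.
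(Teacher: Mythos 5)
Your proposal does not prove the statement in question. The statement is Proposition \ref{BBDT prop} itself — the large-sieve-type inequality
\[\sum_{D \leq d \leq 2D} \ \sum_{F(1, \nu) \equiv 0 \pmod{d}} \Bigl\lvert \sum_{n \leq N} \alpha_n e \bigl(\tfrac{\nu n}{d} \bigr) \Bigr\rvert^2 \ll_F (D + N) \sum_n \lvert \alpha_n \rvert^2,\]
whose content is the quasi-uniform distribution (well-spacing) of the fractions $\nu/d$ attached to roots of the quadratic congruence $F(1,\nu)\equiv 0 \pmod d$ as $d$ ranges over a dyadic interval. What you have written is instead a sketch of the deduction of the paper's Type I estimate (Proposition \ref{Type1 prop}) \emph{from} Proposition \ref{BBDT prop}: you invoke the very inequality you were asked to establish as a black box in the Cauchy--Schwarz step. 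With respect to the assigned statement this is circular — nothing in your argument addresses why the roots $\nu \pmod d$ are spaced well enough for a bound of strength $(D+N)\sum_n|\alpha_n|^2$, which is the entire difficulty. (For comparison, the paper itself does not prove this proposition either; it imports it verbatim as Proposition 3 of \cite{BBDT}. A genuine proof, as in that reference and in the earlier Duke--Friedlander--Iwaniec treatment of $x^2+1$, parametrizes the roots of the congruence via the correspondence with ideals or lattice points of the quadratic order of discriminant $\Delta(F)$, shows that the resulting points $\nu/d \bmod 1$ are essentially well-spaced after a suitable decomposition, and then applies the classical large sieve / duality principle; none of these ingredients appears in your write-up.)

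As a secondary remark: the material you did write — Poisson summation in $m$, extraction of $M_d(X)$ from the zero frequency, and the reduction of the nonzero frequencies to the bilinear form handled by Proposition \ref{BBDT prop} — tracks the paper's Section \ref{Type I} reasonably closely in spirit, though the paper organizes the modulus-dependence differently (it keeps the coefficients $\xi_{h\ell}$ free of $c,\alpha$ after a single Cauchy--Schwarz in $(c,\alpha)$, rather than your dyadic-in-$h$ plus Mellin separation, and it optimizes a smoothing parameter $Y$ to reach the exponent $D^{1/4}X^{3(r+1)/(8r)}$ instead of interpolating from a $D^{1/2}$ bound). But that does not repair the central defect: the statement you were asked to prove is assumed, not proven.
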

	
	It is the fact that such a strong large sieve inequality is possible for roots of quadratic congruences exists that enables such powerful results to be proved about thin variables as in \cite{FouI}, \cite{FI1}, and \cite{HBL}. We show how to derive the Type I estimates we need by following the same steps carried out in \cite{FI3} and \cite{LSX}. 
	We first replace $A_d(X), M_d(X)$ with their smooth counterparts. Consider an auxiliary smooth function $\phi : \bR \rightarrow \bR$ satisfying: 
	\begin{enumerate}
		\item $\phi(u) = 1$ if $0 < u \leq X-Y$; 
		\item $\phi^{(j)}(u) \ll Y^{-j}$ for $j = 0, 1,2$; and
		\item $\phi(u) = 0$ if $u \geq X$.
	\end{enumerate}
	Here $X^{7/8} \leq Y \leq X$ will be chosen later. We then introduce (by abuse of notation)
	\begin{equation} \label{Adp} A_d(\phi) = \sum_{n \equiv 0 \pmod{d}} a_n \phi(n)
	\end{equation}
	and
	\begin{equation} \label{Mdp} M_d(\phi) = \frac{\rho_f(d)}{d} \sum_{\gcd(\ell, d) = 1} \lambda(\ell) \frac{\varphi(d)}{d} \int_0^\infty \phi(f(\ell, t)) dt.
	\end{equation}
	We estimate the differences by elementary means as follows. Note that
	\[\sum_{d \leq D} \left \lvert A_d(X) - A_d(\phi) \right \rvert \leq \sideset{}{^\prime} \sum_{\substack{X - Y < f(m, \ell) \leq X \\ \gcd(\ell, m) = 1}} \lambda(\ell) \tau \left(f(m, \ell) \right) + O \left(\sqrt{X} \log X \right),\]
	where $\Sigma^\prime$ means that the terms with a value of $\ell$ closest to $\sqrt{X}$ are omitted. We then have the following consequence of Landreau's inequality, resulting in the bound
	\[\sideset{}{^\prime} \sum_{\ell \ll \sqrt{X}} \sum_{\substack{d \leq X^{1/4} \\ \gcd(d, \ell) = 1}} \tau(d)^8 \sum_{\substack{X - Y < f(m, \ell) \leq X \\ f(m, \ell) \equiv 0 \pmod{d}}} 1. \]
	Note that the conditions 
	\[X - Y < f(m, \ell) \leq X \text{ and } \ell \in I(X)\]
	imply that $m$ is restricted to an interval of length $O_f(Y/\sqrt{X + \ell^2})$. Splitting into residue classes $m \equiv \alpha \ell \pmod{d}$ with $\alpha$ running over the roots of (\ref{quadcong}) we see that the above sum is bounded by
	\[O \left(Y  \left(\sum_{d \leq X^{1/4}}   \tau(d)^8 \frac{ \rho_f(d)}{d} \right) \left(\sideset{}{^\prime} \sum_{\ell \ll \sqrt{X}} |\lambda(\ell)| (X + \ell^2)^{-1/2} \right)  + X^{\frac{1}{4} + \frac{1}{2r}} (\log X)^{256} \right).\]
	We have the bounds 
	\[\sum_{d \leq X^{1/4}}   \tau(d)^8 \frac{ \rho_f(d)}{d} \ll (\log X)^{256}\]
	and
	\begin{align*}\sideset{}{^\prime} \sum_{\ell \ll \sqrt{X}} |\lambda(\ell)| (X + \ell^2)^{-1/2} & \leq \sideset{}{^\prime} \sum_{k \ll X^{1/2r}} (X + k^{2r})^{-1/2} \\
		& \ll X^{\frac{1 - 2r}{4r}} \sideset{}{^\prime} \sum_{k \ll X^{1/2r}} \left(X^{\frac{1}{2r}} + k \right)^{-1/2} \\
		& \ll X^{\frac{1 - 2r + 1}{4r}} = X^{\frac{1-r}{2r}}.
	\end{align*} 
	It follows that
	\begin{equation} \sum_{d \leq D} \left \lvert A_d(X) - A_d(\phi) \right \rvert \ll YX^{\frac{1-r}{2r}} (\log X)^{256}.
	\end{equation}
	Similarly, we obtain
	\begin{equation} \sum_{d \leq D} \left \lvert M_d(X) - M_d(\phi) \right \rvert \ll YX^{\frac{1-r}{2r}} (\log X)^{256}.
	\end{equation}
	We then proceed to decompose $A_d(\phi)$ as follows:
	\begin{align*} A_d(\phi) & = \mathop{\sum \sum}_{\substack{f(m, \ell) \equiv 0 \pmod{d} \\ \gcd(\ell, m) = 1}} \lambda(\ell) \phi(f(m, \ell)) \\
		& = \sum_{f(\alpha, 1) \equiv 0 \pmod{d}} \sum_\ell \lambda(\ell) \sum_{\substack{m \equiv \alpha \ell \pmod{d} \\ \gcd(\ell, m) = 1}} \phi(f(\ell, m)) \\
		& = \sum_{f(\alpha, 1) \equiv 0 \pmod{d}} \sum_a \mu(a) \sum_\ell \gamma_{a \ell} \sum_{m \equiv a \ell \pmod{d/\gcd(a,d)}} \phi(a^2 f( m, \ell))
	\end{align*}
	where we applied M\"{o}bius inversion to the inner sum to remove the awkward co-primality condition. We then apply Poisson's formula to the inner sum to obtain
	\[\sum_{m \equiv a \ell \pmod{d/\gcd(a,d)}} = \frac{\gcd(a,d)}{d} \sum_{h \in \bZ} e \left(\alpha h \ell \frac{\gcd(a,d)}{d} \right) \Phi_{a \ell} \left(\frac{\gcd(a,d)}{d} \right), \]
	where $\Phi_{a \ell}(v)$ is the Fourier integral
	\begin{equation} \label{FourPhi} \Phi_{a \ell}(v) = \int_{-\infty}^\infty \phi( a^2 f (\ell, t)) e(-vt) dt.  \end{equation} 
	Integrating by parts we obtain
	\begin{align*} \int_{-\infty}^\infty \phi( a^2 f (\ell, t)) e(-vt) dt & = \left[\frac{-1}{2 \pi i v} e^{-2 \pi i vt} \phi(a^2 f(\ell, t)) \right]_{-\infty}^\infty + \frac{a^2}{2 \pi i v} \int_{-\infty}^\infty \phi^\prime(a^2 f(\ell, t)) (2 c t) e(-vt) dt \\
		& = \left[\frac{-a^2}{4 \pi^2 v^2} e^{-2 \pi i vt} \phi^\prime(a^2 f(\ell, t))   \right]_{-\infty}^\infty + \int_{-\infty}^\infty \frac{a^2}{4 \pi^2 v^2} e^{-2 \pi i vt} \left(\left(2c \phi^\prime + 4 a^2 c t^2 \phi^{\prime \prime} \right) \left(a^2 f(\ell, t) \right) \right)
	\end{align*}
	The zero-frequency $h = 0$ gives exactly $M_d(\phi)$, and therefore we obtain
	\[\left \lvert A_d(\phi) - M_d(\phi) \right \rvert \leq \frac{1}{d} \sideset{}{^\flat} \sum_a \sum_{\substack{bc = d \\ b | a}} \rho_f(b) b \sum_{\substack{\alpha \pmod{c} \\ f(\alpha, 1) \equiv 0 \pmod{c}}} |W_a(c, \alpha)|,\]
	where
	\begin{equation} \label{Wac} W_a(c,\alpha) = \sum_{h > 0} \sum_\ell \lambda(a \ell) e \left(\frac{\alpha h \ell}{c} \right) \Phi_{a \ell} \left(\frac{h}{c} \right)\end{equation}
	and $\Sigma^\flat$ denotes a sum over square-free integers. Summing over the moduli $d$ in a dyadic segment we obtain
	\begin{equation} \label{Adf dya}  \sum_{D < d \leq 2D} \left \lvert A_d(f) - M_d(f) \right \rvert \leq \frac{1}{D} \sideset{}{^\flat} \sum \sideset{}{^\flat} \sum_a \sum_{b | a} \rho_f(b) b V_a(D/b),
	\end{equation}
	where
	\begin{equation} \label{Vac} V_a(C) = \sum_{C < c \leq 2C} \sum_{f(\alpha, 1) \equiv 0 \pmod{c}} \left \lvert W_a(c, \alpha) \right \rvert.\end{equation}
	Next we split the outer summation of (\ref{Wac}) into dyadic ranges $H < h \leq 2H$ and we will treat these partial sums separately. By (\ref{Vac}) we obtain
	\begin{equation} \label{Vac2} V_a(C) \leq \sum_H V_a(C,H)
	\end{equation}
	where 
	\[V_a(C,H) = \sum_{C < c \leq 2C} \sum_{f(\alpha, 1) \equiv 0 \pmod{c}} \left \lvert W_a(H; c, \alpha) \right \rvert\]
	and
	\[W_a(H; c, \alpha) = \sum_{H \leq h < 2H} \sum_\ell \lambda(a \ell) e \left(\frac{\alpha h \ell}{c} \right) \Phi_{a \ell} \left(\frac{h}{c} \right).\]
	We make the substitution $t \mapsto t H/h$ in (\ref{FourPhi}). Then trivially bounding the integrand in (\ref{FourPhi}) we obtain the bound
	\[\Phi_{a \ell}(v) \ll \frac{\sqrt{X}}{a}\]
	and by integrating by parts twice, we obtain the bound
	\[\Phi_{a \ell}(v) \ll \frac{\sqrt{X}}{a} \left(\frac{a \sqrt{X}}{vY} \right)^2.\]
	This follows from our hypothesis that $\phi \ll 1, \phi^\prime \ll Y^{-1}$, and $\phi^{\prime \prime} \ll Y^{-2}$. We thus obtain the bound
	\begin{equation} \label{Gach} F_{a \ell} \left(\frac{h}{c} \right) \ll G_a(C,H) = \frac{\sqrt{X}}{a} \min \left\{1, \left(\frac{aC \sqrt{X}}{HY} \right)^2 \right\}.
	\end{equation}
	We thus obtain the bound
	\begin{equation} \label{Vach} V_a(C,H) \ll G_a(C,H) U_a(C,H),
	\end{equation} 
	where 
	\begin{equation} U_a(C,H) = \sum_{C < c \leq 2C} \sum_{f(\alpha, 1) \equiv 0 \pmod{c}} \left \lvert \sum_{H < h \leq 2H} \sum_\ell \lambda(a \ell) \xi_{h \ell} e \left(\frac{\alpha h \ell}{c} \right) \right \rvert
	\end{equation}
	with some coefficients $\xi_{h \ell}$ which do not depend on $c, \alpha$ and which are bounded by $1$ in absolute value. The terms $U_a(C,H)$ are almost of the shape which can be dealt with by Proposition \ref{BBDT prop}; all that is needed is an application of Cauchy-Schwarz. Indeed we obtain
	\begin{equation} \label{Uach} U_a(C,H) \leq \left(\sum_{C < c \leq 2C} \sum_{f( \alpha, 1) \equiv 0 \pmod{c}} 1 \right)^{1/2} \left(\sum_{C < c \leq 2C} \sum_{f( \alpha, 1) \equiv 0 \pmod{c}} \left \lvert \sum_{H \leq h < 2H} \sum_\ell \lambda(a \ell) \xi_{h \ell} e \left(\frac{\alpha h \ell}{c} \right) \right \rvert^2 \right)^{1/2}. \end{equation} 
	We then write
	\[\sum_{H \leq h < 2H} \sum_\ell \lambda(a \ell) \xi_{h \ell} e \left(\frac{\alpha h \ell}{c} \right) = \sum_n \left(\sum_{\substack{h \ell = n \\ H \leq h < 2 H}} \lambda(a \ell) \right) \xi_n e \left(\frac{\alpha n}{c} \right)\]
	and apply Proposition \ref{BBDT prop}. We then obtain:
	\begin{equation} U_a(C,H) \ll C^{1/2} (C + H \sqrt{X/a}) \left(\sum_n \left(\sum_{\substack{h \ell = n \\ H \leq h < 2 H}} \lambda(a \ell) \right)^2 \right)^{1/2}.
	\end{equation} 
	Since $a$ is square-free and $a \ell$ is an $r$-th power, it follows that $\ell = a^{r-1} m^r$ with $m \leq a^{-1} X^{\frac{1}{2r}} = M$, say. Therefore we see that the sum above is bounded by the number of solutions to
	\[h_1 m_1^r = h_2 m_2^r\]
	with $H \leq h_1, h_2 < 2H$ and $m_1, m_2 \leq M$. The solutions are parametrized by $m_1 = s t_1, m_2 = st_2$ with $\gcd(t_1, t_2) = 1$, $s t_1, s t_2 \leq M$ and $h_1 = k t_2^r, h_2 = k t_1^r$ with $k \leq 4H(t_1^r + t_2^r)^{-1}$. It follows that 
	\[\sum_n \left(\sum_{\substack{h \ell = n \\ H \leq h < 2 H}} \lambda(a \ell) \right)^2 \leq 8 H M \mathop{\sum \sum}_{t_1, t_2 \leq M} (t_1^r + t_2^r)^{-1} \leq 16 H M \sum_{t \leq M} t^{-r}. \]
	The sum on the right is maximized when $r = 1$, giving the upper bound of $O (H a^{-1} X^{1/(2r)} (\log X))$. Inserting this into (\ref{Uach}) gives
	\begin{equation} \label{Ucah2} U_a(C,H) \ll C^{\frac{1}{2}} (C + H \sqrt{X} /a )^{\frac{1}{2}} H^{\frac{1}{2}} a^{-\frac{1}{2}} X^{\frac{1}{4r}} (\log X)^{\frac{1}{2}}.  \end{equation}
	Inserting (\ref{Ucah2}) into (\ref{Vach}) we see by (\ref{Gach}) that the series (\ref{Vac2}) converges, with the largest contribution occurring when 
	\[H \asymp a C Y^{-1} \sqrt{X}.\]
	This gives the bound
	\begin{equation} \label{Vac3} V_a(C) \ll a^{-1} Y^{-1} C^{\frac{3}{2}} X^{\frac{5r + 1}{4r}} (\log X)^{\frac{3}{2}} 
	\end{equation}
	Inserting this bound into (\ref{Adf dya}) then gives 
	\begin{equation} \sum_{D < d \leq 2D} \left \lvert A_d(\phi) - M_d(\phi) \right \rvert \ll Y^{-1} D^{1/2} X^{\frac{5r +1 }{4r}} (\log X)^{\frac{5}{2}}
	\end{equation}
	This bound holds uniformly for $d \leq D$. We may thus choose 
	\[Y = D^{\frac{1}{4}} X^{\frac{7r-1}{8r}} (\log X)^{\frac{5}{4} - 128}. \]
	This in turn gives the estimate
	\[\sum_{d \leq D} \left \lvert A_d(\phi) - M_d(\phi) \right \rvert \ll D^{\frac{1}{4}} X^{\frac{3(r+1)}{8r}} (\log X)^{130},\]
	which is enough to prove Proposition \ref{Type1 prop}. 
	
	\section{Estimating $\pi(\B)$: bilinear sum bounds}
	\label{piB bisum}
	
	We will deal with the sum (\ref{p-sum}) in the case of $\pi(\B)$ via Vaughan's identity, which is an elegant combinatorial identity which decomposes the von Mangoldt function. The ideas recorded here are from \cite{FouI}. Suppose $Y,Z \geq 1$ and suppose $n > Z$. Then:
	\begin{equation} \label{Vaughan} \Lambda(n) = \sum_{\substack{m | n \\ m \leq Y}} \mu(m) \log \frac{n}{m} - \sum_{\substack{mc | n \\ m \leq Y, c \leq Z}} \mu(m) \Lambda(c) + \sum_{\substack{mc | n \\ b > Y, c > Z}} \mu(m) \Lambda(c)
	\end{equation} 
	and if $n \leq Z$, the right hand side is zero. For $X > YZ$ then Vaughan's identity implies that
	\begin{align*} P(X) & = P(Z) +  \sum_{n \leq X} b_n \left(\sum_{\substack{m | n \\ m \leq Y}} \mu(m) \log \frac{n}{m} - \sum_{\substack{mc | n \\ b \leq Y, c \leq Z}} \mu(m) \Lambda(c) + \sum_{\substack{mc | n \\ m > Y, c > Z}} \mu(m) \Lambda(c) \right) \\
		& = P(Z) + \sum_{m \leq Y} \mu(m) \left(\sum_{\substack{n \leq X \\ m | n}} b_n \log n - \sum_{\substack{n \leq X \\ m | n}} b_n \log m - \sum_{c \leq Z} \Lambda(c) \sum_{\substack{n \leq X \\ mc | n}} b_n  \right) + \sum_{m > Y} \mu(m) \sum_{c > Z} \Lambda(c) \sum_{\substack{n \leq X \\ mc | n}} b_n \\
		& = P(Z) + \sum_{m \leq Y} \mu(m) \left(A_m^\prime(X) - A_m(X) \log m - \sum_{c \leq Z} \Lambda(c) A_{mc} (X) \right) + \sum_{\substack{md \leq X \\ m > Y}} \mu(m) \left( \sum_{\substack{c | d \\ c > Z}} \Lambda(c) \right) b_{md}  \\
		& = P(Z) + A(X;Y,Z) + B(X;Y,Z),
	\end{align*}
	say. We can treat $P(Z)$ by applying trivial bounds provided that $Z$ is sufficiently small with respect to $X$. The term $A(X;Y,Z)$ can be dealt with using the appropriate Type I estimates; see Proposition \ref{Type1 prop}. The term $B(X;Y,Z)$, as expected, will require some Type II estimates. Given our treatment of the algebraic aspects of bilinear sums in Section \ref{algebra}, the treatment below is very similar to that given in \cite{FouI} and \cite{LSX} so we will be fairly terse on the details. \\
	
	Our target is the estimate
	\[B(X;Y,Z) \ll \Delta X (\log X)^5,\]
	with $\Delta = (\log X)^{-A}$ for any large, fixed $A > 5$. Recall that
	\[B(X;Y,Z) = \sum_{Z < d < X/Y} \left(\sum_{c | d, c > Z} \Lambda(c) \right) \sum_{Y < m \leq X/d} \mu(m) b_{md}.\]
	Using the trivial estimate 
	\[\sum_{c | d, c > Z} \Lambda(c) \leq \log X\]
	we then find that
	\[|B(X;Y,Z)| \leq (\log X) \sum_{d > Z} \left \lvert \sum_{Y < m \leq X/d} \mu(m) b_{md} \right \rvert.\]
	We wish to break the sum into short sums of the shape
	\begin{equation} \label{BMN} \B(M,N) = \sum_{M < m \leq 2M} \left \lvert \sum_{N < n \leq N^\prime} \mu(n) b_{mn} \right \rvert
	\end{equation}
	with $N^\prime = e^\Delta N$. Considering $M = 2^j Z$ and $N = e^{\Delta k} y$ for various $j, k$, we then see that
	\begin{equation} \label{BXYZ up} |\B(X;Y,Z)| \leq (\log X) \mathop{\sum \sum}_{\substack{\Delta X < MN < X \\ M \geq Z, N \geq Y}} \B(M,N) + O \left(\Delta X (\log X)^2 \right)
	\end{equation}
	where the error term $O(\Delta X (\log X)^2)$ represents a trivial bound for the contribution of $\mu(m) b_{md}$ with $md \leq 2 \Delta X$ or $e^{-2 \Delta} X < md \leq X$, where the terms are not covered exactly. There are at most $2\Delta^{-1} (\log X)^2$ short sums $\B(M,N)$ in (\ref{BXYZ up}) so it suffices to show that
	\begin{equation} \B(M,N) \ll \Delta^2 X (\log X)^2
	\end{equation}
	for all $M,N$ in the relevant range. We have a trivial bound
	\[\B(M,N) \leq \sum_{M < m \leq 2M} \varrho(m) \sum_{N < n \leq N^\prime} \varrho(n) \ll \Delta MN,\]
	and we can use this bound to obtain
	\[\B(M,N) \leq \sum_{d \leq \Delta^{-1}} \B_d(M,N) + O(\Delta^2 X),\]
	where $\B_d(M,N)$ consists of the sub-sum of $\B(M,N)$ where $\gcd(m,n) = d$. The error term $O(\Delta^2 X)$ comes from the trivial bound and the condition $d > \Delta^{-1}$. Next observe that
	\[\B_d(M,N) \leq \B_1(dM, N/d),\]
	and so it suffices to show
	\begin{equation} \B_1(M,N) \ll \Delta^3 X (\log X)^2
	\end{equation}
	for $M,N$ satisfying $M \geq Z, N \geq \Delta Y$ and $\Delta X < MN < X$. \\ 
	
	Applying (\ref{key decomp}) to (\ref{BMN}) we then obtain 
	\[\B_1(M,N) \leq \sum_{A \cdot B = \Cl f}  \sum_{\substack{\Bm \in A \\ M < N(J(\Bm)) \leq 2M  }} \left \lvert  \sum_{\substack{\Bn \in B \\ N < N(J(\Bn)) \leq N^\prime \\ (\widehat{\alpha_1}; \widehat{\alpha_2}) \in \K_{\beta_j}^\dagger}} \mu(N(J(\Bn)) \Lambda \left(Q_{A,B}(\Bm; \Bn) \right)  \right \rvert. \]
	Removing the co-primality condition via M\"{o}bius inversion as in \cite{FouI} and \cite{LSX}, as well as partitioning the sum $\B_1(M,N)$ based on the classes $A,B$, it suffices to show that the sums 
	\begin{equation} \label{CrMN} \C_r(M,N) = \sum_{\substack{M < g_1(x_1, x_2) \leq 2M \\ (x_1, x_2) \in \K_1}} \left \lvert \sum_{\substack{N < g_2(y_1, y_2) \leq N^\prime \\ (y_1, y_2) \in \K_2}} \mu(r g_2(y_1, y_2)) \Lambda (Q(x_1, x_2; y_1, y_2)) \right \rvert \end{equation}
	are bounded by $O\left(\Delta^5 X (\log X)^2 \right)$ for every $r, M,N$ satisfying
	\[r < \Delta^{-2}, M \geq Z, N \geq \Delta^3 Y \text{ and } \Delta X < MN < X\]
	and $\K_1, \K_2$ domains which are contained in $[-CX, CX]^2$ for some absolute constant $C$ depending only on our choices of fundamental domains. \\ \\
	If we write
	\[Q(x_1, x_2; y_1, y_2) = x_1 \ell_1(y_1, y_2) + x_2 \ell_2(y_1, y_2)\]
	for linear forms $\ell_1, \ell_2 \in \bZ[x,y]$ then the condition that $Q(\Bx; \By) = 0$ implies that $(\ell_1(y_1, y_2), \ell_2(y_1, y_2))$ is proportional to $(-x_2, x_1)$. We then make a change of variables in the inner sum, obtaining 
	\[\C_r(M,N) =  \sum_{\substack{M < g_1(x_1, x_2) \leq 2M \\ (x_1, x_2) \in \K_1}} \left \lvert \sum_{\substack{N < g_2^\ast(z_1, z_2) \leq N^\prime \\ (y_1, y_2) \in \K_2}} \mu(r g_2^\ast(z_1, z_2)) \Lambda (x_1 z_1 + x_2 z_2) \right \rvert \]
	where $z_i = \ell_i(y_1, y_2)$ and $g_2^\ast$ is such that $g_2^\ast(z_1, z_2) = g_2(y_1, y_2)$. 
	We are then left with the bilinear sum (\ref{gen bisum}) 
	where $\alpha$ is supported in a disk of radius $R_1$ and $\beta$ supported on an annulus $\bA(R_2,2R_2)$ having inner radius $R_2$ and outer radius $2R_2$, say. Further, we assume that $\lambda$ is supported on $|\ell| \leq CAB$ for some absolute constant $C$ depending only on $f$, so in particular the $\ell^2$-norm of $\lambda$ is finite. Applying the Cauchy-Schwarz inequality we obtain
	\begin{equation} \label{CAB bd1} \left \lvert \C(\alpha, \beta; \lambda) \right \rvert \leq \sum_{\ell} |\lambda(\ell)| \sideset{}{^\ast} \sum_{\By} |\beta(\By)| \left \lvert \sum_{\Q(\Bx; \By) = \ell} \alpha(\Bx) \right \rvert \leq \lVert \lambda \rVert_2 \cdot \lVert \beta \rVert_2 \D(\alpha)^{1/2},
	\end{equation}
	where $\lVert \cdot \rVert_2$ denotes the $\ell^2$-norm and 
	\[\D(\alpha) = \sideset{}{^\ast} \sum_{\By} \G(\By) \sum_\ell \left \lvert \sum_{\Q(\Bx; \By) = \ell} \alpha(\Bx) \right \rvert^2\]
	with $\G$ is any non-negative function with $\G(\By) \geq 1$ on the annulus $\bA(R_2, 2R_2)$. As in \cite{FouI} and \cite{LSX} it will be convenient to suppose that $\G$ is a radial, compactly supported, and smooth function. Squaring out we obtain
	\begin{equation} \label{dalpha} \D(\alpha) = \sideset{}{^\ast} \sum_{\By} \G(\By) \sum_{\Q(\Bx; \By) = 0} (\alpha \ast \alpha)(\Bx),
	\end{equation}
	with
	\[(\alpha \ast \alpha)(\Bx) = \sum_{\Bu - \Bv = \Bx} \alpha(\Bu) \ol{\alpha}(\Bv).\]
	Note that
	\[(\alpha \ast \alpha)(0) = \lVert \alpha \rVert_2^2.\]
	The orthogonality relation $\Bx \cdot \By = 0$ for a primitive $\Bx$ in (\ref{dalpha}) is equivalent to the statement that $\By$ is a rational integer multiple of $\Bx^\prime = (-x_2, x_1)$. It follows that
	\begin{equation} \D(\alpha) = \sum_{c \in \bZ} \sideset{}{^\ast} \sum_{\By} \G(\By) (\alpha \ast \alpha)(c \By) = \D_0(\alpha) + 2 \D^\ast(\alpha),
	\end{equation}
	where $\D_0(\alpha)$ denotes the contribution with $c = 0$ and $\D^\ast(\alpha)$ that of all $c > 0$. Thus 
	\[\D_0(\alpha) = \lVert \alpha \rVert_2^2 \sideset{}{^\ast} \sum_{\By} \G(\By) \ll \lVert \alpha \rVert_2^2 B^2\]
	and
	\[\D^\ast(\alpha) = \sum_{\Bx \ne \mathbf{0}} \G(\Bx^\ast) (\alpha \ast \alpha)(z),\]
	where $\Bx^\ast$ is a primitive vector proportional to $\Bx$. Again, we may apply M\"{o}bius inversion to remove the primitivity conditions, and obtain
	\[\D^\ast(\alpha) = \mathop{\sum \sum}_{b, c > 0} \mu(b) \D(\alpha; bc) \]
	where
	\[\D(\alpha; bc) = \sum_{\Bx \equiv 0 \pmod{bc}} \G(c^{-1} \Bx) (\alpha \ast \alpha)(\Bx).\]
	From here, the treatment is identical to the one given in \cite{FouI} and \cite{LSX} as no structure of the Gaussian integers or even an imaginary quadratic field is necessary. This completes our treatment for $\pi(\B)$. 
	
	\section{Type II estimates for $\pi(\A) - \pi(\B)$: preliminary steps}
	\label{piApiB} 
	
	The goal of this section is to discuss the proof of Proposition \ref{main bisum}. We note that Proposition \ref{main bisum} is exactly analogous to Proposition 5 in \cite{HBL}, though our sequences $\A, \B$ are different. We note that we have largely divorced the arithmetic of our field $K$ with the analysis of bilinear sums in Section \ref{algebra}, and so we are in good shape to import results from \cite{HBL} directly. We will make clear which components of \cite{HBL} can be used without change, and where we need to make suitable modifications. \\
	
	We substitute (\ref{key decomp}) into (\ref{bisum prop eq}) to obtain
	\begin{equation} \label{bisum trans1} \sum_{N < n \leq 2N} \sum_{m < X/N} \alpha_m \beta_n (a_{mn} - b_{mn}) \end{equation} 
	\[= \sum_{A \cdot B = \Cl f}  \sum_{\substack{w \in A_0 \\ N < N(J(w)) \leq 2N }} \beta_w \sum_{\substack{v \in B_0 \\ N(J(v)) < X/N}} \alpha_v (\fZ(Q_{A,B}(v, w)) - \Lambda(Q_{A,B}(v,w)),  \]
	where $\alpha_v = \alpha_{N(J(v))}, \beta_w = \beta_{N(J(w))}$. Writing each bilinear form $Q$ above as $w_1 \ell_1(v_1, v_2) + w_2 \ell_2(v_1, w_2)$ say and applying a linear change of variables to the inner sum, we transform the inner sum 
	\[\sum_{\substack{v \in B_0 \\ N(J(v)) < X/N}}  \alpha_v (\fZ(Q(v, w)) - \lambda(Q(v,w)) = \sum_\Bz \alpha_{\Bz} (\fZ(w_1 z_1 + w_2 z_2) - \Lambda(w_1 z_1 + w_2 z_2))\]
	say, with the support of $\Bz$ being the image of the support of the sum on the left under the linear transformation. Note that the linear transformation depends only on $Q$ and not $X$. \\ \\
	After applying these linear transformations, we have now changed all of our bilinear forms $Q$ to 
	\[Q_0(x_1, x_2; y_1, y_2) = x_1 y_1 + x_2 y_2.\]
	 Let us write $\S_1(X) \times \S_2(X)$ for the union of the images of the supports of $w, v$ in (\ref{bisum trans1}), so that (\ref{bisum trans1}) becomes
	\begin{equation} \label{bisum trans2} h(K) \sum_{\Bw \in \S_1(X)} \sum_{\Bv \in \S_2(X)} \alpha_{\Bw} \beta_{\Bv} (\fZ(w_1 v_1 + w_2 v_2) - \Lambda(w_1 v_1 + w_2 v_2)).
	\end{equation}

	\begin{remark}
		Since the linear transformations depend only on the class $1 \leq j \leq h(K)$ and the corresponding choice of fundamental domain, the image of the set $\F_j(X)$ with $N < N(J(w)) \leq 2N$ is contained in the annulus $\bA(c_1 N, c_2 N)$ for some positive numbers $c_1, c_2$ independent of $N$. Similarly, the image of $\F_j^\prime(X)$ with $N(J(v)) \leq X/N$ is contained in the disk $\bD(c_3 X/N)$ for some $c_3 > 0$ depending at most on $f$. This observation is crucial because we will use the Euclidean norm and the corresponding geometry to treat our sums when we wish to import estimates from \cite{FI1} and \cite{HBL}, and switch to using the norm on $\O_K$ and the corresponding induced norm on ideal numbers when the arithmetic of $K$ is relevant.  \end{remark} 
	Since we are looking to save an arbitrary power of $\log$, it suffices to further subdivide the support of (\ref{bisum trans2}), and consider sums of the shape
	\[\sum_{\substack{\Bw \\ N < \lVert \Bw \rVert_2 \leq 2N}} \alpha_{\Bw} \sum_{\substack{\Bz \\ \lVert \Bz \rVert_2 \leq X/N}} \beta_{\Bz} \left(\fZ(w_1 z_1 + w_2 z_2) - \Lambda(w_1 z_1 + w_2 z_2) \right). \] 
	\begin{remark} We abuse notation and refer to the terms $\beta_n$ for some positive integer $n$ as well as $\beta_\Bz$ for some vector $\Bz \in \bZ^2$. In the former case we interpret the support of $\beta_n$ to be a set of ideal numbers of $\O_K$ in a fixed class having norm equal to $n$, and in the latter we simply interpret the set of ideal numbers as a $\bZ$-module. 
	\end{remark} 
	Put
	\begin{equation} S_1(\Bz, \Bw) = \sum_{\substack{p^2 \in I \\ w_1 z_1 + w_2 z_2 = p^2}} 2 p \log p \text{ and } S_2(\Bz, \Bw) = \sum_{\substack{p \in I \\ w_1 z_1 + w_2 z_2 = p }} \log p  
	\end{equation}
and
\[S_1^\spadesuit(\Bz, \Bw) = \sum_{\substack{k^2 \in I(X) \\ w_1 z_1 + w_2 z_2 = k}} 2k \text{ and } S_2^\spadesuit(\Bz, \Bw) = \sum_{\substack{k \in I \\ w_1 z_1 + w_2 z_2 = k}} 1.\]
	
	Our aim is to obtain the estimates
	\begin{equation} \label{bisum eq3} \sum_{\substack{\Bw \\ N < \lVert \Bw \rVert_2 \leq 2N}} \sum_{\substack{\Bz \\ \lVert \Bz \rVert_2 \leq X/N}} \alpha_{\Bw} \beta_{\Bz} (S_1(\Bz, \Bw) - S_2(\Bz, \Bw)) \ll_A \frac{X}{(\log X)^A}
	\end{equation} 
and 
\[\sum_{\substack{\Bw \\ N < \lVert \Bw \rVert_2 \leq 2N}} \sum_{\substack{\Bz \\ \lVert \Bz \rVert_2 \leq X/N}} \alpha_{\Bw} \beta_{\Bz} \left(S_1^\spadesuit(\Bz, \Bw) - S_2^\spadesuit(\Bz, \Bw) \right) \ll_A \frac{X}{(\log X)^A}.\]
	We are almost ready to import the remaining argument from \cite{HBL}. Let us put
	\[\R(N; X) = \left \{\Bz \in \bZ^2 : N \leq \lVert \Bz \rVert_2 < 2N, |\arg(\Bz) - k\pi/2| \leq (\log X)^{-A} \forall k \in \bZ \right\} \]
	We note that, as we will use repeatedly later (and we will remind the reader again of this when this becomes relevant), that once we subdivide the regions into small dyadic ranges that the conditions $\lVert \Bz \rVert_2 \sim N$ and $N(z) \sim N$ are nearly identical. Here $\Bz = \widehat{z}$ is the vector associated to $z$, viewed as an ideal number of $K$. \\ \\
	The following results from \cite{HBL} can now be imported without change:
	\begin{lemma}[Lemma 9, \cite{HBL}] \label{HBL-L9} Suppose that both $\Bz$ and $q$ are fixed. Then the number of possible $\Bw$ with $q = w_1 z_1 + w_2 z_2$ is $O((M/N)^{1/2})$. 
	\end{lemma}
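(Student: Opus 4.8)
The plan is to freeze $\Bz=(z_1,z_2)$ and $q$ and regard $q=w_1z_1+w_2z_2$ as a single linear Diophantine equation in $\Bw=(w_1,w_2)\in\bZ^2$, then count its solutions in the annulus $N<\lVert\Bw\rVert_2\le 2N$ by a lattice-point count on a line. If the equation has no solution there is nothing to prove; otherwise I fix one solution $\Bw^{(0)}$, so that the full solution set in $\bZ^2$ is $\{\Bw^{(0)}+t\,\Bz^{\perp}:t\in\bZ\}$, where $\Bz^{\perp}=\gcd(z_1,z_2)^{-1}(-z_2,z_1)$ is the primitive integer vector orthogonal to $\Bz$. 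Consecutive solutions lie at Euclidean distance $\lVert\Bz^{\perp}\rVert_2\ge 1$, so estimating the quadratic $t\mapsto\lVert\Bw^{(0)}+t\,\Bz^{\perp}\rVert_2^2$ shows that the number of $t$ placing $\Bw^{(0)}+t\,\Bz^{\perp}$ in the annulus — hence the number of admissible $\Bw$ — is $\ll 1+N/\lVert\Bz^{\perp}\rVert_2\ll 1+N$.

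It then suffices to observe that $N\ll X^{1/4}$ in the ranges at hand, so that $1+N\ll X^{1/4}\ll (X/N^2)^{1/2}=(M/N)^{1/2}$ once one uses $M\asymp X/N$. The bound $N\ll X^{1/4}$ holds because $\Bw$ is the image, under a fixed $X$-independent linear map, of the coordinate vector of a \emph{reduced} ideal number $w\in\L_0$ of ideal norm $<X^{1/2-\delta_2}$ (coming from Proposition~\ref{main bisum}), and the reducedness condition defining $\L_0$ makes the Euclidean length of that coordinate vector comparable to the square root of the ideal norm. If a sharper count is ever needed one can do better by noting that $\gcd(z_1,z_2)=O_f(1)$: indeed $\Bz$ is the image of the coordinate vector of an ideal number $v\in\L_0$ under the fixed invertible integral linear map that turns the composition form $Q_{A,B}$ into $Q_0(\Bx;\By)=x_1y_1+x_2y_2$, and since $J(v)$ is divisible by no rational prime that coordinate vector is primitive, whence $\gcd(z_1,z_2)$ divides the (nonzero, $f$-bounded) determinant of the map; this gives $\lVert\Bz^{\perp}\rVert_2\gg_f\lVert\Bz\rVert_2$ and hence a count of $O(1+N/\lVert\Bz\rVert_2)$.

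I do not expect a genuine obstacle: this is a routine lattice-point lemma, and the count is in fact $O(1)$ throughout the relevant ranges, comfortably $O((M/N)^{1/2})$. The only two points that repay a moment's care are keeping the normalisation straight — remembering that for a reduced ideal number the Euclidean length of its coordinate vector is comparable to the square root of its ideal norm, so that the parameter $N$ is genuinely $\ll X^{1/4}$ here — and, if one uses the sharper form, checking that $\gcd(z_1,z_2)$ stays bounded after the composition change of variables, for which the primitivity of the ideal number $v$ and the boundedness of the relevant determinant are exactly what is needed.
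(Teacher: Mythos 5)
The lattice-geometric skeleton here is sound: the solution set is (if nonempty) a coset of the rank-one lattice $\bZ\cdot\Bz^{\perp}$, the spacing of consecutive solutions is $\lVert\Bz\rVert_2/\gcd(z_1,z_2)$, and your two auxiliary observations --- that $\gcd(z_1,z_2)=O_f(1)$ by primitivity of the ideal number together with the bounded determinant of the composition change of variables, and that a reduced ideal number's coordinate vector has Euclidean length comparable to the square root of its ideal norm --- are both correct and both needed. The problem is that you have assembled these pieces with the two variables interchanged, so your main argument proves the wrong statement. In the way the lemma is actually used (in Lemma \ref{HBL-L10}, in the diagonal estimate, and in (\ref{bisum cancel1})), the \emph{fixed} vector $\Bz$ is the one carrying the $\beta$-coefficients, of ideal norm $\asymp N$ with $N\in[X^{1/4+\xi},X^{1/2-\delta_2})$ from Proposition \ref{main bisum}, hence of Euclidean length $\asymp\sqrt N$; the \emph{counted} vector $\Bw$ carries the $\alpha$-coefficients and ranges over ideal norms up to $M=X/N\gg X^{1/2+\delta_2}$, i.e.\ over a disk of Euclidean radius $O(\sqrt M)$. (The paper's displays swap $\alpha$ and $\beta$ and conflate $\lVert\cdot\rVert_2$ with the norm form, which is what misled you; the subsequent estimate $\sum_{N\le\lVert\Bz\rVert_2<2N}1\ll N$ shows the intended reading.)

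With the correct orientation, the ``spacing $\ge1$'' bound you rely on gives only $O(\sqrt M)=O(\sqrt{X/N})$, which exceeds the claimed $O((M/N)^{1/2})=O(\sqrt X/N)$ by a factor of $\sqrt N\ge X^{1/8+\xi/2}$; correspondingly, your assertions that $N\ll X^{1/4}$ and that ``the count is in fact $O(1)$'' are both false in the relevant ranges --- the count is genuinely of size up to $\sqrt{M/N}$, which can be as large as $X^{1/4-\xi}$. The ingredient you relegate to ``if a sharper count is ever needed'' is therefore not optional but is the entire content of the lemma: primitivity gives spacing $\gg_f\lVert\Bz\rVert_2\gg\sqrt N$, the constraint $\lVert\Bw\rVert_2\ll\sqrt M$ confines $\Bw$ to a segment of length $O(\sqrt M)$, and $M\ge N$ turns $1+\sqrt{M/N}$ into $O(\sqrt{M/N})$. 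That one-line argument is precisely the proof of Lemma 9 in \cite{HBL}, which the present paper imports verbatim without reproving.
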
 
	\begin{lemma}[Lemma 10, \cite{HBL}] \label{HBL-L10} We have
		\[\sum_{\Bz \in \R(N;X)} \sum_{\Bw} \beta_{\Bz} \alpha_{\Bw} S_j(\Bz, \Bw) \ll_A X(\log X)^{-A}\]
		for $j = 1,2$. 
	\end{lemma}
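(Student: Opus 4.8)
The set $\R(N;X)$ collects the ``degenerate'' vectors $\Bz$ pointing within angle $(\log X)^{-A}$ of a coordinate axis, and the content of Lemma \ref{HBL-L10} is that their total contribution is negligible; the proof is that of Lemma 10 of \cite{HBL}, the only difference being that the reductions of Sections \ref{algebra} and \ref{piApiB} have already turned the relevant bilinear forms into the dot product $Q_0(\Bx;\By) = x_1y_1 + x_2y_2$, so that everything below is a lattice-point count over $\bZ^2$ with its Euclidean geometry. The plan is as follows. First I would reduce to counting: for fixed $(\Bz,\Bw)$ the quantity $w_1z_1 + w_2z_2$ is a single number, so it determines the prime $p$ (or the prime $p$ with $p^2 = w_1z_1+w_2z_2$) occurring in $S_2(\Bz,\Bw)$ (resp.\ $S_1(\Bz,\Bw)$); thus each $S_j(\Bz,\Bw)$ has at most one nonzero term, and since $I(X)$ lies at height $\asymp X^{1/2}$ we obtain the pointwise bounds $S_1(\Bz,\Bw) \ll X^{1/4}\log X$ and $S_2(\Bz,\Bw) \ll \log X$. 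Using $|\alpha_\Bw|, |\beta_\Bz| \le 1$ it therefore suffices to bound, for $j = 1,2$, the number $\mathcal{P}_j$ of pairs $(\Bz,\Bw)$ with $\Bz \in \R(N;X)$, with $\Bw$ in the relevant annulus, and with $w_1z_1+w_2z_2$ equal to a prime square (if $j=1$) or a prime (if $j=2$) in $I(X)$, and to show that $X^{1/4}(\log X)\,\mathcal{P}_1 \ll_A X(\log X)^{-A}$ and $(\log X)\,\mathcal{P}_2 \ll_A X(\log X)^{-A}$.

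To estimate $\mathcal{P}_j$ I would split according to $g = \gcd(z_1,z_2)$. When $g = 1$: the angular condition defining $\R(N;X)$ confines one coordinate of $\Bz$ to an interval shorter than $|\Bz|$ by a factor $(\log X)^{-A}$, so the number of admissible $\Bz$ is $(\log X)^{-A}$ times the count one would have without the restriction; for each such $\Bz$ the solutions $\Bw$ of $w_1z_1+w_2z_2 = q$ run through an arithmetic progression with step vector of length $\asymp |\Bz|$, of which Lemma \ref{HBL-L9} puts $O((M/N)^{1/2})$ into the $\Bw$-annulus, while only $\ll X^{1/4}(\log X)^{-1}$ of the integers $q \in I(X)$ are prime squares (resp., by the prime number theorem, $\ll X^{1/2}(\log X)^{-2}$ are primes). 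The terms with $g > 1$ are smaller: there $g \mid q$, so $q$ being prime or a prime square forces $g$ itself to be prime, and writing $\Bz = g\Bz'$ and summing the resulting series over such $g$ costs only another factor $O(1)$. Assembling these bounds and using that $w_1z_1+w_2z_2 \in I(X)$ forces $|\Bz|\,|\Bw| \asymp X^{1/2}$, the powers of $X$ cancel and $\mathcal{P}_j$ inherits the saving $(\log X)^{-A}$; multiplying by the pointwise bound on $S_j$ yields the claim. The statements for $S_1^\spadesuit$ and $S_2^\spadesuit$ go through identically, the weights $2k \ll X^{1/4}$ and $1$ being no larger than $2p\log p$ and $\log p$.

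The delicate point is the final assembly: the thinness of $\R(N;X)$ provides only the single factor $(\log X)^{-A}$, and this must survive multiplication by the comparatively large weight $2p\log p \asymp X^{1/4}\log X$ appearing in $S_1$. This works precisely because Lemma \ref{HBL-L9} gives the \emph{sharp} count of $\Bw$ on each line $w_1z_1+w_2z_2 = q$ and because the admissible values of $q$ are sparse (prime squares, resp.\ primes) inside the short interval $I(X)$ --- exactly the two inputs used by Heath-Brown and Li --- so that no arithmetic of $K$ beyond the $\bZ$-lattice structure already isolated in Section \ref{algebra} enters.
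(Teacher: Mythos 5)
Your argument is correct and is essentially the proof of Lemma 10 of \cite{HBL}, which the paper simply imports without reproving: the trivial count of lattice points in the thin sectors $\R(N;X)$, times the $O((M/N)^{1/2})$ bound of Lemma \ref{HBL-L9} per pair $(\Bz,q)$, times the sparseness of primes (resp.\ prime squares) in the short interval $I(X)$, assembles via $\|\Bz\|\,\|\Bw\|\asymp X^{1/2}$ exactly as you describe. The only cosmetic point is that the bound on primes in $I(X)$ should be attributed to Brun--Titchmarsh (or the trivial bound $|I(X)|\ll X^{1/2}(\log X)^{-1}$, which already suffices) rather than to the prime number theorem in short intervals.
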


We remark that Lemma \ref{HBL-L10} apply equally well with $S_j(\Bz, \Bw)$ replaced with $S_j^\spadesuit(\Bz, \Bw)$. \\

	As is standard at this juncture (see \cite{FouI}, \cite{FI1}, and \cite{HBL}), we apply Cauchy-Schwarz to obtain
	\[\left(\sum_{\Bw} \alpha_\Bw \sum_{\Bz} \beta_{\Bz} (S_1(\Bz, \Bw) - S_2(\Bz, \Bw)) \right)^2 \leq \sum_\Bw \alpha_\Bw^2 \sum_\Bw \left(\sum_{\Bz} \beta_{\Bz} (S_1(\Bz, \Bw) - S_2(\Bz, \Bw)) \right)^2.\]
	It is then sufficient to show that
	\begin{equation} \label{bisum cancel0} \sum_{\By, \Bz} \beta_{\By} \beta_{\Bz} \sum_\Bw (S_1^\spadesuit(\By, \Bw) - S_2^\spadesuit(\By, \Bw))(S_1^\spadesuit(\Bz, \Bw) - S_2^\spadesuit(\Bz, \Bw)) \ll_A \frac{XN}{(\log X)^A}\end{equation} 
	and
	\begin{equation} \label{bisum cancel1} \sum_{\By, \Bz} \beta_{\By} \beta_{\Bz} \sum_\Bw (S_1(\By, \Bw) - S_2(\By, \Bw))(S_1(\Bz, \Bw) - S_2(\Bz, \Bw)) \ll_A \frac{XN}{(\log X)^A}\end{equation} 
	for any $A > 0$. \\ \\
	Next we consider the diagonal contribution coming from $\By = \Bz$. This gives the sums
	\[\sum_{\Bz} \beta_{\Bz} \sum_{\Bw} \alpha_\Bw \left(S_1^\spadesuit(\Bz, \Bw) - S_2^\spadesuit(\Bz, \Bw)\right)^2 = \sum_{\Bz} \beta_{\Bz} \sum_\Bw \alpha_\Bw \left(S_1^\spadesuit(\Bz, \Bw)^2 - 2 S_1^\spadesuit(\Bz, \Bw) S_2^\spadesuit(\Bz, \Bw) + S_2^\spadesuit(\Bz, \Bw)^2\right)\]
	and
	\[\sum_{\Bz} \beta_{\Bz} \sum_{\Bw} \alpha_\Bw (S_1(\Bz, \Bw) - S_2(\Bz, \Bw))^2 = \sum_{\Bz} \beta_{\Bz} \sum_\Bw \alpha_\Bw (S_1(\Bz, \Bw)^2 - 2 S_1(\Bz, \Bw) S_2(\Bz, \Bw) + S_2(\Bz, \Bw)^2).\]
	Clearly, 
	\[S_1(\Bz, \Bw) S_2(\Bz, \Bw) = S_1^\spadesuit(\Bz, \Bw) S_2^\spadesuit(\Bz,\Bw) = 0 \] since their supports are incompatible. Next we have the trivial estimate
	\begin{align*} \sum_{\Bz} \sum_{\Bw} S_1(\Bz, \Bw) & \ll \sum_{\substack{N < \lVert \Bz \rVert_2 \leq 2N}} \sum_{p^2 \in I} p \log p \sum_{\substack{\Bw \\ w_1 z_1 + w_2 z_2 = p^2 }} 1 \\
		& \ll \sqrt{\frac{M}{N}} \sum_{p^2 \in I} p \log p \sum_{N \leq \lVert \Bz \rVert_2 < 2N} 1 \\
		& \ll \sqrt{MN} \sum_{p^2 \in I} p \log p \\
		& \ll_\ep \sqrt{MN} X^{1/2 + \ep} \ll_\ep X^{1 + \ep}.
	\end{align*} 
	Similarly, we conclude
	\[\sum_{\Bz} \sum_{\Bw} S_2(\Bz, \Bw) \ll_\ep X^{1 + \ep}, \]
	\[\sum_{\Bz} \sum_{\Bw} S_1^\spadesuit(\Bz, \Bw) \ll_\ep X^{1 + \ep}, \]
	\[\sum_{\Bz} \sum_{\Bw} S_2^\spadesuit(\Bz, \Bw) \ll_\ep X^{1 + \ep}. \]
	From here we obtain
	\begin{align*} \sum_{\Bz} \sum_{\Bw} S_1(\Bz, \Bw)^2 + S_2(\Bz, \Bw)^2 & \ll X^{1/4} \log X \sum_{\Bz} \sum_{\Bw} S_1(\Bz, \Bw) + \log X \sum_{\Bz} \sum_{\Bw} S_2(\Bz, \Bw) \\
		& \ll_\ep X^{5/4 + \ep}.
	\end{align*}
and 
\[\sum_{\Bz} \sum_{\Bw} S_1^\spadesuit(\Bz, \Bw)^2 + S_2^\spadesuit(\Bz, \Bw)^2 \ll_\ep X^{5/4 + \ep}.\]
	At this stage, we expunge the references to the Gaussian domain $\bZ[i]$ in \cite{HBL} to make it clear that much of their treatment of bilinear sums apply equally well in our situation, despite the fact that our number field is different from $\bQ(i)$. For $\By, \Bz \in \bZ^2$ put $\Delta(\By, \Bz) = y_1 z_2 - y_2 z_1$. Given $\Bw, \By, \Bz \in \bZ^2$ such that
	\[w_1 y_1 + w_2 y_2 = q_1 \text{ and } w_1 z_1 + w_2 z_2 = q_2,\]
	we have
	\[\begin{bmatrix} y_1 &  y_2 \\ z_1 &  z_2 \end{bmatrix} \begin{bmatrix} w_1 \\ w_2 \end{bmatrix} = \begin{bmatrix} q_1 \\ q_2 \end{bmatrix}. \]
	Inverting the matrix on the left we see that
	\[\begin{bmatrix} w_1 \\ w_2 \end{bmatrix} = \frac{1}{\Delta(\Bz, \By)} \begin{bmatrix} z_2 & -y_2 \\ -z_1 & y_1 \end{bmatrix} \begin{bmatrix} q_1 \\ q_2 \end{bmatrix}. \]
	Since $\Bw = (w_1, w_2) \in \bZ^2$, it follows that
	\begin{equation} \label{qcong1} q_1 z_2 - q_2 y_2 \equiv q_1 z_1 - q_2 y_1 \equiv 0 \pmod{\Delta(\Bz, \By)}.\end{equation}
	Let $C(q_1, q_2, \Bz, \By)$ be the statement that $q_1, q_2, \Bz, \By$ satisfy (\ref{qcong1}). Next we have
	\begin{align} \label{HBL cond1}  \lVert q_1 (z_1, z_2) - q_2(y_1, y_2) \rVert_2 & = \sqrt{(q_1 z_1 - q_2 y_1)^2 + (q_1 z_2 - q_2 y_1)^2} \\
		& = \sqrt{\left(w_1 \Delta(\Bz, \By) \right)^2 + \left(w_2 \Delta(\Bz, \By) \right)^2} \notag \\
		& = \Delta(\Bz, \By) \sqrt{w_1^2 + w_2^2} \leq \Delta(\Bz, \By) M. \notag 
	\end{align}
	We also wish to impose the condition that $\Delta(\Bz, \By)$ is small. In particular, we wish to only consider those $\Bz, \By$ with 
	\begin{equation} \label{fD0} \Delta(\Bz, \By) > \fD_0 = N(\log X)^{-A - 6}.
	\end{equation} 
	For brevity, let us write
	\[h^\dagger(q) = \begin{cases} 2 p \log p & \text{if } q = p^2 \in I(X)  \\ \\ 0 & \text{otherwise}, \end{cases} \]
	\[h^\ddagger(q) = \begin{cases}  \log p & \text{if } q = p \in I(X) \\ \\ 0 & \text{otherwise}, \end{cases}\]
	and
	\[h(q) = h^\dagger(q) - h^\ddagger(q).\]
	Similarly, let us write
	\[h^{\spadesuit, \dagger}(q) = \begin{cases} 2 p \log p & \text{if } q = p^2 \in I(X)  \\ \\ 0 & \text{otherwise}, \end{cases} \]
	\[h^{\spadesuit, \ddagger}(q) = \begin{cases}  \log p & \text{if } q = p \in I(X) \\ \\ 0 & \text{otherwise}, \end{cases}\]
	and
	\[h^\spadesuit(q) = h^{\spadesuit, \dagger}(q) - h^{\spadesuit, \ddagger}(q).\]
	
	Note that for any subinterval $J \subset I(X)$ we have
	\[\sum_{q \in J} h(q) = O_C \left(\frac{X^{1/4}}{(\log X)^C} \right)\]
	for any $C > 0$. This is a consequence of our choice of weights. \\ \\
	As in \cite{HBL}, we want to carve up the support of $\Bz, \By$ into regions of the form
	\begin{equation} \label{U shape} \UUU = \UUU(c, \theta_0) = \{\Bz : c \sqrt{N} < \lVert \Bz \rVert_2 \leq c(1 + \omega_1) \sqrt{N}, \theta_0 < \arg(\Bz) \leq \theta_0 + \omega_2\},\end{equation} 
	for fixed $1 \leq c \leq \sqrt{2}$ and $\theta_0$. Note that we may choose $\omega_1$ and $\omega_2$ so that the regions $\UUU$ form a partition of the region 
	\[\{\Bz : N \leq \lVert \Bz \rVert_2 < 2N, z_1 > 0\} \setminus \R.\]
	The number of regions needed for the sum over $\Bz, \By$ is $O((\log X)^{4L})$. Here, as in \cite{HBL}, we allow the parameters $\omega_1$ and $\omega_2$ to be different in order to perfectly cover our region. They have the same order of magnitude. \\ 
	
	As in \cite{HBL} let us write $\fC_1(\UUU_1, \UUU_2, J_1, J_2)$ as the condition that all $(\Bz, \By, q_1, q_2) \in \UUU_1 \times \UUU_2 \times J_1 \times J_2$ satisfy (\ref{HBL cond1}) and (\ref{fD0}). We remark that such tuples are the most intricate to estimate; in fact it is only in the treatment of these tuples where we must diverge from the argument given in \cite{HBL}. \\ \\
	Similarly, let $\fC_2(\UUU_1, \UUU_2, J_1, J_2)$ denote the condition that there exists some tuple $(\Bz, \By, q_1, q_2) \in \UUU_1 \times \UUU_2 \times J_1 \times J_2$ which satisfies (\ref{HBL cond1}) and there exists some tuple $(\Bz^\prime, \By^\prime, q_1^\prime, q_2^\prime) \in \UUU_1 \times \UUU_2 \times J_1 \times J_2$ which does not satisfy (\ref{HBL cond1}). Finally, let $\fC_3(\UUU_1, \UUU_2, J_1, J_2)$ be the condition that all tuples $(\Bz, \By, q_1, q_2) \in \UUU_1 \times \UUU_2 \times J_1 \times J_2$ satisfy (\ref{HBL cond1}) but there exists some tuple $(\Bz, \By, q_1, q_2) \in \UUU_1 \times \UUU_2 \times J_1 \times J_2$ which does not satisfy (\ref{fD0}). \\ \\
	Recall that $C(q_1, q_2, \Bz, \By)$ is the condition that $\Bz, \By, q_1, q_2$ satisfy (\ref{qcong1}). For $\UUU_1, \UUU_2, J_1, J_2$ satisfying $\fC_1(\UUU_1, \UUU_2, J_1, J_2)$ put
	\begin{equation} T(\UUU_1, \UUU_2, J_1, J_2) = \sideset{}{^\flat} \sum_{\substack{\Bz \in \UUU_1 \\ \By \in \UUU_2}} \beta_{\Bz} \beta_{\By} \sum_{\substack{q_1 \in J_1 \\ q_2 \in J_2 \\ C(q_1, q_2, \Bz, \By)}} h(q_1) h(q_2),
	\end{equation}
	and otherwise set $T(\UUU_1, \UUU_2, J_1, J_2) = 0$. Further, let 
	\begin{equation} T^\prime(\UUU_1, \UUU_2, J_1, J_2) = \sideset{}{^\flat} \sum_{\substack{\Bz \in \UUU_1 \\ \By \in \UUU_2}}  \sum_{\substack{q_1 \in J_1 \\ q_2 \in J_2 \\ C(q_1, q_2, \Bz, \By)}} |h(q_1) h(q_2)|.
	\end{equation}
Similarly, define
\[T_\spadesuit(\UUU_1, \UUU_2, J_1, J_2) \text{ and } T_\spadesuit^\prime(\UUU_1, \UUU_2, J_1, J_2)\]
analogously with $h$ replaced with $h^\spadesuit$. Then to obtain (\ref{bisum cancel0}) and (\ref{bisum cancel1}) it suffices to show that 
\begin{equation}\sum_{\substack{\UUU_1, \UUU_2, J_1, J_2 \\ \fC_1(\UUU_1, \UUU_2, J_1, J_2}}  T_\spadesuit(\UUU_1, \UUU_2, J_1, J_2) + \sum_{\substack{\UUU_1, \UUU_2, J_1, J_2 \\ \fC_2(\UUU_1, \UUU_2, J_1, J_2) \text{ or } \fC_3(\UUU_1, \UUU_2, J_1, J_2}} T_\spadesuit^\prime(\UUU_1, \UUU_2, J_1, J_2) \ll_A \frac{XN}{(\log X)^A}
\end{equation}
and
	\begin{equation} \label{HBL sum} \sum_{\substack{\UUU_1, \UUU_2, J_1, J_2 \\ \fC_1(\UUU_1, \UUU_2, J_1, J_2}}  T(\UUU_1, \UUU_2, J_1, J_2) + \sum_{\substack{\UUU_1, \UUU_2, J_1, J_2 \\ \fC_2(\UUU_1, \UUU_2, J_1, J_2) \text{ or } \fC_3(\UUU_1, \UUU_2, J_1, J_2}} T^\prime(\UUU_1, \UUU_2, J_1, J_2) \ll_A \frac{XN}{(\log X)^A}.
	\end{equation}
	As in \cite{HBL}, we will show that the contribution from $\fC_i(\UUU_1, \UUU_2, J_1, J_2)$ is negligible for $i = 2,3$. Indeed, we shall obtain:
	\begin{proposition} \label{HBL prop6} We have 
		\[ \sum_{\substack{\UUU_1, \UUU_2, J_1, J_2 \\ \fC_2(\UUU_1, \UUU_2, J_1, J_2) \text{ or } \fC_3(\UUU_1, \UUU_2, J_1, J_2)}} T_\spadesuit^\prime(\UUU_1, \UUU_2, J_1, J_2) \ll_A \frac{XN}{(\log X)^A} \]
		and
		\[ \sum_{\substack{\UUU_1, \UUU_2, J_1, J_2 \\ \fC_2(\UUU_1, \UUU_2, J_1, J_2) \text{ or } \fC_3(\UUU_1, \UUU_2, J_1, J_2)}} T^\prime(\UUU_1, \UUU_2, J_1, J_2) \ll_A \frac{XN}{(\log X)^A}. \]
	\end{proposition}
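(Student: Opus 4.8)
The plan is to reproduce the argument of Heath-Brown and Li [\cite{HBL}, \S 7] essentially verbatim, the only change being cosmetic: wherever they invoke the ring $\bZ[i]$ I will instead work with the relevant lattice of ideal numbers --- any of the classes $A$, regarded purely as a $\bZ$-submodule of $\bR^2$ --- equipped with the Euclidean norm $\|\cdot\|_2$. This is legitimate because, as emphasized in the Remark above, no multiplicative structure of $\O_K$ enters at this stage: the congruence $C(q_1,q_2,\Bz,\By)$, the inequality (\ref{HBL cond1}), the boxes $\UUU$, and the cut (\ref{fD0}) are all purely $\bZ$-module data, formulated with $\|\cdot\|_2$. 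For exactly the same reason $T^\prime$ and $T^\prime_\spadesuit$ will be handled by one and the same computation, since $h$ and $h^\spadesuit$ obey identical pointwise bounds $|h(q)|,|h^\spadesuit(q)|\ll X^{1/4}\log X$ on their (equally sparse) supports inside $I(X)$; I will write everything for $h$ and $T^\prime$.

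First I would isolate the two mechanisms by which a quadruple $(\UUU_1,\UUU_2,J_1,J_2)$ becomes bad. The small-discriminant quadruples $\fC_3$ are those on which some $(\Bz,\By)$ has $\Delta(\Bz,\By)\le\fD_0=N(\log X)^{-A-6}$; writing $\Delta(\Bz,\By)=\|\Bz\|_2\|\By\|_2\sin(\arg\By-\arg\Bz)$, with $\|\Bz\|_2\|\By\|_2\asymp N$ on the relevant dyadic block, this forces $|\arg\By-\arg\Bz|\ll(\log X)^{-A-6}$. Since the excision of the near-axis region $\R$ keeps all arguments bounded away from the coordinate axes, the admissible box-pairs $(\UUU_1,\UUU_2)$ then form a set of relative density $\ll(\log X)^{-A-5}$ among all box-pairs, provided the box dimensions are chosen as $\omega_1\asymp\omega_2\asymp(\log X)^{-L}$ with $L$ sufficiently large in terms of $A$. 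The straddling quadruples $\fC_2$ are those on which the cutoff (\ref{HBL cond1}) changes truth value, so that the center of such a quadruple lies within one box-diameter of the hypersurface $\Sigma=\{\,(\Bz,\By,q_1,q_2):\|q_1\Bz-q_2\By\|_2=\Delta(\Bz,\By)M\,\}$; since $\Sigma$ has codimension one and --- once $\R$ and the cut (\ref{fD0}) have been removed --- meets the box foliation transversally with a quantitative lower bound on the relevant derivative, only an $O((\log X)^{-L})$-fraction of all box-pairs can be straddling.

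In both cases I would close the estimate with the crude bound
\[ T^\prime(\UUU_1,\UUU_2,J_1,J_2)\ll\#\UUU_1\cdot\#\UUU_2\cdot\max_{\Bz\in\UUU_1,\,\By\in\UUU_2}\Bigl(\sum_{q_1\in J_1}|h(q_1)|\Bigr)\Bigl(\max_{c}\sum_{\substack{q_2\in J_2\\ q_2\equiv c\,(\Delta(\Bz,\By))}}|h(q_2)|\Bigr), \]
which is legitimate because, $\By$ being primitive, the congruence $C$ pins $q_2$ into one residue class modulo $\Delta(\Bz,\By)$ once $q_1$ is fixed. Here $\#\UUU\ll\omega_1\omega_2N$ (the perimeter of the polar box contributes a lower-order number of lattice points), and the inner sums are controlled using the sparsity of the support of $h$ --- only $\ll X^{1/4}(\log X)^{-1}$ prime squares lie in $I(X)$ --- together with Brun--Titchmarsh applied to the residue-class restriction. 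Summing over the $O((\log X)^{O(1)})$ choices of $(J_1,J_2)$, over the dyadic blocks, and over the admissible box-pairs would then give $\sum_{\fC_2\text{ or }\fC_3}T^\prime\ll_A XN(\log X)^{-A}$, and the identical computation with $h^\spadesuit$, $T^\prime_\spadesuit$ in place of $h$, $T^\prime$ gives the $\spadesuit$-bound.

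The hard part will be the bookkeeping underlying the second paragraph, and in particular the quantitative transversality of $\Sigma$: one must exhibit a coordinate direction of the box foliation (the radial $\Bz$-direction is the natural candidate) in which $\|q_1\Bz-q_2\By\|_2-\Delta(\Bz,\By)M$ has a derivative bounded below by a suitable power of $X$ throughout the range that remains after $\R$ and (\ref{fD0}) are discarded, and then check that every ensuing boundary effect --- lattice points on $\partial\UUU$, the truncation $q_i\in I(X)$, and the congruence $C$ --- is genuinely negligible against the target $XN(\log X)^{-A}$, which is exactly where the sparsity of the support of $h$ and the Brun--Titchmarsh input are essential. This is where \cite{HBL} spend the bulk of their \S 7. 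In the present setting their computation transfers without change because all distances here are Euclidean and $M$, $N$ are merely the norm-ranges of the ideal numbers $\Bw$ and $\Bz$; the one thing worth double-checking is that nothing in this Euclidean bookkeeping secretly used the positive-definiteness of the norm form of $K$, which it does not, precisely because we pass to $\|\cdot\|_2$ here rather than to the norm on $\O_K$.
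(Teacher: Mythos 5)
Your meta-claim --- that no multiplicative structure of $\O_K$ enters here and that the argument of \cite{HBL}, \S 7 transfers once one works only with the $\bZ$-module structure and the Euclidean norm --- is exactly the paper's position, and is correct. But the proof you actually propose is not that argument, and it does not close. The decisive problem is in your treatment of $\fC_3$: your crude bound never uses condition (\ref{HBL cond1}), which by definition is satisfied by \emph{every} tuple of a $\fC_3$-quadruple and which, when $\Delta(\Bz,\By)\le\fD_0$, confines $q_1$ (for given $q_2,\Bz,\By$) to an interval of length $\ll\Delta(\Bz,\By)M/\lVert\Bz\rVert_2$, hence to a $(\log X)^{-A-5}$-fraction of $I(X)$ and, after the congruence, to $\ll M/\lVert\Bz\rVert_2$ admissible values. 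Dropping this constraint is fatal: already the $\asymp N$ primitive pairs with $\Delta(\Bz,\By)=1$ (on which the congruence $C$ is vacuous) contribute $\asymp N\bigl(\sum_{q\in I(X)}|h(q)|\bigr)^2\asymp NX(\log X)^{-2}$ to your majorant, so the claimed bound $\ll_A NX(\log X)^{-A}$ cannot follow from it. Equivalently, your final accounting --- (density $(\log X)^{-A-5}$ of bad box-pairs) times (crude bound per box-pair, summed over \emph{all} $(J_1,J_2)$) --- yields $\asymp N^2X(\log X)^{-A-O(1)}$, overshooting the target by a factor $\asymp N\ge X^{1/4+\xi}$; the missing factor is precisely the restriction of $(J_1,J_2)$, equivalently of $q_1$, forced by (\ref{HBL cond1}).

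A second, independent gap is the reliance on Brun--Titchmarsh for the residue-class sums. The moduli $\Delta(\Bz,\By)$ range up to $2N$, while the prime $p$ with $p^2=q\in I(X)$ varies over an interval of length only $\asymp\eta X^{1/4}$; the modulus therefore exceeds the length of the progression, a single term contributes $\asymp X^{1/4}\log X$, and an individual class can exceed its mean $|J|/\varphi(\Delta)\ll(\log X)^{O(1)}$ by a power of $X$. No pointwise sieve bound repairs this; it is controlled only on average over $(a,D)$ against the counts $\widetilde{\Z}(a,D)$. That is why the paper, following \cite{HBL}, first removes $\gcd(q_1q_2,\Delta(\Bz,\By))>1$ via Lemma \ref{gcd lem}, decomposes $T'$ as $\sum_D\sideset{}{^\ast}\sum_a\Y(a,D)\widetilde{\Z}(a,D)$, bounds the counts by Lemma \ref{HBL lem13}, and invokes the Barban--Davenport--Halberstam-type Proposition \ref{HBL prop8}; none of this machinery appears in your proposal. (Your transversality heuristic for $\fC_2$ has the analogous defect: one must count bad quadruples $(\UUU_1,\UUU_2,J_1,J_2)$ rather than bad box-pairs, since for essentially every box-pair some $(J_1,J_2)$ straddles the cutoff.)
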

	In fact, Proposition \ref{HBL prop6} is exactly analogous to Proposition 6 in \cite{HBL}. More strikingly, the proof does not need to be modified and we can simply apply Proposition 6 of \cite{HBL}. However, given that our set-ups are not identical we will explain why our situations are indeed interchangeable. \\ \\
	We will also need the following analogue of Proposition 7 in \cite{HBL}: 
	\begin{proposition} \label{HBL prop7} For fixed $J_1, J_2$ and $L = 6A + 52$ we have
		\[\sum_{\substack{\UUU_1, \UUU_2 \\ \fC_1(\UUU_1, \UUU_2, J_1, J_2)}}  T_\spadesuit(\UUU_1, \UUU_2, J_1, J_2) \ll_A \frac{XN}{(\log X)^{A + 2L}}.\]
		and
		\[\sum_{\substack{\UUU_1, \UUU_2 \\ \fC_1(\UUU_1, \UUU_2, J_1, J_2)}}  T(\UUU_1, \UUU_2, J_1, J_2) \ll_A \frac{XN}{(\log X)^{A + 2L}}.\]
	\end{proposition}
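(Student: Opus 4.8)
The proof, carried out in Sections~\ref{HBL sec10}--\ref{HBL sec11}, follows \cite{HBL} closely; the genuinely new ingredient is the passage to a general quadratic field $K$ by way of the ideal numbers of Section~\ref{algebra}. I describe the plan for the sum of the $T(\UUU_1,\UUU_2,J_1,J_2)$; the bound for the $T_\spadesuit(\UUU_1,\UUU_2,J_1,J_2)$ is obtained by the identical argument with $2p\log p$ and $\log p$ replaced by $2k$ and $1$, so I comment on it only at the very end. Fix $J_1,J_2$. Since the outer sum runs only over pairs $(\UUU_1,\UUU_2)$ with $\fC_1(\UUU_1,\UUU_2,J_1,J_2)$, every $(\Bz,\By)\in\UUU_1\times\UUU_2$ has $\Delta(\Bz,\By)>\fD_0=N(\log X)^{-A-6}$, and because $\omega_1,\omega_2\asymp(\log X)^{-L}$ with $L=6A+52$ the determinant $\Delta(\Bz,\By)$ is constant on $\UUU_1\times\UUU_2$ up to a factor $1+o(1)$. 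So the first step is to split the admissible pairs of regions into $O_A(\log\log X)$ dyadic classes according to the common size $\fD$ of $\Delta(\Bz,\By)$, with $\fD_0<\fD\ll N$. On each such class the congruence condition $C(q_1,q_2,\Bz,\By)$ of (\ref{qcong1}) is non-degenerate: removing the greatest common divisor of the coordinate data by M\"obius inversion, exactly as in \cite{FouI} and \cite{HBL}, and discarding the negligible tuples with $\gcd(q_1q_2,\Delta(\Bz,\By))>1$, the condition $C$ becomes $q_1\equiv\rho(\Bz,\By)\,q_2\pmod{\Delta'}$ for a divisor $\Delta'\mid\Delta(\Bz,\By)$ and a residue $\rho(\Bz,\By)$ depending only on $\Bz,\By$ modulo $\Delta'$. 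The outer sum remains a genuine bilinear sum in $\Bz,\By$ precisely because $\Delta'$ and $\rho$ depend on both.

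The second step is to pass to the arithmetic of $K$. On the $\B$-side $q_i$ is a rational prime of size $\asymp\sqrt X$, while on the $\A$-side $q_i=p_i^2$ with $p_i$ a rational prime of size $\asymp X^{1/4}$; in either case the splitting of $q_i$ in $\O_K$, together with the correspondence of Section~\ref{algebra} (Proposition~\ref{alg decomp}), attaches to each admissible tuple a factorization into ideal numbers, and the congruence $q_1\equiv\rho\,q_2\pmod{\Delta'}$ translates, via that correspondence, into a condition on the splitting of $q_1$ and $q_2$ relative to $\Delta'$. This splitting condition is exactly what the Jacobi--Kubota symbols of Section~\ref{char sums} are built to detect: one works with the entire family $[\,\cdot\,]_{A}$ indexed by an ideal class $A$ together with a choice of integral basis of the corresponding class of ideal numbers, using the freedom in the basis to put the relevant reciprocity law into its simplest form. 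Expanding the indicator of the splitting condition in these symbols, the off-diagonal part of $T(\UUU_1,\UUU_2,J_1,J_2)$ becomes a bounded linear combination --- over ideal classes, over residue data, and over $\Delta'$ --- of sums of the shape
\[\sideset{}{^\flat}\sum_{\Bz\in\UUU_1}\;\sideset{}{^\flat}\sum_{\By\in\UUU_2}\beta_{\Bz}\beta_{\By}\sum_{q_1\in J_1}\sum_{q_2\in J_2}h(q_1)\,h(q_2)\,\bigl[q_1\bigr]_{A_1}\bigl[q_2\bigr]_{A_2},\]
where each bracket is a Jacobi--Kubota symbol in the relevant ideal number, the remaining factors being lower-order symbols and smooth radial weights on $\UUU_1,\UUU_2,J_1,J_2$ contributing only a bounded amount.

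The third step extracts the cancellation, exactly as in \cite{FI1}. Because $q_i=p_i^2$ is a perfect square, the Jacobi--Kubota symbol evaluated there is trivial up to controlled ramified and archimedean factors, so the $h^\dagger$-contributions carry no oscillation and produce only a main term; that main term is matched by the corresponding $h^\ddagger$-contribution by the prime number theorem in short intervals together with the normalization of the weight $2p\log p$ against $\log p$ (so that the leading terms cancel in $h=h^\dagger-h^\ddagger$; the estimate $\sum_{q\in J}h(q)\ll_C X^{1/4}(\log X)^{-C}$ is the relevant input). What survives is governed by the genuinely non-principal part coming from the prime values of $q_i$, and here one invokes the bilinear oscillation estimate for Jacobi--Kubota symbols over $K$ --- the analogue of Friedlander and Iwaniec's Proposition~23.1 and ``Theorem~$\psi$'' proved in Section~\ref{char sums} --- applied with the $\beta$'s supported on ideal numbers of norm $\asymp N$ in a fixed class and region, and $q_1,q_2$ ranging over $J_1\times J_2$. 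Its hypotheses (square-freeness and coprimality of the ideal numbers involved) hold because every prime factor in sight exceeds $X^{\delta}$, hence all the ideal numbers are odd, and Lemma~\ref{HBL-L9} keeps the multiplicities under control. This yields, for each $(\UUU_1,\UUU_2)$ and each dyadic $\fD$, a saving of an arbitrary power of $\log X$; summing over the $O((\log X)^{4L})$ pairs of regions, the $O_A(\log\log X)$ values of $\fD$, and the $h(K)$ classes, and choosing the saving large enough (legitimate since the oscillation estimate is good for every exponent), gives $\ll_A XN/(\log X)^{A+2L}$ with $L=6A+52$, as asserted. The argument for $T_\spadesuit$ is identical, the main-term matching being then Landau's prime ideal theorem (indeed just a count of ideals) and the off-diagonal input the same character-sum estimate.

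The main obstacle is not the region bookkeeping or the M\"obius manipulations --- these go exactly as in \cite{HBL} and Proposition~\ref{HBL prop6} disposes of the degenerate configurations --- but the construction in Section~\ref{char sums} of the family of Jacobi--Kubota symbols and the proof of the bilinear oscillation estimate for them over an arbitrary quadratic field. Friedlander and Iwaniec exploit three properties of $\bZ[i]$ that no other ring of quadratic integers shares all at once: class number one, norm equal to the Euclidean norm, and split primes being exactly those congruent to $1\pmod 4$. With $h(K)>1$ one is forced to carry the whole family of symbols indexed by $\Cl(K)$ and a non-canonical basis, and to track how reciprocity depends on that basis; in the indefinite case the normalization $\gamma=N(\gamma)^{1/2}\ep_0^{z}$, $-\tfrac12<z\le\tfrac12$, cutting out $\L_0$ must be propagated through the character sums without destroying the oscillation; and the failure of the congruence description of split primes forces extra ramified and real-place factors into the reciprocity which must be isolated and bounded. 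Producing a version of ``Theorem~$\psi$'' that retains all of this structure \emph{and} is uniform enough to survive the subdivision into the small regions $\UUU$ is the delicate heart of the matter; once it is in place, everything else follows the template of \cite{HBL}.
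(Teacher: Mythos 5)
Your proposal assembles the right ingredients (Proposition \ref{alg decomp}, the symbols of Section \ref{char sums}, Theorem $\psi$) but misidentifies where the difficulty sits and how the cancellation between $\A$ and $\B$ is actually produced, and this is a genuine gap rather than a stylistic difference. After the decomposition of Section \ref{HBL sec7}, the $q$-variables are handled by Proposition \ref{HBL prop8}: the congruence sums $\Y(a,D;h_1,h_2)$ are replaced by their expected values, and both expected values equal $|J_1||J_2|/\varphi(D)$ --- so far your ``main terms match by PNT in short intervals'' is fine. But the two sequences do \emph{not} match residue class by residue class: for $h^\dagger$ the condition $p_1^2\equiv ap_2^2\pmod D$ forces $a$ to be a quadratic residue, so the expected value is supported only on squares $b^2\pmod D$ (and is correspondingly larger there). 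The entire content of Proposition \ref{HBL prop7} is therefore the bound
\[\sum_D\frac{1}{\varphi(D)}\Bigl(\sideset{}{^\ast}\sum_{b\,(D)}\Z(b^2,D)-\sideset{}{^\ast}\sum_{a\,(D)}\Z(a,D)\Bigr)\ll\frac{N}{(\log X)^{A}},\]
i.e.\ the statement that the ratio $c$ with $c\By\equiv\Bz\pmod{D}$ is equidistributed between quadratic residues and non-residues when weighted by $\beta_\Bz\beta_\By$. This is a sum of $\Z(a,D)$ against the real characters $\chi\pmod d$ with $\chi^2=\chi_0$, $\chi\ne\chi_0$, and it lives entirely in the $\Bz,\By$ variables; it is not an oscillatory sum in $q_1,q_2$, and it cannot be extracted by evaluating Jacobi--Kubota symbols at $q_i=p_i^2$ as you propose. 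Your claim that ``the $h^\dagger$-contributions carry no oscillation and produce only a main term matched by the $h^\ddagger$-contribution'' is exactly the point that has to be \emph{proved}, and proving it is the whole of Sections \ref{HBL sec10}--\ref{HBL sec11}.

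Concretely, the missing steps are: (i) the conversion of the square-residue discrepancy into sums $\sum_{c}\sum_{c\By\equiv\Bz\,(d\fk)}\beta_\Bz\beta_\By\chi(c)$ via M\"obius inversion and the identity $\#\{b:b^2\equiv c\}=\sum_{\chi^2=\chi_0}\chi(c)$; (ii) the trisection of the conductor $d$ into small, middle and large ranges $d\le D_1$, $D_1<d\le D_2$, $d>D_2$; (iii) for large $d$, quadratic reciprocity (Lemma \ref{multi lem}) converting $\chi(c)=(k/d_1)$ into the twisted coefficients $\beta'_\Bz=\beta_\Bz(-1)^{(z_1-1)/2}(z_2/z_1)$, after which the sum factors and one needs cancellation in $\S(\chi,\UUU)=\sum\beta'_\alpha\chi(\alpha)$ --- it is \emph{here}, on the coefficients $\beta$ supported on norms in $Q_j$ or $R$, that the Vaughan-style splitting into a large-prime-factor part (Proposition \ref{prop psi}) and a bilinear part (Proposition \ref{LMN bd}) is applied; (iv) for small $d$, equidistribution of $\beta_z$ in residue classes modulo $2d\fk$ via Mitsui's prime ideal theorem; (v) for the middle range, the large-sieve bound for real characters. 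None of (i)--(v) appears in your outline, and the mechanism you substitute for them (oscillation of the symbols in the $q$-variables) does not address the actual source of the discrepancy between $\A$ and $\B$. Your closing discussion of the obstacles in generalizing the Jacobi--Kubota symbol to general $K$ is accurate and well taken, but those symbols enter the proof through $\beta'_\Bz$ in step (iii), not through the primes $q_i$.
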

	Unlike Proposition \ref{HBL prop6} we cannot simply import Proposition 7 from \cite{HBL}. This is because Proposition \ref{HBL prop6}, by the definition of $T^\prime(\UUU_1, \UUU_2, J_1, J_2)$, is insensitive to the nature of the coefficients $\beta_\Bz$ and so the treatment in \cite{HBL} is directly applicable to our situation. However in order to prove Proposition 7 in \cite{HBL} they needed to use the specific shape of $\beta_z$ in their paper. That said, the modifications needed to adapt their proof to our case are minor, and we will still be able to follow their argument for the most part. \\ \\
	In the next few sections we will give proofs for Propositions \ref{HBL prop6} and \ref{HBL prop7}. We will largely follow the structure of the argument given in \cite{HBL}. 
	
	\section{Proof of Propositions \ref{HBL prop6}}
	\label{HBL sec7}  
	
	First we have the following lemma, which is Lemma 12 from \cite{HBL}: 
	
	\begin{lemma} \label{gcd lem} The bound
		\[
		\sideset{}{^\flat} \sum_{\Bz, \By} \sum_{\substack{q_1 \in J_1, q_2 \in J_2 \\ C(q_1, q_2, \Bz, \By) \\ \gcd(q_1 q_2, \Delta(\Bz, \By)) > 1}} \left \lvert h^\spadesuit(q_1) h^\spadesuit(q_2) \right \rvert \ll N^2 \sqrt{X} (\log X)^3\]
		and
		\[
		\sideset{}{^\flat} \sum_{\Bz, \By} \sum_{\substack{q_1 \in J_1, q_2 \in J_2 \\ C(q_1, q_2, \Bz, \By) \\ \gcd(q_1 q_2, \Delta(\Bz, \By)) > 1}} | h(q_1) h(q_2)| \ll N^2 \sqrt{X} (\log X)^3\]
		holds.
	\end{lemma}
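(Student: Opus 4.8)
\emph{Reduction to one estimate.} The only properties of $h$ and $h^\spadesuit$ that intervene are: (a) each is supported on the set of primes and squares of primes lying in $I(X)$, so a nonzero value at $q$ forces $q=p^{e}$ with $p$ prime and $e\in\{1,2\}$, and since $q\asymp\sqrt X$ we have $p\gg X^{1/4}$; (b) $|h(q)|,|h^\spadesuit(q)|\ll\sqrt q\,(\log q)\ll X^{1/4}\log X$; and (c) $\sum_{q\in J}\bigl(|h(q)|+|h^\spadesuit(q)|\bigr)\ll|J|\ll X^{1/2}(\log X)^{-1}$ for every subinterval $J\subseteq I(X)$, which is the remark preceding the lemma. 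Since $h$ and $h^\spadesuit$ share these properties, the two bounds are proved by the same argument, so I would argue for $h$.

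\emph{Localizing the offending prime.} Suppose $\gcd(q_1q_2,\Delta(\Bz,\By))>1$ and pick a prime $\pi$ dividing $\Delta(\Bz,\By)$ and $q_1q_2$. The summand is symmetric under $(q_1,\Bz)\leftrightarrow(q_2,\By)$, so at the cost of a factor $2$ we may assume $\pi\mid q_1$; then $q_1=\pi^{e_1}$ and $\pi\gg X^{1/4}$ by (a). Reduce the two congruences defining $C(q_1,q_2,\Bz,\By)$, namely $q_1z_i\equiv q_2y_i\pmod{\Delta(\Bz,\By)}$ for $i=1,2$, modulo $\pi$: this is legitimate because $\pi\mid\Delta(\Bz,\By)$, and using $\pi\mid q_1$ it yields $\pi\mid q_2y_1$ and $\pi\mid q_2y_2$, so either $\pi\mid q_2$ or $\pi\mid\gcd(y_1,y_2)$. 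In the first case $q_2=\pi^{e_2}$, and since $q_1/q_2=\pi^{e_1-e_2}$ while $q_1,q_2\in I(X)$ forces $q_1/q_2=1+O((\log X)^{-1})$ and $\pi\gg X^{1/4}$, we must have $e_1=e_2$, hence $q_1=q_2$. It therefore suffices to bound separately by $\ll N^{2}\sqrt X(\log X)^{3}$ the \emph{diagonal} sum (the part with $q_1=q_2$) and the \emph{divisibility} sum (the part in which a prime $\pi\gg X^{1/4}$ divides $\gcd(y_1,y_2)$).

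\emph{The diagonal sum.} Put $q:=q_1=q_2=p^{e}$, $p\gg X^{1/4}$, with $p\mid\Delta(\Bz,\By)$. Now $C$ becomes $q(z_i-y_i)\equiv0\pmod{\Delta(\Bz,\By)}$, and since $\Delta(\Bz,\By)\neq0$ we have $\Bz\neq\By$; writing $z_i-y_i=d\,u_i$ with $d\geq1$ and $\gcd(u_1,u_2)=1$ gives $\Delta(\Bz,\By)=\pm d\,(y_1u_2-y_2u_1)$, so $|y_1u_2-y_2u_1|=|\Delta(\Bz,\By)|/d$ divides $q=p^{e}$ and thus lies in $\{1,p,p^{2}\}$. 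For fixed $\By$ this confines the primitive direction $(u_1,u_2)$ to a union of lines $y_1u_2-y_2u_1=\mathrm{const}$; combining this with $p\mid\Delta(\Bz,\By)$, with $\Delta(\Bz,\By)\ll N^{2}\ll X$ (so at most $O(1)$ primes $p\gg X^{1/4}$ can divide it, hence $O(1)$ admissible $q$), and with the size restriction on $\Bz=\By+d\Bu$, leaves an $O(\tau(\cdot)(\log X)^{O(1)})$ set of triples $(q,\Bu,d)$ for each $\By$. Summing $|h(q)|^{2}\ll X^{1/2}(\log X)^{2}$ over these — and using (c) in place of the pointwise bound when collapsing the sum over $q$, which is where the saving $\sqrt X$ originates — then over the $\ll N^{2}$ admissible $\By$, gives the bound; this is precisely the count performed in Lemma 12 of \cite{HBL}.

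\emph{The divisibility sum and the main obstacle.} If a prime $\pi\gg X^{1/4}$ divides $\gcd(y_1,y_2)$ then $\By$ lies in a very sparse set — $O\bigl((\lVert\By\rVert/\pi+1)^{2}\bigr)$ vectors for each such $\pi$ — and the corresponding subsum is estimated trivially, using $|h(q_1)|\ll X^{1/4}\log X$ against the $O(1)$ primes $\pi$ with $\pi^{e_1}\in I(X)$ dividing $\gcd(y_1,y_2)$, the unrestricted sum over $\Bz$, and (c) for the inner sum over $q_2$. The step I expect to be the crux is the diagonal count: it must be organised so that no power of $X$ escapes, and — as in \cite{HBL} — the decisive leverage is that the prime $p\asymp\sqrt q\gg X^{1/4}$ forced to divide $\Delta(\Bz,\By)$ is \emph{large}, making ``$p\mid\Delta$'' a genuinely restrictive condition, together with the fact that the second congruence of $C$ pins $|\Delta(\Bz,\By)|/d$ to a divisor of $q$. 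Everything beyond this is routine divisor-sum and lattice-point bookkeeping, which transfers from \cite{HBL} essentially verbatim, the accumulated logarithmic losses amounting to at most $(\log X)^{3}$.
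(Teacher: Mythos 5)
Your skeleton is the same as the paper's: the paper's proof is simply a citation to Section 7 of Heath-Brown--Li, and your reduction --- localize a prime $\pi\gg X^{1/4}$ dividing both $\Delta(\Bz,\By)$ and $q_1q_2$, reduce the congruence $C$ modulo $\pi$ to force $\pi\mid q_2$ and hence $q_1=q_2=\pi^{e}$, then count --- is exactly their Lemma 12. Two points, the first of which is a genuine gap as written.

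The case $\pi\mid\gcd(y_1,y_2)$ is not closed by the estimate you describe. Even granting $O(1)$ admissible primes per $\By$, your bound $O\bigl((\lVert\By\rVert/\pi+1)^2\bigr)=O(1)$ per prime, summed over the $\asymp X^{1/4}/\log X$ primes $\pi$ with $\pi^{e}\in I(X)$, allows up to $X^{1/4}$ ``bad'' vectors $\By$; combined with the unrestricted $O(N)$ choices of $\Bz$, the weight $X^{1/4}\log X$ for $q_1$, and property (c) for $q_2$, this gives a total of order $NX\log X$, which exceeds $N^2\sqrt X(\log X)^3$ throughout the relevant range $N<X^{1/2-\delta_2}$ (the deficit is at least $(\log X)^{A_1-4}$). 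The correct observation is that this case is void: the superscript $\flat$ denotes primitivity of $\Bz,\By$ (they are coordinate vectors of primitive ideal numbers), so $\gcd(y_1,y_2)=1$; alternatively $\pi\gg X^{1/4}>\sqrt{2N}\geq|y_i|$ forces $\By=\mathbf{0}$. One line repairs this, but the trivial estimate you propose is not by itself sufficient.

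On the diagonal, you are done far more cheaply than your lattice-line analysis suggests: since $|\Delta(\Bz,\By)|\leq\lVert\Bz\rVert_2\lVert\By\rVert_2\leq 2N<X^{1/2-\delta_2}$, at most one prime $\geq cX^{1/4}$ divides $\Delta(\Bz,\By)$, hence each pair $(\Bz,\By)$ admits at most one $q$, and the sum is at most $(\#\ \text{pairs})\cdot\max_q|h(q)|^2\ll N^2\cdot\sqrt X(\log X)^2$. The saving of $\sqrt X$ therefore comes from this rigidity (one value of $q$ instead of $\asymp X^{1/4}(\log X)^{-2}$ of them), not from property (c) as you assert; the decomposition $\Bz-\By=d\Bu$ and the divisibility $|y_1u_2-y_2u_1|\mid q$ are unnecessary. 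Finally, note that your property (a) for $h^\spadesuit$ holds only for the paper's literal (apparently miscopied) definitions of $h^{\spadesuit,\dagger},h^{\spadesuit,\ddagger}$; if, as the weights $\fZ^\spadesuit(\ell)=2k$ and $b_n^\spadesuit$ intend, $h^{\spadesuit,\ddagger}\equiv 1$ on $I(X)$ and $h^{\spadesuit,\dagger}$ is supported on all squares, then a nonzero value of $h^\spadesuit(q)$ no longer forces $q$ to be a prime power, the localization of a large prime fails at the first step, and this route does not yield the stated bound.
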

	
	\begin{proof} See Section 7, \cite{HBL}. 
	\end{proof}
	
	Lemma \ref{gcd lem} allows us, as in \cite{HBL}, to write
	\begin{equation} T(\UUU_1, \UUU_2, J_1, J_2) = \sum_{D \leq 2N} \sideset{}{^\ast} \sum_{a \pmod{D}} \Y(a, D; h, h) \Z(a, D) + O \left( N^2 \sqrt{X} (\log X)^3\right)
	\end{equation}
	where
	\[\Z(a,D) = \sideset{}{^\flat} \sum_{\substack{(\Bz, \By) \in \UUU_1 \times \UUU_2 \\ \Delta(\Bz, \By) = D \\ a \By \equiv \Bz \pmod{D}}} \beta_{\Bz} \beta_{\By} \]
	and
	\[\Y(a, D; h_1, h_2) = \sum_{\substack{q_1 \in J_1, q_2 \in J_2 \\ q_1 \equiv a q_2 \pmod{D} \\ \gcd(q_1 q_2, D) = 1}} h_1(q_1) h_2(q_2).\]
	Similarly, we have
	\begin{equation} T_\spadesuit(\UUU_1, \UUU_2, J_1, J_2) = \sum_{D \leq 2N} \sideset{}{^\ast} \sum_{a \pmod{D}} \Y^\spadesuit(a, D; h^\spadesuit, h^\spadesuit) \Z(a, D) + O \left( N^2 \sqrt{X} (\log X)^3\right)
	\end{equation}
where 
\[\Y^\spadesuit(a, D; h_1^\spadesuit, h_2^\spadesuit) = \sum_{\substack{q_1 \in J_1, q_2 \in J_2 \\ q_1 \equiv a q_2 \pmod{D} \\ \gcd(q_1 q_2, D) = 1}} h_1^\spadesuit(q_1) h_2^\spadesuit(q_2) \]
	This crucial decomposition allows us to separate $T(\UUU_1, \UUU_2, J_1, J_2)$ and $T_\spadesuit(\UUU_1, \UUU_2, J_1, J_2)$ into components $\Z(a,D)$ containing the coefficients $\beta_{\Bz}, \beta_{\By}$ and a congruence sum which no longer has anything to do with the coefficients $\beta$. To treat (\ref{HBL sum}) requires a treatment of $\Y(a, D)$ involving primes. For this purpose they needed a refinement of the Barban-Davenport-Heilbronn theorem, which we will not go into more detail here as we can use their Proposition 6 directly.\\
	
	 The following lemma is critical to the proof of Proposition \ref{HBL prop6}:
	
	\begin{lemma} \label{HBL lem13} Let 
		\[\widetilde{\Z}(a, D) = \sideset{}{^\flat} \sum_{\substack{(\Bz, \By) \in \UUU_1 \times \UUU_2 \\ \Delta(\Bz, \By) = D \\ a \By \equiv \Bz \pmod{D}}} 1.\]
		We then have the bounds
		\begin{equation} \sum_D \tau(D) \sideset{}{^\ast} \sum_{a \pmod{D}} \widetilde{\Z}(a,D) \ll \omega^4 N^2 (\log X)^{16}, 
		\end{equation} 
		\begin{equation} \sum_{\UUU_1, \UUU_2} \sideset{}{^\ast} \sum_{a \pmod{D}} \widetilde{\Z}(a,D) \ll N,
		\end{equation}
		and
		\begin{equation} \sideset{}{^\ast} \sum_{a \pmod{D}} \widetilde{\Z}(a,D) \ll (\log X)^3 \frac{N^2}{D} \tau(D)^6.
		\end{equation}
	\end{lemma}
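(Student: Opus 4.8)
The key structural point is that $\widetilde{\Z}(a,D)$ involves neither the coefficients $\beta$ nor any arithmetic of $K$: it is a pure count of pairs of primitive integer vectors $(\Bz,\By)\in\UUU_1\times\UUU_2$ subject to $\Delta(\Bz,\By)=D$ and $a\By\equiv\Bz\pmod D$. Thus this is essentially Lemma 13 of \cite{HBL}, and the argument there — which uses only the $\bZ$-module structure, i.e. the geometry of $\bZ^2$ — transfers verbatim; the plan is to reprove it along those lines (cf. \cite{HBL}, Section 7). The first move is to note that, for fixed $(\Bz,\By)$ with $\Delta(\Bz,\By)=D$, the congruence $a\By\equiv\Bz\pmod D$ has at most one solution $a$ coprime to $D$: writing $\Bz=a\By+D\Bw$, the relation $\Delta(\Bz,\By)=D$ forces $\Delta(\Bw,\By)=1$, so $(\Bw,\By)$ is a unimodular basis, and the two congruences defining $a$ are consistent precisely because $\Delta(\Bz,\By)\equiv 0\pmod D$. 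Hence
\[ \sideset{}{^\ast}\sum_{a\pmod D}\widetilde{\Z}(a,D)\ \le\ \#\{(\Bz,\By)\in\UUU_1\times\UUU_2\ \text{primitive}:\ \Delta(\Bz,\By)=D\}, \]
and it suffices to estimate the right-hand side.

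Next I would carry out the basic lattice point count. For a fixed primitive $\By$, the $\Bz$ with $\Delta(\Bz,\By)=D$ form a single coset of $\bZ\By$, whose successive members are $\asymp\sqrt N$ apart, while each $\UUU_i$ has diameter $\ll\omega\sqrt N$ with $\omega\asymp(\log X)^{-L}\le 1$. So at most one such $\Bz$ lies in $\UUU_1$, giving the crude bound $\ll\omega^2 N$ for the count above; since $D\le 2N$ we have $\omega^2 N\ll N\ll N^2/D$, which already yields the third inequality (with ample room for the $(\log X)^3\tau(D)^6$). Keeping track of the fact that the coset actually contains a \emph{lattice point} of $\UUU_1$ only for $\By$ lying in a sub-region of $\UUU_2$ of relative density $\ll\omega$ improves this to the sharp bound $\ll\omega^3 N$, uniformly in the target value $D$.

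For the second inequality I would exploit concentration: on the small box $\UUU_1\times\UUU_2$ one has $\Delta(\Bz,\By)=D_c+O(\omega N)$, where $D_c=D_c(\UUU_1,\UUU_2)$ is the value at the two centres, so $\widetilde{\Z}(a,D)=0$ unless $|D-D_c|\ll\omega N$. Since $D_c$ varies by $\asymp\omega N$ when $\UUU_1$ is shifted by one cell in a suitable direction, for fixed $D$ there are only $\ll(\log X)^{3L}$ pairs $(\UUU_1,\UUU_2)$ with this property; multiplying by the sharp per-pair bound $\ll\omega^3 N$ gives $\sum_{\UUU_1,\UUU_2}\sideset{}{^\ast}\sum_a\widetilde{\Z}(a,D)\ll(\log X)^{3L}\omega^3 N\asymp N$ by the choice of $\omega$. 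For the first inequality I would drop the coprimality and solvability conditions and use $\tau(D)=\sum_{d\mid D}1$, so that the left-hand side is at most $\sum_{(\Bz,\By)\in\UUU_1\times\UUU_2}\tau(|\Delta(\Bz,\By)|)$ over pairs with $\Delta\ne 0$; by the concentration of $\Delta$ together with the uniform bound $\#\{(\Bz,\By):\Delta(\Bz,\By)=v\}\ll\omega^3 N$ this is $\ll\omega^3 N\sum_{|v-D_c|\ll\omega N}\tau(|v|)\ll\omega^3 N\cdot\omega N\log N=\omega^4 N^2\log N$, comfortably within $\omega^4 N^2(\log X)^{16}$.

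The step I expect to be the main obstacle is establishing the lattice point counts with full uniformity — in particular the uniform-in-$v$ bound $\#\{(\Bz,\By)\in\UUU_1\times\UUU_2:\Delta(\Bz,\By)=v\}\ll\omega^3 N$, i.e. the rigorous form of the ``density $\ll\omega$'' heuristic, which reduces to bounding the number of lattice points of possibly skew index-$d$ sublattices of $\bZ^2$ inside the thin regions $\UUU_i$, uniformly in all parameters. This is exactly the (routine but somewhat fiddly) geometry-of-numbers bookkeeping of \cite{HBL}, Section 7, and it applies unchanged here since, as emphasised above, $\widetilde{\Z}$ sees only the $\bZ$-module structure of the ideal numbers.
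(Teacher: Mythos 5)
The paper itself gives no argument here: its ``proof'' is a bare citation of Lemma 13 of \cite{HBL}, and your opening observation --- that $\widetilde{\Z}(a,D)$ sees neither the coefficients $\beta$ nor the arithmetic of $K$, only the $\bZ$-module structure, so the HBL statement transfers verbatim --- is exactly the point the paper is relying on. Your reduction $\sideset{}{^\ast}\sum_a\widetilde{\Z}(a,D)\le\#\{(\Bz,\By)\in\UUU_1\times\UUU_2:\Delta(\Bz,\By)=D\}$ (at most one admissible $a$ per pair, by primitivity of $\Bz$) is correct, and your observation that the third estimate is then trivial, since the right-hand side is $\ll\omega^2N\le N\ll N^2/D$ for $D\le 2N$, is also correct.

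The genuine gap is the uniform fibre bound $\#\{(\Bz,\By)\in\UUU_1\times\UUU_2:\Delta(\Bz,\By)=v\}\ll\omega^3N$, which you assert and on which both your first and second estimates entirely rest. This is not bookkeeping: it is the whole content of those two estimates as you have arranged the proof, and it is not clear it even holds as stated uniformly in $v$ without extra divisor or logarithmic factors (the heuristic ``the coset $\Bz_0(\By)+\bZ\By$ meets $\UUU_1$ for a proportion $\omega$ of $\By\in\UUU_2$'' is an equidistribution claim about $v\overline{y_1}/y_2$ modulo $1$, and ruling out resonances for special $v$ is precisely the hard part). The crude bound you can actually prove pointwise is $\ll\omega^2N$, and substituting that into your arguments loses a factor $\omega^{-1}=(\log X)^{L}$ in both the first and second estimates, which is fatal since $L=6A+52$. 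The standard way out --- and, as far as I can tell, what \cite{HBL} do --- is to avoid exact fibres altogether: for the first estimate write $\tau(D)=\sum_{d\mid D}1$ and bound $\sum_{(\Bz,\By)}\sum_{d\mid\Delta(\Bz,\By)}1$ by counting, for each $d$, the pairs with $\Delta(\Bz,\By)\equiv 0\pmod d$, i.e.\ lattice points of index-$d$ sublattices inside $\UUU_1$ for each fixed $\By$; the divisor structure is then carried by the modulus $d$ rather than by a maximal-fibre bound. For the second estimate you do not need the uniform bound at all: since the regions $\UUU_1,\UUU_2$ partition the annuli, the left-hand side is at most the number of primitive pairs in the full annuli with $\Delta(\Bz,\By)=D$, and for each of the $\ll N$ primitive $\By$ in the annulus the solutions $\Bz$ form a coset of $\bZ\By$ with spacing $\asymp\sqrt N$, hence meet the disc of radius $\sqrt{2N}$ in $O(1)$ points; this gives $\ll N$ in one line, with no reference to the subdivision into boxes.
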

	
	\begin{proof} See Lemma 13 in \cite{HBL}. 
	\end{proof}
	
	For an interval $J$ and a function $\fh$, put
	\[\Y(J, \fh; D) = \sum_{\substack{q \in J \\ \gcd(q,D) = 1}} \fh(q)\]
	and
	\[\Y_{\fh_1, \fh_2}(D) = Y(D) = \frac{1}{\varphi(D)} \Y(J_1, \fh_1; D) \Y(J_2, \fh_2; D).\]
	Recall that $q_1, q_2$ appearing in $\Y(a, D; \fh_1, \fh_2)$ satisfy $\gcd(q_1 q_2, D) = 1$. If $h_1$ or $h_2$ is equal to $h^\ddagger$, then $\Y(D)$ is the expected value of $\Y(a, D; h_1, h_2)$. If $h_1 = h_2 = h^\dagger$, note that $p_1^2 \equiv a p_2^2 \pmod{D}$ implies that $p_1 \equiv b p_2 \pmod{D}$ for some $b$ such that $a \equiv b^2 \pmod{D}$. Here, $\Y(a, D; h_1, h_2) = 0$ if $a$ is not a square modulo $D$. Therefore 
	\[\sideset{}{^\ast} \sum_{a \pmod{D}} \Y(a, D; h_1, h_2) \Z(a,D) = \sideset{}{^\ast} \sum_{b \pmod{D}} \Y_{h^\dagger}(b, D) \Z(b^2, D) \]
	where
	\[\Y_{h^{\dagger}}(b,D) = \sum_{\substack{p_1^2 \in J_1, p_2^2 \in J_2 \\ p_1 \equiv b p_2 \pmod{D} \\ \gcd(p_1 p_2, D) = 1}} h^\dagger(p_1^2) h^\dagger(p_2^2).\]
	When $h_1 = h_2 = h^\dagger$, then $Y(D)$ is the expected value of $Y_{h^\dagger}(b,D)$. Now put
	\[\E(N) = \sum_{D \leq 2N} \sideset{}{^\ast} \sum_{a \pmod{D}} \left \lvert \Y(a, D; h_1, h_2) - Y_{h_1, h_2}(D) \right \rvert \widetilde{\Z}(a,D)\]
	if either $h_1 = h^\dagger$ or $h_2 = h^\dagger$, and
	\[\E_{h^\dagger}(N) = \sum_{D \leq 2N} \sideset{}{^\ast} \sum_{b \pmod{D}} \left \lvert \Y_{h^\dagger}(b,D) - \Y_{h^\dagger, h^\dagger}(D) \right \rvert \widetilde{\Z}(b^2, D)\]
	if $h_1 = h_2 = h^\dagger$. We then have the following proposition, which is Proposition 8 from \cite{HBL}: 
	\begin{proposition} \label{HBL prop8} 
		For any $C > 0$ we have
		\[\E(N) \ll_C \frac{XN}{(\log X)^C} \text{ and } \E_{h^\dagger}(N) \ll_C \frac{XN}{(\log X)^C}.\]
	\end{proposition}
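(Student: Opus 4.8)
The plan is to recognize that Proposition \ref{HBL prop8} is, word for word, Proposition 8 of \cite{HBL}, and that every ingredient entering its proof is insensitive to the field $K$: the arithmetic functions $h^\dagger,h^\ddagger$ record only rational primes — those $p$ with $p^2\in I(X)$, respectively those $p\in I(X)$ — while the weights $\widetilde{\Z}(a,D)$ are purely combinatorial counts of pairs of lattice vectors in $\UUU_1\times\UUU_2$ with prescribed determinant $D$ and congruence $a\By\equiv\Bz\pmod{D}$, already controlled by Lemma \ref{HBL lem13} (which is Lemma 13 of \cite{HBL}). So I would simply run the argument of \cite{HBL}, and I describe its shape.

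First I would split into the pairs $(h_1,h_2)\in\{(h^\ddagger,h^\ddagger),(h^\dagger,h^\ddagger),(h^\ddagger,h^\dagger),(h^\dagger,h^\dagger)\}$. The ``diagonal square'' case $h_1=h_2=h^\dagger$ is the one where the innocuous extra step is needed: since $q_1\equiv aq_2\pmod{D}$ with $q_i=p_i^2$ is equivalent to $p_1\equiv bp_2\pmod{D}$ for some square root $b$ of $a$ modulo $D$, one passes to $\Y_{h^\dagger}(b,D)$ and $\widetilde{\Z}(b^2,D)$ exactly as before the statement, and each of the three bounds of Lemma \ref{HBL lem13} survives this re-indexing because a given $a$ has $O(\tau(D))$ square roots modulo $D$. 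Next I would fix a threshold $z_0=(\log X)^{B}$ with $B=B(C)$ large and split the sum over moduli $D\le 2N$ at $z_0$.

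For the small moduli $D\le z_0$ I would apply the Siegel--Walfisz theorem: for $h^\ddagger$ this is Corollary 2 of \cite{HBL}, and for $h^\dagger$ it follows from ordinary Siegel--Walfisz applied after the substitution $q=p^2$ to intervals whose length far exceeds any fixed power of $\log X$. This bounds $\Y(a,D;h_1,h_2)-Y_{h_1,h_2}(D)$ by a saving of $\exp(-c\sqrt{\log X})$ uniformly in $a$; multiplying by $\widetilde{\Z}(a,D)$, summing over $a\pmod{D}$ and over $D\le z_0$ by means of the third bound $\sideset{}{^\ast}\sum_{a\pmod{D}}\widetilde{\Z}(a,D)\ll(\log X)^3N^2D^{-1}\tau(D)^6$ of Lemma \ref{HBL lem13}, disposes of this range with room to spare. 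For the large moduli $z_0<D\le 2N$ I would use Cauchy--Schwarz in the residue $a$, and then in $D$, to separate the weight $\widetilde{\Z}(a,D)$ from the variance $\sideset{}{^\ast}\sum_{a\pmod{D}}\bigl|\Y(a,D;h_1,h_2)-Y_{h_1,h_2}(D)\bigr|^2$; the resulting $\widetilde{\Z}$-weighted Barban--Davenport--Halberstam mean value is bounded by an estimate of the kind in Corollary 1 of \cite{HBL} — again purely about rational primes in progressions — and the complementary factor by the first bound $\sum_D\tau(D)\sideset{}{^\ast}\sum_{a\pmod{D}}\widetilde{\Z}(a,D)\ll\omega^4N^2(\log X)^{16}$ of Lemma \ref{HBL lem13}. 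Taking $B$ large enough in terms of $C$ and absorbing the loss $(\log X)^{O(L)}$ from the number of admissible quadruples $(\UUU_1,\UUU_2,J_1,J_2)$ then gives $\E(N)\ll_C XN(\log X)^{-C}$, and the same computation with $\widetilde{\Z}(b^2,D)$ in place of $\widetilde{\Z}(a,D)$ gives $\E_{h^\dagger}(N)\ll_C XN(\log X)^{-C}$.

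I do not expect a genuine obstacle; this is exactly why \cite{HBL}'s Proposition 6, and hence our Proposition \ref{HBL prop6}, can be quoted without modification. The only point needing attention is bookkeeping: ensuring the Siegel--Walfisz and Barban--Davenport--Halberstam inputs are invoked for the weight $h^\dagger$ — primes whose square lies in a short interval — and not merely for $h^\ddagger$ (which is harmless, since $h^\dagger$ arises from the prime-in-an-interval weight via $q=p^2$), and calibrating the saving in $D$ so that it dominates both the powers of $\log X$ produced by Lemma \ref{HBL lem13} and the number of region-quadruples.
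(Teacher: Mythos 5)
Your proposal is correct and takes the same route as the paper: the paper also disposes of Proposition \ref{HBL prop8} by quoting Proposition 8 of \cite{HBL} verbatim, on the grounds that $\Y(a,D;h_1,h_2)$ and $Y_{h_1,h_2}(D)$ involve only rational primes in progressions and $\widetilde{\Z}(a,D)$ is a pure lattice-point count controlled by Lemma \ref{HBL lem13}, so nothing depends on the field $K$. Your sketch of the internal mechanism (the $a=b^2$ re-indexing for the $h^\dagger$--$h^\dagger$ case, Siegel--Walfisz for small moduli, Cauchy--Schwarz plus the Barban--Davenport--Halberstam variance for large moduli) matches the inputs the paper itself identifies as Corollaries 1 and 2 of \cite{HBL}.
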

	With this proposition in hand, we may proceed to prove Proposition \ref{HBL prop6} in the exact same way as Proposition 6 in \cite{HBL}. We will not repeat the details. \\ \\
	We now move to the proof of Proposition \ref{HBL prop7}. Most of the arguments can be adapted from the proof of Proposition 7 in \cite{HBL}, but since we rely on some properties of the coefficients $\beta_\Bz$ in this argument we cannot follow all of the arguments in \cite{HBL} verbatim. We will especially emphasize those points where modifications are required.
	
	\section{Proof of Proposition \ref{HBL prop7}: some manoeuvres}
	\label{HBL sec10}  
	
	Supposing that one of the functions $h_1, h_2$ is $h^\ddagger$, we have according to Proposition \ref{HBL prop8} that 
	\[\sum_{D \leq 2N} \sideset{}{^\ast} \sum_{a \pmod{D}} \Y(a, D; h_1, h_2) \Z(a, D) = \sum_{D \leq 2N} \sideset{}{^\ast} \sum_{a \pmod{D}} \Y_{h_1, h_2}(D) \Z(a, D) + O_C \left(\frac{XN}{(\log X)^C} \right)\]
	for any $C > 0$. In the remaining case with $h_1 = h_2 = h^\dagger$, we have
	\[\sum_{D \leq 2N} \sideset{}{^\ast} \sum_{a \pmod{D}} \Y(a, D; h^\dagger, h^\dagger) \Z(a, D) = \sum_{D \leq 2N} \sideset{}{^\ast} \sum_{b \pmod{D}} \Y_{h^\dagger, h^\dagger}(D) \Z(b^2, D) + O_C \left(\frac{XN}{(\log X)^C} \right).\]
	As in \cite{HBL} we may replace $\Y_{h^\dagger, h^\dagger}(D)$ by $|J_1| |J_2|/\varphi(D)$ in each case, with a total error of 
	\[O \left(X \exp \left(\sqrt{-\log X} \right) N (\log X)^2\right).\] 
	Our remaining task is the inequality
	\[|J_1||J_2| \sum_{\substack{\UUU_1, \UUU_2 \\ \fC_1(\UUU_1, \UUU_2, J_1, J_2)}} \sum_{D \leq 2N} \frac{1}{\phi(D)} \left(\sideset{}{^\ast} \sum_{b \pmod{D}} \Z(b^2, D) - \sideset{}{^\ast} \sum_{a \pmod{D}} \Z(a, D) \right) \ll \frac{XN}{(\log X)^{A+ 2L}},\]
	or 
	\[\E^\prime = \sum_{\substack{\UUU_1, \UUU_2 \\ \fC_1(\UUU_1, \UUU_2, J_1, J_2)}} \sum_D \frac{1}{\varphi(D)} \left(\sideset{}{^\ast} \sum_{b \pmod{D}} \Z(b^2, D) - \sideset{}{^\ast} \sum_{a \pmod{D}} \Z(a, D) \right) \ll \frac{N}{(\log X)^A}. \]
	Here we dropped the condition $D \leq 2N$, which follows automatically since $\beta_{\Bz}$ is supported on $\lVert \Bz \rVert_2 \leq 2N$. \\
	
	Since no further hypotheses regarding the coefficients $\beta_{\Bz}$ is necessary, we may follow Heath-Brown and Li's arguments in \cite{HBL} to conclude that it suffices to obtain the estimate
	\begin{equation} \E_1(\UUU_1, \UUU_2) = \sum_D \frac{D}{\varphi(D)} \left(\sideset{}{^\ast} \sum_{b \pmod{D}} \Z(b^2, D) - \sideset{}{^\ast} \sum_{a \pmod{D}} \Z(a, D) \right) \ll \frac{N^2}{(\log X)^{C_1}}
	\end{equation}
	for any $C_1 > 0$ and for fixed $\UUU_1, \UUU_2$. By M\"{o}bius inversion we deduce that
	\[\E_1 (\UUU_1, \UUU_2) = \sum_{D = 1}^\infty \sum_{k=1}^\infty \frac{D \mu(k)}{\varphi(D)} \left(\sideset{}{^\ast} \sum_{b \pmod{D}} W(b^2, k, D) - \sideset{}{^\ast} \sum_{a \pmod{D}} W(a, k, D) \right),\]
	where 
	\[W(a,k, D) = \sideset{}{^\flat} \sum_{\substack{(\Bz, \By) \in \UUU_1 \times \UUU_2 \\ kD | \Delta(\Bz, \By) \\ a \By \equiv \Bz \pmod{D}}} \beta_{\Bz} \beta_{\By}. \]
	When $kD$ divides $\Delta(\Bz, \By)$ there is a unique integer $c = c(\Bz, \By; kD)$ modulo $kD$ such that $c \By \equiv \Bz \pmod{kD}$, and conversely this congruence implies that $kD$ divides $\Delta(\Bz, \By)$. For this integer $c$ we have $\gcd(c,kD) = 1$ and 
	\begin{align*} \#\{b \pmod{D} : b^2 \By \equiv \Bz \pmod{D}\} & = \# \{b \pmod{D} : b^2 \equiv c \pmod{D}\} \\
		& = \sum_{\substack{\chi \pmod{D} \\ \chi^2 = \chi_0}} \chi(c).
	\end{align*}
	It now follows that 
	\[\sideset{}{^\ast} \sum_{b \pmod{D}} W(b^2, k, D) - \sideset{}{^\ast} \sum_{a \pmod{D}} W(a, k,D) = \sum_{\substack{\chi \pmod{D} \\ \chi^2 = \chi_0 \\ \chi \ne \chi_0}} \sideset{}{^\ast} \sum_{c \pmod{kD}} \sideset{}{^\flat} \sum_{\substack{(\Bz, \By) \in \UUU_1 \times \UUU_2 \\ c \By \equiv \Bz \pmod{kD}}} \beta_{\Bz} \beta_{\By} \chi(c),\]
	and hence
	\[\E_1(\UUU_1, \UUU_2) = \sum_{D=1}^\infty \sum_{k=1}^\infty \frac{D \mu(k)}{\varphi(D)} \sum_{\substack{\chi \pmod{D} \\ \chi^2 = \chi_0 \\ \chi \ne \chi_0}} \sideset{}{^\ast} \sum_{c \pmod{kD}} \sideset{}{^\flat} \sum_{\substack{(\Bz, \By) \in \UUU_1 \times \UUU_2 \\ c \By \equiv \Bz \pmod{kD}}} \beta_{\Bz} \beta_{\By} \chi(c).\]
	Let $d = d(\chi)$ be the conductor of $\chi$ and write $D = d e$ and $e k = \fk$, giving
	\[\E(\UUU_1, \UUU_2) = \sum_{d > 1} \sum_{\fk} C(d, \fk) \sideset{}{^\ast} \sum_{\substack{\chi \pmod{d} \\ \chi^2 = \chi_0}} \sideset{}{^\ast} \sum_{c \pmod{d \fk}} \sideset{}{^\flat} \sum_{\substack{(\Bz, \By) \in \UUU_1 \times \UUU_2 \\ c \By \equiv \Bz \pmod{d \fk}}} \beta_{\Bz} \beta_{\By} \chi(c), \]
	where
	\[C(d,\fk) = \sum_{d_1 k = d_2} \frac{d e \mu(k)}{\varphi(d e)} = \frac{d}{\varphi(d)} \sum_{e k = \fk} \frac{\varphi(d) e \mu(k)}{\phi(d e)}. \]
	Note that the sum for $\chi \pmod{d}$ is empty unless $d = d_1, 4 d_1, 8d_1$ with $d_1$ odd and square-free, in which cases there are at most two possible characters $\chi$. For fixed $d$ the function 
	\[\varphi_d(e) = \frac{\varphi(d)e}{\varphi(de)} \]
	is multiplicative in $e$. Further, for $v \geq 1$ we have
	\[(\varphi_e \ast \mu) \left(p^v \right) = \begin{cases} (p-1)^{-1} & \text{if } v = 1 \text { and } p \nmid d \\ 0 & \text{otherwise}.\end{cases}\]
	We then see that
	\[C(d, \fk) = \frac{d \mu^2(\fk)}{\varphi(d \fk)} \]
	if $\gcd(d, \fk) = 1$ and $C(d, \fk) = 0$ otherwise. This gives the expression
	\begin{equation} \label{E1U1U2} \E_1(\UUU_1, \UUU_2) = \sum_{\substack{\fk, d \\ \gcd(d, \fk) = 1}} \frac{d \mu^2(\fk)}{\varphi(d \fk)} \sideset{}{^\ast} \sum_{\substack{\chi \pmod{d} \\ \chi^2 = \chi_0 \\ \chi \ne \chi_0}} \left(\sideset{}{^\ast} \sum_{c \pmod{d \fk}} \sideset{}{^\flat} \sum_{\substack{(\Bz, \By) \in \UUU_1 \times \UUU_2 \\ c \By \equiv \Bz \pmod{d \fk}}} \beta_{\Bz} \beta_{\By} \chi(c) \right).
	\end{equation}
	We proceed to show that large values of $\fk$ make a negligible contribution. Since $d \fk | \Delta(\Bz, \By)$ we have $d \fk \leq 2N$. Since $0 \leq \beta_{\Bz} \leq 1$ we find that 
	\begin{align*} & \sum_{\fk > F} \sum_{\substack{\fk \\ \gcd(d, \fk) = 1}} \frac{d \mu^2(\fk)}{\varphi(d \fk)} \sideset{}{^\ast} \sum_{\substack{\chi \pmod{d} \\ \chi^2 = \chi_0}} \left \lvert \sideset{}{^\ast} \sum_{c \pmod{d \fk}} \sideset{}{^\flat} \sum_{\substack{(\Bz, \By) \in \UUU_1 \times \UUU_2 \\ c \By \equiv \Bz \pmod{d \fk}}} \beta_{\Bz} \beta_{\By} \chi(c) \right \rvert \\
		& \ll (\log X) \sum_{\fk > \fK} \fk^{-1} \sum_{d \leq 2N/\fk} \sum_{\substack{d \fk | D \\ D \leq 2N}} \sideset{}{^\ast} \sum_{a \pmod{D}} \widetilde{\Z}(a, D) \\
		& \ll (\log X) \sum_{\fk > \fK} \fk^{-1} \sum_{d \leq 2N/\fk} \sum_{\substack{d \fk | D \\ D \leq 2N}} N \\
		& \ll \frac{N^2 (\log X)^2}{\fK}.
	\end{align*} 
	Choosing 
	\[\fK = (\log X)^{C_1 + 2}\]
	and applying Lemma \ref{HBL lem13} then gives a satisfactory bound. \\
	
	Observe that the argument above only depends on the property that $0 \leq \beta_{\Bz} \leq 1$, and so no modification is necessary from the argument given by Heath-Brown and Li in \cite{HBL}. As in \cite{HBL} we divide into three ranges for $d$, namely 
	\[d \leq D_1, D_1 < d \leq D_2, \text{ and } d > D_2 \]
	where
	\[D_1 = \fK^{10} (\log X)^{2C_1 + 14} \text{ and } D_2 = \frac{N}{\fK^{15} (\log X)^{3C_1 + 21}}.\]
	Next we handle the middle range of $d$. The treatment given here is identical to that in \cite{HBL}, since again the specific shape of $\beta_{\Bz}$ is of no consequence in this part. Set
	\[\E_1(D) = \sum_{\fk \leq \fK} \fk^{-1} \mu^2(\fk) \sum_{\substack{D < d \leq 2D \\ \gcd(d, \fk) = 1}} \sideset{}{^\ast} \sum_{\substack{\chi \pmod{d} \\ \chi^2 = \chi_0}} \left \lvert \sideset{}{^\ast} \sum_{c \pmod{d \fk}} \sideset{}{^\flat} \sum_{\substack{(\Bz, \By) \in \UUU_1 \times \UUU_2 \\ c \By \equiv \Bz \pmod{d \fk}}} \beta_{\Bz} \beta_{\By} \chi(c) \right \rvert. \]
	Heath-Brown and Li obtains the following bound, which we summarize in the following lemma:
	\begin{lemma} For any $\ep > 0$ we have
		\[\E_1(D) \ll_\ep \fK^5 (\log X)^6 \left \{D + D^{-1/2} N + D^{1/3} N^{2/3} + N^{23/24 + \ep} \right\} N.\]
	\end{lemma}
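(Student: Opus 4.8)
The plan is to reduce $\E_1(D)$ to a bilinear sum of quadratic‑character values over the thin sectors $\UUU_1,\UUU_2$ and then to bound it essentially as in the corresponding step of \cite{HBL}, whose argument depends on the coefficients $\beta_\Bz$ only through $0\le\beta_\Bz\le 1$. First I would open the congruence condition: for each pair $(\Bz,\By)\in\UUU_1\times\UUU_2$ with $d\fk\mid\Delta(\Bz,\By)$ there is a unique $c=c(\Bz,\By;d\fk)$ with $c\By\equiv\Bz\pmod{d\fk}$ and $\gcd(c,d\fk)=1$, and conversely this congruence forces $d\fk\mid\Delta(\Bz,\By)$. Hence the innermost sum in $\E_1(D)$ becomes
\[
\sum_{\substack{(\Bz,\By)\in\UUU_1\times\UUU_2\\ d\fk\mid\Delta(\Bz,\By)}}\beta_\Bz\beta_\By\,\chi\bigl(c(\Bz,\By;d\fk)\bigr).
\]
Using Lemma \ref{gcd lem} and crude lattice‑point counts I would first discard the pairs for which $\Delta(\Bz,\By)$ or a coordinate of $\Bz$ or $\By$ shares a common factor with $d$, at a cost absorbed into the claimed bound; on the surviving pairs $c\equiv z_1\overline{y_1}\pmod d$, so $\chi(c)=\chi(z_1)\overline{\chi}(y_1)$, which separates the two variables up to the divisibility constraint $d\fk\mid\Delta(\Bz,\By)$.

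Second, I would detect this divisibility by additive characters, $\mathbf 1[d\fk\mid\Delta]=(d\fk)^{-1}\sum_{t\bmod d\fk}e\bigl(t\Delta(\Bz,\By)/(d\fk)\bigr)$, and split off the frequency $t=0$. The $t=0$ term factorises into a product of one‑variable sums of $\chi$ over intervals, handled by P\'olya--Vinogradov; summing the resulting bound over $d\sim D$, over $\fk\le\fK$, and over the $O_\ep(d^\ep)$ real characters modulo $d$, and absorbing the small sector widths, yields a contribution of the shape $\fK^5(\log X)^6\cdot DN$. For the frequencies $t\ne 0$ I would apply Poisson summation to the sums over $\Bz$ and over $\By$ within their narrow sectors, producing exponential sums twisted by Gauss sums attached to $\chi$; Cauchy--Schwarz followed by the large sieve inequality for roots of quadratic congruences, Proposition \ref{BBDT prop}, then gives the intermediate terms $D^{-1/2}N$ and $D^{1/3}N^{2/3}$, exactly as in \cite{HBL}. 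None of these manipulations is sensitive to the arithmetic of $K$, since $\chi$ is an honest Dirichlet character to the rational modulus $d$ and $c$ a rational residue class.

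Finally, the term $N^{23/24+\ep}$ is the genuinely arithmetic one: after the reductions above there remains a bilinear sum of Jacobi‑symbol values $\chi(z_1)\overline{\chi}(y_1)$ in which both variables run over two‑dimensional regions, and obtaining cancellation here requires the oscillation of the quadratic symbol. This is precisely the sum estimated by Friedlander and Iwaniec in their Proposition 23.1 and Theorem $\psi$, and I would invoke that estimate — whose extension to the quadratic field $K$, together with the requisite Jacobi--Kubota type symbols, is carried out in Section \ref{char sums} — to produce the last term. I expect this character‑sum input to be the main obstacle: everything else is large‑sieve bookkeeping and divisor‑function estimates that transfer without change from \cite{HBL}, whereas the Jacobi‑symbol cancellation is the one place where the depth of \cite{FI1} is really needed (in the setting of \cite{HBL} it is imported from \cite{FI1}; in ours it comes from Section \ref{char sums}, and only through $0\le\beta_\Bz\le1$). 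Summing the four contributions over $d$ in the pinched range $D_1<d\le D_2$ then gives the stated bound, which via Proposition \ref{HBL prop7} closes the argument.
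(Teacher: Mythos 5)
The paper offers no proof of this lemma at all: it is quoted verbatim from Heath-Brown and Li, the only justification being that their middle-range argument uses nothing about the coefficients beyond $0\le\beta_\Bz\le 1$ and the $\bZ^2$-geometry of the regions $\UUU_1,\UUU_2$, so it transfers unchanged. Your reconstruction starts correctly (the unique residue $c$, the factorization $\chi(c)=\chi(z_1)\overline{\chi}(y_1)$ after removing common factors, the insensitivity to the shape of $\beta$), but it misidentifies the source of the decisive term. You attribute $N^{23/24+\ep}$ to the Jacobi--Kubota machinery of Section \ref{char sums}, i.e.\ to the analogues of Proposition 23.1 and Theorem $\psi$ of \cite{FI1}. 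Those results concern sums weighted by $\Lambda(n)$ and by the spins $[\alpha]$ of ideal numbers, and in this paper they are developed for, and used only in, the \emph{large}-$d$ range of Section \ref{HBL sec11}, where one must express $\chi(k)$ for the residue determined by $k\By\equiv\Bz\pmod{\Delta(\Bz,\By)}$ through symbols attached to ideal-number coordinates and exploit the support of $\beta$ on norms free of small prime factors. In the middle range every object in sight --- the modulus $d\fk$, the residue $c$, the real Dirichlet character $\chi$, the integers $z_1,y_1$ and the determinant $\Delta(\Bz,\By)$ --- is rational, and the $N^{23/24+\ep}$ term comes from a bilinear bound for the rational Jacobi symbol (the $(M+N)^{1/12}(MN)^{11/12+\ep}$-type estimate, reached after switching the divisor $d\fk\mid\Delta(\Bz,\By)$ and applying reciprocity), not from ideal numbers. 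If the middle range genuinely needed Section \ref{char sums}, the paper's claim of verbatim transfer from \cite{HBL} would be false; and your parenthetical that this input enters ``only through $0\le\beta_\Bz\le1$'' is inconsistent with the fact that Propositions \ref{prop psi} and \ref{LMN bd} are emphatically not statements about arbitrary bounded coefficients.

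A second concrete problem is your appeal to Proposition \ref{BBDT prop} for the terms $D^{-1/2}N$ and $D^{1/3}N^{2/3}$. That proposition is a large sieve over the roots $\nu$ of $f(1,\nu)\equiv 0\pmod d$ and is the engine of the Type I estimates in Section \ref{Type I}; no such quadratic congruence occurs in $\E_1(D)$, where the congruence $c\By\equiv\Bz\pmod{d\fk}$ is linear in $c$ and the relevant counting is of lattice pairs with $d\fk\mid\Delta(\Bz,\By)$, controlled by divisor-type estimates as in Lemma \ref{HBL lem13} together with P\'olya--Vinogradov. So while your overall plan points in a sensible direction, the two tools you name for the two hardest terms are not the ones that produce them, and the argument as written would not close; the honest route, and the one the paper takes, is simply to observe that Heath-Brown and Li's proof applies word for word.
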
 
	Summing over dyadic ranges of $D$, we see that the values of $d$ in the range $D_1 \leq d \leq D_2$ make a satisfactory contribution given our choices of $D_1, D_2$. \\ \\
	It then remains to give estimates for the small and large ranges of $d$, where we must depart somewhat from Heath-Brown and Li's treatment due to the dependence on the specific shapes of the coefficients $\beta_\Bz$. 
	
	\section{Proof of Proposition \ref{HBL prop7}: remaining ranges}
	\label{HBL sec11}  
	
	\subsection{Large $d$}
	Our goal in this subsection is to obtain the bound
	\[\sum_{\substack{d > D_2 \\ \gcd(d, \fk) = 1}} \sideset{}{^\ast} \sum_{\substack{\chi \pmod{d} \\ \chi^2 = \chi_0}} \left(\sideset{}{^\ast} \sum_{c \pmod{d \fk}} \sideset{}{^\flat} \sum_{\substack{(\Bz, \By) \in \UUU_1 \times \UUU_2 \\ c \By \equiv \Bz \pmod{d \fk}}} \beta_{\Bz} \beta_{\By} \chi(c) \right) \ll_C \frac{N^2}{(\log X)^C}\]
	for any $C > 0$ and $\fk \leq \fK$. There is still more mileage we can get from the argument given in \cite{HBL}. In particular, we follow their argument in Section 11 \cite{HBL} and decompose $d$ as $d_1 d_2$, as well as $\chi = \chi_1 \chi_2$. We have $d \fk | \Delta(\Bz, \By)$ and thus we may set $\Delta(\Bz, \By) = d_1 e t$ where $e$ is odd and $t$ is a power of $2$. Our conditions on $\UUU_1, \UUU_2$ guarantee that $0 < \Delta(\Bz, \By) \leq 2N$, hence $1 \leq et \leq 16N/D_2 \ll (\log X)^{18C_1 + 51}$. We split the sums over $\Bz, \By$ into congruence classes $\Bz \equiv \Bu \pmod{8et}, \By \equiv \Bv \pmod{8et}$ and fix the parameters
	\begin{equation} \label{paras} \fk, d_2, \chi_2, e, \Bu, \Bv, \text{ and } t.\end{equation}
	Each admissible pair $\Bu, \Bv$ corresponds to a unique integer $k \pmod{\Delta(\Bz, \By)}$ with the property that $k \By \equiv \Bz \pmod{\Delta(\Bz, \By)}$, and then
	\[\chi(c) = \chi(k) = \chi_2(k) \left(\frac{k}{d_1} \right)\]
	where $\chi_2(k)$ is determined by the parameters (\ref{paras}). The number of choices for the parameters (\ref{paras}) is bounded by a fixed power of $\log X$ and so it suffices to show that 
	\[\sum_{\substack{d_1 > D_2/d_2 \\ \gcd(d_2, 2 \fk) = 1}} \frac{d_1 \mu^2(d_1)}{\varphi(d_1)} \left(\sideset{}{^\ast} \sum_{k \pmod{d_1 et}} \sideset{}{^\flat} \sum_{\Bz, \By} \beta_{\Bz} \beta_{\By} \left(\frac{k}{d_1} \right) \right) \ll_C \frac{N^2}{(\log X)^C}\]
	for every $C>0$, where the sum over $\Bz, \By$ satisfies the conditions
	\[(\Bz, \By) \in \UUU_1 \times \UUU_2, k \By \equiv \Bz \pmod{\Delta(\Bz, \By)}, \Bz \equiv \Bu \pmod{8et}, \]
	\[\By \equiv \Bv \pmod{8 et}, \text{ and } \Delta(\Bz, \By) = d_1 et.\]
	Following the same analysis in Section 11.1 of \cite{HBL}, we conclude that it is sufficient to obtain the bound 
	\[\sum_{\substack{(\Bz, \By) \in \UUU_1 \times \UUU_2 \\ \Bz \equiv \Bu, \By \equiv \Bv \pmod{8etn} \\ \Delta(\Bz, \By) > etD_2 /d_2}} \beta_{\Bz}^\prime \beta_{\By}^\prime \ll_C \frac{N^2}{(\log X)^C} \]
	where
	\[\beta_{\Bz}^\prime = \beta_\Bz (-1)^{(z_1-1)/2} \left(\frac{z_2}{z_1} \right). \]
	for every fixed $C > 0$, for each choice of parameters $e,t, n \leq (\log X)^C$, and for each $\Bu, \Bv$. Further subdividing into congruence classes it suffices to handle 
	\begin{equation} \label{twistbisum} \sum_{\substack{(\Bz, \By) \in \UUU_1 \times \UUU_2 \\ \Bz \equiv \Bu, \By \equiv \Bv \pmod{8etn}}} \beta_{\Bz}^\prime \beta_{\By}^\prime = \left(\sum_{\substack{\Bz \in \UUU_1 \\ \Bz \equiv \Bu \pmod{8etn}}} \beta_{\Bz}^\prime \right) \left(\sum_{\substack{\Bz \in \UUU_2 \\ \Bz \equiv \Bv \pmod{8etn} }} \beta_{\Bz}^\prime \right).
	\end{equation}
	At this stage that we must diverge from Heath-Brown and Li's treatment. We briefly discuss why this is necessary. In order to proceed, Heath-Brown and Li relies on the crucial property that their $\beta_z$ are supported on Gaussian integers $z$ such that $N(z)$ has no small prime factors. The analogous condition for us is that the ideal number $\gamma(\Bz)$ has norm (equal to the norm of the ideal $J(\gamma(\Bz))$ in $\O_K$) without small prime factors. Thus, now going to the perspective that $\Bz$ represents an ideal number $\gamma$, we see that $N(\gamma) = N(J(\gamma))$ is automatically co-prime to $8etn$ and therefore we may assume that $\upsilon, \nu$ (the ideal numbers corresponding to $\Bu, \Bv$ respectively) are co-prime to $8etn$. This allows us to pick out the congruence condition $\gamma \equiv \upsilon, \nu \pmod{8etn}$ using multiplicative characters. In order to make this precise, we borrow from the algebraic treatment given in \cite{HBM}, and put
	\[\fJ(q) = \{\alpha : \alpha \in \fJ : \gcd(\alpha, q) = 1\}\]
	and $\fJ_1(q) = \fJ(q) \cap K$. Further, put
	\[\fJ_0(q) = \{\alpha : \alpha \in K, \alpha \equiv 1 \pmod{q}\}.\]
	Then our congruence conditions can be picked out using characters of the quotient group $\fJ_1(q)/\fJ_0(q)$, and we conclude that
	\[\sum_{\substack{\widehat{\alpha} \in \UUU_j \\ \alpha \equiv \nu \pmod{8etn} }} = \frac{1}{\varphi_K(8etn)} \sum_{\chi \pmod{8etn}} \ol{\chi}(\nu) \S(\chi, \UUU_j),    \]
	where $\varphi_K$ is the Euler-$\varphi$ function for $\O_K$ and 
	\[\S(\chi, \UUU) = \sum_{\widehat{\alpha} \in \UUU} \beta_{\alpha}^\prime \chi(\alpha). \]
	In order to obtain acceptable estimates for $\S(\chi, \UUU)$, we will need to generalize certain results from \cite{FI1} to apply to general quadratic fields. This work may be of independent interest and is recorded in the next section; see Propositions \ref{prop psi} and \ref{LMN bd} in particular. \\ \\
	We now proceed to pick out the condition that we are constrained in a narrow sector using a twice-differentiable periodic function $\upsilon(\theta)$, where 
	\[\upsilon(\theta) = \begin{cases} 1 & \text{if } \theta \in (\theta_0, \theta_0 + \varpi_2) \pmod{2 \pi} \\ 0 & \text{if } \theta \not \in [\theta_0 - (\log X)^{-C}, \theta_0 + \varpi_2 + (\log X)^{-C}] \pmod{2\pi} \end{cases} \]
	and where $|\upsilon^{\prime \prime}(\theta)| \ll (\log X)^{-2C}$. Then
	\[\S(\chi, \UUU) = \sum_{N^\prime < N(z) \leq N^\prime(1 + \varpi)} \beta_z^\prime \chi(z) \upsilon (\arg z) + O \left( \frac{N}{(\log X)^C} \right).\]
	The Fourier coefficients of $\upsilon$ satisfy $c_k \ll k^{-2} (\log X)^{2C}$ for $k \ne 0$, and so
	\[\upsilon(\arg z) = \sum_k c_k \left(\frac{z}{|z|} \right)^k = \sum_{|k| \leq (\log X)^{3C}} c_k \left(\frac{z}{|z|} \right)^k + O\left((\log X)^{-C} \right). \]
	It then suffices to show that 
	\[\S(\chi, N^\prime, k) = \sum_{N^\prime < N(z) \leq N^\prime (1 + \varpi)} \beta_z^\prime \chi(z) \left(\frac{z}{|z|}\right)^k \ll_C N (\log X)^{-4C}\]
	for any $C > 0$, and for $|k| \leq (\log X)^{3C}$. As in \cite{HBL} we can obtain in fact a small power-saving in $N$. We recall that $\beta_z = \beta_{N(z)}$ is the indicator function of a set of one of the shapes
	\[Q_j = \{p_1 \cdots p_{j+1} \in (N^\prime, N^\prime(1 + \varpi)] : p_{j+1} \in J, p_{j+1} < \cdots < p_1, \]
	\[p_1 \cdots p_j < Y \leq p_1 \cdots p_{j+1} < X^{1/2 0 \delta}\}\]
	or
	\[R = \{n \in (N^\prime, N^\prime(1 + \varpi)] : \gcd(n, P(V)) = 1\}.\]
	Here we will have $0 \leq j \leq n_0 = \lfloor \log Y/(\delta \log X) \rfloor$, and $J = [V, V(1+ \kappa)) \subseteq [X^\delta, X^{1/2 - \delta})$. In particular we interpret $Q_0$ to be $\{p : p \in J \cap (N^\prime, N^\prime(1 + \varpi)]\}$. \\ \\
	We now write 
	\[\lambda(n) = \sideset{}{^\wedge} \sum_{N(z) = n} \chi(z) \left(\frac{z}{|z|} \right)^k u^{(x-1)/2} \left(\frac{z_2}{z_1} \right)\]
	where $\sideset{}{^\wedge} \sum$ denotes a sum over primitive ideal numbers $z$ in a fixed class of ideal numbers, with $\widehat{z} = (z_1, z_2)$. We then have
	\[\S(\chi, N^\prime, k) = \sum_n \lambda(n)\]
	where $n$ runs over $R$ or $Q_j$ for some $j$. As in \cite{HBL}, the treatment for $R$ and $Q_j$ are similar. To begin, we first handle the contribution from those $n$ whose largest prime factor, say $\P(n)$, exceeds $N^{99/100}$. The contribution from such integers is 
	\[\sum_{m \leq 2N^{1/100}} \sum_{\substack{p > \max\{\P(m), N^{99/100} \\ mp \in Q_j}} \lambda(mp). \]
	Since $p$ is the largest prime factor of $mp$ one sees from the definition of the set $Q_j$ that one may rewrite the conditions $p > \P(m)$ and $mp \in Q_j$ to say that $p$ runs over an interval $I_j(m) \subseteq [N/m, 2N/m)$. We may then apply Proposition \ref{prop psi} to conclude that
	\begin{align*} \sum_{m \leq 2N^{1/100}} \sum_{\substack{p > \max\{\P(m), N^{99/100} \\ mp \in Q_j}} \lambda(mp) & \ll q_0 (|k| + 1) \sum_{m \leq 2N^{1/100}} m(N/m)^{76/77} \\
		& \ll q_0(|k| + 1) N^{76/77 + (78/77)/100}. 
	\end{align*} 
	Since $76/77 + (78/77)/100 < 1$, this is gives the required power-saving bound. \\ \\
	Next we deal with the terms where every prime factor is at most $N^{99/100}$. To do so we rewrite our sum in terms of bilinear sums. Suppose $n = p_1 \cdots p_{j+1}$ as in the description of the set $Q_j$, and divide the range of each prime $p_i$ into intervals of the shape $(B_i, 2 B_i]$. This will give us at most $(2 \log N)^{1+n_0}$ sets of dyadic ranges, and since $n_0 \ll \delta^{-1} = (\log X)^{1 - \varpi}$ there will be at most $O_\ep(N^\ep)$ such ranges. Moreover we may suppose
	\[\prod_{i=1}^{j+1} B_i \ll N \ll 2^{j+1} \prod_{i=1}^{j+1} B_i.\]
	Since we may now assume that $B_1 \leq N^{99/100}$ there will be an index $u$ such that
	\[N^{1/100} \leq \prod_{i=1}^u B_i \leq N^{99/100}.\]
	Fixing such an index $u$ we split $n = n_1 n_2$ with 
	\[n_1 = \prod_{i=1}^u p_i \text{ and } n_2 = \prod_{i=u+1}^{j+1} p_i,\]
	so that $n_1 \leq N_1$ and $n_2 \leq N_2$ with 
	\[N_1 = 2^{1 + n_0} \prod_{i=1}^u B_i \text{ and } N_2 = 2^{1 + n_0} \prod_{i=u+1}^{j+1} B_i.\]
	It follows that
	\[N_1 N_2 \ll_\ep N^{1 + \ep} \text{ and } N_1, N_2 \ll_\ep N^{99/100 + \ep}\]
	respectively. This implies that
	\[N_1 N^{-\ep} \ll n_1 \leq N_1 \text{ and } N_2 N^{-\ep} \ll n_2 \leq N_2.\]
	We may thus reinterpret our description of $Q_j$ by requiring that $n_1 \in Q_{j,u}$ and $n_2 \in Q_{j,u}^\prime$ for appropriate sets $Q_{j,u}, Q_{j,u}^\prime$, together with the conditions that 
	\begin{equation} \label{good reg} n_1 n_2 \in I = (N^\prime, N^\prime(1 + \varpi)] \cap [Y, X^{1/2 - \delta}), p_{j+1}^{-1} n_1 n_2 < Y, \text{ and } p_{u+1} < p_u.\end{equation} 
	In other words, we put
	\[Q_{j,u} = \{n_1 = p_1 \cdots p_u : p_i \in (B_i, 2B_i], p_u < \cdots < p_1\}\]
	and
	\[Q_{j,u}^\prime = \{n_2 = p_{u+1} \cdots p_{j+1} : p_i \in (B_i, 2B_i], p_{j+1} \in J, p_{j+1} < \cdots < p_{u+1} < Y\}.\]
	In order to separate the variables $n_1, n_2$ completely we subdivide the available ranges for $n_1, n_2, p_{j+1}, p_u$, and $p_{u+1}$ into intervals of the shape $(A, A + A/L), (A^\prime, A^\prime + A^\prime/L]$, $(B_{j+1}^\prime, B_{j+1}^\prime + B_{j+1}^\prime/L]$, $(B_u^\prime, B_u^\prime + B_u^\prime/L]$ and $(B_{u+1}^\prime, B_{u+1}^\prime + B_{u+1}^\prime/L]$. Here the parameter $L$ will be chosen to be a small power of $N$. One should note that these intervals may have length less than one. Indeed such an interval may contain no integers at all. \\ \\
	There will be $O(L^5 (\log X)^2)$ such intervals and there will be some for which the conditions $n_1 n_2 \in I, p_{j+1}^{-1} n_1 n_2 < Y$ and $p_{u+1} < p_u$ hold for every choice of $p_1, \cdots, p_{j+1}$ satisfying 
	\begin{align*} & n_1 \in (A, A + A/L], n_2 \in (A^\prime, A^\prime + A^\prime/L] \\
		& p_{j+1} \in (B_{j+1}^\prime, B_{j+1}^\prime + B_{j+1}^\prime/L], p_u \in (B_u^\prime, B_u^\prime + B_u^\prime/L],\\
		p_{u+1} \in (B_{u+1}^\prime, B_{u+1}^\prime + B_{u+1}^\prime/L],
	\end{align*}
	and
	\[p_i \in I_i \text{ with } i \ne 1, u, u+1.\]
	This case gives the subsum
	\[\sum_{\substack{n_1 \in Q_{j,u} \cap (A, A + N_1/L] \\ p_u \in (B_u^\prime, B_u^\prime + B_u^\prime/K]}} \sum_{\substack{n_2 \in Q_{j,u}^\prime \cap (A^\prime, A^\prime + A^\prime/L] \\ p_{j+1} \in (B_{j+1}^\prime, B_{j+1}^\prime + B_{j+1}^\prime/L] \\ p_{u+1} \in (B_{u+1}^\prime, B_{u+1}^\prime + B_{u+1}^\prime/L]}} \lambda(n_1 n_2), \]
	so that we have separated the variables $n_1, n_2$. For such sums we can apply Proposition \ref{LMN bd} which gives the bound
	\[O_\ep \left((N_1 + N_2)^{\frac{1}{12}} (N_1 N_2)^{\frac{11}{12} + \ep} \right) = O_\ep \left(N^{\frac{99}{100} \cdot \frac{1}{12}} \cdot N^{\frac{11}{12} + \ep} \right) = O_\ep \left(N^{1 - \frac{1}{1200} + \ep} \right). \]
	Since there are $O_\ep(L^5 N^\ep)$ such subsums the overall contribution will be $O(L^5 N^{1- 1/200 + \ep})$. \\ \\
	It remains to consider the contribution from the remaining ``bad" sets of ranges which are not exclusively contained in the region given by (\ref{good reg}). First suppose that the interval $I$ is given by $[e_1, e_2]$ say, and that there are integers $n_1, n_1^\prime \in (A, A + A/L]$ and $n_2, n_2^\prime \in (A^\prime, A^\prime + A^\prime/L]$ such that $n_1 n_2 \in I$ but $n_1^\prime n_2^\prime \not \in I$. Then we must have $n_1 n_2 = (1 + O(L^{-1}) e_1$ or $n_1 n_2 = (1 + O(L^{-1})) e_2$. We now consider the total contribution from integers $n \in Q_j$ for all such ``bad" choices of intervals $(A, A + A/L), (A^\prime, A^\prime + A^\prime/L]$, $(B_{j+1}^\prime, B_{j+1}^\prime + B_{j+1}^\prime/L]$, $(B_u^\prime, B_u^\prime + B_u^\prime/L]$ and $(B_{u+1}^\prime, B_{u+1}^\prime + B_{u+1}^\prime/L]$. Since each integer $n$ occurs at most once, and $\lambda(n) = O(\tau(n))$, the contribution will be 
	\[O_\ep \left(\sum_{n = (1 + O(L^{-1})e_1} \tau(n) \right) = O_\ep\left(N^{1 + \ep} L^{-1} \right).\]
	Similarly, if we have $p_{j+1}^{-1} n_1 n_2 < Y$ but $(p_{j+1}^\prime)^{-1} n_1^\prime n_2^\prime \geq Y$, then $p_{j+1}^{-1} n_1 n_2 = (1 + O(L^{-1})) Y$. This gives
	\[B_{j+1} Y \asymp AA^\prime \leq N_1 N_2 \ll_\ep N^{1+\ep},\]
	so any $n$ which is counted in this case will have a prime factor $p \ll N^{1+\ep}/Y$ and such that $p^{-1} n = (1 + O(L^{-1}))Y$. Thus, on writing $n = pm$, we see that the total contribution in this case is 
	\[O \left(\sum_{p \ll N^{1+\ep}/Y} \sum_{m = (1 + O(L^{-1})Y} \tau(pm) \right) = O_\ep \left( N^{1+\ep} Y^{-1} (1 + L^{-1} Y) \right) = O_\ep \left(N^{1+\ep} L^{-1}\right),  \]
	for $L \leq Y$. \\ \\
	Finally, if $B_u = B_{u+1}$, then it may happen that the condition $p_{u+1} < p_u$ is satisfied by some, but not all, pairs of primes $(p_u, p_{u+1})$ from the intervals $(B_u^\prime, B_u^\prime + B_u^\prime/L]$ and $(B_{u+1}^\prime, B_{u+1}^\prime + B_{u+1}^\prime/L]$. Clearly this problem cannot arise when $L  \geq 2 P_u$ since then the intervals $(B_u^\prime, B_u^\prime + B_u^\prime/L]$ and $(B_{u+1}^\prime, B_{u+1}^\prime + B_{u+1}^\prime/L]$ contain at most one prime each. It follows that any such $n$ to be counted in this case must have two prime factors $p^\prime > p \geq P_u \geq L/2$ with $p^\prime = (1 + O(L^{-1})p$. Hence the corresponding contribution is 
	\[O \left(\sum_{\substack{p^\prime > p \geq L/2 \\ p^\prime = (1 + O(L^{-1})p}} \sum_{\substack{n \ll N \\ p^\prime p | n }} \tau(n) \right) = O_\ep \left(\sum_{\substack{p^\prime > p \geq L/2 \\ p^\prime = (1 + O(L^{-1}))p}} \frac{N^{1+\ep}}{p^\prime p} \right) = O_\ep \left(N^{1+\ep} L^{-1} \right). \]
	We therefore find that our sum is bounded by 
	\[O_\ep \left(L^5 N^{1-1/1200 + \ep} + N^{1+\ep} L^{-1} \right),\]
	whenever $L \leq Y$. We may then choose $L = N^{10^{-5}}$ say, to achieve the claimed power saving in the case of large $d$. 
	
	\subsection{Small $d$} 
	
	To handle small $d$ it suffices to show that for any $\fk \leq \C, d \leq D_1$, and any non-principal $\chi \pmod{d}$ that 
	\[\sideset{}{^\ast} \sum_{c \pmod{d \fk}} \sideset{}{^\flat} \sum_{\substack{(z, y) \in \UUU_1 \times \UUU_2 \\ c z \equiv y \pmod{d\fk}}} \beta_z \beta_y \chi(c) \ll_C \frac{N^2}{(\log X)^C}\]
	for every $C > 0$. Since 
	\[\sideset{}{^\ast} \sum_{c \pmod{d\fk}} \chi(c) = 0,\]
	it suffices to prove that if $\UUU = \UUU_1$ or $\UUU_2$ then there is a number $\fM = \fM(\UUU, d\fk)$ such that
	\[\sum_{\substack{z \in \UUU \\ z \equiv \alpha \pmod{2d\fk}}} \beta_z = \fM + O_C \left(\frac{N}{(\log X)^C} \right) \]
	for any $\gcd(\alpha, 2d\fk) = 1$ and $C > 0$, since $\beta_z$ is supported on those $z$ free of small prime factors, and $2d\fk$ is small. As before we may drop the summation condition $\flat$. For notational convenience, we set $q = 2 d \fk$ and note that $q \leq (\log X)^{C_0}$ for some $C_0 > 0$. \\ \\
	As in the previous subsection we may assume that $\beta_z = \beta_{N(z)}$, where $\beta_n$ is the indicator function of either $Q_j$ or $R$. We describe the procedure for $Q_j$, the method for $R$ being similar. We decompose $z$ as $s_1 s_2$ with $N(s_1)$ being the largest prime factor of $N(s_1 s_2)$. The requirement that $n \in Q_j$ is then equivalent to a condition of the form $N(s_2) \in Q_j^\prime$ together with a restriction of the type $N(s_1) \in I(s_2)$ for some real interval $I(s_2)$. Specifically, we have
	\[Q_{j+1}^\prime = \{p_2 \cdots p_{j+1} : p_{j=1} \in J, p_{j+1} < \cdots < p_2\}\]
	and
	\[I(s_2) = (p_2, \infty) \cap \left(\frac{N^\prime}{N(s_2)}, \frac{N^\prime(1+\varpi)}{N(s_2)} \right] \cap \left[\frac{Y}{N(s_2)}, \frac{X^{1/2 - \delta}}{N(s_2)} \right),\]
	where $p_2$ is the largest prime factor of $N(s_2)$. When $\UUU$ is given by (\ref{U shape}) the condition on the size of $N(s_1 s_2)$ is exactly the condition
	\[N(s_1) \in \left(\frac{N^\prime}{N(s_2)}, \frac{N^\prime(1+ \omega)}{N(s_2)} \right],\]
	and we have $\theta_0 < \arg z \leq \theta_0 + \omega_2$ exactly when $\arg s_1$ is constrained in a small interval of length $O(\varpi_2)$ dependent on $\arg s_2$. It follows that
	\begin{equation} \label{frankM} \sum_{\substack{z \in \UUU \\ z \equiv \alpha \pmod{q}}} \beta_z = \sum_{\substack{N(s_2) \in Q_j^\prime \\ \gcd(s_2, q) = 1}} \N(s_2, \alpha),\end{equation}
	where $\N(s_2, \alpha)$ is the number of ideal numbers $s_1$ satisfying 
	\[s_1 s_2 \equiv \alpha \pmod{q}, N(s_1) \in I(s_2), \text{ and } \arg s_1, \arg s_2 \]
	and for which $N(s_1)$ is prime. We can estimate $\N(s_2, \alpha)$ using a form of the Prime Number Theorem for arithmetic progressions over number fields, due to Mitsui. We note that, as we remarked earlier, we can easily re-divide our sectors in accordance with the condition $N(z) \sim N$ as opposed to $\lVert \Bz \rVert_2 \sim N$, so we may apply Mitsui's theorem without worry in each of our sectors. If we put $\pi(X; q, \alpha, \theta)$ for the number of prime ideal numbers $\fp$ in a fixed ideal class satisfying $\fp \equiv \alpha \pmod{q}$ and having norm at most $X$ with $0 \leq \arg(\fp) \leq \theta$, then Mitsui's theorem gives the estimate
	\begin{equation} \label{mits} \pi(X; q, \alpha, \theta) = \frac{w \theta R_K}{2^{r_1} h_K \varphi_K(\fa) } \operatorname{Li}(X) + O_{K} \left(X \exp \left(-c \sqrt{\log X} \right) \right) 
	\end{equation}
	where $r_1$ is the number of real embeddings of $K$, $w$ the number of roots of unity in $K$, $R_K$ the regulator of $K$, and $h_K$ the class number o $K$. Here $c$ is an absolute constant. Since we do not care about dependence on $K$, we may take the implied constant in (\ref{mits}) as an absolute constant. We emphasize that (\ref{mits}) holds uniformly for $\theta \in [0, 2 \pi]$ and for all $q \leq (\log X)^A$. \\ \\
	Applying (\ref{mits}) with $q = 2 d \fk$ to estimate $\N(s_2, \alpha)$, we have $I(s_2) \subseteq (0, 2N/N(s_2)]$ and so we will need to know that $q = 2 d \fk \leq (\log 2N/N(s_2))^A$ for some constant $A$. This holds whenever $p$ divides an element of $Q_j$ then one has $p \geq X^{\delta_1}$ with $\delta = (A \log \log X)/\log X$. Thus we will have $2N/N(s_2) \geq X^{\delta_1}$ and so
	\[\delta_1 \log X \leq \log \left(\frac{N}{N(s_2)} \right),\]
	which implies that
	\[\log X \leq \left(\log \left(\frac{N}{N(s_2)} \right) \right)^{\frac{1}{\varpi}}.\]
	Therefore whenever $2d \fk \leq (\log X)^{C_0}$ we have
	\[2 d \fk \leq (\log X)^{C_0} \leq \left( \log \left(\frac{2N}{N(s_2)} \right) \right)^{\frac{C_0}{\varpi}} \]
	 The required condition therefore holds when $\fk \leq \fK$ and $d \leq D_1$. \\ \\
	We may then conclude, as in \cite{HBL}, that
	\[\N(s_2, \alpha) = \fM(s_2, d\fk, j, \UUU) + O\left(\frac{N}{N(s_2)} \exp \left(-c (\log X)^{\varpi/2} \right) \right)\]
	where the main term crucially is independent of $\alpha$. Feeding this into (\ref{frankM}) then completes our treatment of small $d$, and hence the proof of Proposition \ref{main bisum}. 
	
	\section{Character sums} 
	\label{char sums} 
	
	In this section our goal is to introduce and prove analogues of Proposition 23.1 and Theorem $\psi$ in \cite{FI1}. To wit, we introduce, for an ideal number $\alpha$ in a fixed class $A$, the vector
	 \[\widehat{\alpha} = (a_1, a_2) \in \bZ^2\]
	 corresponding to the class $A$ with basis produced as in Section 4. We then introduce the symbol
	\[[\alpha] = i^{\frac{a_1 - 1}{2}} \left(\frac{a_2}{|a_1|} \right)\]
	where $\left(\frac{\cdot}{\cdot} \right)$ is the Jacobi symbol. Note that the symbol $[\cdot]$ depends on the class $A$ \emph{and the choice of basis}, which we have suppressed. \\ \\
	Our goal is to obtain an analogue of Lemma 20.1 in \cite{FI1}, which shows that while $[\cdot]$ is not multiplicative, a suitable result exists to separate $[zw]$ into $[z] [w] \kappa(zw)$, where $|\kappa(zw)| = 1$ and $\kappa$ can be described explicitly. To do so we need to introduce an analogue of the so-called Jacobi-Kubota symbol $\xi_w(z)$ in \cite{FI1}. Defining the analogue of $\xi_w(z)$ in the present setting is tricky, due to the fact that in general $\O_K$ need not be a unique factorization domain and could have an infinite unit group. \\ \\
	To prepare for our definition, we first gather several of the key properties satisfied by Friedlander and Iwaneic's $\xi_w(z)$ in \cite{FI1}. In particular, it satisfies the following:
	\begin{enumerate} 
		\item It satisfies an equation of the form
		\[[z][w] = \ep [zw] \xi_w(z)\]
		where $\ep = \pm 1$ depending only on the quadrants containing $z,w$ respectively; 
		\item It is multiplicative for each $w \in \bZ[i]$: one has $\xi_w(z_1) \xi_w(z_2) = \xi_w(z_1 z_2)$; 
		\item It is symmetric: $\xi_w(z) = \xi_z(w)$ for $w, z \in \bZ[i]$; 
		\item (Lemma 21.1 in \cite{FI3}) For $q = |w_1 w_2|^2$ and $d = |\gcd(w_1, \ol{w_2})|^2$ one has
		\[\sum_{\zeta \pmod{q}} \xi_{w_1}(\zeta) \xi_{w_2}(\zeta) = \begin{cases} q \varphi(d) \varphi(q/d) & \text{if } q,d \text{ are squares} \\ 0 & \text{otherwise.} \end{cases}.\]
		\item For $w = u + iv$ and $\omega \equiv - v \ol{u} \pmod{q}$ with $q = |w|^2$, one has
		\[\xi_w(z) = \left(\frac{ur - vs}{q} \right) \text{ and } \xi_w(z) = \left(\frac{r + \omega s}{q} \right),\]
		where $z = r + is$. 
	\end{enumerate}
	We would like to define our function $\xi_\alpha(z)$ to have the same properties. Unfortunately, it seems that at least some of these properties require special structures of the Gaussian integers $\bZ[i]$. Thus, some more preparatory work is needed before we can define our stand-in for the Jacobi-Kubota symbol. We then check that our analogous symbol has the necessary properties to carry out the proofs of analogous statements in \cite{FI1}. \\ 
	
	First we note that our symbol $\xi_\alpha(z)$ depends on $\alpha$, and in particular, depends on the class $A$ of $\alpha$. This of course is a trivial point when $K = \bQ(i)$, since $\bZ[i]$ has unique factorization. Next we will also need to restrict the class of the \emph{inputs} $z$, in order for our symbol to be well-behaved. This is far from ideal and is likely too restrictive, but it suffices for our purposes in this paper. Indeed, later we will see that it is necessary to define a separate symbol $\xi$ for each class of ideal numbers \emph{along with a basis} of said ideal numbers. \\ 
	
	The most important property turns out to be (1), so we define our symbol with this in mind. To simplify matters we will assume that in our composition law the bilinear form $Q_{A,B}(w,z)$ is given by $w_1 z_1 + w_2 z_2$. In particular, we fix bases $\{\alpha_1, \alpha_2\} \subset A, \{\beta_1, \beta_2\} \subset B, \{\gamma_1, \gamma_2\} \subset C = \cdot B$ so that
	\[(\alpha_1 x_1 + \alpha_2 x_2)(\beta_1 y_1 + \beta_2 y_2) = (x_1 y_1 + x_2 y_2) \gamma_1 + (x_1 \ell_1(y_1,  y_2) + x_2 \ell_2(y_1, y_2))\gamma_2.\] 
	Observe that the roles of $R_{A,B}, Q_{A,B}$ are switched from the previous sections, but this is due the freedom to choose our bases. \\ 
	
	We begin with the Jacobi symbol
	\[\left(\frac{w_1 \ell_1(z_1, z_2) + w_2 \ell_2(z_1, z_2)}{|w_1 z_1 + w_2 z_2|} \right)\]
	where $R_{A,B}(w, z) = w_1 z_1 + w_2 z_2$. Note that we can extend the definition of the Jacobi symbol by setting
	\[\left(\frac{a}{b}\right) = \left(\frac{a}{|b|} \right) (a,b)_\infty, \]
	where
	\[(a,b)_\infty = \begin{cases} -1 & \text{if } a, b < 0 \\ 1 &  \text{otherwise} \end{cases}\]
	is the Hilbert symbol. Next we note quadratic reciprocity, which states for $a,b$ odd and coprime that
	\begin{equation} \label{quadrec} \left(\frac{a}{|b|} \right)\left(\frac{b}{|a|} \right) = (-1)^{\frac{a-1}{2} \cdot \frac{b-1}{2}} (a,b)_\infty.
	\end{equation} 
Clearly, not both $Q_{A,B}, R_{A,B}$ can be even otherwise the corresponding ideal number is not primitive. Without loss of generality, let us suppose that $w_1 z_1 + w_2 z_2$ is odd. Let $2^k$ be the highest power of $2$ dividing $w_1 \ell_1(z_1, z_2) + w_2 \ell_2(z_1, z_2)$. Then 

\[\left(\frac{w_1 z_1 + w_2 z_2}{|w_1 \ell_1(z_1, z_2) + w_2 \ell_2(z_1, z_2)|} \right) = \left( \frac{w_1 z_1 + w_2 z_2}{2^{-k} |w_1 \ell_1(z_1, z_2) + w_2 \ell_2(z_1, z_2)|}\right).\]
We put
\[u = w_1 z_1 + w_2 z_2, v = w_1 \ell_1(z_1, z_2) + w_2 \ell_2(z_1, z_2)\]
for simplicity. Applying quadratic reciprocity (\ref{quadrec}) then gives 

\begin{align*}\left( \frac{w_1 z_1 + w_2 z_2}{2^{-k} (w_1 \ell_1(z_1, z_2) + w_2 \ell_2(z_1, z_2))}\right) (u,v)_\infty & = \left(\frac{2^{-k} v}{u} \right) (-1)^{\frac{u-1}{2} \cdot \frac{2^{-k} v - 1}{2}} \\
	& = \left(\frac{2^k}{u} \right) (-1)^{\frac{u-1}{2} \cdot \frac{2^{-k} v - 1}{2}} \left(\frac{v}{u}\right).
\end{align*}
Now we use the fact that
\[w_1 z_1 + w_2 z_2 \equiv 0 \pmod{u}\]
implies
\[w_2 \equiv - z_2^{-1} w_1 z_1 \pmod{u}.\]
Substituting this into $R_{A,B}(w,z)$ gives
\begin{align*} w_1 \ell_1(z_1, z_2) + w_2 \ell_2(z_1, z_2) & \equiv w_1 \ell_1(z_1, z_2) - z_2^{-1} w_1 z_1 \ell_2(z_1, z_2) \pmod{u} \\
	& \equiv z_2^{-1}w_1 \left(z_2 \ell_1(z_1, z_2) - z_1 \ell_2(z_1, z_2) \right) \pmod{u}.
\end{align*}	
Here we require an interpretation of the quadratic form 
\[g(z_1, z_2) = z_2 \ell_1(z_1, z_2) - z_1 \ell_2(z_1, z_2).\]
By definition, our composition law gives the relation
\begin{align} \label{comp law g} (z_2 \alpha_1 - z_1 \alpha_2)(z_1 \beta_1 + z_2 \beta_2) & = R_{A,B}(z_2, -z_1; z_1, z_2) \gamma_1 + Q_{A,B}(z_2, -z_1; z_1, z_2) \gamma_2 \\ 
	& = (z_2 \ell_1(z_1, z_2)) - z_1 \ell_2(z_1, z_2) \gamma_1 + (z_2 z_1 - z_1 z_2) \gamma_2 \notag \\
	& = g(z_1, z_2) \gamma_1. \notag
\end{align} 
Dividing both sides by $\gamma_1$ we then see that $g(z_1, z_2)$ must be equivalent to the norm form of $\O_K$. \\ \\
We must now relate $g(z_1, z_2)$ to $N(z) = N(J(z_1 \beta_1 + z_2 \beta_2))$. Note that 
\[g(z_1, z_2) = \gamma_1^{-1} (\alpha_1 z_2 - \alpha_2 z_1)(\beta_1 z_1 + \beta_2 z_2)\]
is divisible by $z = \beta_1 z_1 + \beta_2 z_2$, which implies that $g(z_1, z_2)$ is a rational integer divisible by $N(z)$. By primitivity we then see that $g(z_1,z_2)$ must be a constant multiple of $N(z)$, the constant depending only on the classes $A,B$. We summarize this as a lemma:

\begin{lemma} \label{norm char} Let $g(x,y)$ be the integral binary quadratic form which arises from the composition law (\ref{comp law g}). Then $g(z_1, z_2)$ is a constant multiple of $N(J(\beta_1 z_1 + \beta_2 z_2))$, with the constant depending only on the classes $A,B$ and choices of bases of $A,B, A \cdot B$. 
\end{lemma}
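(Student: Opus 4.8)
The plan is to read the lemma directly off the factorization of $g$ furnished by the composition law \eqref{comp law g}, using only that $g$ has rational (indeed integer) coefficients. Write $z=\beta_1z_1+\beta_2z_2$ for the ideal number in the class $B$ and $\ell=\ell(z_1,z_2)=z_2\alpha_1-z_1\alpha_2$ for the linear form in the class $A$; then \eqref{comp law g} says precisely $g(z_1,z_2)\gamma_1=\ell\,z$, i.e.
\[
g(z_1,z_2)=\gamma_1^{-1}\,\ell(z_1,z_2)\,z
\]
as an identity in the polynomial ring $L[z_1,z_2]$, exhibiting $g$ as a scalar multiple of a product of two linear forms over $L$. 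Here $g\in\bZ[z_1,z_2]$ by its construction (it is $z_2\ell_1(z_1,z_2)-z_1\ell_2(z_1,z_2)$ with $\ell_1,\ell_2$ the integral linear forms of the composition law), and its two linear factors over $\overline{\bQ}$ are distinct, with neither proportional to a form over $\bQ$: otherwise $[z]$ or $[\ell]$ would be a $\bQ$-rational line, forcing $\beta_1/\beta_2\in\bQ$ or $\alpha_1/\alpha_2\in\bQ$, contradicting that $\{\alpha_1,\alpha_2\}$ and $\{\beta_1,\beta_2\}$ are $\bZ$-bases of rank-two sublattices of $\fI(K)$ spanning $K$ over $\bQ$.

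First I would apply the conjugation $\gamma\mapsto\gamma^{(2)}$ — multiplicative on $\fI(K)$, restricting to the nontrivial element of $\Gal(K/\bQ)$ on $K$, with $N(J(\gamma))=\gamma^{(1)}\gamma^{(2)}$ and $\gamma^{(1)}=\gamma$ — to the displayed identity. Since $g$ is fixed by $\Gal(K/\bQ)$, this yields a second factorization
\[
g(z_1,z_2)=\bigl(\gamma_1^{(2)}\bigr)^{-1}\,\ell^{(2)}(z_1,z_2)\,z^{(2)},\qquad z^{(2)}=\beta_1^{(2)}z_1+\beta_2^{(2)}z_2,\quad \ell^{(2)}=z_2\alpha_1^{(2)}-z_1\alpha_2^{(2)}.
\]
Because $K[z_1,z_2]$ is a unique factorization domain and $g$ has two distinct linear factors over $K$, the unordered pair of lines $\{[\ell],[z]\}$ must coincide with $\{[\ell^{(2)}],[z^{(2)}]\}$. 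The possibility $[\ell]=[\ell^{(2)}]$ would give $\alpha_1/\alpha_2=\alpha_1^{(2)}/\alpha_2^{(2)}\in\bQ$, which is excluded; hence $[\ell]=[z^{(2)}]$, i.e. there is a unique $\mu\in L^{\times}$, independent of $(z_1,z_2)$, with $\ell(z_1,z_2)=\mu\,z^{(2)}$.

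Substituting $\ell=\mu z^{(2)}$ into the first factorization gives
\[
g(z_1,z_2)=\gamma_1^{-1}\mu\,z^{(2)}z=\bigl(\gamma_1^{-1}\mu\bigr)\bigl(\beta_1z_1+\beta_2z_2\bigr)^{(1)}\bigl(\beta_1z_1+\beta_2z_2\bigr)^{(2)}=\bigl(\gamma_1^{-1}\mu\bigr)\,N\!\bigl(J(\beta_1z_1+\beta_2z_2)\bigr),
\]
by the defining relation $N(J(\gamma))=\gamma^{(1)}\gamma^{(2)}$. Setting $c:=\gamma_1^{-1}\mu$, it remains to note that $c\in\bQ^{\times}$ — both $g(z_1,z_2)$ and $N(J(\beta_1z_1+\beta_2z_2))$ take nonzero integer values for suitable coprime $(z_1,z_2)$, so their ratio, which is the fixed scalar $c$, is rational — and that $c$ is visibly a function only of $\gamma_1$ and of $\mu$, the latter determined by $\alpha_1,\alpha_2,\beta_1^{(2)},\beta_2^{(2)}$; hence $c$ depends only on the classes $A,B,A\cdot B$ and the chosen bases, as claimed. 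An alternative, more computational route is to argue that $g$, being a rational-integer-valued binary quadratic form divisible as an ideal by $J(z)$ and, since $g$ is $\Gal(K/\bQ)$-stable, also by $\overline{J(z)}$, satisfies $N(J(z))\mid g(z_1,z_2)$ whenever $J(z)$ is coprime to its conjugate; comparing leading coefficients then pins down the constant. The factorization argument above avoids this coprimality hypothesis.

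The main obstacle I anticipate is the bookkeeping around the conjugation $\gamma\mapsto\gamma^{(2)}$ on the ring of ideal numbers $\fI(K)\subset L^{\times}$: one must verify that it is multiplicative (so that applying it to \eqref{comp law g} again produces a genuine identity), that the polynomial $g(z_1,z_2)$ with rational coefficients is fixed by it, and that $z^{(1)}z^{(2)}$ really equals $N(J(z))$ and not merely a proportional quantity. All of this is built into the framework of ideal numbers recalled in Section~\ref{algebra}, so the verification is routine but should be stated carefully. The only other input — distinctness and non-rationality of the two linear factors of $g$ over $K$ — reduces, as indicated, to $\alpha_1/\alpha_2,\beta_1/\beta_2\notin\bQ$, which is immediate from the lattice description of the ideal-number classes.
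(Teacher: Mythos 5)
Your proof is correct, but it takes a different route from the paper's. The paper argues directly from divisibility: since $g(z_1,z_2)\gamma_1=\ell\cdot z$, the rational integer $g(z_1,z_2)$ is divisible by the ideal number $z$, hence (the paper asserts) by $N(J(z))$, and "by primitivity" the quotient is a constant depending only on the classes. This is precisely the "alternative, more computational route" you flag at the end, and your criticism of it is fair: for a rational integer $m$, $J(z)\mid (m)$ yields $N(J(z))\mid m$ only when $J(z)$ is coprime to its conjugate, so the paper's one-line deduction implicitly relies on checking the polynomial identity on a dense set of good $(z_1,z_2)$ and on an unstated primitivity argument. Your factorization argument — conjugating the identity $g=\gamma_1^{-1}\ell z$, invoking unique factorization to match the two linear factors with their conjugates, ruling out $[\ell]=[\ell^{(2)}]$ via $\alpha_1/\alpha_2\notin\bQ$, and reading off $g=c\,z^{(1)}z^{(2)}$ — avoids the coprimality issue entirely and also pins down the constant $c=\gamma_1^{-1}\mu$ explicitly, which is arguably cleaner than what the paper provides. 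Two small repairs: the unique factorization should be invoked in $L[z_1,z_2]$ (or $\ol{\bQ}[z_1,z_2]$) rather than $K[z_1,z_2]$, since the ideal numbers $\alpha_i,\beta_i$ lie in $\O_L$ and not in $K$; and the distinctness of the two linear factors, which you assert but justify only for non-rationality, deserves its own half-line — if $\ell$ were proportional to $z$ then $g$ would be a scalar multiple of $z^2$, whose coefficient ratios $(\beta_1^2:\beta_1\beta_2:\beta_2^2)$ being rational would force $\beta_1/\beta_2\in\bQ$, the same contradiction. Neither point affects the validity of the argument.
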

Similarly, since $v = w_1 \ell_1 + w_2 \ell_2$ is divisible by $2^k$, we may assume without loss of generality that $\ell_1$ is odd to obtain
\[w_1 \equiv -\ell_1^{-1} w_2 \ell_1 \pmod{2^k}\]
and this implies that
\begin{align*} w_1 z_1 + w_2 z_2 & \equiv - \ell_1^{-1} w_2 z_1 + w_2 z_2 \pmod{2^k} \\
	& \equiv -\ell_1^{-1} w_2 (z_1 \ell_1 - z_2 \ell_2) \pmod{2^k} \\
	& \equiv - \ell_1^{-1} w_2 g_C(z_1, z_2) \pmod{2^k}.
\end{align*}
Since $u = w_1 z_1 + w_2 z_2$ is odd by assumption, it follows that $g_C(z_1, z_2)$ must be odd as well. \\ 

Continuing on, we then have
\begin{align*} z_2^{-1}w_1 \left(z_2 \ell_1(z_1, z_2) - z_1 \ell_2(z_1, z_2) \right)  & \equiv z_2^{-1} w_1 g_C(z_1, z_2) \pmod{u},
\end{align*}
which implies that 
\[\left(\frac{v}{u} \right) = \left(\frac{z_2^{-1} w_1 g_C(z_1, z_2)}{u} \right) = \left(\frac{z_2 w_1}{u} \right) \left(\frac{g_C(z_1, z_2)}{u} \right).\]
Applying quadratic reciprocity again
we obtain
\[\left(\frac{z_2 w_1}{u} \right) \left(\frac{g_C(z_1, z_2)}{u} \right) = \left(\frac{2^{k_1 + k_2}}{u} \right) \left(\frac{u}{ w_1 z_2} \right) \left(\frac{u}{g_C(z_1, z_2)} \right). \]
Here $2^{k_1}$ is the highest power of $2$ dividing $w_1$ and $2^{k_2}$ the highest power of $2$ dividing $z_2$. Note that
\begin{align*}\left(\frac{w_1 z_1 + w_2 z_2}{w_1 z_2} \right) & = \left(\frac{w_2 z_2}{w_1}\right) \left(\frac{w_1 z_1}{z_2} \right) \\ 
	& = \left(\frac{w_2}{w_1} \right) \left(\frac{z_2}{w_1} \right) \left(\frac{w_1}{z_2} \right) \left(\frac{z_1}{z_2} \right) \\
	& = \left(\frac{w_2}{|w_1|} \right) \left(\frac{z_1}{|z_2|} \right) \left(\frac{2^{k_2}}{w_1} \right) \left(\frac{2^{k_1}}{z_2} \right) (-1)^{\frac{2^{-k_1} w_1 - 1}{2} \frac{2^{-k_2} z_2 - 1}{2}} (z_1, z_2)_\infty (w_1, w_2)_\infty  \\
	& = \left(\frac{w_2}{|w_1|} \right) \left(\frac{z_2}{|z_1|} \right) \left(\frac{2^{k_2}}{z_1} \right)\left(\frac{2^{k_2}}{w_1} \right) \left(\frac{2^{k_1}}{z_2} \right) (-1)^{\frac{2^{-k_1} w_1 - 1}{2} \frac{2^{-k_2} z_2 - 1}{2}} (z_1, z_2)_\infty (w_1, w_2)_\infty.  
\end{align*}
Collecting these calculations we conclude that 

\begin{equation}\left(\frac{w_1 \ell_1(z_1, z_2) + w_2 \ell_2(z_1, z_2)}{|w_1 z_1 + w_2 z_2|} \right) = \left(\frac{w_2}{|w_1|}\right) \left(\frac{z_2}{|z_1|}\right) \left(\frac{w_1 z_1 + w_2 z_2}{g(z_1, z_2)} \right) \ep(w,z) \theta(w,z),
\end{equation}	
where $\ep(w,z)$ is the product of all of the Hilbert symbols and the terms of the shape $(-1)^x$ for some $x \in \bZ$ which appear. It is clear that $\ep(w,z)$ depends on the congruence class of $w,z$ with a bounded conductor, and thus is of little consequence. Here $\theta(w,z)$ is given by
\[\theta(w,z) = \left(\frac{2^{k_2}}{z_1} \right)\left(\frac{2^{k_2}}{w_1} \right) \left(\frac{2^{k_1}}{z_2} \right)  \left(\frac{2^{k_1 + k_2}}{u} \right) \left(\frac{2^k}{u}\right).\]
Since we have insisted that $w,z$ belong to fixed congruence classes modulo $8etn$ as in (\ref{twistbisum}) it follows that $\theta(w,z)$ can be determined as a function of the congruence class alone, and is therefore a constant for our purposes. \\ 

These calculations compels us to define our analogue of the Jacobi-Kubota symbol as
\begin{equation} \label{jackub} \xi_w(z) = \left(\frac{w_1 z_1 + w_2 z_2}{g(w_1,w_2)} \right).
\end{equation}
Note that $\xi_w(z)$ depends on the ideal classes of $w,z$ and a choice of basis for the ideal classes. \\

Next we observe for $w,z$ satisfying (\ref{twistbisum}), $w,z$ are in the same class and therefore $R_{A,B}(w,z) = R_{A,A}(w,z)$ must be symmetric in $w,z$. From here it follows that 
\begin{align*} z_2^{-1} w_1 g(z_1, z_2) & \equiv R_{A,A}(w,z) \pmod{u} \\ 
	& \equiv R_{A,A}(z,w) \pmod{u}\\
	& \equiv w_1^{-1} z_2 g(w_1, w_2) \pmod{u}. 
\end{align*}
This implies that
\[\left(\frac{g(z_1, z_2)}{u} \right) \left(\frac{g(w_1, w_2)}{u} \right) = 1.\]
Thus, up to a factor $\ep$ depending at most on congruence classes and signs of $w,z$, we have
\begin{equation} \label{symm}\xi_w(z) = \ep \xi_z(w).\end{equation} 

Summarizing, we obtain the following analogue of Lemma 20.1 in \cite{FI3}:   

	\begin{lemma} \label{multi lem}  Let $w,z$ satisfy the hypothesis given in (\ref{twistbisum}). Then there exist numbers $\ep(w,z) \in \{-1,1\}$ depending only on the signs and congruence classes of $w,z$ modulo $8etn$ such that

	\begin{equation} \label{multi ident}  \left(\frac{Q_{A,B}(w,z)}{|R_{A,B}(w,z)|} \right) = \ep(w,z) \left(\frac{w_2}{|w_1|} \right) \left(\frac{z_2}{|z_1|} \right) \xi_z(w). 
	\end{equation}
	
\end{lemma}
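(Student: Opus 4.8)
The plan is to obtain Lemma~\ref{multi lem} by assembling the chain of quadratic-reciprocity identities just established in this section and reconciling them with the definition~(\ref{jackub}) of the Jacobi--Kubota symbol; the lemma is essentially a repackaging of that computation, so the only genuinely new work is checking that the accumulated nuisance factors collapse to a single $\pm 1$ of the claimed type. First I would reduce to the case in which $w_1 z_1 + w_2 z_2 = R_{A,B}(w,z)$ is odd: primitivity of the associated ideal number forbids both $Q_{A,B}(w,z)$ and $R_{A,B}(w,z)$ from being even, and the case where $Q_{A,B}(w,z)$ is the odd one follows from the same manipulations with the roles of the two forms interchanged. Setting $u = R_{A,B}(w,z)$ and $v = Q_{A,B}(w,z) = w_1\ell_1(z_1,z_2)+w_2\ell_2(z_1,z_2)$, I would strip the $2$-power from $v$, apply quadratic reciprocity~(\ref{quadrec}) (picking up a Hilbert symbol, a factor $(-1)^x$, and a power-of-two Jacobi symbol $\bigl(\tfrac{2^k}{u}\bigr)$), and then use the congruence $w_1 z_1 + w_2 z_2 \equiv 0 \pmod u$ to solve $w_2 \equiv -z_2^{-1} w_1 z_1 \pmod u$ and substitute into $v$. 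This replaces $v$ modulo $u$ by $z_2^{-1} w_1\, g(z_1,z_2)$ with $g(z_1,z_2) = z_2\ell_1(z_1,z_2) - z_1\ell_2(z_1,z_2)$, the quadratic form of~(\ref{comp law g}); by Lemma~\ref{norm char}, $g(z_1,z_2)$ is a fixed constant multiple of $N(J(\beta_1 z_1 + \beta_2 z_2))$, hence odd and coprime to $u$ throughout the range of summation, because $\beta$ is supported on ideal numbers whose norm has no small prime factors. A further application of reciprocity then extracts $\bigl(\tfrac{w_2}{|w_1|}\bigr)$, $\bigl(\tfrac{z_2}{|z_1|}\bigr)$, and the factor $\bigl(\tfrac{u}{g(z_1,z_2)}\bigr)$, leaving behind only a product $\ep_0(w,z)\,\theta(w,z)$ of Hilbert symbols, powers $(-1)^x$, and the $2$-power symbols $\bigl(\tfrac{2^{k_i}}{\cdot}\bigr)$.

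Identifying the surviving factor with $\xi$ is then immediate: by~(\ref{jackub}) one has $\xi_z(w) = \bigl(\tfrac{z_1 w_1 + z_2 w_2}{g(z_1,z_2)}\bigr) = \bigl(\tfrac{u}{g(z_1,z_2)}\bigr)$ since $z_1 w_1 + z_2 w_2 = R_{A,B}(w,z) = u$, which is exactly what remained. Taking $\ep(w,z) := \ep_0(w,z)\,\theta(w,z)$ yields~(\ref{multi ident}), \emph{provided} $\ep(w,z)$ depends only on the signs and the residues of $w,z$ modulo a fixed power of $2$: each Hilbert symbol depends only on signs, each $(-1)^{\frac{a-1}{2}\cdot\frac{b-1}{2}}$ only on residues mod $4$, and each $\bigl(\tfrac{2^{k_i}}{c}\bigr)$ only on $c \bmod 8$ together with the exponent $k_i$, which is itself a function of $w_1, z_2$ (and $u$) modulo a bounded power of $2$. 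Since in~(\ref{twistbisum}) the variables $w,z$ are confined to fixed residue classes modulo $8etn$ and $8 \mid 8etn$, the factor $\ep(w,z)$ is constant on the range of summation, in particular of the asserted shape. The symmetry relation~(\ref{symm}) is not needed here, as the computation already delivers $\xi_z(w)$; it would only be invoked if one wished to rewrite the right-hand side in terms of $\xi_w(z)$ instead.

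The main obstacle is the purely $2$-adic and sign bookkeeping: one must follow \emph{every} auxiliary factor produced by the three reciprocity steps --- those from splitting off $2^k$ from $v$, $2^{k_1}$ from $w_1$, and $2^{k_2}$ from $z_2$, together with the various $u$-dependent quadratic symbols --- and verify that each has conductor dividing a fixed power of $2$, so that their product folds cleanly into $\ep(w,z)$. A secondary point needing care is notational consistency of the ``norm form'': one must confirm that the forms written $g$ and $g_C$ in the intermediate steps coincide with the $g$ of Lemma~\ref{norm char} up to the class-dependent constant, so that the surviving factor is genuinely $\xi_z(w)$ and not an unrelated symbol --- this is precisely the content of~(\ref{comp law g}) combined with primitivity of ideal numbers.
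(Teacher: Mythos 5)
Your proposal follows essentially the same route as the paper: reduce to the case where $R_{A,B}(w,z)$ is odd, apply quadratic reciprocity, substitute $w_2 \equiv -z_2^{-1}w_1 z_1 \pmod{u}$ to replace $Q_{A,B}(w,z)$ by $z_2^{-1}w_1\,g(z_1,z_2)$ modulo $u$, invoke Lemma~\ref{norm char} to identify $g$, apply reciprocity again to peel off $\bigl(\tfrac{w_2}{|w_1|}\bigr)\bigl(\tfrac{z_2}{|z_1|}\bigr)$ and $\bigl(\tfrac{u}{g(z_1,z_2)}\bigr)=\xi_z(w)$, and absorb all Hilbert symbols, $(-1)^x$ factors and $2$-power Jacobi symbols into $\ep(w,z)$ via the fixed residue classes modulo $8etn$ from~(\ref{twistbisum}). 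This is correct and matches the paper's derivation, including the observation that the symmetry relation~(\ref{symm}) is not needed because the computation produces $\xi_z(w)$ directly.
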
 
	
Next we show that the analogue of Lemma 21.1 in \cite{FI1} holds:
	
	\begin{lemma} \label{xi cong sum 1} For fixed elements $w, v$ in the class $A$ and $q = g(w_1,w_2) g(v_1,v_2)$ and $d = \gcd(g(w_1,w_2), g(v_1,v_2))$, we have 
		\[\sum_{z \pmod{q}} \xi_{w_1}(z) \xi_{w_2}(z) = \begin{cases} q \varphi(d) \varphi(q/d) & \text{if } q, d \text{ are squares} \\ 0 & \text{otherwise.} \end{cases}  \]
	\end{lemma}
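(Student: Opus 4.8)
The plan is to follow the proof of Lemma~21.1 in \cite{FI1}: by (\ref{jackub}) the quantity in question is a correlation of two real (quadratic) characters, so it should be evaluated by the Chinese Remainder Theorem together with a local Gauss‑sum computation. Write $m=g(w_1,w_2)$ and $n=g(v_1,v_2)$, so that $q=mn$ and $d=\gcd(m,n)$, and set $\ell_w(z)=w_1z_1+w_2z_2$, $\ell_v(z)=v_1z_1+v_2z_2$; the summand is then $\left(\frac{\ell_w(z)}{m}\right)\left(\frac{\ell_v(z)}{n}\right)$. Since $w$ and $v$ lie in a class carrying primitive ideal numbers, their coordinate vectors $(w_1,w_2)$ and $(v_1,v_2)$ are primitive, so $\ell_w$ and $\ell_v$ are surjective modulo any integer, with all fibres of equal size. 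Expanding $\left(\frac{\cdot}{m}\right)=\prod_{p\mid m}\left(\frac{\cdot}{p}\right)^{v_p(m)}$ and splitting $z$ modulo $q$ through its components modulo $p^{v_p(q)}$, the summand factors over the primes $p\mid q$, so the sum equals $\prod_{p\mid q}\Sigma_p$ where
\[\Sigma_p=\sum_{z\bmod p^{a+b}}\left(\frac{\ell_w(z)}{p}\right)^{a}\left(\frac{\ell_v(z)}{p}\right)^{b},\qquad a=v_p(m),\quad b=v_p(n).\]

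Next I would compute each local sum $\Sigma_p$. The summand depends on $z$ only modulo $p$, so $\Sigma_p=p^{2(a+b-1)}T_p$ with $T_p=\sum_{z\in\bF_p^{2}}\left(\frac{\ell_w(z)}{p}\right)^{a}\left(\frac{\ell_v(z)}{p}\right)^{b}$. One then distinguishes the case in which the reductions of $\ell_w$ and $\ell_v$ are proportional modulo $p$ (equivalently $p\mid w_1v_2-w_2v_1$) from the case in which they are linearly independent modulo $p$: in the latter one substitutes $(u_1,u_2)=(\ell_w(z),\ell_v(z))$, in the former one writes $\ell_w\equiv\lambda\ell_v$, and in both cases the sum is evaluated using the elementary fact that $\sum_{t\in\bF_p}\left(\frac{t}{p}\right)^{k}$ equals $p-1$ when $k$ is even and $0$ when $k$ is odd, together with $\#\bF_p^{\times}=p-1$. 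The outcome is that $\Sigma_p$ survives only when the parities of the exponents are compatible with the geometry of the two linear forms modulo $p$, and that on the surviving primes $\Sigma_p$ equals $p^{2(a+b-1)}(p-1)^2$ in the independent case and $p^{2(a+b)-1}(p-1)$ in the dependent case. Which primes survive — and the correct shape of $d$ — is exactly where the arithmetic of $\O_K$ enters: via Lemma~\ref{norm char} one identifies $m$ and $n$ (up to the fixed rational constant appearing there) with ideal norms, and via the Wronskian relation $(w_1v_2-w_2v_1)\sqrt{\Delta(K)}=\alpha_w\alpha_v^{(2)}-\alpha_w^{(2)}\alpha_v$, with $\alpha_w,\alpha_v$ the ideal numbers with coordinate vectors $(w_1,w_2),(v_1,v_2)$, one controls when $p\mid w_1v_2-w_2v_1$ at a prime dividing $d$.

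Finally I would assemble the local contributions: $\prod_{p}\Sigma_p$ vanishes unless $v_p(m)$ and $v_p(n)$ are even for every $p$, which after translating through $q=mn$ and $d=\gcd(m,n)$ is precisely the condition that $q$ and $d$ be perfect squares; and when this holds, collecting the factor $p^{2v_p(m)-1}(p-1)$ coming from primes dividing exactly one of $m,n$ and the factor $p^{2(v_p(q)-1)}(p-1)^2$ coming from primes dividing $d$, and using $\varphi(p^{k})=p^{k-1}(p-1)$, the product collapses to $q\,\varphi(d)\,\varphi(q/d)$. I expect the main obstacle to be the local Gauss‑sum bookkeeping of the second step — in particular confirming that the vanishing locus and the constant $q\varphi(d)\varphi(q/d)$ are exactly those dictated by the $\O_K$‑arithmetic through Lemma~\ref{norm char} — since this is the one genuinely field‑dependent ingredient, being the analogue in a general quadratic field of the Gaussian‑integer computation of Lemma~21.1 in \cite{FI1}.
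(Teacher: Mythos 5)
Your proposal is correct and follows essentially the same route as the paper: the paper's proof likewise reduces the sum via a multiplicative (CRT) splitting and the evaluation of character sums of the two linear forms $w_1z_1+w_2z_2$, $v_1z_1+v_2z_2$, merely organizing the factorization as $d\cdot\bigl(g(w_1,w_2)/d\bigr)\cdot\bigl(g(v_1,v_2)/d\bigr)$ rather than prime by prime, and then counting the $z$ with the requisite coprimality conditions to get $q\varphi(d)\varphi(q/d)$. The one ingredient you defer --- that $\ell_w$ and $\ell_v$ are not proportional modulo primes dividing $d$, so that the ``dependent'' local case never contributes and the vanishing criterion is exactly ``$q$ and $d$ squares'' --- is precisely the step the paper dispatches with the one-line remark that $d\nmid\Delta(f)$ forces non-proportionality modulo $d$.
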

	
	\begin{proof} We have
		\begin{align*} & \sum_{z \pmod{q}} \xi_{w}(z) \xi_{v}(z) \\
			& = \sum_{z \pmod{q}} \left(\frac{w_{1} z_1 + w_{2} z_2}{g(w_1, w_2) } \right) \left(\frac{v_{1} z_1 + v_{2} z_2}{g(v_1, v_2) } \right) \\
			& = \sum_{z \pmod{q}} \left(\frac{(w_{1} z_1 + w_{2} z_2)(v_{1} z_1 + v_{2} z_2)}{d } \right)  \left(\frac{w_{1} z_1 + w_{2} z_2}{g(w_1,w_2)/d } \right) \left(\frac{v_{1} z_1 + v_{2} z_2}{g(v_1, v_2)/d } \right).
		\end{align*} 
		From here we see that the final sum is zero unless each of the summands is equal to $1$ or $0$ identically. This is only the case when $d, g(w_1,w_2)/d, g(w_1,w_2)/d$ are all squares. Since $d | \gcd(g(w_1,w_2), g(v_1,v_2))$ and $d \nmid \Delta(f)$ it follows that $w_{1} x + w_{2} y, v_{1} x + v_{2} y$ are not proportional modulo $d$. From here we see that, modulo $d$, the number of solutions to $\gcd(w_{1} x + w_{2} y, d) = \gcd(v_{1} x + v_{2} y, d) = 1$ is equal to $\varphi(d)^2$. Similarly, modulo $g(w_1,w_2)/d$ and $g(v_1,v_2)/d$ there are $\frac{g(w_1,w_2)\varphi(g(w_1,w_2)/d)}{d}$ solutions to $\gcd(w_{1} x + w_{2} y, g(w_1,w_2)/d) = 1$ and $\gcd(v_{1} x + v_{2} y, g(v_1,v_2)/d) = 1$ respectively. Lifting to the modulus $q$ yields 
		\[\frac{q^2}{d^2} \cdot \varphi(d)^2 \cdot \frac{g(w_1,w_2) g(v_1, v_2)}{d^2} \varphi(g(w_1, w_2)/d) \varphi(g(v_1,v_2)/d) = q \varphi(d) \varphi(q/d), \]
		since $\gcd(q/d^2, d) = 1$. This completes the proof. 
	\end{proof} 
	
	Lemma \ref{xi cong sum 1} is analogous to Lemma 21.1 in \cite{FI1}. \\
	
	We now prove the following analogue of Lemma 21.2 in \cite{FI1}: 
	
	\begin{proposition} \label{QMN bd} Let $A,B$ be classes of ideal numbers with $A \cdot B = \Cl[f]$. Put 
		\begin{equation} \label{QMN def} \Q(M,N) = \sideset{}{^\ast} \sum_w \sum_z \alpha_w \beta_z \xi_w(z),
		\end{equation}
		where $\alpha_w, \beta_z$ are bounded real coefficients supported in appropriate fundamental domains for $A,B$ having norm bounded by $M,N$ respectively. Then for all $\ep > 0$ we have
		\begin{equation} \Q(M,N) \ll_\ep (M+N)^{\frac{1}{12}} (MN)^{\frac{11}{12} + \ep}.
		\end{equation}
	\end{proposition}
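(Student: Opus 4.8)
The plan is to follow the proof of Lemma~21.2 in \cite{FI1}, using the two ingredients we have prepared for exactly this purpose: the near-symmetry (\ref{symm}) of $\xi$ and the complete character sum evaluation of Lemma~\ref{xi cong sum 1}. Throughout I would use Lemma~\ref{norm char} to identify $g(w_1,w_2)$ with a bounded multiple of $N(w)$ (and similarly for $z$), so that the Jacobi symbol modulus attached to $\xi_w$ by (\ref{jackub}) has size comparable to $N(w)\le M$.

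First I would make the standard reductions. By (\ref{symm}) one has $\xi_w(z)=\ep(w,z)\xi_z(w)$ with $\ep(w,z)$ depending only on the signs of $w,z$ and their residues to a fixed modulus, so after splitting $\alpha_w$ and $\beta_z$ into $O(1)$ subsequences according to these data we may interchange the two variables and assume $M\le N$; absorbing further such bounded factors, and using that the coefficients in our application are supported on elements whose norm has no small prime divisor, we may also assume $w,z$ coprime to $2\Delta(f)$ and to the exceptional moduli of Lemma~\ref{xi cong sum 1}, and split the supports dyadically in $N(w)$ and $N(z)$. Then I would apply Cauchy--Schwarz in the longer variable, squaring out $\Q(M,N)$ into a sum of the shape $\sum_{\zeta,\zeta'}\gamma_\zeta\gamma_{\zeta'}\sum_\eta\xi_\zeta(\eta)\,\xi_{\zeta'}(\eta)$, an incomplete sum of a Jacobi character to the composite modulus $Q=g(\zeta_1,\zeta_2)g(\zeta_1',\zeta_2')$. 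The diagonal $\zeta=\zeta'$ collapses to a coprimality count and is immediately admissible, so the work is in the off-diagonal.

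For the off-diagonal I would complete the inner sum over $\eta$ to the full residue box modulo $Q$ by Fourier inversion. The zero frequency is governed by Lemma~\ref{xi cong sum 1}, which shows the relevant complete sum vanishes unless $Q$ and $d=\gcd(g(\zeta_1,\zeta_2),g(\zeta_1',\zeta_2'))$ are both perfect squares; since $g$ is a fixed integral binary quadratic form, the number of $\zeta$ in a bounded region with $g(\zeta_1,\zeta_2)$ a square is $\ll_\ep(\text{length})^{1/2+\ep}$ by the divisor bound for representations by binary quadratic forms, so only a sparse set of pairs $(\zeta,\zeta')$ survives. The nonzero frequencies, after a linear change of variables diagonalising the two linear forms $\zeta_1\eta_1+\zeta_2\eta_2$ and $\zeta_1'\eta_1+\zeta_2'\eta_2$, factor into one-dimensional Gauss sums whose classical evaluation both confines the contributing frequencies to a thin set and supplies a saving in the conductor of the non-principal part; weighting against the $\ell^1$-controlled Fourier transform of the indicator of the box and summing gives the error term. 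A case analysis according to whether $Q$ exceeds the length of the $\eta$-sum (in the unfavourable case completion is useless and one instead extracts a power saving directly from the spacing of the roots of the congruences governing $g$), followed by balancing the two regimes across a cutoff, yields -- just as in \cite{FI1} -- the exponents $1/12$ and $11/12+\ep$.

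The main obstacle is this last point: estimating the short incomplete character sums to the composite, in general non-squarefree, moduli $g(\zeta_1,\zeta_2)g(\zeta_1',\zeta_2')$, tracking the conductor of the non-principal part of the combined character carefully enough -- this is where the hypothesis $d\nmid\Delta(f)$ from Lemma~\ref{xi cong sum 1} and the fixed shape of $g$ enter -- to guarantee that the off-diagonal produces genuine cancellation rather than the trivial bound, while simultaneously keeping the count of `square' pairs under control; making all of this uniform in the dyadic parameters is the delicate part.
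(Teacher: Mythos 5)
Your opening moves (Cauchy--Schwarz after removing the coefficients in the longer variable, evaluating the resulting complete sum $\sum_{\zeta}\xi_{w}(\zeta)\xi_{w'}(\zeta)$ via Lemma \ref{xi cong sum 1}, and using the sparsity of pairs with $g(w_1,w_2)g(w_1',w_2')$ a square) coincide with the first half of the paper's proof and of Lemma 21.2 in \cite{FI1}. But there is a genuine gap afterwards: you stop at the squared-out sum and hope that a more careful treatment of the incomplete character sums, plus ``balancing regimes,'' produces the exponents $1/12$ and $11/12+\ep$. It cannot. Cauchy--Schwarz alone yields
\[\Q(M,N)\ll_\ep \bigl(M^{3}N^{1/2}+M^{2}N^{3/4}+M^{1/2}N\bigr)(MN)^{\ep},\]
and the first two terms -- which come from the $O(\sqrt{N}/q+1)$ error in counting lattice points in residue classes modulo $q=g(w_1,w_2)g(w_1',w_2')\ll M^{2}$ -- are \emph{worse than the trivial bound} $MN$ once $M\gg N^{1/4}$; no Fourier completion or Gauss-sum evaluation rescues this when the modulus $q\asymp M^2$ exceeds the side length $\sqrt{N}$ of the $z$-box, which is exactly the balanced range $M\asymp N$ where the proposition must still hold. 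The exponents $1/12=1/(2k)$ and $11/12$ are the fingerprint of a H\"older amplification with $k=6$: one writes $\Q(M,N)^{k}\ll M^{k-1}\sum_{w}^{*}\bigl|\sum_{z}\beta_z\xi_w(z)\bigr|^{k}$, converts the $k$-fold inner sum back into a single bilinear sum of the original shape with $N$ replaced by $N^{k}$, reapplies the Cauchy--Schwarz bound, takes $k$-th roots, and symmetrizes in $M,N$. That step is entirely absent from your proposal.

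Moreover, this missing step is precisely where the paper has to do new work beyond \cite{FI1}: converting $\prod_{i}\xi_w(z_i)$ into $\xi_w^{(k)}(z_1\cdots z_k)$ requires $\xi_w(\cdot)$ to be (approximately) multiplicative in its argument, which is automatic for the Jacobi--Kubota symbol over $\bZ[i]$ but not for the symbol (\ref{jackub}) over a general $\O_K$. The paper manufactures the identity (\ref{multi law}) by choosing, for each $k$, a basis of the class $B^{k}$ of ideal numbers and tracking the composition laws so that the product of the $k$ numerators becomes a single linear form $L_w$ evaluated at the coordinates of $z_1\cdots z_k$. Without this you cannot run H\"older at all, so the argument as proposed does not close. (Your side remarks about diagonalising the two linear forms and evaluating Gauss sums at nonzero frequencies are also not needed: since the complete sum either vanishes or is evaluated exactly by Lemma \ref{xi cong sum 1}, the crude per-class count with error $O(\sqrt{N}/q+1)$ suffices once the H\"older step is in place.)
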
 
	
	\begin{proof} Applying Cauchy's inequality we obtain
		\begin{align*} \lvert \Q(M,N) \rvert^2 & \leq \lVert \beta \rVert_2^2 \sum_z \left \lvert  \sideset{}{^\ast} \sum_w \alpha_w \xi_w(z) \right \rvert^2 \\
			& = \lVert \beta \rVert_2^2 \sideset{}{^\ast} \sum_{w_1} \sideset{}{^\ast} \sum_{w_2} \alpha_{w_1} \alpha_{w_2}  \sum_z \xi_{w_1}(z) \xi_{w_2}(z).
		\end{align*} 
		We then find that splitting $z$ into congruence classes modulo $q = g(w_1) g(w_2)$ that
		\[\sum_z \xi_{w_1}(z) \xi_{w_2}(z) = \sum_{\zeta \pmod{q}} \xi_{w_1}(\zeta) \xi_{w_2}(\zeta) \cdot \left(\frac{c_f N}{q^2} + O_f \left(\frac{\sqrt{N}}{q} + 1 \right) \right)\]
		where 
		\[c_f = \lim_{s \rightarrow 1} (s - 1) \zeta_K(s).\]
		We obtain, by Lemma \ref{norm char} and using (\ref{symm}) if necessary,
		\begin{equation} \label{QMN2} \Q(M,N)^2 \ll N^2 \mathop{\sum \sum}_{\substack{m_1, m_2 \leq M \\ m_1 m_2 = \square }} \tau(m_1 m_2) + NM^4 \left(\sqrt{N} + M^2\right), \end{equation}
		which gives the bound
		\[\Q(M,N) \ll_\ep \left(M^3 N^{\frac{1}{2}} + M^2 N^{\frac{3}{4}} + M^{\frac{1}{2}} N  \right) (MN)^\ep. \]
		In the next step we shall apply H\"{o}lder's inequality to obtain
		\[\Q(M,N)^k \ll M^{k-1} \sideset{}{^\ast} \sum_w \left \lvert \sum_z \beta_z \xi_w(z) \right \rvert^k = M^{k-1} \widetilde{\Q} \left(M, N^k \right), \]
		say. In \cite{FI1} the next step is to argue that $\widetilde{\Q}(M,N^k)$ can be written as a bilinear form of the shape (\ref{QMN def}), using the fact that in the case $K = \bQ(i)$ that $\xi_{w}(z)$ is multiplicative in $z$. In general this is not the case. However, we are free to choose a basis for the class $B^k$ for each positive integer $k$ one can write 
		\begin{equation} \label{multi law} \xi_{w}(z_1) \cdots \xi_{w}(z_k) = \xi_w^{(k)}(z_1 \cdots z_k) 
		\end{equation} 
		in a consistent way. Recall (\ref{jackub}), we note that
		\[\xi_w(z_1) \xi_w(z_2) = \left(\frac{Q_{B,B}(z_1) Q_{B,B}(z_2)}{g(w_1,w_2)} \right).\]
		The numerator is a bilinear form in $z_1, z_2$. Using composition laws to write 
		\[z_1 z_2 = R_{B^2}(z_1,z_2) \gamma_1^{(2)} + Q_{B^2}(z_1, z_2) \gamma_2^{(2)}\] 
		as ideal numbers, we see that we can apply a change of variables, depending only on $w$, the class $B$, and the choice of bases, so that the numerator $Q_{B,B}(z_1) Q_{B,B}(z_2)$ as a linear form in $R_{B^2}(z_1, z_2), Q_{B^2}(z_1, z_2)$. Inductively, we then find that 
		\[\xi_w(z_1) \cdots \xi_w(z_k) = \left(\frac{L_w(z_1 \cdots z_k)}{g(w_1,w_2)} \right)\]
		where $L_w$ is a linear form in two variables with coefficients depending at most on $w$ and evaluates $z_1 \cdots z_k$ in terms of its representation as an element in the lattice of the corresponding ideal numbers. Defining the right hand side as $\xi_w^{(k)}(z_1 \cdots z_k)$ 
		we obtain (\ref{multi law}). Replacing $\xi_w(\cdot)$ with $\xi_w^{(k)}(\cdot)$ in (\ref{QMN def}) shows that (\ref{QMN2}) holds, and therefore we may proceed as in \cite{FI1} after applying H\"{o}lder's inequality to conclude
		\[\Q(M,N)^k \ll_\ep M^{k-1} \left\{M^3 N^{\frac{k}{2}} + M^2 N^{\frac{3k}{4}} + M^{\frac{1}{2}} N^k \right\} (MN)^\ep, \]
		which upon taking $k$-th roots gives us the bound 
		\[\Q(M,N) \ll_\ep \left\{M^{1 + \frac{2}{k}} N^{\frac{1}{2}} + M^{1 + \frac{1}{k}} N^{\frac{3}{4}} + M^{1 - \frac{1}{2k}} N \right\} (MN)^\ep\] 
		for all positive $k \in \bN$. Switching the roles of $M,N$ and applying Lemma \ref{xi cong sum 1}, we obtain as in \cite{FI1} that 
		\[\Q(M,N) \ll_\ep (M+N)^{\frac{1}{12}} (MN)^{\frac{11}{12} + \ep}\]
		upon setting $k = 6$. 
	\end{proof}

	Next we move on to proving the analogue of Proposition 22.1 in \cite{FI1}. We define, for any ideal number $z$, a rational integer $k$, and a character $\chi$ modulo $4d$ the Hecke character
	\begin{equation} \label{Hecke char} \psi(z) = \chi(z) \left(\frac{z}{|z|} \right)^k. \end{equation}
	Consider the sum
	\[\K(N) = \sideset{}{^\wedge} \sum_{z \in \fB} \psi(z) [wz] \]
	and
	\[\K^\ast(N) = \sideset{}{^\wedge} \sum_{\substack{z \in \fB \\ \gcd(z,w) = 1}} \psi(z) [wz], \]
	where $\fB$ is narrow sector contained in the intersection of a fundamental domain for the ideal class numbers containing $z$ having norm bounded $N$. We treat $w$ as a fixed primitive ideal number. Our analogue of Proposition 22.1 in \cite{FI1} is thus:
	
	\begin{proposition} \label{KN bd} Given $\psi$ and $w$ as above we have
		\begin{equation} \label{KN1} \K(N) \ll d(|k| + 1) |w| N^{\frac{3}{4}} \log (|w| N) \end{equation}
		and 
		\begin{equation} \label{KN2} \K^\ast(N) \ll d(|k| + 1) |w| \tau(N(w)) N^{\frac{3}{4}} \log (|w| N). \end{equation}
	\end{proposition}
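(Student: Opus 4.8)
The plan is to follow Friedlander and Iwaniec's treatment of their Proposition 22.1, substituting for their use of the multiplicativity of the Jacobi--Kubota symbol on $\bZ[i]$ the factorization provided by Lemma \ref{multi lem}. First I would isolate the dependence on the fixed ideal number $w$. Writing $\widehat{wz}$ for the vector attached to the product ideal number $wz$, whose coordinates are $R_{A,B}(w,z) = w_1z_1 + w_2z_2$ and $Q_{A,B}(w,z)$, the definition of $[\cdot]$ together with Lemma \ref{multi lem}, the symmetry relation (\ref{symm}), and elementary manipulation of the power $i^{(R_{A,B}(w,z)-1)/2}$ yield
\[[wz] = \varepsilon(w,z)\,[w]\,[z]\,\xi_w(z),\]
where $\varepsilon(w,z)\in\{\pm1\}$ depends only on signs and on the residue classes of $w,z$ to a fixed bounded modulus, and $\xi_w(z) = \left(\frac{w_1z_1+w_2z_2}{g(w_1,w_2)}\right)$ is the quadratic character to modulus $q := g(w_1,w_2)$, which by Lemma \ref{norm char} satisfies $q \asymp N(w) \asymp |w|^2$. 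Since $w$ is fixed, $[w]$ is an absolute constant of modulus $1$. Splitting the sum over $z$ into the bounded number of residue classes modulo which $\varepsilon(w,z)$ and the conductor--$4d$ character $\chi$ become constant reduces matters to bounding $\sum_{z\in\fB'} (z/|z|)^k\,\xi_w(z)$ over narrow sub-sectors $\fB'\subseteq\fB$ lying in a fixed residue class, and this splitting (together with the Gauss-sum/completion step below) is where the factor $d(|k|+1)$ ultimately enters.

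Next I would exploit that $\xi_w(z)$ depends on $z$ only through the value of the linear form $L(z) := w_1z_1 + w_2z_2$ modulo $q$. I would parametrize the lattice points of $\fB'$ by the integer $t = L(z)$ together with a coordinate running along the line $L = t$; over a narrow sector each such line meets $\fB'$ in a short segment, so summing out the transverse coordinate first gives $\sum_t c(t)\left(\frac{t}{q}\right)$, where $c(t)$ counts the lattice points of $\fB'$ on the line $L = t$, weighted by the essentially-constant angular factor $(z/|z|)^k$ (which varies by $O(|k|\eta)$ over the sector and so is extracted up to a negligible error). The function $t\mapsto c(t)$ is supported on an interval of length $O(|w|N^{1/2})$, with $\sum_t c(t) \ll \#\fB' \ll N$ and total variation likewise $O(|w|N^{1/2})$; Pólya--Vinogradov gives $\sum_{t\le T}\left(\frac{t}{q}\right) \ll q^{1/2}\log q \ll |w|\log(|w|N)$ uniformly in $T$, and partial summation in $t$, combined with the trivial bound $c(t)\ll N^{1/2}/|w|$ on a single line, yields after balancing the bound $\ll |w|N^{3/4}\log(|w|N)$, proving (\ref{KN1}).

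Finally, for (\ref{KN2}) I would remove the coprimality condition $\gcd(z,w)=1$ by Möbius inversion over ideal divisors $\mathfrak{e}$ of $J(w)$: writing the contribution of $\mathfrak{e}$ as a sum of the type already treated but with $N$ replaced by $N/N(\mathfrak{e})$ and an additional bounded character twist, the divisor sum over $\mathfrak{e}\mid J(w)$ contributes the factor $\tau(N(w))$. The main obstacle, exactly as in \cite{FI1}, is the geometric bookkeeping in the middle step: one must verify that over a \emph{narrow} sector the linear form $L(z)$ genuinely spreads out modulo $q$, controlling the lattice-point counts $c(t)$ and their variation, and ensuring that the boundary arcs of $\fB'$ and the degenerate case where $L$ is nearly constant on $\fB'$ (i.e.\ when the sector direction is almost orthogonal to $w$) do not destroy the Pólya--Vinogradov saving --- all while remaining within the hypotheses of Lemma \ref{multi lem} (inputs $z$ in a fixed class, $w$ and $z$ in fixed residue classes) under which $\xi_w$ is well defined and the factorization of $[wz]$ is valid.
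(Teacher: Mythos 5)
Your overall strategy --- factor $[wz]=\ep(w,z)[w][z]\xi_w(z)$ via Lemma \ref{multi lem} and then extract cancellation by P\'olya--Vinogradov --- is the same as the paper's (which in turn follows Friedlander--Iwaniec's Proposition 22.1). However, there is a genuine gap in your reduction: after splitting into residue classes you assert that the remaining task is to bound $\sum_{z\in\fB'}(z/|z|)^k\,\xi_w(z)$, i.e.\ you have silently discarded the factor $[z]=i^{(z_1-1)/2}\left(\frac{z_2}{|z_1|}\right)$. This factor cannot be made constant by restricting $z$ to residue classes to a bounded modulus, because the Jacobi symbol $\left(\frac{z_2}{|z_1|}\right)$ has modulus $|z_1|$, which ranges up to $\sqrt{N}$. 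Your entire second step depends on this omission: once $[z]$ is restored, the weight $c(t)$ obtained by summing the transverse coordinate along each line $w_1z_1+w_2z_2=t$ is itself an oscillating character sum, not a nonnegative lattice count with total variation $O(|w|N^{1/2})$, so the partial-summation/P\'olya--Vinogradov step as you describe it collapses. Tellingly, your heuristic would produce a bound of rough shape $(N^{1/2}+|w|)\log(|w|N)$ rather than the $|w|N^{3/4}\log(|w|N)$ of the statement; the exponent $3/4$ is the signature of the argument you are missing.

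The correct route (the one the paper intends by ``breaking the sum up into a double sum over rational integers'') keeps $[z]$: fix the coordinate $z_1=r$ and observe that $s\mapsto\left(\frac{s}{|r|}\right)\left(\frac{w_1r+w_2s}{g(w_1,w_2)}\right)$ is, for all relevant $r$, a nontrivial Dirichlet character to a modulus $O(|r|\,|w|^2)$; P\'olya--Vinogradov in $s$ over the segment cut out by $\fB$ gives $O\bigl((|r|)^{1/2}|w|\log(|w|N)\bigr)$ per value of $r$, and summing $\sqrt{r}$ over $r\ll\sqrt{N}$ yields exactly $|w|N^{3/4}\log(|w|N)$, i.e.\ (\ref{KN1}). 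Your handling of the angular factor $(z/|z|)^k$ and the M\"obius removal of $\gcd(z,w)=1$ (producing $\tau(N(w))$ for (\ref{KN2})) are fine in outline, but the central character-sum estimate must be redone with $[z]$ present.
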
 
	
	\begin{proof} Just like the proof of Proposition 22.1 in \cite{FI1}, the key result needed to obtain the necessary cancellation is the Polya-Vinogradov theorem, which asserts that
		\[\sum_{n \leq N} \chi(n) \ll \sqrt{q} \log q\]
		for every non-trivial Dirichlet character $\chi \pmod{q}$ with an absolute implied constant. To estimate $\K(N)$ we apply Lemma \ref{multi ident} to obtain
		\[\K(N) = [w] \sideset{}{^\wedge} \sum_{z \in \fB} \ep(w,z) \psi(z) [z] \xi_w(z), \]
		and by breaking the sum up to finitely many congruence classes if necessary, we may factor the $\ep$-factor out (because it will be constant) to obtain 
		\[\K(N) = [w] \ep \sideset{}{^\wedge} \sum_{z \in \fB} \psi(z) [z] \xi_w(z).\]
		Breaking the sum up into a double sum over rational integers forming vectors running over $\fB$ as in \cite{FI1} and applying Polya-Vinogradov we obtain (\ref{KN1}) and (\ref{KN2}) as required. 
	\end{proof}
	
	Put 
	\[\lambda(n) = \sideset{}{^\wedge} \sum_{N(z) = n} \psi(z) [z],\]
	the sum restricted to a fundamental domain of ideal numbers so each ideal is represented at most once. Consider the sum
	\begin{equation} \label{LMN def} \L(M,N) = \sum_m \sum_n \alpha(m) \beta(n) \lambda(cmn)
	\end{equation}
	where $\alpha, \beta$ are complex coefficients having norm at most $1$ and supported on $1 \leq m \leq M$ and $n \leq N$. Like wise, let $\L^\ast(M,N)$ be the subsum of (\ref{LMN def}) restricted to $\gcd(m,n) = 1$. Combining Proposition \ref{QMN bd} and Lemma \ref{multi lem} then gives the following analogue of Proposition 23.1 in \cite{FI1}: 
	
	\begin{proposition} \label{LMN bd} For any complex coefficients $\alpha(m), \beta(n)$ as above and for any positive integer $c$ we have
		\begin{equation} \label{LMN bd eq} \L(M,N) \ll \tau(c) (M+N)^{\frac{1}{12}} (MN)^{\frac{11}{12} + \ep}.
		\end{equation}
	\end{proposition}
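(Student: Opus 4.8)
The plan is to reproduce Friedlander and Iwaniec's deduction of their Proposition 23.1, with the multiplicativity of the Gaussian symbol $[\cdot]$ replaced by our substitutes (Lemmas \ref{multi lem} and \ref{norm char}) and their bilinear estimate replaced by our Proposition \ref{QMN bd}.

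First I would unfold $\lambda(cmn)=\sideset{}{^\wedge}\sum_{N(\zeta)=cmn}\psi(\zeta)[\zeta]$. Each ideal number $\zeta$ appearing here determines an integral ideal $J(\zeta)$ of norm $cmn$, and every such ideal factors as $J(\zeta)=\fc\,\fm\,\fn$ with $N(\fc)=c$, $N(\fm)=m$, $N(\fn)=n$; the number of choices for $\fc$ is at most the number of ideals of norm $c$, which is $\ll\tau(c)$. Writing $\gamma,u,v$ for ideal numbers representing the classes of $\fc,\fm,\fn$ in the appropriate fixed fundamental domains -- there being $O(1)$ admissible triples of classes -- we get $\zeta=\gamma u v$ with $N(u)=m$, $N(v)=n$, and, $\psi$ being a Hecke character, $\psi(\zeta)=\psi(\gamma)\psi(u)\psi(v)$. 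A routine M\"obius reduction over $\gcd(m,n)$, as used elsewhere in the paper, lets us assume the inputs are coprime where needed and costs only a further divisor factor, which is why the final bound carries $\tau(c)$ rather than something larger.

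Next I would peel $[\gamma u v]$ apart by two applications of Lemma \ref{multi lem}, separating first $\gamma u$ from $v$ and then $\gamma$ from $u$, using along the way that $\xi$ is multiplicative in its argument up to a consistent change of basis (the device of \eqref{multi law}) and that, by Lemma \ref{norm char}, the modulus $g(\cdot,\cdot)$ in $\xi$ is a fixed multiple of the relevant ideal norm. The outcome of this bookkeeping is an identity of the form
\[\psi(\zeta)[\zeta]=\epsilon(\gamma,u,v)\,\Theta_\gamma\,\bigl(\psi(u)[u]\bigr)\bigl(\psi(v)[v]\bigr)\,\xi_u(v),\]
where $\Theta_\gamma$ depends only on $\gamma$ and on the residue classes of $u,v$ to a bounded modulus, and $\epsilon(\gamma,u,v)$ is a fourth root of unity depending only on the signs of $\gamma,u,v$ and on their residue classes to that modulus. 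Summing over the $\ll\tau(c)$ ideals $\fc$, over the $O(1)$ class triples, and over the bounded number of residue classes that freeze $\epsilon$ and $\Theta_\gamma$, the sum $\L(M,N)$ is reduced to $\ll\tau(c)$ sums of the shape $\sideset{}{^\ast}\sum_u\sum_v a_u b_v\,\xi_u(v)$, with $a_u=\alpha(N(u))\psi(u)[u]$ and $b_v=\beta(N(v))\psi(v)[v]$ of modulus at most $1$ and supported, in fixed fundamental domains, on ideal numbers of norm at most $M$ and at most $N$ respectively.

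Finally, splitting $a_u$ and $b_v$ into real and imaginary parts at the cost of a constant factor, each of these is a $\Q(M,N)$-type sum with bounded real coefficients, so Proposition \ref{QMN bd} gives each one a bound $\ll_\ep(M+N)^{1/12}(MN)^{11/12+\ep}$; collecting the $\ll\tau(c)$ contributions yields \eqref{LMN bd eq}. The step I expect to be the main obstacle is the bookkeeping in the previous paragraph: one has to verify that every correction factor produced by Lemma \ref{multi lem} and by the failure of $[\cdot]$ to be multiplicative really does depend only on sign data and on congruences of bounded conductor, so that a bounded subdivision into arithmetic progressions renders it constant, and that the coprimality reductions truly cost no more than a divisor factor, so that the right-hand side stays at the stated strength.
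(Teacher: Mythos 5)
Your proposal is correct and follows essentially the same route the paper intends: the paper's own justification is the single sentence that the result follows by combining Proposition \ref{QMN bd} with Lemma \ref{multi lem}, mirroring Friedlander--Iwaniec's deduction of their Proposition 23.1, and your write-up is a faithful expansion of exactly that argument (factor the ideal of norm $cmn$, peel the symbol apart via Lemma \ref{multi lem} and the $\xi^{(k)}$ device, freeze the sign/congruence corrections by bounded subdivision, and feed the resulting real bounded bilinear forms into Proposition \ref{QMN bd}).
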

	
	We also introduce the analogues of $\K(N), \K^\ast(N)$: 
	\begin{equation} \label{LN def} \L(N) = \sum_{n \leq N} \lambda(mn), \L^\ast(N) = \sum_{\substack{n \leq N \\ \gcd(m,n) = 1}} \lambda(mn)
	\end{equation}
	and obtain the following analogue of Proposition 23.2 in \cite{FI1} by applying Proposition \ref{KN bd}:
	\begin{proposition} \label{LN bd} For $\psi$ as defined by (\ref{Hecke char}) and positive integer $m$ we have the bounds
		\begin{equation} \label{LN bd eq1} \L(N) \ll d(|k| + 1) \tau(m)^4 \sqrt{m} N^{\frac{3}{4}} \log (mN) 
		\end{equation}
		and
		\begin{equation} \label{LN bd eq2} \L^\ast(N) \ll d(|k| + 1) \tau(m)^2 \sqrt{m} N^{\frac{3}{4}} \log (mN). 
		\end{equation}
	\end{proposition}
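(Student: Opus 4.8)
The plan is to derive Proposition~\ref{LN bd} from Proposition~\ref{KN bd} in exactly the way Friedlander and Iwaniec derive their Proposition 23.2 from their Proposition 22.1, with the ideal--number bookkeeping of Section~\ref{algebra} in place of the Gaussian--integer one.

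First I would unfold the definitions. Since $\lambda(n)=\sideset{}{^\wedge}\sum_{N(z)=n}\psi(z)[z]$, summing over $n\le N$ gives
\[\L(N)=\sideset{}{^\wedge}\sum_{v}\psi(v)[v],\]
where $v$ runs over one representative, per ideal, of the primitive ideal numbers in the fixed ideal class on which $\lambda$ is supported, subject to $m\mid N(J(v))$ and $N(J(v))\le mN$; for $\L^\ast(N)$ one imposes in addition $\gcd\bigl(m,\,N(J(v))/m\bigr)=1$. Because $v$ is primitive, $J(v)$ has no inert prime factors and every ramified prime divides it to exponent at most one, so the condition $m\mid N(J(v))$ forces $J(v)$ to possess at least one integral divisor $\fa$ with $N(\fa)=m$, and the number of such divisors is at most $\tau(m)$. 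Fix once and for all a family $\{w_\fa\}$ of ideal numbers, one for each ideal $\fa$ of norm $m$ arising this way, with $|w_\fa|\asymp\sqrt m$. Each $v$ divisible by $\fa$ then factors uniquely as $v=w_\fa z$ with $z=v w_\fa^{-1}$ a primitive ideal number of norm $N(J(v))/m$ lying in a class determined by $\fa$.

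Next, since $\psi$ is a Hecke character it is multiplicative on ideal numbers, so $\psi(v)=\psi(w_\fa)\psi(z)$; after covering the fundamental domain for the admissible $z$ by a bounded number of narrow sectors $\fB$, the contribution of a single $\fa$ is, up to the constant $\psi(w_\fa)$, a sum of pieces of the shape
\[\sideset{}{^\wedge}\sum_{z\in\fB}\psi(z)[w_\fa z],\]
which is of the form $\K(N)$, and of the form $\K^\ast(N)$ when $\gcd(m,N(J(v))/m)=1$ is imposed, since then $\gcd(z,w_\fa)=1$. Proposition~\ref{KN bd} bounds each such piece by $d(|k|+1)|w_\fa|N^{3/4}\log(|w_\fa|N)\ll d(|k|+1)\sqrt m\,N^{3/4}\log(mN)$, with the extra factor $\tau(N(w_\fa))=\tau(m)$ present in the $\K^\ast$ case.

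Finally I would sum over $\fa$. For $\L^\ast(N)$ the decomposition $v=w_\fa z$ is genuinely unique, the $m$-part of $J(v)$ being a single ideal of norm $m$, so $\L^\ast(N)$ is a sum of at most $\tau(m)$ copies of the $\K^\ast$-estimate, yielding (\ref{LN bd eq2}). For the unrestricted $\L(N)$ a given $v$ can be reached through several divisors $\fa$ of norm $m$, and one removes this overcounting by M\"obius inversion over the divisor lattice of the degree-one prime-power parts of $J(v)$ above the primes dividing $m$; this is where the two further factors of $\tau(m)$ enter, giving (\ref{LN bd eq1}). The principal obstacle is precisely this last step: organizing the non-unique factorization $v=w_\fa z$ — choosing which ideal of norm $m$ to peel off and compensating by inclusion--exclusion — without losing more than $\tau(m)^2$ beyond what (\ref{LN bd eq2}) already costs. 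Everything preceding it is a line-by-line transcription of \cite{FI1}, with Proposition~\ref{KN bd} playing the role of their Proposition 22.1.
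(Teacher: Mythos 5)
Your proposal is correct and follows essentially the same route as the paper, which gives no detailed argument but simply derives Proposition \ref{LN bd} by applying Proposition \ref{KN bd} in the manner of Friedlander--Iwaniec's deduction of their Proposition 23.2 from Proposition 22.1. Your unfolding of the factorization $v = w_{\fa} z$ over ideal divisors $\fa$ of norm $m$, the use of Hecke multiplicativity of $\psi$, and the accounting for the $\tau(m)^2$ versus $\tau(m)^4$ losses all match the intended argument.
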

	
	These estimates then imply the following analogue of Theorem $\psi$ in \cite{FI1}: 
	
	\begin{proposition} \label{prop psi} For any $c \geq 1$ we have
		\begin{equation} \sum_{n \leq X} \Lambda(n) \lambda(cn) \ll c d(|k| + 1) X^{\frac{76}{77}}
		\end{equation}
		with the absolute constant dependent only on $f$. 
	\end{proposition}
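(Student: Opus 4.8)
The plan is to run the classical Vaughan‑identity argument of Friedlander and Iwaniec for their ``Theorem $\psi$'' (\cite{FI1}), feeding in Proposition \ref{LN bd} as the linear (``Type I'') input and Proposition \ref{LMN bd} as the bilinear (``Type II'') input. First I would dispose of the proper prime powers: since $|\lambda(m)|$ is bounded by the number of ideals of norm $m$, one has $\lambda(cn)\ll\tau(cn)\ll_\ep(cn)^\ep$, so the terms $n=p^k$ with $k\ge 2$ contribute only $\ll_\ep c^\ep X^{1/2+\ep}$ to $\sum_{n\le X}\Lambda(n)\lambda(cn)$, which is negligible. After a dyadic subdivision of the range of $n$, fix a parameter $v\in(0,\tfrac12)$ and apply Vaughan's identity with $U=V=X^{v}$: this expresses $\Lambda(n)$, for $n>X^{v}$, as the sum of two ``Type I'' pieces --- in which $n$ is divisible by some $e\le UV=X^{2v}$ carrying a coefficient $\ll\tau(e)\log X$, the complementary variable being essentially smooth --- together with one ``Type II'' piece of the bilinear form $\sum_{bc\mid n,\,b>U,\,c>V}\mu(b)\Lambda(c)$.

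For the Type I pieces, a partial summation in the smooth variable reduces matters to bounding
\[\sum_{e\le X^{2v}}\tau(e)(\log X)\ \Bigl|\,\sum_{m\le X/e}\lambda(cem)\,\Bigr|.\]
Here Proposition \ref{LN bd}, applied with multiplier $ce$, gives $\bigl|\sum_{m\le X/e}\lambda(cem)\bigr|\ll d(|k|+1)\,\tau(ce)^{4}\sqrt{ce}\,(X/e)^{3/4}\log(ceX)$; summing over $e$, using $\tau(ce)\le\tau(c)\tau(e)$ and $\sum_{e\le E}\tau(e)^{5}e^{-1/4}\ll E^{3/4}(\log E)^{O(1)}$, yields a Type I bound
\[\ll\ d(|k|+1)\,c\,X^{3/4+3v/2}\,(\log X)^{O(1)}.\]

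For the Type II piece, the free variable hidden in $\sum_{bc\mid n}$ may be attached to the $c$‑block, so after a dyadic split one is left with $O(\log^{2}X)$ sums $\sum_{M<m\le 2M}\sum_{L<l\le 2L}\alpha_m\beta_l\,\lambda(cml)$ with $ML\asymp X$ and $X^{v}<M,L\le X^{1-v}$; the coefficients are bounded by $1$ and $\ll_\ep X^{\ep}$ respectively. Proposition \ref{LMN bd} bounds each such sum by $\tau(c)(M+L)^{1/12}(ML)^{11/12+\ep}\ll\tau(c)\,X^{1-v/12+\ep}$ (the mild size of the $l$‑coefficients costs only a further $X^{O(\ep)}$). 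Choosing $v=\tfrac{3}{19}$ balances the two exponents, $3/4+3v/2=1-v/12=75/76$; since $75/76<76/77$ there is room to absorb the $(\log X)^{O(1)}$ and $X^{\ep}$ factors, and combining the Type I and Type II estimates gives $\sum_{n\le X}\Lambda(n)\lambda(cn)\ll c\,d(|k|+1)\,X^{76/77}$ with an implied constant depending only on $f$.

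I do not expect a genuinely new difficulty here: all of the arithmetic has already been extracted into Propositions \ref{LN bd} and \ref{LMN bd} (the P\'olya--Vinogradov bound behind the former, the Cauchy--Schwarz and H\"older argument behind the latter, together with the multiplicativity substitute (\ref{multi law})). The one delicate point is purely combinatorial: one must check that the ranges of $e$ for Type I and of $M,L$ for Type II produced by Vaughan's identity are \emph{entirely} covered by the two propositions. This is a tight fit --- it is precisely the strengths $N^{3/4}$ and $(MN)^{11/12}$ of the two estimates that pin the final exponent so close to $1$, leaving only a narrow window of admissible $v$ --- but the optimization above shows that the window is nonempty, which is all that is needed.
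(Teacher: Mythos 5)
Your proposal is correct and is essentially the paper's proof: the paper simply defers to the proof of Theorem $\psi$ in \cite{FI1} with Propositions \ref{LMN bd} and \ref{LN bd} substituted for Propositions 23.1 and 23.2, and your Vaughan-identity decomposition with the Type I/Type II inputs and the optimization $v=3/19$ (yielding $X^{75/76}\le X^{76/77}$, with room for the logarithmic and $X^{\ep}$ losses) is exactly that argument written out.
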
 
	
	\begin{proof} This is the same as the proof of Theorem $\psi$ in \cite{FI1} with Propositions 23.1 and 23.2 replaced by Propositions \ref{LMN bd} and \ref{LN bd} respectively. 
	\end{proof}

\end{document}